\DeclareMathAlphabet{\mathpzc}{OT1}{pzc}{m}{it}
\newtheorem{theorem}{Theorem}[section]
\newtheorem*{claim*}{Claim}
\newtheorem{lemma}[theorem]{Lemma}
\newtheorem{lem}[theorem]{Lemma}
\newtheorem{corollary}[theorem]{Corollary}
\newtheorem{Cor}[theorem]{Corollary}
\newtheorem{cor}[theorem]{Corollary}
\newtheorem{proposition}[theorem]{Proposition}
\newtheorem{prop}[theorem]{Proposition}
\theoremstyle{definition}
\newtheorem{definition}[theorem]{Definition}
\newtheorem{Def}[theorem]{Definition}
\newtheorem{example}[theorem]{Example}
\theoremstyle{remark}
\newtheorem{remark}[theorem]{Remark}
\newtheorem{Rmk}[theorem]{Remark}
\numberwithin{equation}{section}
\newcommand{\op}{\operatorname}
\newcommand{\be}{\begin{equation}}
\newcommand{\ee}{\end{equation}}
\newcommand{\Ga}{\Gamma}
\newcommand{\R}{\mathbb R}
\renewcommand{\H}{\mathbb H}
\newcommand{\Z}{\mathbb Z}
\newcommand{\N}{\mathbb N}
\newcommand{\ga}{\gamma}
\newcommand{\la}{\lambda}
\newcommand{\La}{\Lambda}
\newcommand{\inte}{\op{int}}
\newcommand{\ba}{\backslash}
\newcommand{\cal}{\mathcal}
\newcommand{\br}{\mathbb R}
\newcommand{\SO}{\op{SO}}
\newcommand{\Isom}{\op{Isom}}
\newcommand{\PSL}{\op{PSL}}
\newcommand{\F}{\cal F}
\newcommand{\bH}{\mathbb H}
\newcommand{\diam}{\op{diam}}
\newcommand{\vol}{\op{Vol}}
\newcommand{\G}{\Gamma}
\newcommand{\T}{\mathscr T}
\renewcommand{\frak}{\mathfrak}
\newcommand{\e}{\varepsilon}
\renewcommand{\L}{\mathcal L}
\newcommand{\fa}{\mathfrak a}
\renewcommand{\i}{\op{i}}
\renewcommand{\S}{\mathbb S}
\newcommand{\so}{\SO^\circ}
\newcommand{\C}{\cal C}
\newcommand{\id}{\op{id}}
\newcommand{\PGL}{\op{PGL}}
\newcommand{\GL}{\op{GL}}
\renewcommand{\P}{\mathbb{P}}
\newcommand{\ess}{\mathsf{E}}
\newcommand{\B}{\mathcal{B}}
\newcommand{\fg}{\frak g}
\newcommand{\lat}{\La_{\theta}}
\newcommand{\ft}{\F_\theta}
\renewcommand{\epsilon}{\e}
\renewcommand{\d}{\mathsf{d}}
\def\g{\gamma}
\newcommand{\TG}{\mathscr T_\Ga}
\newcommand{\fat}{\fa_\theta^*}
\newcommand{\Ltm}{\L_\theta-\{0\}}
\newcommand{\fh}{\mathfrak{h}}
\title[Ahlfors regularity of Patterson-Sullivan measures]{Ahlfors regularity of Patterson-Sullivan measures of Anosov~groups and applications}
\author{Subhadip Dey}
\address{Department of Mathematics, Yale University, New Haven, CT 06511\linebreak \indent
    {\rm\em Current address:} School of Mathematics, Tata Institute of Fundamental Research, Mumbai, India}
\email{subhadip@math.tifr.res.in}
\author{Dongryul M. Kim}
\address{Department of Mathematics, Yale University, New Haven, CT}
\email{dongryul.kim@yale.edu}
\author{Hee Oh}
\address{Department of Mathematics, Yale University, New Haven, CT and Korea Institute for Advanced Study, Seoul}
\email{hee.oh@yale.edu}
\thanks{Oh is partially supported by the NSF grant No. DMS-1900101 and 2450703.}
\begin{document}

 \keywords{Anosov groups, Patterson-Sullivan measures, Ahlfors regularity, Temperedness}
 \subjclass[2020]{22E40, 28A78}

\begin{abstract} 
For all Zarski dense Anosov subgroups of a semisimple real algebraic group, we prove that their limit sets are Ahlfors regular for intrinsic conformal premetrics. As a consequence, we obtain that
a Patterson-Sullivan measure is Ahlfors regular (and hence equal to the Hausdorff measure) if and only if the associated linear form is symmetric.
We also discuss several applications, including analyticity of $(p,q)$-Hausdorff dimensions on the Teichm\"uller spaces, new upper bounds on the growth indicator, and $L^2$-spectral properties of associated locally symmetric manifolds.

\end{abstract}

\maketitle
%\tableofcontents

\section{Introduction}
Let $G$ be a connected semisimple real algebraic group. Let $\Gamma < G$ be a 
discrete subgroup. Patterson-Sullivan measures are certain families of Borel measures on a generalized flag variety, supported on the limit set of $\Gamma$. They  play a crucial role in the study of dynamics on the associated locally symmetric space, especially in the counting and equidistribution of $\Gamma$-orbits of various geometric objects. The original construction is due to Patterson and Sullivan for Kleinian groups  (\cite{Patterson1976limit}, \cite{Sullivan1979density}), which was generalized 
by Quint \cite{Quint2002Mesures} (see \cite{Albuquerque1999patterson} and \cite{Burger_manhattan} for earlier works).

Sullivan showed that for convex cocompact Kleinian groups of $\Isom^+(\bH_\br^n)$, Patterson-Sullivan measures are Ahlfors regular Hausdorff measures on the limit sets in $\S^{n-1}$ \cite[Theorem 8]{Sullivan1979density}. Since Patterson-Sullivan measures are constructed from the weighted Dirac measures on an orbit of $\Gamma$ in the symmetric space $\bH_\br^n$, it is remarkable that they can be given the geometric characterization purely in terms of the internal metric on the limit set of $\Gamma$ which is a subset of the boundary $\partial \bH_\br^n\simeq \S^{n-1}$.

In recent decades, Anosov subgroups have emerged as a higher rank generalization of convex cocompact Kleinian groups. Therefore it is natural to ask when the Patterson-Sullivan measures of Anosov subgroups arise as Ahlfors regular Hausdorff measures on the limit sets with respect to appropriate metrics. The main goal of this paper is to answer this question.

To state our results, 
fix a Cartan decomposition $G=K A^+ K$ where $K < G$ is a maximal compact subgroup and $A^+ \subset A$ is a positive Weyl chamber of a maximal real split torus $A < G$.  We denote by $X$ the associated Riemannian symmetric space $G/K$. Let $\fg$ and $\fa$ denote the Lie algebras of $G$ and $A$ respectively, and set $\fa^+=\log A^+$. Let
 $\Pi$ denote the set of all simple roots of $(\frak g, \frak a)$ with respect to the choice of $\fa^+$.

Fix a non-empty subset $\theta$  of $\Pi$. Let $P_\theta$ be the standard parabolic subgroup of $G$ associated with $\theta$.
The quotient space $$\F_\theta=G/P_\theta$$ is called the $\theta$-boundary of $X$, or a generalized flag variety.   We denote by $\La_\theta$ the limit set  of $\Ga$ in $\F_\theta$ \cite{Benoist1997proprietes}.  For $\theta=\Pi$, we omit the subscript $\theta$ from now on; so in particular, $P=P_\Pi$ is a minimal parabolic subgroup of $G$.  Set $\fa_\theta=\bigcap_{\alpha\in \Pi-\theta} \op{ker}\alpha$ and let $\fa_\theta^*$ denote the dual vector space of $\fa_\theta$. We may think of $\fa_\theta^*$ as a subspace of $\fa^*$ via  the canonical projection $p_\theta: \fa\to \fa_\theta$ \eqref{att}.
For $\psi\in \fa_\theta^*$,
a $(\Ga, \psi)$-{\em Patterson-Sullivan measure}  is a Borel probability measure $\nu$ on $\La_{\theta}$ such that for all $\ga \in \Ga$ and $\xi \in \La_{\theta}$, $$\frac{d \ga_* \nu}{d \nu}(\xi) = e^{\psi(\beta_{\xi}(e, \ga))}$$
where $\beta$ denotes the Busemann map (see \eqref{Bu}). 

A finitely generated subgroup $\Ga<G$ is called {\em $\theta$-Anosov} if
 there exists a constant $C > 1$ such that for all $\alpha \in \theta$, $$\alpha(\mu(\ga)) \ge C^{-1} |\ga| - C \quad \text{for all } \ga \in \Ga$$
where $|\cdot |$ is a word metric on $\Ga$ with respect to a fixed finite generating set and  $\mu:G\to \fa^+$ is the Cartan projection defined
by the condition that $g \in K ( \exp \mu(g)) K$ for all $g\in G$. 
See (\cite{Labourie2006anosov}, \cite{GW_anosov}, \cite{KLP_Anosov}, \cite{KLP_2018}, \cite{GGKW_gt}, \cite{BPS_anosov}, etc.) for other equivalent definitions of Anosov subgroups.

In the rest of the introduction, let $\Ga$ be a non-elementary\footnote{the limit set $\La_{\theta}$ has at least three points.} $\theta$-Anosov subgroup of $G$. We impose the non-elementary assumption on Anosov subgroups for the entire paper. The space of all Patterson-Sullivan measures of $\Ga$ is parameterized  by the set $\mathscr T_\Ga\subset \fa_\theta^*$ of all linear forms  tangent to the $\theta$-growth indicator $\psi_\Ga^\theta$ (Definition \ref{def.growthindicator}): $$ \mathscr T_\Ga=\{\psi\in \fa_\theta^*:\ \psi\ge \psi_\Ga^\theta, \ \psi(u)=\psi_\Ga^\theta(u) \quad \text{for some } u\in \fa_\theta-\{0\} \} .$$ More precisely, for any $\psi\in \mathscr T_\Ga$, there exists a unique $(\Ga, \psi)$-Patterson-Sullivan measure $$\nu_\psi$$ and every Patterson-Sullivan measure of $\Ga$ arises in this way (Theorem \ref{thm.uniquePS}).
Denote by $\L_\theta\subset \fa_{\theta}^+$ the $\theta$-limit cone of $\Ga$, which is the asymptotic cone of $p_{\theta}(\mu(\Ga))$. Then $\T_\G$ is  in bijection with the set $\{\psi\in \fa_\theta^*: \psi>0 \text{ on } \L_\theta-\{0\}\}/\sim$, where $\psi_1\sim \psi_2$ if and only if $\psi_1=c \cdot \psi_2$ for some $c>0$. When the limit cone $\L_\theta$ has non-empty interior (e.g., when $\G$ is Zariski dense in $G$), $\T_\G$ is homeomorphic to $\br^{\# \theta-1}$.
 
\subsection*{Ahlfors regularity and Hausdorff measures} 
Anosov subgroups of a rank one Lie group $G$ are precisely
 convex cocompact subgroups. 
 In general rank one groups, the unique Patterson-Sullivan measure of  $\Ga$ is Ahlfors regular and coincides with the Hausdorff measure on $\La$  with respect to a $K$-invariant {\it sub-Riemannian} metric on the boundary $\partial_\infty X $ which is defined in terms of the Gromov product 
 \cite[Theorem 5.4]{Corlette_Inv}.  Except for the case of $\SO(n,1)$,
 this sub-Riemannian metric is not a Riemannian metric. 

In this paper, we prove an analogous theorem for a general Anosov subgroup. Let $\psi\in \T_\G$.
 The $\theta$-Anosov property of $\Ga$ implies that
any two distinct points of $\La_\theta$ are in general position and hence
 the following defines a premetric\footnote{On a topological space $X$, a premetric $d$ is a positive definite continuous function $d: X \times X \to \R$ such that $d(x, x) = 0$ for all $x \in X$.} on $\La_\theta$: for $\xi, \eta\in \La_\theta$,
\be \label{dpsi}
d_{\psi}(\xi, \eta) =\begin{cases}
e^{-\psi( \cal G(\xi, \eta))} & \quad \text{if } \xi \neq \eta
\\ 
  0 &\quad\text {if } \xi = \eta\end{cases}  
\ee where $\cal G$ is the $\fa$-valued Gromov product (see Definition \ref{def.defgromovprod}). This premetric turns out to be a correct replacement of the sub-Riemannian metric of the rank one case.

For $s > 0$,
 we denote by  $\cal H_{\psi}^s$
the $s$-dimensional Hausdorff measure on $\La_\theta$ with respect to the premetric $d_\psi$, which is a Borel outer measure  \eqref{hhhjun19}.
We write $\cal H_\psi$ for $\cal H_\psi^1$. 
It turns out that the metric properties of the Patterson-Sullivan measure $\nu_\psi$  depend on the symmetricity of   $\psi\in \fa_\theta^*$: $\psi$ is called {\em symmetric} if $\psi$ is invariant under the opposition involution $\i$ of $\fa$ (see \eqref{oppo}).

Our main theorem is as follows:

\begin{theorem} \label{main1} Let $\Ga$ be a non-elementary $\theta$-Anosov subgroup  of $G$. Let $\psi\in \T_\Ga$ be a symmetric linear form. Then the 
Patterson-Sullivan measure $\nu_\psi$
is Ahlfors $1$-regular and equal to the one-dimensional Hausdorff measure  $\cal H_\psi$, up to a constant multiple.
\end{theorem}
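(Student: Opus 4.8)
The plan is to show that $\nu_\psi$ satisfies a two-sided estimate of the form $C^{-1} r \le \nu_\psi(B_\psi(\xi, r)) \le C r$ uniformly over $\xi \in \La_\theta$ and $r \in (0, r_0]$, where $B_\psi(\xi, r)$ denotes the ball in the premetric $d_\psi$; this is exactly Ahlfors $1$-regularity, and by the standard comparison between Hausdorff measures and Ahlfors-regular measures (the mass distribution principle together with a Vitali-type covering argument) this forces $\nu_\psi = c \cdot \cal H_\psi$ for some constant $c>0$. So the entire content is the ball estimate.

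\textbf{Shadows and the Gromov product.} The bridge between $\nu_\psi$, which is defined via Busemann cocycles on an orbit in $X$, and the premetric $d_\psi = e^{-\psi(\cal G(\cdot,\cdot))}$, which lives on $\La_\theta$, is the \emph{shadow lemma}. First I would recall (or reprove in the $\theta$-setting) Quint-type shadow lemma: for a suitable basepoint $o = eK$, the shadow $O_R(o, \exp(t u) K)$ cast on $\F_\theta$ by a ball of radius $R$ around a point at "vectorial distance" roughly $tu$ (with $u \in \L_\theta$ the direction) satisfies $\nu_\psi(O_R) \asymp e^{-\psi(tu)}$, with implied constants depending only on $R$ (for $R$ large) and $\psi$. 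The $\theta$-Anosov hypothesis is what makes this clean: the Cartan projections $\mu(\ga)$ drift into the interior of $\fa_\theta^+$ linearly in $|\ga|$, limit points are in general position, and the local-to-global / Morse properties give uniform control of the geometry of the associated flats. The next step is to identify these shadows with premetric balls: using the fact that for $\xi$ in the shadow $O_R(o, g o)$ one has $\cal G(\xi, \eta) \approx \mu(g) - (\text{Gromov product seen from } go)$-type identities, I would show that there are constants $c_1 < c_2$ (depending on $R$) such that
\[
B_\psi(\xi, c_1 e^{-\psi(\mu(g))}) \subseteq O_R(o, g o) \subseteq B_\psi(\xi, c_2 e^{-\psi(\mu(g))})
\]
for $\xi$ a limit point "in the direction of" $g$. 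Combining the two displays gives $\nu_\psi(B_\psi(\xi, r)) \asymp r$, but only for the discrete set of radii of the form $e^{-\psi(\mu(\ga))}$.

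\textbf{Filling in all radii and the role of symmetry.} To upgrade from the discrete set of radii to all small $r$, I would use that $\La_\theta$ is "well-spread": because $\Ga$ is $\theta$-Anosov and non-elementary, the orbit $\Ga o$ is coarsely dense in the relevant sense and the shadows at nearby scales overlap controllably, so every ball $B_\psi(\xi, r)$ is sandwiched between two shadows whose $\nu_\psi$-masses are comparable to $r$ up to a fixed multiplicative constant; this is where one invokes the linear divergence $\alpha(\mu(\ga)) \ge C^{-1}|\ga| - C$ to guarantee that the "gaps" in $\{\psi(\mu(\ga))\}$ are bounded. \textbf{The symmetry hypothesis enters precisely here}: for $d_\psi$ to be a genuine premetric with a workable "quasi-triangle" behavior — more importantly, for the Gromov product $\cal G(\xi,\eta)$ to be \emph{symmetric} in $\xi, \eta$ so that $d_\psi(\xi,\eta) = d_\psi(\eta,\xi)$ and balls behave symmetrically — one needs $\psi \circ \i = \psi$; without symmetry, $\psi(\cal G(\xi,\eta)) \ne \psi(\cal G(\eta,\xi))$ in general and $d_\psi$ fails to be symmetric, so the Hausdorff measure $\cal H_\psi$ would not even be the natural object (indeed the theorem is stated only for symmetric $\psi$, and the introduction flags that the general case is genuinely different). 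Concretely, symmetry lets me relate $\cal G(\xi,\eta)$ to $\tfrac12(\text{something }\i\text{-invariant})$ and thereby to the Cartan projection in a two-sided way, which is what makes the shadow-to-ball comparison above hold with the \emph{same} exponent $\psi(\mu(g))$ on both sides.

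\textbf{Main obstacle.} I expect the crux to be the two-sided shadow-versus-ball comparison — i.e., proving that a shadow $O_R(o, go)$ is genuinely comparable to a $d_\psi$-ball, not merely contained in one. The containment $O_R \subseteq B_\psi(\cdot, c_2 e^{-\psi(\mu(g))})$ is the "easy" direction (it follows from lower bounds on the Gromov product for points in a shadow). The reverse, that a small ball is contained in a shadow of comparable mass, requires showing that if $d_\psi(\xi,\eta)$ is small then $\xi,\eta$ lie in a common shadow from $o$ of the appropriate size; in rank one this is the standard fact that the Gromov product controls which shadow a pair of boundary points lies in, and here it must be carried out with the $\fa$-valued Gromov product and the $\theta$-boundary, using the Anosov/Morse geometry to control how $\cal G$ degenerates. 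A secondary technical point is verifying that $d_\psi$ satisfies enough of a quasi-metric inequality (a constant-$K$ triangle inequality $d_\psi(\xi,\zeta) \le K(d_\psi(\xi,\eta) + d_\psi(\eta,\zeta))$) for the Hausdorff-measure machinery to apply cleanly; this again leans on symmetry of $\psi$ and on general-position estimates for limit points, and I would isolate it as a lemma before running the covering argument.
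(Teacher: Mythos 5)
Your skeleton (higher rank shadow lemma, a two-sided comparison of shadows with $d_\psi$-balls, bounded gaps between consecutive scales along a geodesic ray, then covering arguments) is the same as the paper's, but there are two genuine gaps. The first is your final step: Ahlfors $1$-regularity of $\nu_\psi$ does not ``force'' $\nu_\psi = c\,\cal H_\psi$. The mass distribution principle together with a Vitali-type covering argument only gives two-sided comparability of $\nu_\psi$ and $\cal H_\psi$ (and, since $d_\psi$ is merely a premetric, even the finiteness $\cal H_\psi(\La_\theta)<\infty$ already needs the covering lemma for the premetric, Lemma \ref{lem.lovitali}); a measure with a non-constant density bounded between two positive constants against an Ahlfors regular measure is still Ahlfors regular, so regularity alone can never yield proportionality. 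The paper closes this by showing that $\cal H_\psi$ is itself a $(\Ga,\psi)$-conformal measure (Lemma \ref{lem.confhaus}) and then invoking the uniqueness of the $(\Ga,\psi)$-Patterson--Sullivan measure for Anosov subgroups (Theorem \ref{thm.uniquePS}). Without that conformality-plus-uniqueness argument your route proves only $C^{-1}\cal H_\psi \le \nu_\psi \le C\,\cal H_\psi$, not the theorem.

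The second gap is that you correctly isolate the shadow-versus-ball comparison as the crux but leave its mechanism unspecified. The paper's proof rests on two inputs your sketch does not supply: the coarse triangle inequality for $\d_\psi(\ga_1 o,\ga_2 o)=\psi(\mu(\ga_1^{-1}\ga_2))$ on the orbit (Theorem \ref{thm.triangle}), proved via additivity of $\psi\circ\mu$ on Kapovich--Leeb--Porti diamonds and the higher rank Morse lemma, and the comparison $\psi(\cal G(\xi,\eta))=\d_\psi(o,[\xi,\eta]o)+O(1)$ (Proposition \ref{ss}). It is precisely there---not in the quasi-triangle inequality for $d_\psi$, which holds for every $\psi$ positive on $\L_\theta-\{0\}$ (Proposition \ref{prop.lotriangle})---that symmetry $\psi=\psi\circ\i$ is used: both Busemann terms in $\cal G$ are close to $\mu_\theta$ at the projection point, and $\psi\circ\i=\psi$ recombines them into $\psi\circ\mu$. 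Moreover both inclusions of the comparison require this orbit-level machinery together with the comparison of shadows in the Gromov boundary with shadows in $\La_\theta$ (Proposition \ref{prop.shadowgromovbooundary}); in particular the direction you call easy is not a one-line consequence of ``lower bounds on the Gromov product on a shadow,'' because $(\Ga o,\d_\psi)$ is neither geodesic nor a genuine metric space, so the rank one arguments you allude to do not transfer directly. The plan is structurally right, but the two essential ingredients---conformality/uniqueness for the identification with $\cal H_\psi$, and the metric-like structure of $\psi\circ\mu$ behind the shadow-ball comparison---are exactly what is missing.
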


The Ahlfors $1$-regularity of $\nu_\psi$ means that  there exists $C\ge 1$ such that for any $\xi\in \La_\theta$ and $0\le r<\op{diameter} (\La_{\theta}, d_\psi)$,
   \be\label{nucom} C^{-1} r\le \nu_\psi (B_{\psi}(\xi, r))\le C r \ee 
   where $B_{\psi}(\xi, r)=\{\eta\in \La_{\theta}: d_\psi(\xi, \eta)<r\}$.The premetric space $(\La_{\theta}, d_{\psi})$ is called Ahlfors $s$-regular for $s > 0$ if it admits an Ahlfors $s$-regular Borel measure (see Definition \ref{aaa}). Noting that  $\cal H_{s \psi}=\cal H_\psi^s$ for $s>0$, the reason that the Patterson-Sullivan measure is the {\it one-dimensional} Hausdorff measure in the above theorem is due to the normalization of $\psi$ made by the choice that $\psi$ is a tangent form, i.e., $\psi\in \T_\Ga$ (see Remark \ref{sss}).

\begin{remark} If $\psi$ has gradient in the interior of $\fa_{\theta}^+$,  then
$\psi$ can be used to define a Finsler metric on $X$ and Dey-Kapovich \cite[Theorem A]{DK_patterson} showed that $\nu_\psi$ is the Hausdorff measure,  without addressing the Ahlfors regularity  (see Remark \ref{rmk.DK}). Note that Hausdorff measures need not be Ahlfors-regular in general. Our approach 
 in this paper is different; indeed, we first establish the Ahlfors regularity of $\nu_\psi$  and deduce the rest as a consequence of this.
\end{remark}
The opposition involution $\i$ of $\fa$ is known to be trivial if and only if
$G$ does not have a simple factor of type $A_n$ ($n\ge 2$), $D_{2n+1}$ ($n\ge 2$) or $E_6$ \cite[1.5.1]{Tits_classification}.  When $\i$ is non-trivial,
the symmetricity hypothesis on $\psi$ cannot be removed. In fact, we prove the following (Theorem \ref{thm.nonsymmhmeas}, Theorem \ref{thm.Aregular}, see also Remark \ref{rmk.jan13}): 

\begin{theorem}\label{main0}
   Let $\Ga$ be a Zariski dense $\theta$-Anosov subgroup of $G$. For any non-symmetric $\psi\in \mathscr T_\Ga$,
 the premetric space $(\La_{\theta}, d_{\psi})$
  is Ahlfors $s$-regular for some $0<s<1$ but the Patterson-Sullivan measure
$\nu_\psi $ is not comparable\footnote{Two measures $\nu_1$ and $\nu_2$ are comparable
if $C^{-1} \nu_2 \le \nu_1\le C \nu_2 $ for some $C\ge1$} to any $\cal H_\psi^s$, $s > 0$.
\end{theorem}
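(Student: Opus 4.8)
The plan is to run the Ahlfors-regularity machinery from the proof of Theorem \ref{main1} with the exponent $1$ replaced by a suitable $s \in (0,1)$, and then to use the asymmetry of $\psi$ to rule out comparability with any $\cal H_\psi^s$. First I would produce the exponent. Since $\psi \in \T_\Ga$, we have $\psi \geq \psi_\Ga^\theta$ with equality along some direction in the limit cone, while the symmetrized form $\psi^{\i} := \tfrac12(\psi + \psi \circ \i)$ also lies above $\psi_\Ga^\theta$ (because $\psi_\Ga^\theta$ is $\i$-invariant) and is symmetric; after rescaling, $\psi^{\i}$ is proportional to an element of $\T_\Ga$. The key quantitative input is that the Patterson-Sullivan measure $\nu_\psi$ can be compared to $\nu_{\psi^{\i}}$ via a shadow-lemma computation: for a ball $B_\psi(\xi,r)$, the shadow lemma (the higher-rank analogue used in proving Theorem \ref{main1}, comparing $\nu_\psi$-mass of shadows to $e^{-\psi(\mu(\ga))}$ along orbit points $\ga$ with $\xi \in$ shadow of $\ga$) gives $\nu_\psi(B_\psi(\xi,r)) \asymp e^{-\psi(\mu(\ga))}$ while $r \asymp e^{-\psi^{\i}(\cal G(\xi,\eta))} \asymp e^{-\psi^{\i}(\mu(\ga))}$ for the relevant scale, using that the $\fa$-valued Gromov product $\cal G(\xi,\eta)$ is, up to bounded error, the Cartan projection $\mu(\ga)$ of the approximating group element (this is exactly the geometric content of the Anosov property that was exploited for \ref{main1}). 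Therefore $\nu_\psi(B_\psi(\xi,r)) \asymp r^{s}$ with
$$
s = \frac{\psi(\mu(\ga))}{\psi^{\i}(\mu(\ga))},
$$
and the point is that this ratio is \emph{uniformly bounded away from $0$ and $1$} over the relevant group elements: it is $\leq 1$ because $\psi \leq 2\psi^{\i}$ fails in general, but in fact one uses $\psi^{\i} \le \psi$ is false too — rather, one argues via the limit cone that $\psi/\psi^{\i}$ extends to a continuous function on the projectivized limit cone $\P(\L_\theta)$, which is compact, hence attains a max $s<1$ and min $>0$; strict inequality $s<1$ is where Zariski density is used, guaranteeing $\L_\theta$ has non-empty interior and is not contained in a fixed point set of $\i$, so $\psi \neq \psi^{\i}$ somewhere on $\L_\theta$. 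This gives Ahlfors $s$-regularity with that single $s$.

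The second half — non-comparability with $\cal H_\psi^t$ for every $t>0$ — I would argue by contradiction. If $\nu_\psi$ were comparable to $\cal H_\psi^t$, then since $\nu_\psi$ is Ahlfors $s$-regular, a standard argument (mass distribution / Frostman) forces $t = s$ and $\cal H_\psi^s$ to be positive and finite; moreover Ahlfors $s$-regularity of $\nu_\psi$ already implies $\cal H_\psi^s$ is comparable to $\nu_\psi$ \emph{automatically} on any single ball, so the comparability would have to fail through non-uniformity of the constant. The mechanism: the local exponent of $\nu_\psi$ at $\xi$ in direction $\eta$ is governed by $\psi(\mu(\ga))/\psi^{\i}(\mu(\ga))$ where $\mu(\ga)/\|\mu(\ga)\|$ ranges over a \emph{dense} subset of $\P(\L_\theta)$ (limit points of orbit directions); since $\psi$ is non-symmetric and $\L_\theta$ has interior, this ratio is genuinely non-constant on $\P(\L_\theta)$, so there exist sequences of balls $B_\psi(\xi_i, r_i)$ and $B_\psi(\xi_i', r_i')$ with $\nu_\psi(B_\psi(\xi_i,r_i))/r_i^{s} \to 0$ along directions realizing values of the ratio other than $s$. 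More precisely, pick a direction $u \in \inte \L_\theta$ with $\psi(u)/\psi^{\i}(u) = s' \neq s$ (possible since $s$ is the max and the ratio is non-constant), take orbit points $\ga_i$ with $\mu(\ga_i)/\|\mu(\ga_i)\| \to u$, and look at the shadow balls they define: then $\nu_\psi \asymp e^{-\psi(\mu(\ga_i))}$ on a ball of $d_\psi$-radius $\asymp e^{-\psi^{\i}(\mu(\ga_i))}$, whose ratio to $r^{s}$ is $\asymp e^{(s - s')\psi^{\i}(\mu(\ga_i))} \to 0$ or $\infty$; meanwhile along the direction realizing $s$ the ratio stays bounded, so the two Ahlfors constants cannot be reconciled, contradicting comparability of $\nu_\psi$ with $\cal H_\psi^s$. (One must also dispatch $t \neq s$: if $t < s$ then $\cal H_\psi^t(\La_\theta) = \infty$ and if $t > s$ then $\cal H_\psi^t = 0$, by the Ahlfors $s$-regularity; both contradict comparability with a probability measure.)

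The main obstacle I anticipate is the first half: making precise, \emph{uniformly over all scales and basepoints}, the comparison $d_\psi(\xi,\eta) \asymp e^{-\psi^{\i}(\mu(\ga))}$ and $\nu_\psi(B_\psi(\xi,r)) \asymp e^{-\psi(\mu(\ga))}$ for a single well-chosen approximating $\ga \in \Ga$. This requires (i) the Anosov-group shadow lemma in the $\theta$-boundary with the premetric $d_\psi$ — presumably already established en route to Theorem \ref{main1}, so I would cite it; (ii) the fact that $\cal G(\xi,\eta)$ agrees up to a bounded additive error with $\mu(\ga)$ where $\ga$ is the group element whose shadow "separates" $\xi$ from $\eta$ at the scale $r$; and (iii) controlling how $\psi$ versus $\psi^{\i}$ evaluate on $\mu(\ga)$ purely through $\mu(\ga)$'s direction, which is the place the compactness of $\P(\L_\theta)$ and Zariski density (non-empty interior of $\L_\theta$, genuine asymmetry of $\psi$ on it) enter to extract the single exponent $s$ and the strict inequalities $0 < s < 1$. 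Once these are in hand, the regularity statement is immediate and the non-comparability is the short contradiction argument sketched above.
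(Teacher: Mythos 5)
The decisive gap is in your first half. Your own mechanism gives, for a ball at scale $r\asymp e^{-\bar\psi(\mu(\ga))}$ along a ray toward $\xi$, the estimate $\nu_\psi(B_\psi(\xi,r))\asymp e^{-\psi(\mu(\ga))}=r^{s(\xi,r)}$ with $s(\xi,r)=\psi(\mu(\ga))/\bar\psi(\mu(\ga))$ depending on the direction of $\mu(\ga)$. Taking the maximum (or minimum) of this ratio over $\P(\L_\theta)$ does not produce Ahlfors $s$-regularity: it only yields mismatched bounds $r^{s_{\max}}\lesssim \nu_\psi(B_\psi(\xi,r))\lesssim r^{s_{\min}}$, and $s_{\min}<s_{\max}$ precisely when $\psi$ is non-symmetric and $\Ga$ is Zariski dense, since $\psi/\bar\psi$ is non-constant on a cone with non-empty interior. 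So no single exponent works for $\nu_\psi$ itself, and indeed your first half is inconsistent with your second half: you correctly note that Ahlfors $s$-regularity of $\nu_\psi$ would force $\nu_\psi\asymp\cal H_\psi^s$, which is exactly what you must disprove. The paper avoids this by making the \emph{symmetrized} Patterson--Sullivan measure, not $\nu_\psi$, the regular one: by Proposition \ref{prop.bilip} the identity map $(\La_\theta,d_\psi)\to(\La_\theta,d_{\bar\psi})$ is bi-Lipschitz; $\delta_{\bar\psi}\bar\psi\in\T_\Ga$ is symmetric, so Theorem \ref{thm.exactdim} gives $\nu_{\delta_{\bar\psi}\bar\psi}(B_{\delta_{\bar\psi}\bar\psi}(\xi,r))\asymp r$, and the identity $B_{\delta_{\bar\psi}\bar\psi}(\xi,r^{\delta_{\bar\psi}})=B_{\bar\psi}(\xi,r)$ converts this into $\nu_{\delta_{\bar\psi}\bar\psi}(B_\psi(\xi,r))\asymp r^{\delta_{\bar\psi}}$ (Theorem \ref{thm.Aregular}). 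The exponent is thus $s=\delta_{\bar\psi}$, and $0<\delta_{\bar\psi}<1=\delta_\psi$ for non-symmetric $\psi\in\T_\Ga$ and Zariski dense $\Ga$ by Proposition \ref{prop.mincrit} (using strict concavity and uniqueness of the tangent direction, Theorem \ref{strict}); it is not the maximum of $\psi/\bar\psi$ on the projectivized limit cone.

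For the non-comparability, the paper's argument is also different and shorter than your contradiction scheme: since $\psi\neq\psi\circ\i$, the forms $\psi$ and $\bar\psi$ are not proportional; $d_\psi\asymp d_{\bar\psi}$ gives $\cal H_\psi^s\asymp\cal H_{\bar\psi}^s$ for every $s$; Theorem \ref{thm.hausmeas} identifies $\cal H_{\bar\psi}^{\delta_{\bar\psi}}$ with $\nu_{\delta_{\bar\psi}\bar\psi}$ up to a constant, so $\cal H_\psi^s(\La_\theta)\in\{0,\infty\}$ for $s\neq\delta_{\bar\psi}$, while for $s=\delta_{\bar\psi}$ one invokes the mutual singularity of the distinct Patterson--Sullivan measures $\nu_\psi$ and $\nu_{\delta_{\bar\psi}\bar\psi}$ (Theorem \ref{thm.uniquePS}). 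Your ``varying local exponent'' idea captures the true reason $\nu_\psi$ fails to be regular for $d_\psi$, but as written it rests on your flawed first half and on shadow--ball comparisons (Lemma \ref{lem.shadow}, Theorem \ref{thm.ballshadowball}) that the paper establishes only for symmetric forms, so you would in any case need the bi-Lipschitz reduction to $\bar\psi$ to run it. To repair the proposal: prove the regularity clause as the paper does, with witnessing measure $\nu_{\delta_{\bar\psi}\bar\psi}$ and exponent $\delta_{\bar\psi}$, and replace the second half by the singularity argument above.
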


\subsection*{Critical exponents and Hausdorff dimensions} 
Denote by $\L_\theta\subset \fa_{\theta}^+$ the $\theta$-limit cone of $\Ga$, which is the asymptotic cone of $p_{\theta}(\mu(\Ga))$.
For $\psi\in \fa_\theta^*$ which is positive on  $\L_{\theta}-\{0\}$, we set
 \be\label{dp00} \delta_\psi:=\delta_\psi(\Ga) = \limsup_{T \to \infty} \frac{\log \# \{ \ga \in \Ga : \psi(\mu(\ga)) \le T\}}{T}.\ee

The Hausdorff dimension of  $(\La_{\theta}, d_{\psi})$ is defined as:
$$\dim_{\psi} \La_{\theta}:=\inf\{s > 0: \cal H_{\psi}^{s}(\La_{\theta}) < \infty\}. $$
A natural question is whether $\dim_\psi \La_\theta$ is equal to $\delta_\psi$. 
\begin{theorem}\label{main2}
Let $\Ga$ be a non-elementary $\theta$-Anosov subgroup  of $G$.
   For any $\psi\in \fa_\theta^*$ which is positive on $\L_{\theta}-\{0\}$,
   we have 
   $$\dim_{\psi} \La_{\theta}= \delta_{\bar \psi} $$
    where $\bar \psi = \frac{\psi + \psi \circ \i}{2}$.
In particular, if $\psi$ is symmetric, then  $\dim_{\psi} \La_{\theta}= \delta_{\psi}$.
\end{theorem}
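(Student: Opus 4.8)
The plan is to reformulate the statement metrically, deduce the lower bound from the symmetric case already available, and isolate the remaining content of the upper bound as a single Frostman-type estimate. By homogeneity we may assume $\psi\in\T_\Ga$: replacing $\psi$ by $c\psi$ ($c>0$) replaces $d_\psi$ by $d_\psi^{\,c}$, so $\cal H^s_{c\psi}=\cal H^{cs}_\psi$ and $\dim_{c\psi}\La_\theta=c^{-1}\dim_\psi\La_\theta$, while $\delta_{c\bpsi}=c^{-1}\delta_{\bpsi}$; and since $\psi>0$ on $\L_\theta-\{0\}$ some positive multiple of $\psi$ lies in $\T_\Ga$. Although $d_\psi$ is only a premetric, $\cal H^s_\psi$ is built from the diameters $\diam_\psi U=\sup_{\xi,\eta\in U}d_\psi(\xi,\eta)$, and relabelling the (symmetric) index set gives $\diam_\psi U=\sup_{\xi,\eta\in U}\max\!\big(d_\psi(\xi,\eta),d_\psi(\eta,\xi)\big)$; since $\cal G(\eta,\xi)=\i\big(\cal G(\xi,\eta)\big)$, the symmetric function $\hat d_\psi(\xi,\eta):=\max\!\big(d_\psi(\xi,\eta),d_\psi(\eta,\xi)\big)=e^{-\min(\psi,\,\psi\circ\i)(\cal G(\xi,\eta))}$ satisfies $\cal H^s_\psi=\cal H^s_{\hat d_\psi}$, so $\dim_\psi\La_\theta=\dim(\La_\theta,\hat d_\psi)$. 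For the lower bound, the arithmetic--geometric mean inequality gives $d_{\bpsi}(\xi,\eta)=e^{-\frac12(\psi+\psi\circ\i)(\cal G(\xi,\eta))}=\sqrt{d_\psi(\xi,\eta)\,d_\psi(\eta,\xi)}\le\hat d_\psi(\xi,\eta)$, hence $\diam_{\bpsi}U\le\diam_\psi U$ for all $U$, hence $\cal H^s_{\bpsi}(\La_\theta)\le\cal H^s_\psi(\La_\theta)$, hence $\dim_\psi\La_\theta\ge\dim_{\bpsi}\La_\theta$. Since $\bpsi$ is symmetric and positive on $\L_\theta-\{0\}$ (as $\i$ preserves $\L_\theta$), writing $\bpsi_0:=\delta_{\bpsi}\,\bpsi$ for its tangent normalization, Theorem~\ref{main1} identifies $\nu_{\bpsi_0}$ with the finite nonzero Hausdorff measure $\cal H_{\bpsi_0}$ up to a constant, so $\dim_{\bpsi_0}\La_\theta=1$ and, by homogeneity, $\dim_{\bpsi}\La_\theta=\delta_{\bpsi}$. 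This gives $\dim_\psi\La_\theta\ge\delta_{\bpsi}$, which for symmetric $\psi$ (where $\bpsi=\psi$) already contains half of the last assertion.

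For the upper bound I must show $\cal H^s_\psi(\La_\theta)=0$ whenever $s>\delta_{\bpsi}$. A coarse tripod inequality for the $\fa$-valued Gromov product on pairs in general position makes $\hat d_\psi$ a quasi-metric on $\La_\theta$ inducing its topology, with some multiplicative constant $C\ge1$. Fix a small $r>0$ and a maximal $r$-separated subset $\{\eta_i\}$ of $(\La_\theta,\hat d_\psi)$; then the balls $\hat B_\psi(\eta_i,r)=\{\zeta:\hat d_\psi(\eta_i,\zeta)<r\}$ cover $\La_\theta$, each has $\hat d_\psi$-diameter at most $Cr$, and the $\hat B_\psi(\eta_i,c r)$ are pairwise disjoint for a suitable $c=c(C)>0$. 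Granting the Frostman-type bound
\begin{equation}\label{pqfrost}
\nu_{\bpsi_0}\big(\hat B_\psi(\eta,\rho)\big)\ \gtrsim\ \rho^{\,\delta_{\bpsi}}\qquad\text{uniformly in }\eta\in\La_\theta,\ \rho>0,
\end{equation}
disjointness forces $\#\{\eta_i\}\lesssim r^{-\delta_{\bpsi}}$, whence $\cal H^s_\psi(\La_\theta)\le\liminf_{r\to0}\big(\#\{\eta_i\}\big)(Cr)^s\lesssim\liminf_{r\to0}r^{\,s-\delta_{\bpsi}}=0$ for $s>\delta_{\bpsi}$. Thus $\dim_\psi\La_\theta\le\delta_{\bpsi}$, and together with the lower bound $\dim_\psi\La_\theta=\delta_{\bpsi}$; the last assertion follows since $\psi$ symmetric forces $\bpsi=\psi$.

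The main obstacle is precisely \eqref{pqfrost}. As $\min(\psi,\psi\circ\i)\le\bpsi$, one has $\hat B_\psi(\eta,\rho)\subseteq B_{\bpsi}(\eta,\rho)$, and Theorem~\ref{main1} together with homogeneity gives the matching upper estimate $\nu_{\bpsi_0}\big(B_{\bpsi}(\eta,\rho)\big)\asymp\rho^{\,\delta_{\bpsi}}$; so the content of \eqref{pqfrost} is that the \emph{symmetric} ball already carries a definite proportion of the mass of the $\bpsi$-ball. The mechanism I would use is that all of the $\fa_\theta^*$-data is $\i$-symmetric: $\L_\theta$, the growth indicator $\psi_\Ga^\theta$, and the tangency locus $\{u\in\L_\theta:\psi_\Ga^\theta(u)=\bpsi_0(u)\}$ are $\i$-stable, and the latter, being an $\i$-stable convex cone, contains an $\i$-fixed direction $u_0$, at which $\psi(u_0)=(\psi\circ\i)(u_0)=\bpsi(u_0)$ and $\delta_{\bpsi}=\psi_\Ga^\theta(u_0)/\bpsi(u_0)$. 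Near the direction $u_0$ the premetrics $d_\psi,d_{\psi\circ\i},d_{\bpsi}$ agree to first order, so $\hat B_\psi(\eta,\rho)$ and $B_{\bpsi}(\eta,\rho)$ differ only in the ``unbalanced'' directions of $\L_\theta$ where $\psi\neq\psi\circ\i$; and --- this is the quantitative sharpening of (the proof of) Theorem~\ref{main1} that the argument needs --- as $\rho\to0$ the $\nu_{\bpsi_0}$-mass of $B_{\bpsi}(\eta,\rho)$ concentrates in the directions where $\psi_\Ga^\theta$ meets $\bpsi_0$, i.e.\ near $u_0$. Hence deleting the unbalanced part of $B_{\bpsi}(\eta,\rho)$ costs only a lower-order fraction of the mass, yielding \eqref{pqfrost}. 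Making this directional concentration precise --- presumably via a refined shadow lemma controlling $d_\psi$-balls in terms of the location of $\mu(\ga)$ within $\L_\theta$ --- is the crux.
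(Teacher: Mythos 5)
Your reduction and lower bound are fine: after normalizing by homogeneity, observing that $\cal H^s_\psi$ only sees the symmetrized premetric $\hat d_\psi(\xi,\eta)=\max\big(d_\psi(\xi,\eta),d_\psi(\eta,\xi)\big)$, and noting the exact identity $d_{\bar\psi}(\xi,\eta)=\sqrt{d_\psi(\xi,\eta)\,d_\psi(\eta,\xi)}\le \hat d_\psi(\xi,\eta)$, the symmetric case indeed yields $\dim_\psi\La_\theta\ge\delta_{\bar\psi}$; this half is correct and even a bit more elementary than the paper's argument. The gap is the upper bound: everything there rests on your Frostman-type estimate $\nu_{\delta_{\bar\psi}\bar\psi}\big(\hat B_\psi(\eta,\rho)\big)\gtrsim\rho^{\delta_{\bar\psi}}$, which you do not prove. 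The "directional concentration" mechanism you sketch (mass of $B_{\bar\psi}(\eta,\rho)$ concentrating near the $\i$-fixed tangent direction, the premetrics agreeing "to first order" there, a refined shadow lemma keeping track of the position of $\mu(\ga)$ inside $\L_\theta$) is never formulated quantitatively, is not a consequence of Theorem \ref{thm.exactdim} or Theorem \ref{thm.hausmeas} as stated, and you yourself flag it as the unresolved crux. As written, the proof is incomplete.

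Moreover, the missing input is far simpler than what you are trying to construct. Lemma \ref{lem.gromovsym} of the paper shows that the Gromov product is almost symmetric: $\|\cal G^{\theta}(\xi,\eta)-\cal G^{\theta}(\eta,\xi)\|<2C_1$ uniformly on $\La_\theta$, because $\cal G^{\theta}(\xi,\eta)$ lies within bounded distance of the $\i$-invariant vector $\tfrac12\big(\mu_\theta(\ga_{\xi,\eta})+\i(\mu_\theta(\ga_{\xi,\eta}))\big)$. Hence $|\psi(\cal G^{\theta}(\xi,\eta))-\bar\psi(\cal G^{\theta}(\xi,\eta))|\le\|\psi\|C_1$, so $d_\psi$, $\hat d_\psi$ and $d_{\bar\psi}$ are all comparable up to a uniform multiplicative constant (Proposition \ref{prop.bilip}). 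This gives your Frostman bound at once, since $\hat B_\psi(\eta,\rho)\supset B_{\bar\psi}(\eta,\rho/R)$ and $\nu_{\delta_{\bar\psi}\bar\psi}$ is Ahlfors regular; in fact it makes the separated-set covering argument unnecessary, because bi-Lipschitz equivalence of premetrics gives $\dim_\psi\La_\theta=\dim_{\bar\psi}\La_\theta$ directly, and then $\cal H^{\delta_{\bar\psi}}_{\bar\psi}(\La_\theta)\in(0,\infty)$ from Theorem \ref{thm.hausmeas} finishes the proof --- which is exactly how the paper proves this statement in Corollary \ref{cor.crithdim}. So: either establish a uniform bound on $\cal G^{\theta}(\xi,\eta)-\cal G^{\theta}(\eta,\xi)$ (or cite Lemma \ref{lem.gromovsym}), after which your upper bound collapses to two lines, or accept that the concentration route you propose is an unproven and unnecessary detour.
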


\begin{remark} \begin{itemize}
    \item We remark that for $\psi$ non-symmetric,
$ \dim_{\psi} \La_{\theta}$ is not equal to $\delta_\psi $ in general (Proposition \ref{prop.mincrit}).
\item As mentioned above, if $\psi$ is symmetric and
the gradient of $\psi$ belongs to the interior of $\fa_{\theta}^+$, then Theorem \ref{main2} is due to Dey-Kapovich \cite[Theorem A(v)]{DK_patterson}. When $G$ is of rank-one, Theorem \ref{main2} is due to Patterson, Sullivan (\cite{Patterson1976limit}, \cite{Sullivan1979density}) and Corlette \cite{Corlette_Inv}.
\end{itemize}
\end{remark}

Together with a work of Bridgeman-Canary-Labourie-Sambarino \cite[Proposition 8.1]{BCLS_gafa}, Theorem \ref{main2} implies that for any $\psi$ non-negative on $\fa_\theta^+$,
$\dim_\psi \La_\theta$ depends real-analytically on $\theta$-Anosov representations (Corollary \ref{analytic}). We describe one concrete example as follows.

\subsection*{$(p,q)$-Hausdorff dimension and Teichm\"uller space} Let $\Sigma$ be a torsion-free uniform lattice of $\PSL_2(\br)$, and let $\op{Teich}(\Sigma)$
be the Teichm\"uller space: $$\op{Teich}(\Sigma)=\left\{\sigma:\Sigma\to \PSL_2(\br): \begin{matrix} \text{a discrete faithful representation}
\end{matrix}\right\} / \sim$$
where the equivalence relation is given by conjugations by elements of $\op{PGL}_2(\br)$. 
It is well-known that $\op{Teich}(\Sigma)\simeq \br^{6g-6}$ where
 $g$ is the genus of the surface $\Sigma \ba \H_\br^2$.
For $\sigma \in \op{Teich}(\Sigma)$, denote by $\La_\sigma\subset \S^1\times \S^1$ the limit set of the self-joining subgroup
$(\id\times \sigma)(\Sigma)=\{(\ga, \sigma(\ga)): \ga \in \Sigma\}$, which is well-defined up to conjugations. The  Hausdorff dimension of $\La_\sigma$ with respect to a Riemannian metric on $\S^1\times \S^1$ is equal to $1$ for any $\sigma\in \op{Teich}(\Sigma)$ \cite[Theorem 1.1]{KMO_HD}.
For any pair $(p,q)$ of positive real numbers, consider the premetric on $\S^1\times \S^1$ given by
$$d_{p,q} ( \xi, \eta) = d_{\S^1}(\xi_1, \eta_1)^p d_{\S^1}(\xi_2, \eta_2)^q$$
for any $\xi=(\xi_1, \xi_2)$ and $\eta= (\eta_1, \eta_2)$ in $\S^1\times \S^1$,
where $d_{\S^1}$ is a Riemannian metric on $\S^1$.
For a subset $S \subset \S^1 \times \S^1$, denote by $\dim_{p,q} S $ the Hausdorff dimension of $S$ with respect to $d_{p, q}$. 
Note that on the diagonal of $\S^1\times \S^1$, $d_{p,q}$ is the $(p+q)$-th power of $d_{\S^1}$ and hence $$\dim_{p,q} \La_{\id}=\frac{1}{p + q}.$$

For each $\sigma \in \op{Teich}(\Sigma)$, denote by $\delta_{p, q}(\sigma)$ the critical exponent of the Poincar\'e series
$s\mapsto \sum_{\ga \in \Sigma} e ^{ - s(pd_{\bH_\br^2}(o, \ga o) + q d_{\bH_\br^2}(o, \sigma(\ga) o))}$. 

\begin{Cor} \label{teich} Let $p,q > 0$.
\begin{enumerate}
    \item 
For any $\sigma \in \op{Teich}(\Sigma)$,
we have $$\dim_{p, q}\La_{\sigma} =\delta_{p,q}(\sigma);$$
\item For any $\sigma \in \op{Teich}(\Sigma)$,
we have $$\dim_{p, q} \La_{\sigma} \le \frac{1}{p + q}$$ and the equality holds if and only if $\sigma = \id$;
\item The map $$\sigma \mapsto \dim_{p, q}\La_{\sigma} $$ is a real-analytic function on $\op{Teich}(\Sigma)$. 
\end{enumerate}
\end{Cor}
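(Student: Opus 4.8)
The plan is to realize each $\sigma \in \op{Teich}(\Sigma)$ as a $\theta$-Anosov representation and then read off all three statements from Theorems \ref{main1}, \ref{main2}, and the analyticity input of Bridgeman-Canary-Labourie-Sambarino. Concretely, the self-joining $(\id \times \sigma)(\Sigma) < \PSL_2(\br) \times \PSL_2(\br)$ is a $\Pi$-Anosov (hence $\theta$-Anosov for the unique relevant $\theta = \Pi$) subgroup of $G = \PSL_2(\br) \times \PSL_2(\br)$, with $\fa = \br^2$, and the two simple roots are the coordinate functionals (up to normalization). Under this identification the Cartan projection of $(\ga, \sigma(\ga))$ is $\big(d_{\bH_\br^2}(o,\ga o),\, d_{\bH_\br^2}(o,\sigma(\ga)o)\big)$ up to bounded error, the $\fa$-valued Gromov product on $\S^1 \times \S^1$ splits as the pair of one-dimensional Gromov products on each $\S^1$ factor, and the premetric $d_\psi$ of \eqref{dpsi} for $\psi = (p,q) \in (\fa^2)^*$ is precisely (comparable to) the premetric $d_{p,q}$, since $d_{\S^1}(\xi_i,\eta_i)$ is comparable to $e^{-\cal G_{\S^1}(\xi_i,\eta_i)}$. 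With these dictionaries in place, $\dim_{p,q}\La_\sigma = \dim_\psi \La_\theta$ for $\psi = (p,q)$, and $\delta_{p,q}(\sigma) = \delta_\psi(\Ga)$ by definition of the respective Poincar\'e series.

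For part (1), I would apply Theorem \ref{main2}: $\dim_\psi \La_\theta = \delta_{\bar\psi}$ where $\bar\psi = \tfrac{\psi + \psi\circ\i}{2}$. Since $\PSL_2(\br)$ has trivial opposition involution, the opposition involution $\i$ of $\fa = \br^2$ is trivial (it is a product of rank-one factors of type $A_1$), so $\bar\psi = \psi$ and $\dim_{p,q}\La_\sigma = \delta_{p,q}(\sigma)$. For part (2), I would use the convexity/concavity of the map $\psi \mapsto \delta_\psi$ together with the rigidity case of equality. The inequality $\delta_{p,q}(\sigma) \le \tfrac{1}{p+q}$ should follow because $\delta_{p+q}$ along the diagonal direction is controlled by the critical exponent of $\Sigma$ itself — which is $1$, since $\Sigma$ is a uniform lattice — combined with the fact that $(\id\times\sigma)(\Sigma)$ has the same abstract growth as $\Sigma$ while the Cartan projections are at least as large in the $\sigma$-factor: more precisely, one exploits that $pd_{\bH}(o,\ga o) + q d_{\bH}(o,\sigma(\ga)o) \ge (p+q)\sqrt{d_{\bH}(o,\ga o)\,d_{\bH}(o,\sigma(\ga)o)}$ is not quite what's needed; instead I would compare with the symmetric form and invoke that the self-joining growth indicator lies below the diagonal unless $\sigma$ is conjugate to $\id$ (a Manhattan-curve / correlation-rigidity statement, cf. Burger, and the strict convexity of the Manhattan curve away from the diagonal). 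The equality case $\sigma = \id$ gives $\La_{\id}$ the diagonal, where $d_{p,q}$ is the $(p+q)$-power of $d_{\S^1}$, so $\dim_{p,q}\La_{\id} = \tfrac{1}{p+q}$ as already noted in the text.

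For part (3), I would invoke \cite[Proposition 8.1]{BCLS_gafa}: the map sending an Anosov representation to its critical exponent $\delta_\psi$ (for a fixed linear functional $\psi$ nonnegative on $\fa_\theta^+$) is analytic on the space of Anosov representations, since the relevant entropy/pressure is an analytic function of the representation via thermodynamic formalism. Applying this with $\psi = (p,q)$ and precomposing with the analytic embedding $\op{Teich}(\Sigma) \hookrightarrow \mathfrak R_{\op{disc}}(\Sigma, \PSL_2(\br)\times\PSL_2(\br))$, $\sigma \mapsto (\id\times\sigma)$, gives that $\sigma \mapsto \delta_{p,q}(\sigma) = \dim_{p,q}\La_\sigma$ is analytic. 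The main obstacle I anticipate is part (2): identifying the precise rigidity mechanism forcing the strict inequality for $\sigma \ne \id$ — this requires knowing that the self-joining limit cone / growth indicator of $(\id\times\sigma)(\Sigma)$ touches the diagonal direction only when $\sigma = \id$, which is where a correlation-rigidity or Manhattan-curve argument (in the spirit of the text's references and \cite{KMO_HD}) must be brought in; parts (1) and (3) are essentially bookkeeping once the Anosov dictionary is set up.
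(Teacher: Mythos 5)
Your overall strategy coincides with the paper's: you realize $(\id\times\sigma)(\Sigma)$ as a ($\Pi$-)Anosov subgroup of $\PSL_2(\br)\times\PSL_2(\br)$, identify $d_{p,q}$ with the conformal premetric $d_\Psi$ for $\Psi(u_1,u_2)=pu_1+qu_2$ (using that the visual metric on each circle factor is $e^{-\cal G}$ up to bi-Lipschitz equivalence), observe that the opposition involution is trivial so $\Psi$ is symmetric, and then get (1) from Theorem \ref{main2} and (3) from the Bridgeman--Canary--Labourie--Sambarino analyticity of $\sigma\mapsto\delta_\Psi$ precomposed with the analytic family $\sigma\mapsto\id\times\sigma$. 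This is exactly how the paper proves it (Corollary \ref{analytic23}, specialized to $n=2$), so parts (1) and (3) are fine.

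Part (2), however, is not actually proved in your proposal. The inequality you first write down (the AM--GM-type comparison, which you yourself retract) would not yield the bound, and the statement you fall back on --- that the growth indicator of the self-joining lies strictly below the diagonal form unless $\sigma=\id$ --- is left as an ``anticipated obstacle'' rather than being stated precisely or deduced. The paper closes this with no new argument at all: (2) is an immediate consequence of (1) together with the Bishop--Steger/Burger rigidity theorem (Theorem \ref{bur}), namely $\delta_{p,q}(\sigma)\le\left(\frac{p}{\delta_\Sigma}+\frac{q}{\delta_{\sigma(\Sigma)}}\right)^{-1}$ with equality if and only if $\sigma=\id$; since $\Sigma$ and $\sigma(\Sigma)$ are uniform lattices, $\delta_\Sigma=\delta_{\sigma(\Sigma)}=1$ and the right-hand side is exactly $\frac{1}{p+q}$. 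The paper in turn deduces Theorem \ref{bur} from \cite[Theorem 1(a)]{Burger_manhattan} (and \cite[Theorem 2]{BS_rigidity} for $n=2$, which is the case relevant here) by rescaling the coordinate forms $\alpha_i$ to $\delta_i\alpha_i$ so each has critical exponent one and writing $p\alpha_1+q\alpha_2$ as a positive multiple of a convex combination of the rescaled forms; Burger's theorem says that convex combination has critical exponent at most one, with equality only when $\sigma=\id$. So your instinct to invoke a Manhattan-curve/correlation-rigidity input is correct, but to make the proof complete you must replace the abandoned inequality and the vague appeal by this precise statement and the observation that the lattice normalization makes both critical exponents equal to $1$; the equality case $\sigma=\id$ is then, as you say, the direct computation on the diagonal.
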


Part (2) is an immediate consequence of (1) by the rigidity theorem on
$\delta_{p,q}(\sigma)$, due to Bishop and Steger \cite[Theorem 2]{BS_rigidity} and to Burger \cite[Theorem 1(a)]{Burger_manhattan}.
See Corollary \ref{analytic23} for a more general version for
 convex cocompact representations. If we denote by $f=f_\sigma$ the $\sigma$-equivariant homeomorphism $\S^1\to \S^1$, then
$\La_\sigma=\{(x, f(x)):x\in \S^1\}$
and $\dim_{p,q} \La_\sigma$ can also be understood as the Hausdorff dimension of $\La_\Sigma=\S^1$ with respect to the premetric $d_{\sigma, p,q} (x,y)= d_{\S^1}(x,y)^p d_{\S^1}(f(x), f(y))^q$, $x,y\in \S^1$.

\subsection*{Hausdorff dimension of $\La_\theta$ with respect to a Riemannian metric} 
We denote by $\dim \La_\theta$ the Hausdorff dimension of $\La_\theta$ with respect to a {\it Riemannian metric} on $\F_\theta$; since all Riemannian metrics on $\F_\theta$ are Lipschitz equivalent to each other, this is well-defined. With the exception of $G=\so(n,1)$, $\dim \La$ is not in general equal to the critical exponent of $\Ga$ even in rank one case. See \cite{Dufloux_hausdorff}
for a discussion on this for the case of $G=\op{SU}(n,1)$.

From Theorem \ref{main2}, we derive an estimate on $\dim \La_\theta$ in terms of  critical exponents.
Let $\chi_{\alpha}$ denote the Tits weight of $G$ associated to $\alpha\in \Pi$ as given in \eqref{ttt2}. When $G$ is split over $\br$, $\chi_\alpha$ is simply the fundamental weight associated to $\alpha$. We prove: 

\begin{theorem}\label{m4}
For any $\theta$-Anosov subgroup $\Ga$ of $G$, we have
  $$  \max_{\alpha \in \theta} \delta_{\chi_{\alpha} + \chi_{\i(\alpha)}} \le \dim \La_{\theta} \le   \max_{\alpha \in \theta} \delta_{\alpha} .$$
  Moreover, both the upper and lower bounds are attained by some Anosov subgroups. 
  \end{theorem}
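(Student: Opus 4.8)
\noindent\emph{Proof proposal.} The plan is to trap $\La_\theta$, equipped with a Riemannian metric, between the premetrics $d_\psi$ of \eqref{dpsi}, and then invoke Theorem~\ref{main2}. (We may assume $\i(\theta)=\theta$, enlarging $\theta$ to $\theta\cup\i(\theta)$ if necessary; this changes neither side, since $\delta_{\i(\alpha)}=\delta_\alpha$ and $\delta_{\chi_{\i(\alpha)}+\chi_\alpha}=\delta_{\chi_\alpha+\chi_{\i(\alpha)}}$, and it does not change $\dim\La_\theta$ because the $\Ga$-equivariant projection $\La_{\theta\cup\i(\theta)}\to\La_\theta$ is bi-Lipschitz on limit sets of Anosov groups.) The crux is a comparison of the ambient Riemannian distance with the $\fa$-valued Gromov product. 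Embed $\F_\theta$ into $\prod_{\alpha\in\theta}\P(V_{\chi_\alpha})$ by the Plücker maps $\xi\mapsto\xi_\alpha=[v_\xi]$ into the projectivized Tits representations $V_{\chi_\alpha}$, whose extreme weights are $\chi_\alpha$ and $-\chi_{\i(\alpha)}$; this is a closed, hence bi-Lipschitz, embedding, so $d_{\F_\theta}(\xi,\eta)\asymp\max_{\alpha\in\theta}d_{\P(V_{\chi_\alpha})}(\xi_\alpha,\eta_\alpha)$ for $\xi,\eta\in\La_\theta$. The $\theta$-Anosov property puts distinct limit points in uniformly general position, which makes the relevant Cartan-decomposition estimates inside each $V_{\chi_\alpha}$ uniform; carrying them out — pairing $v_\xi$ against the lowest-weight ($-\chi_{\i(\alpha)}$) covector attached to $\eta$ — yields, with constants depending only on $\Ga$,
$$d_{\P(V_{\chi_\alpha})}(\xi_\alpha,\eta_\alpha)\ \gtrsim\ e^{-(\chi_\alpha+\chi_{\i(\alpha)})(\cal G(\xi,\eta))}\qquad(\alpha\in\theta,\ \xi,\eta\in\La_\theta).$$

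\noindent\emph{Lower bound.} This estimate reads $d_{\F_\theta}\gtrsim d_{\chi_\alpha+\chi_{\i(\alpha)}}$ on $\La_\theta\times\La_\theta$ in the notation \eqref{dpsi}. Hausdorff dimension is monotone under domination of premetrics, so $\dim\La_\theta\ge\dim_{\chi_\alpha+\chi_{\i(\alpha)}}\La_\theta$ for every $\alpha\in\theta$. Since $\chi_\alpha+\chi_{\i(\alpha)}$ is symmetric and positive on $\L_\theta-\{0\}$ (each $\chi_\beta$, $\beta\in\theta$, being positive on $\fa_\theta^+-\{0\}$), Theorem~\ref{main2} gives $\dim_{\chi_\alpha+\chi_{\i(\alpha)}}\La_\theta=\delta_{\chi_\alpha+\chi_{\i(\alpha)}}$. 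Maximizing over $\alpha\in\theta$ yields the lower bound.

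\noindent\emph{Upper bound.} Here I would argue directly by an economical covering of $\La_\theta$ by shadows (the displayed estimate, because of the maximum over $\alpha$, does not conveniently give the sharp bound). For $\ga\in\Ga$ with $|\ga|$ large and $\alpha\in\theta$, one has $\alpha(\mu(\ga))>0$ and the second weight of $V_{\chi_\alpha}$ is $\chi_\alpha-\alpha$, so the top singular value of $\ga$ on $V_{\chi_\alpha}$ is simple with gap $e^{\alpha(\mu(\ga))}$, and $\ga$ contracts a fixed neighborhood of its attracting point in $\P(V_{\chi_\alpha})$ by a factor $\asymp e^{-\alpha(\mu(\ga))}$. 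Hence the shadow $O_\ga\subset\F_\theta$ of $\ga$ — which, by the Anosov/quasi-geodesic property, contains the part of $\La_\theta$ seen past $\ga o$ — satisfies $\diam(O_\ga,d_{\F_\theta})\lesssim e^{-\min_{\alpha\in\theta}\alpha(\mu(\ga))}$. For each $T>0$, the shadows of the $\ga$ with $\min_{\alpha\in\theta}\alpha(\mu(\ga))\in[T,T+c]$ cover $\La_\theta$, have $d_{\F_\theta}$-diameter $\lesssim e^{-T}$, and number at most $\#\{\ga:\min_{\alpha\in\theta}\alpha(\mu(\ga))\le T+c\}\le\sum_{\alpha\in\theta}\#\{\ga:\alpha(\mu(\ga))\le T+c\}\le Ce^{(\delta^*+\e)T}$ for any $\e>0$, where $\delta^*=\max_{\alpha\in\theta}\delta_\alpha$. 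Therefore $\sum(\diam O_\ga)^s\lesssim e^{((\delta^*+\e)-s)T}\to 0$ as $T\to\infty$ when $s>\delta^*+\e$, so the $s$-dimensional Hausdorff measure of $\La_\theta$ (for a Riemannian metric) vanishes for such $s$; letting $\e\to 0$ gives $\dim\La_\theta\le\max_{\alpha\in\theta}\delta_\alpha$.

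\noindent\emph{Sharpness and the main obstacle.} Both bounds are already attained inside $\SL_3(\br)$, where $\i$ interchanges the two simple roots and the bounds are distinct. Taking a cocompact lattice $\Sigma<\SL_2(\br)$ and embedding it by the principal (symmetric-square) representation, its limit set in $\F_\Pi$ is a smooth circle, so $\dim\La_\Pi=1$, while $\alpha_i(\mu(\ga))\asymp d_{\bH^2}(o,\ga o)$ forces $\max_i\delta_{\alpha_i}=1$: the upper bound is attained. Embedding the same $\Sigma$ reducibly, $g\mapsto g\oplus 1$, again gives a smooth circle, $\dim\La_\Pi=1$, but now $(\chi_{\alpha_1}+\chi_{\i(\alpha_1)})(\mu(\ga))\asymp d_{\bH^2}(o,\ga o)$, so $\delta_{\chi_{\alpha_1}+\chi_{\i(\alpha_1)}}=1$: the lower bound is attained. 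The main difficulty is the geometric comparison lemma — establishing the precise lower estimate for $d_{\P(V_{\chi_\alpha})}$ with the \emph{symmetric} combination $\chi_\alpha+\chi_{\i(\alpha)}$, together with the matching contraction estimate $\asymp e^{-\min_\alpha\alpha(\mu(\ga))}$ for shadows — both of which require combining the weight theory of the Tits representations with the uniform transversality and $\theta$-regularity furnished by the Anosov hypothesis, normalized consistently with the definition of $\cal G$.
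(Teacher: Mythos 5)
Your proposal follows essentially the same route as the paper: the lower bound via the embedding $\F_\theta\to\prod_{\alpha\in\theta}\P(V_\alpha)$ by Tits representations, a comparison of the Riemannian metric with the premetrics $d_\psi$ (the paper's Lemma \ref{lem.riembdr} and Lemma \ref{RG}), and then Theorem \ref{main2}; the upper bound via a cover of $\La_\theta$ by sets of Riemannian diameter $\lesssim e^{-\min_{\alpha\in\theta}\alpha(\mu(\ga))}$ together with the identity $\delta_{\min_{\alpha\in\theta}\alpha}=\max_{\alpha\in\theta}\delta_\alpha$ (Proposition \ref{prop.psllower}); and essentially the same sharpness examples. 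The two estimates you single out as the main obstacle are exactly what the paper supplies: the comparison lemma follows from Quint's formula $2\chi_\alpha(\cal G(\xi,\eta))=-\log\sin\angle(gV_\alpha^+,gV_\alpha^<)$ (Lemma \ref{lem.gromovangle}) together with the containment $w_0V_\alpha^+\subset V_\alpha^<$, with Lemma \ref{lem.gromovsym} converting $2\chi_\alpha$ into $\chi_\alpha+\chi_{\i(\alpha)}$ on the limit set, while the contraction/diameter estimate for the covering sets is imported from Pozzetti--Sambarino--Wienhard \cite{PSW_Lipschitz} (the paper covers by those balls indexed by word-spheres $|\ga|=k$ rather than by shadows in annuli of $\min_\alpha\alpha(\mu(\cdot))$, an immaterial difference). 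One caveat: your opening reduction to $\i(\theta)=\theta$ rests on the claim that the projection $\La_{\theta\cup\i(\theta)}\to\La_\theta$ is bi-Lipschitz, which is not justified — only the Lipschitz direction, hence $\dim\La_\theta\le\dim\La_{\theta\cup\i(\theta)}$, is clear, and in general the inverse identification is merely H\"older (e.g. the point-to-supporting-line map on the boundary of a divisible strictly convex domain in $\P(\R^3)$ is not Lipschitz), so this reduction as stated would lose the lower bound. Fortunately it is unnecessary: your displayed estimate only involves the factors $\P(V_{\chi_\alpha})$ for $\alpha\in\theta$, and the paper handles non-symmetric $\theta$ by defining $d_\psi$ on $\La_\theta$ through the canonical homeomorphism (Remark \ref{nons}), so that Lemma \ref{RG} and Theorem \ref{main2} apply directly on $\F_\theta$.
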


For $G=\PSL_n(\br)$, we have the set of simple roots given by
$$\alpha_{k}(\op{diag}(a_1, \cdots, a_{n}))=a_{k}-a_{k+1}, \quad 1\le k\le n-1.$$ 
When $G=\PSL_n(\br)$ and $\theta=\{\alpha_1\}$, the lower bound in Theorem \ref{m4}
 was  obtained by  Dey-Kapovich \cite{DK_patterson}, and the upper bound by
 Pozzetti-Sambarino-Wienhard \cite{PSW_Lipschitz} (see also \cite{CZZ_entropy}). For some  special class of Anosov subgroups, much sharper bounds are known, see 
(\cite{GMT_Hausdorff}, \cite{PSW_conformality}, \cite{PSW_Lipschitz}, \cite{KMO_HD}).
Recently, Li-Pan-Xu proved that for $G=\PSL_3(\br)$,
$\dim\La_{\alpha_1} $ coincides with the affinity exponent of $\Ga$ \cite{LPX_dimension}. See also (\cite{LO_dichotomy}, \cite{KOW_indicators}) which show that $\La_\theta$ has Lebesgue measure zero in higher rank and \cite{Ledrappier-Lessa}  which shows that $\dim \La_\theta$ has a positive co-dimension for all Zariski dense Anosov subgroups of 
$\PSL_n(\br)$, $n\ge 3$. 

The novelty of Theorem \ref{m4} is that it applies to {\it all} $\theta$-Anosov subgroups of any semisimple real algebraic group.
Since both upper and lower bounds are realized by some Anosov subgroups, Theorem \ref{m4} cannot be improved in this generality. 
A {\em Hitchin subgroup} of $\PSL_n(\br)$ is the image of a representation $\pi: \Sigma \to \PSL_n(\br)$ of a uniform lattice $\Sigma<\PSL_2(\br)$ belonging to the same connected component as $\iota|_\Sigma$ in the character variety $\op{Hom}(\Sigma, \PSL_n(\br))/\sim$ where 
$\iota$ is the irreducible representation of  $\PSL_2(\br)$ into $\PSL_n(\br)$ and the equivalence is given by conjugations. Hitchin subgroups are $\Pi$-Anosov, as was shown by Labourie \cite[Theorem 1.4]{Labourie2006anosov}.
For Hitchin subgroups of $\PSL_n(\br)$,  we have
$\dim \La_{\theta} = 1$ by (\cite{Labourie2006anosov}, \cite[Proposition 1.5]{CZZ_entropy}) and $\delta_{\alpha} = 1$ for all $\alpha \in \Pi$ by \cite[Theorem B]{PS_eigenvalues}. Hence 
$$\dim \La_{\theta} = 1 = \delta_{\alpha}\quad\text{ for all $\alpha \in \theta$.}$$

The upper bound in Theorem \ref{m4} is also obtained for Anosov subgroups of the product of $\so(n, 1)$'s \cite{KMO_HD}. For the lower bound, 
let $\G$ be the image of a uniform lattice $\Sigma$ of $\PSL_2(\br)$ under the embedding $\PSL_2(\R) \hookrightarrow \begin{pmatrix}
    \PSL_2(\R) & 0 \\
    0 & I_{n-2}
\end{pmatrix} < \PSL_n(\R)$ where $I_{n-2}$ is the $(n-2) \times (n-2)$ identity matrix.
Then $\Ga$ is $\{\alpha_1\}$-Anosov. On one hand, the limit set $\La_{\alpha_1}$ of $\Ga$ in $\F_{\alpha_1} = \P(\R^n)$ is the projective line, and hence $\dim \La_{\alpha_1} = 1$. On the other hand, since 
$(\chi_{\alpha_1} + \chi_{\i(\alpha_1)})(\op{diag}(a_1, \cdots, a_n)) = a_1 - a_n$, we have
$$\delta_{\chi_{\alpha_1} + \chi_{\i(\alpha_1)}} = \delta_{\Sigma} = 1=\dim \La_{\alpha_1}.$$ Therefore the lower bound in Theorem \ref{m4} is achieved for this example.

\subsection*{Growth indicator bounds and $L^2$-spectral properties} The growth indicator $\psi_{\Ga}=\psi_\Ga^{\Pi}:\fa\to \br \cup\{-\infty\}$
is a higher rank version of the critical exponent of $\Ga$ that captures the growth rate of $\mu(\Ga)$
in each direction of $\fa$ (Definition \ref{def.growthindicator}).  This was introduced by Quint \cite{Quint2002divergence}.
Denote by $\rho $  the half-sum of all positive roots of $(\fg, \fa)$ counted with multiplicity.
Then for any discrete subgroup $\Ga<G$,  $\psi_\Ga \le 2 \rho$, and if $G$ is simple and has higher rank and $\text{Vol}(\Ga\ba G)=\infty$, then Quint proved a gap theorem  that $\psi_\Ga \le 2\rho-\Theta$ where $\Theta$ denotes the half-sum of all roots
in a maximal strongly orthogonal system of $(\frak g, \frak a)$ (\cite{quint_ka}, \cite{oh}, \cite[Theorem 7.1]{LO_dichotomy}).
We obtain the following bound on $\psi_\Ga$ for Anosov subgroups:

\begin{corollary} \label{maincor5} For any $\theta$-Anosov subgroup $\Ga$ of $G$,
we have
\be\label{p100} 
    \psi_{\Ga} \le \dim \La_{\theta} \cdot  \min_{\alpha \in \theta} (\chi_{\alpha} + \chi_{\i(\alpha)}) \quad\text{on }\fa. 
\ee 

\end{corollary}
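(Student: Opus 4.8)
The plan is to deduce \eqref{p100} directly from the lower bound in Theorem \ref{m4}, combined with Quint's description of the growth indicator in terms of critical exponents. Recall from \cite{Quint2002divergence} that for any $\phi\in\fa^*$ which is positive on $\L\setminus\{0\}$, where $\L\subset\fa^+$ denotes the limit cone of $\Ga$, one has
\[
\delta_\phi=\sup_{v\in\L\setminus\{0\}}\frac{\psi_\Ga(v)}{\phi(v)};
\]
since $\psi_\Ga\equiv-\infty$ on $\fa\setminus\L$ and $\psi_\Ga(0)=\phi(0)=0$, this is equivalent to saying that $\psi_\Ga\le\phi$ on $\fa$ if and only if $\delta_\phi\le 1$. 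First I would check that each form $\chi_\alpha+\chi_{\i(\alpha)}$, $\alpha\in\theta$, is positive on $\L\setminus\{0\}$, so that this criterion applies: the $\theta$-Anosov property forces $\alpha>0$ on $\L\setminus\{0\}$ for every $\alpha\in\theta$, and a dominant weight such as $\chi_\alpha$ is strictly positive on $\{v\in\fa^+:\alpha(v)>0\}$; hence $\delta_{\chi_\alpha+\chi_{\i(\alpha)}}$ is well defined and finite.

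Now fix $\alpha\in\theta$ and write $d=\dim\La_\theta$. By the lower bound of Theorem \ref{m4},
\[
\delta_{\chi_\alpha+\chi_{\i(\alpha)}}\le\max_{\beta\in\theta}\delta_{\chi_\beta+\chi_{\i(\beta)}}\le d.
\]
Using the homogeneity $\delta_{c\phi}=c^{-1}\delta_\phi$ for $c>0$, this gives $\delta_{d(\chi_\alpha+\chi_{\i(\alpha)})}=d^{-1}\delta_{\chi_\alpha+\chi_{\i(\alpha)}}\le 1$ (if $d=0$, then already $\delta_{\chi_\alpha+\chi_{\i(\alpha)}}=0$, so the growth of $\Ga$ is subexponential, $\psi_\Ga\le 0$, and \eqref{p100} is immediate). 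Applying the criterion above with $\phi=d(\chi_\alpha+\chi_{\i(\alpha)})$ yields $\psi_\Ga\le d\,(\chi_\alpha+\chi_{\i(\alpha)})$ on $\fa$.

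Finally, since this holds for every $\alpha\in\theta$, for each $v\in\fa$ we have $\psi_\Ga(v)\le d\,(\chi_\alpha+\chi_{\i(\alpha)})(v)$ simultaneously for all $\alpha\in\theta$, and taking the minimum over $\alpha\in\theta$ gives $\psi_\Ga(v)\le d\cdot\min_{\alpha\in\theta}(\chi_\alpha+\chi_{\i(\alpha)})(v)$, which is exactly \eqref{p100}. The only substantive ingredient is the lower bound in Theorem \ref{m4}; everything else is the standard growth-indicator/critical-exponent duality of Quint together with the scaling of $\delta_\bullet$, and the single minor point to verify is the positivity of $\chi_\alpha+\chi_{\i(\alpha)}$ on the limit cone. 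I therefore do not anticipate a genuine obstacle — the content of this corollary is entirely contained in Theorem \ref{m4}.
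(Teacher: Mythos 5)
Your argument is correct, and it reaches \eqref{p100} by a slightly different route than the paper. Both proofs have the same substantive ingredient, namely the lower bound $\max_{\alpha\in\theta}\delta_{\chi_\alpha+\chi_{\i(\alpha)}}\le \dim\La_\theta$ of Theorem \ref{m4} (Corollary \ref{cor.riemlowermax}); the difference is how one converts the critical-exponent bound into a bound on the growth indicator on all of $\fa$. The paper stays inside the $\theta\cup\i(\theta)$-framework: it applies Lemma \ref{lem.tangentcritexp} to $\chi_\alpha+\chi_{\i(\alpha)}\in\fa_{\theta\cup\i(\theta)}^*$ to get $\psi_\Ga^{\theta\cup\i(\theta)}\le \delta_{\chi_\alpha+\chi_{\i(\alpha)}}(\chi_\alpha+\chi_{\i(\alpha)})$ on $\fa_{\theta\cup\i(\theta)}$, and then descends to $\psi_\Ga$ on $\fa$ via the comparison $\psi_\Ga\le\psi_\Ga^{\theta\cup\i(\theta)}\circ p_{\theta\cup\i(\theta)}$ from \cite[Lemma 3.12]{KOW_indicators} together with the $p_{\theta\cup\i(\theta)}$-invariance of $\chi_\alpha+\chi_{\i(\alpha)}$. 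You instead invoke Quint's duality for the full growth indicator directly on $\fa$, which obliges you to check that $\chi_\alpha+\chi_{\i(\alpha)}$ is positive on the full limit cone $\L-\{0\}$; your verification is fine (the Anosov inequality of Definition \ref{ta} gives $\alpha>0$ on $\L-\{0\}$ for $\alpha\in\theta$, and $\omega_\alpha$ is a non-negative combination of simple roots with a positive coefficient on $\alpha$, so $\chi_\alpha>0$ wherever $\alpha>0$ on $\fa^+$; note this does not follow formally from Theorem \ref{thm.anosovbasic}(2), since $\alpha\notin\fa_\theta^*$). One point to state carefully: the corollary allows Anosov subgroups that are not Zariski dense, while the equality $\delta_\phi=\sup_{v\in\L-\{0\}}\psi_\Ga(v)/\phi(v)$ in \cite{Quint2002divergence} is usually stated under Zariski density. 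You only need the inequality $\psi_\Ga\le\delta_\phi\,\phi$, i.e. the divergence half of the duality, which holds for an arbitrary discrete subgroup by the elementary cone-counting argument, so this is a citation nuance rather than a gap. In exchange for this extra care, your route avoids the projection lemma from \cite{KOW_indicators}; the paper's route avoids the positivity check on $\L$ and the appeal to Quint by reusing machinery it has already set up. (Your parenthetical treatment of $\dim\La_\theta=0$ is vacuous anyway, since Corollary \ref{cor.riemlowermax} forces $\dim\La_\theta\ge\delta_{\chi_\alpha+\chi_{\i(\alpha)}}>0$ for non-elementary $\Ga$.)
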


Recall that $X=G/K$ denotes the associated Riemannian symmetric space.
The size of $\psi_\Ga$ is closely related to the spectral properties of the locally symmetric space $\Ga\ba X$. Let $\lambda_0(\Ga\ba X)$ denote the bottom of the $L^2$-spectrum of $\Ga\ba X$ (see \eqref{ll}).
    As first introduced by Harish-Chandra \cite{Harish},  a unitary representation $(\pi, \cal H_{\pi})$ of $G$ is {\it tempered}
if all of its matrix coefficients belong to
$L^{2+\epsilon}(G)$ for all $\e>0$, or, equivalently, if
$\pi$ is weakly contained\footnote{$\pi$ is weakly contained in a unitary representation $\sigma$ of $G$ if any diagonal
matrix coefficients of $\pi$ can be approximated, uniformly on compact sets, by convex
combinations of diagonal matrix coefficients of $\sigma$.}
in the regular representation $L^2(G)$. Hence the temperedness of the quasi-regular representation $L^2(\Ga\ba G)$ means that $\Ga\ba G$ looks like $G$ from the $L^2$-viewpoints.
If a discrete subgroup $\Ga$ of $G$ satisfies that $\psi_\Ga\le \rho$, 
then  $L^2(\Ga\ba G)$ is tempered and $\lambda_0(\Ga\ba X)=\|\rho\|^2$ as shown in \cite[Theorem 1.6]{EO_temperedness} for $\Pi$-Anosov groups and in \cite{LWW} in general. Moreover, $\lambda_0(\Ga\ba X)$ is not an $L^2$-eigenvalue (\cite{EO_temperedness}, \cite{EFLO}).
However it is not easy to decide whether $\psi_\Ga \le \rho$ holds or not. We give a criterion for this in terms of $\dim \La_\theta$ using Corollary \ref{maincor5}.

Define $$\mathsf c_\theta:=\min\{c \ge  0:\sum_{\alpha\in \theta} (\chi_\alpha + \chi_{\i(\alpha)}) \le  c\cdot \rho \text{ on } \fa^+\}.$$
We set $\mathsf c_G := \mathsf c_\Pi$. Note that 
$0 < \mathsf c_\theta\le \mathsf c_{G}$ and moreover, if $\theta\cap \i(\theta)=\emptyset$, $\mathsf c_\theta \le \mathsf{c}_G/2$. 

If $G$ is $\br$-split, then $\sum_{\alpha\in \Pi} \chi_\theta= \rho $  \cite[Proposition 29]{Bourbaki}, and hence $\mathsf c_G=2$. In general, we have
\be\label{cg} 0<\mathsf c_G\le 2 \ee
by Lemma \ref{smi} due to Smilga.

\begin{cor} \label{cortempered} Let $\Ga$ be a  $\theta$-Anosov subgroup such that
 $$ \dim \La_{\theta}  \le \frac{\# \theta}{ {\mathsf c}_\theta}.$$
Then $L^2(\Ga \ba G)$ is tempered and $\la_0(\Ga\ba X) = \| \rho\|^2$.
    In particular, the conclusion holds for any $\Pi$-Anosov subgroup with $\dim \La \le
  \frac{\op{rank } G}{ {\mathsf c}_G}$.
   \end{cor}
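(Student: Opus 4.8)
The plan is to deduce Corollary \ref{cortempered} from Corollary \ref{maincor5} together with the cited results that $\psi_\Ga \le \rho$ implies temperedness of $L^2(\Ga\ba G)$ and $\la_0(\Ga\ba X) = \|\rho\|^2$. So the entire content reduces to verifying the inequality $\psi_\Ga \le \rho$ on $\fa$ under each of the two hypotheses on $\dim \La_\theta$. By Corollary \ref{maincor5} we already have
\[
\psi_\Ga \le \dim\La_\theta \cdot \min_{\alpha\in\theta}(\chi_\alpha + \chi_{\i(\alpha)}) \quad\text{on }\fa,
\]
and since $\psi_\Ga$ is supported on $\fa^+$ where all $\chi_\beta$ are non-negative, it suffices to show that $\dim\La_\theta \cdot \min_{\alpha\in\theta}(\chi_\alpha + \chi_{\i(\alpha)}) \le \rho$ on $\fa^+$ under the stated bound on $\dim\La_\theta$.

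First I would bound the minimum by the average: $\min_{\alpha\in\theta}(\chi_\alpha + \chi_{\i(\alpha)}) \le \frac{1}{\#\theta}\sum_{\alpha\in\theta}(\chi_\alpha + \chi_{\i(\alpha)})$ on $\fa^+$, again using non-negativity of the $\chi_\beta$ on $\fa^+$. Now distinguish the two cases. If $\theta\cap\i(\theta)=\emptyset$, then the multiset $\{\alpha : \alpha\in\theta\}\cup\{\i(\alpha):\alpha\in\theta\}$ consists of $2\#\theta$ distinct simple roots, so $\sum_{\alpha\in\theta}(\chi_\alpha+\chi_{\i(\alpha)}) = \sum_{\beta\in\theta\cup\i(\theta)}\chi_\beta \le \sum_{\beta\in\Pi}\chi_\beta \le \mathsf c_G\cdot\rho$ on $\fa^+$, by definition of $\mathsf c_G$ and non-negativity. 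Hence $\min_{\alpha\in\theta}(\chi_\alpha+\chi_{\i(\alpha)}) \le \frac{\mathsf c_G}{\#\theta}\rho$, and the hypothesis $\dim\La_\theta \le \frac{\#\theta}{\mathsf c_G}$ gives $\psi_\Ga\le\rho$. In the general case we only know that $\sum_{\alpha\in\theta}(\chi_\alpha+\chi_{\i(\alpha)})$ counts roots with multiplicity at most $2$, so the bound degrades by a factor of $2$: $\sum_{\alpha\in\theta}(\chi_\alpha+\chi_{\i(\alpha)}) \le 2\sum_{\beta\in\Pi}\chi_\beta \le 2\mathsf c_G\cdot\rho$, giving $\min_{\alpha\in\theta}(\chi_\alpha+\chi_{\i(\alpha)})\le \frac{2\mathsf c_G}{\#\theta}\rho$, so the hypothesis $\dim\La_\theta\le\frac{\#\theta}{2\mathsf c_G}$ again yields $\psi_\Ga\le\rho$.

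For the final assertion, when $\theta=\Pi$ we have $\#\theta = \op{rank} G$ and $\i(\Pi)=\Pi$, so $\Pi\cap\i(\Pi)=\Pi\ne\emptyset$ and the second bound $\dim\La \le \frac{\op{rank} G}{2\mathsf c_G}$ is exactly the relevant hypothesis, giving the stated conclusion. I would then invoke \cite[Theorem 1.6]{EO_temperedness} and \cite{LWW}: once $\psi_\Ga\le\rho$ is established, $L^2(\Ga\ba G)$ is tempered and $\la_0(\Ga\ba X)=\|\rho\|^2$. The only genuinely delicate point is to be careful that all inequalities among the $\chi_\alpha$, $\chi_{\i(\alpha)}$, $\rho$ are being used only on the chamber $\fa^+$, where $\psi_\Ga$ lives and where the $\chi$'s and $\rho$ are non-negative — off $\fa^+$ the inequality $\psi_\Ga \equiv -\infty$ is vacuous, so there is no loss. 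No real obstacle is expected; the work is just the elementary convexity/counting bookkeeping packaging Corollary \ref{maincor5}.
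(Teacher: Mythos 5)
Your proposal is correct and follows essentially the same route as the paper: combine Corollary \ref{maincor5} with the bound $\min_{\alpha\in\theta}(\chi_\alpha+\chi_{\i(\alpha)})\le \frac{\mathsf c_G}{\#\theta}\rho$ (resp.\ $\frac{2\mathsf c_G}{\#\theta}\rho$) on $\fa^+$ obtained by averaging and the definition of $\mathsf c_G$, distinguishing the case $\theta\cap\i(\theta)=\emptyset$, and then invoke the temperedness criterion $\psi_\Ga\le\rho$ from \cite{EO_temperedness} and \cite{LWW}. The bookkeeping, including the observation that the inequalities are only needed on $\fa^+$ and that $\theta=\Pi$ falls under the second case, matches the paper's argument.
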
    
  
See Remark \ref{lpint} for a more general statement.
Corollary \ref{cortempered} recovers Sullivan's theorem \cite{Sullivan_Riemannian}
in rank one Lie groups (see Remark \ref{ffinal})
and immediately applies to many examples of Anosov subgroups with limit sets of low Hausdorff dimensions; for example to all $\Pi$-Anosov subgroups of higher rank Lie groups with $\dim \La\le 1$ such as Hitchin subgroups and the image of any positive representation into a real split group (\cite[Propositions 1.5 and 11.1]{CZZ_entropy}). 
Although the conclusion of Corollary \ref{cortempered}  was already known for Hitchin subgroups by \cite{KMO_tent} and \cite{EO_temperedness} relying on the work of \cite{PS_eigenvalues}, we obtain a completely different proof in this paper.
Another application is that
the image of a maximal representation of a surface group into $\op{Sp}_{2n}(\br)$ is a tempered subgroup of  $\op{Sp}_{2n}(\br)$  for $n\ge 3$. Such an image is an $\{\alpha_n\}$-Anosov subgroup of $\op{Sp}_{2n}(\br)$ where $\alpha_n$ is the long simple root of  $\op{Sp}_{2n}(\br)$, 
$\dim \La_{\alpha_n}=1$ by \cite{BI}, and we can directly compute $c_{\alpha_n}\le 1$ for $n\ge 3$.

Since the opposition involution $\i$ of $\PSL_n(\br)$ sends the simple root $\alpha_i$ to $\alpha_{n-i}$ for $1 \le i \le n -1$, we also deduce:
\begin{cor}
   Let $n\ge 3$. 
   If $\Ga<\PSL_n(\br)$ is
   $\{\alpha_i\}$-Anosov with
    $\dim \La_{\alpha_i}\le~1$ for some 
    $i\ne \frac{n}{2}$, 
    then
     $L^2(\Ga \ba \PSL_n(\br))$ is tempered and $\la_0(\Ga\ba X ) = \| \rho\|^2$.
\end{cor}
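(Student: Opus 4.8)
The plan is to obtain this as a direct specialization of Corollary \ref{cortempered} to the case $G = \PSL_n(\br)$ and $\theta = \{\alpha_i\}$, so that $\#\theta = 1$. All the real work has already been done in Corollary \ref{cortempered}; what remains is only to verify its two hypotheses in the present setting and to identify which of the two bullets applies.

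First I would record that $G = \PSL_n(\br)$ is split over $\br$, so that $\sum_{\alpha \in \Pi}\chi_\alpha = \rho$ by \cite[Proposition 29]{Bourbaki}, whence $\mathsf c_G = 1$; this is exactly the discussion following \eqref{cg}. Next I would determine $\theta \cap \i(\theta)$. Since the opposition involution of $\PSL_n(\br)$ sends $\alpha_i$ to $\alpha_{n-i}$, as recalled just before the statement, the hypothesis $i \ne \tfrac{n}{2}$ is precisely the condition $n - i \ne i$, so that $\i(\alpha_i) = \alpha_{n-i} \ne \alpha_i$; as $\theta$ and $\i(\theta)$ are then distinct singletons, we get $\theta \cap \i(\theta) = \emptyset$.

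Consequently we are in the first bullet of Corollary \ref{cortempered}, whose hypothesis reads $\dim \La_{\alpha_i} \le \tfrac{\#\theta}{\mathsf c_G} = 1$, which is exactly the assumption made. Corollary \ref{cortempered} then yields that $L^2(\Ga\ba\PSL_n(\br))$ is tempered and $\la_0(\Ga\ba X) = \|\rho\|^2$, completing the proof.

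There is essentially no obstacle here: the mathematical content lies entirely in Corollary \ref{cortempered} (hence ultimately in Corollary \ref{maincor5} and the temperedness criterion $\psi_\Ga \le \rho$), and the present statement is just its unwinding in the $\br$-split, singleton-$\theta$ case. The one point worth flagging --- more a sanity check than a difficulty --- is that the hypothesis $i \ne \tfrac{n}{2}$ cannot be dropped: for $n$ even and $i = \tfrac{n}{2}$ the set $\{\alpha_i\}$ is $\i$-invariant, so only the second, strictly weaker bullet of Corollary \ref{cortempered} would be available, requiring $\dim \La_{\alpha_i} \le \tfrac{1}{2}$ rather than $\le 1$.
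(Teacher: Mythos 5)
Your proposal is correct and is exactly the paper's (implicit) argument: the corollary is stated as an immediate specialization of Corollary \ref{cortempered}, using that $\i(\alpha_i)=\alpha_{n-i}$ so $i\ne \frac{n}{2}$ gives $\theta\cap\i(\theta)=\emptyset$, and that $\PSL_n(\br)$ is $\br$-split so $\mathsf c_G=1$ and the first bullet's bound is $\#\theta/\mathsf c_G=1$. Your closing remark about why $i\ne\frac n2$ is needed is a correct and sensible sanity check, consistent with the paper's second bullet.
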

This corollary applies to any $(1,1,2)$-hyperconvex subgroup whose Gromov boundary is homeomorphic to a circle, since such a subgroup is $\{\alpha_1\}$-Anosov with $\dim \La_{\alpha_1} = 1$
by Pozzetti-Sambarino-Wienhard \cite{PSW_conformality}.
It also applies to  the image of a  purely hyperbolic Schottky representation of the free group $F_k$ on $k$-generators  in $\PSL_n(\br)$ in the sense of Burelle-Treib \cite{burelle2022schottky} by \cite[Proposition 11.1]{CZZ_entropy}.

\subsection*{On the proof of Theorem \ref{main1}}  
The key step is to prove that for a symmetric $\psi\in \T_\Ga$, the Patterson-Sullivan measure $\nu_\psi$ 
is Ahlfors one-regular.
Fix $o=[K]\in X$.  The $\theta$-Anosov property of $\Ga$ implies that
$\Ga$ is a hyperbolic group and that the orbit map $\ga \mapsto \ga o$ is a quasi-isometric embedding that continuously  extends to a $\G$-equivariant homeomorphism between the Gromov boundary $\partial \Ga$ and limit set $\La_\theta $. 
One key feature of a Gromov hyperbolic space is that the Gromov product measures the distance between a fixed point and a geodesic, up to an additive error. The main philosophy of our proof is to establish an analogue of this property, by showing that there is a metric-like function $\d_\psi$ on $\Ga o$ that is closely related to the $\psi$-Gromov product $\psi \circ \cal G$ on the limit set $\La_\theta$. For $\ga_1, \ga_2 \in \Ga$, set \be \label{eqn.dpsiorbitintro}
\d_{\psi}(\ga_1 o, \ga_2 o) = \psi(\mu(\ga_1^{-1} \ga_2)).
\ee
We prove that $\d_\psi$ satisfies the coarse triangle inequality (Theorem \ref{thm.triangle}), using a higher rank Morse lemma due to Kapovich-Leeb-Porti \cite{KLP_2018}: there exists $D>0$ such that for any $ \ga_1, \ga_2, \ga_3 \in \Ga$,
 \be \label{eqn.coarsetriangleintro}
\d_{\psi}(\ga_1 o, \ga_3 o) \le \d_{\psi}(\ga_1 o, \ga_2 o) + \d_{\psi}(\ga_2 o, \ga_3 o) + D.
\ee
This allows us to treat $\d_{\psi}$ as a ``metric" on $\Ga o$.
Moreover $(\Ga o, \d_{\psi})$ has a uniform thin-triangle property. That is, there exists $\delta > 0$ such that for any $\xi_1, \xi_2, \xi_3 \in \Ga \cup \partial \Ga$, the image of the geodesic triangle $[\xi_1, \xi_2] \cup [\xi_2, \xi_3] \cup [\xi_3, \xi_1]$ under the orbit map is $\delta$-thin in the $\d_{\psi}$-metric. On the other hand, since $(\Ga o, \d_{\psi})$ is not a geodesic space in general, the thin-triangle property does not imply that $(\Ga o, \d_{\psi})$ is a Gromov hyperbolic space. 
Nevertheless, investigating fine geometric properties of thin-triangles in $(\Ga o, \d_{\psi})$ leads us to proving that the $\psi$-Gromov product measures the $\d_{\psi}$-distance between $o$ and a geodesic (Proposition \ref{ss}). That is, for $\xi \neq \eta \in \La_{\theta} \simeq \partial \Ga$, \be \label{eqn.gromovdistintro}
\psi(\cal G(\xi, \eta)) = \d_{\psi}(o, [\xi, \eta]o)+O(1)
\ee where $[\xi, \eta]o$ is the image of a bi-infinite geodesic $[\xi, \eta]$ in $\Ga$ connecting $\xi$ and $\eta$ under the orbit map. 
 We also prove that shadows on the Gromov boundary $\partial \Ga$ are comparable to shadows on $\La_\theta$ (Proposition \ref{prop.shadowgromovbooundary}) and use it
 to establish the comparability of the $d_{\psi}$-balls and shadows in $\La_{\theta}$ (Theorem \ref{thm.ballshadowball}): for all large $R > 1$ there exists $c \ge 1$ such that for any $\xi\in \La_\theta$ and $\ga \in \Ga$ on a geodesic ray in $\Ga$ toward $\xi\in \La_\theta\simeq \partial \Ga$ from the identity $e \in \Ga$, we have 
\be\label{cshadow} B_{\psi}(\xi, c^{-1} e^{-\psi(\mu(\ga))}) \subset O_R (o, \ga o) \cap \La_{\theta} \subset B_{\psi}(\xi, c  e^{- \psi(\mu(\ga))})\ee 
where the shadow $O_R(o, \ga o)$ is the set of endpoints of all positive Weyl chambers  based at $o$ passing through the Riemannian ball in $X$ of radius $R > 0$ with center $\ga o$ in $X$. Then the Ahlfors one-regularity of $\nu_\psi$ is deduced by applying the higher rank version of Sullivan's shadow lemma (Lemma \ref{lem.shadow}). 
While positivity of $\cal H_{\psi}(\La_{\theta})$ is a standard consequence of the Ahlfors $1$-regularity,  finiteness of $\cal H_{\psi}(\La_{\theta})$ is not immediate since $d_\psi$ is not a genuine metric. We rely on the Vitali covering type lemma for the conformal premetric $d_{\psi}$ on $\La_{\theta}$ (Lemma \ref{lem.lovitali}).

\subsection*{Organization} \begin{itemize}
    \item In section \ref{sec.prelim}, we review some basic structures of Lie groups and $\theta$-boundaries. The notations set up in this section will be used throughout the paper.
    
    \item In section \ref{sec.PSmeas}, we recall the classification of Patterson-Sullivan measures of Anosov subgroups using tangent forms and some basic properties of Anosov subgroups.

    \item In section \ref{sec.metriclike}, we show  that  for each $\psi\in \fat$ positive on $\L_\theta-\{0\}$, the composition $\psi \circ \mu$
 defines a metric-like function $\d_\psi$ on  the $\Ga$-orbit $\Ga o$. The
coarse triangle inequality of $\d_\psi$ (Theorem \ref{thm.triangle}) is a crucial ingredient of this paper. Its proof makes a heavy use of the notion of diamonds and the Morse lemma due to Kapovich-Leeb-Porti (Theorem \ref{thm:ML}).

    \item In section \ref{sec.confmetric}, we define a conformal premetric $d_\psi$ on the limit set $\La_\theta$  and discuss its basic properties.

    \item Sections \ref{sec.ballinshadow} and \ref{sec.shadowinball} are devoted to the proof of
    the compatibility between  shadows and $d_\psi$-balls in the limit set $\La_\theta$ as in \eqref{cshadow}.

    \item In sections \ref{sec.localsize} and \ref{sec.hausmeas}, we prove
     Theorem \ref{main1}. In section \ref{sec.localsize}, we  prove that for symmetric $\psi \in \fa_{\theta}^*$, the $(\Ga, \psi)$-Patterson-Sullivan measure is Ahlfors one-regular. In
      section \ref{sec.hausmeas}, we prove that Patterson-Sullivan measures for symmetric linear forms are Hausdorff measures on the limit set, up to a constant multiple. We also prove Theorem \ref{main2}.

    \item In section \ref{sec.riem}, we prove Theorem \ref{m4} on the estimate of the Hausdorff dimension of $\La_\theta$ with respect to a Riemannian metric. 
    \item In section \ref{fff}, we obtain an upper bound on the growth indicator and discuss its implications on the temperedness of $L^2(\Ga \ba G)$.
\end{itemize}

\subsection*{Acknowledgement} We would like to thank Ilia Smilga for providing the proof of Lemma \ref{smi}.

\section{Basic structure theory of Lie groups and $\theta$-boundaries} \label{sec.prelim}
Throughout the paper, let $G$ be a connected semisimple real algebraic group, more precisely, $G$ is the identity component $\mathbf G(\br)^\circ$ of the group of real points of a semisimple algebraic group $\mathbf G$ defined over $\br$. In this section, we review some basic facts about the Lie group structure of $G$.
Let $A$ be a maximal real split torus of $G$.
Let $\fg$ and $\fa$ respectively denote the Lie algebras of $G$
and $A$. Fix a positive Weyl chamber $\fa^+ \subset \fa$ and set $A^+=\exp \fa^+$, and a maximal compact subgroup $K< G$ such that the Cartan decomposition $G=K A^+ K$ holds. Let $\Phi=\Phi(\fg, \fa)$ denote the set of all roots and $\Pi$  the set of all  simple roots given by the choice of $\fa^+$. Denote by $N_K(A)$ and $C_K(A)$ the normalizer and centralizer of $A$ in $K$ respectively. The Weyl group $\cal W$ is given by $N_K(A)/C_K(A)$.  
Consider the real vector space $\mathsf E^*=\mathsf X(A)\otimes_\Z \br$
where $\mathsf X(A)$ is the group of all real characters of $A$. and let $\mathsf E$ be its dual.
Denote by $(, )$ a $\cal W$-invariant inner product on $\mathsf E$.
 We
denote by $\{\omega_{\alpha}:\alpha\in \Pi\}$ the (restricted) fundamental weights of $\Phi$ defined by \be\label{fw} 2\frac{(\omega_\alpha, \beta)}{(\beta,\beta)} = c_\alpha \delta_{\alpha, \beta}\ee  where $c_\alpha=1$ if $2\alpha\notin \Phi$ and $c_\alpha=2$
otherwise.

Fix an element $w_0\in N_K(A) $ of order $2$  representing the longest Weyl element so that $\op{Ad}_{w_0}\mathfrak a^+= -\mathfrak a^+$. 
The map \be\label{oppo} \i= -\op{Ad}_{w_0}:\fa\to \fa\ee  is called the opposition involution.
It induces an involution of $ \Phi$ preserving $\Pi$, for which we use the same notation $\i$, so that $\i (\alpha )  = \alpha \circ \i$ for all $\alpha\in \Phi$.

Henceforth, we fix a  non-empty subset $\theta$ of $ \Pi$. 
Let 
\begin{equation*}
\mathfrak{a}_\theta =\bigcap_{\alpha \in \Pi-\theta} \ker \alpha, \quad  \quad \fa_\theta^+  =\fa_\theta\cap \fa^+, \end{equation*}
\begin{equation*} A_{\theta}  = \exp \fa_{\theta}, \quad \text{and} \quad     A_{\theta}^+  = \exp \fa_{\theta}^+. \end{equation*}
 Let $$ p_\theta:\mathfrak{a}\to\mathfrak{a}_\theta$$ denote  the projection invariant under all $w\in \cal W$ fixing $\fa_\theta$ pointwise.

Let $ P_\theta$ denote a standard parabolic subgroup of $G$ corresponding to $\theta$; that is,
$P_{\theta}=L_\theta N_\theta$ where $L_\theta$ is the centralizer of $A_\theta$ and $N_\theta$
is the unipotent radical of $P_\theta$ such that $\log N_\theta$ is generated by
root subgroups associated to all positive roots which are not $\mathbb Z$-linear combinations of $\Pi-\theta$. 

 We set $M_{\theta} = K \cap P_{\theta}=C_K(A_\theta)$.
The Levi subgroup $L_\theta$ can be written as  $L_{\theta} = A_{\theta}S_{\theta}$ where $S_{\theta}$ is an almost direct product of
 a connected semisimple real algebraic subgroup and a compact center.
 Letting $B_\theta=S_\theta \cap A$ and $B_\theta^+=\{b\in B_\theta: \alpha (\log b)\ge 0
 \text{ for all $\alpha\in \Pi-\theta$}\},$
 we have the Cartan decomposition of $S_\theta$: 
$$S_\theta = M_{\theta} B_\theta^+ M_{\theta}.$$
Note that  $A=A_\theta B_\theta$ and $ A^+\subset A_\theta^+ B_\theta^+.$
The space $\fa_\theta^*=\op{Hom}(\fa_\theta, \br)$ can be identified with the subspace of $\fa^*$ consisting of $p_\theta$-invariant linear forms: \be\label{att} \fa_\theta^*=\{\psi\in \fa^*: \psi\circ p_\theta=\psi\}.\ee 
Hence for $\theta_1\subset \theta_2$, we have 
\be\label{inc} \fa_{\theta_1}^*\subset \fa_{\theta_2}^*.\ee

When $\theta=\Pi$, we will omit the subscript. So
 $P=P_\Pi$ is a minimal parabolic subgroup and $P=MAN$.

\subsection*{Cartan projection} 
Recall the Cartan decomposition $G = KA^+ K$, which means that for every $g\in G$, there exists a unique element $\mu(g)\in \fa^+$ such that $g\in K \exp \mu(g) K$.
The map $G\to \fa^+$ given by $g\mapsto \mu(g)$ is called the Cartan projection.
 We have 
\be\label{inverse} \mu(g^{-1})=\i (\mu(g))\quad\text{ for all $g\in G$. }
\ee

 Let $X = G/K$ be the associated Riemannian symmetric space, and set $o = [K] \in X$.  Fix a $K$-invariant norm $\| \cdot \|$ on $\fg$  and a Riemannian distance $d$ on $X$, induced from the Killing form on $\fg$; so that
 $$d(go, ho)=\|\mu(g^{-1}h)\|$$ for any $g, h \in G$.
 For $p \in X$ and $R > 0$, let $B(p, R) $ denote the metric ball $ \{ x \in X : d(x, p) < R\}$.

\begin{lemma} \cite[Lemma 4.6]{Benoist1997proprietes} \label{lem.cptcartan}
For any compact subset $Q \subset G$, there exists a constant $C=C_Q>0$ such that for all $g \in G$, $$\sup_{q_1, q_2\in Q} \| \mu(q_1gq_2) -\mu(g)\| < C .$$  
\end{lemma}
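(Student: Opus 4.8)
The plan is to reduce the statement to two separate claims, one for left-multiplication and one for right-multiplication by compact sets, and then combine them. Since any $q \in Q$ satisfies $q = k_1 (\exp \mu(q)) k_2$ with $\mu(q)$ ranging over a compact subset of $\fa^+$ (because $Q$ is compact and $\mu$ is continuous, hence $\mu(Q)$ is compact), it suffices to prove: (i) there is $C_1>0$ with $\|\mu(kg) - \mu(g)\| < C_1$ and $\|\mu(gk) - \mu(g)\| < C_1$ for all $k \in K$, $g \in G$ — but this is exact, not merely coarse, since $G = KA^+K$ means $kg$ and $gk$ have the \emph{same} Cartan projection as $g$; and (ii) there is $C_2 = C_2(Q)$ with $\|\mu(ag) - \mu(g)\| < C_2$ for all $g\in G$ and all $a$ in the fixed compact set $\exp(\mu(Q)) \subset A$, and similarly for right multiplication. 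So the whole content is (ii), i.e. the case where the compact set is a compact subset of $A$.

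For (ii), the key input is the sub-additivity of the Cartan projection with respect to the Riemannian distance: $\|\mu(g_1 g_2)\| = d(o, g_1 g_2 o) \le d(o, g_1 o) + d(g_1 o, g_1 g_2 o) = \|\mu(g_1)\| + \|\mu(g_2)\|$, which controls norms but not the vector-valued difference. To upgrade this to a bound on $\|\mu(ag) - \mu(g)\|$ directly, I would use the standard fact (a consequence of the Cartan decomposition together with convexity/Kostant's theorem, or more elementarily the Lipschitz property of $\mu$ with respect to a suitable metric) that $g \mapsto \mu(g)$ is $1$-Lipschitz from $(G, d_{\text{right-}K\text{-invariant}})$-type distance; concretely, $\|\mu(g) - \mu(h)\| \le d(gK, hK)$ is false in general, but $\big|\,\|\mu(g)\| - \|\mu(h)\|\,\big| \le d(go, ho)$ holds, and the stronger statement we need follows from the fact that for a compact $Q_A \subset A$ the map $g \mapsto \mu(a g)$ stays within bounded $\fa$-distance of $\mu(g)$ because left translation by $a$ moves $o$ a bounded amount and Cartan projection is coarsely Lipschitz as a map $X \to \fa^+/\mathcal W \hookrightarrow \fa^+$. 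The cleanest route is: write $ag = k_1 \exp\mu(ag) k_2$ and $g = k_1' \exp\mu(g) k_2'$; then $\exp\mu(ag) \in K a K (\exp\mu(g)) K = K\,(\exp\mu(g))\,K$ up to the bounded factor $a$, and apply the triangle inequality in the vector space $\fa$ after projecting, using that the $\mathcal W$-orbit of $\mu(g)$ meeting a bounded neighborhood forces the $\fa^+$-representatives to be boundedly close.

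The step I expect to be the genuine obstacle is precisely this last point: passing from the scalar bound $\big|\,\|\mu(ag)\| - \|\mu(g)\|\,\big| \le 2\|\log a\|$ (immediate from sub-additivity) to the vector bound $\|\mu(ag) - \mu(g)\| \le C$. This is where one cannot avoid invoking a structural result — either Kostant's convexity theorem (the projection of $KgK$-type data to $\fa$ lies in the convex hull of the Weyl orbit of $\mu(g)$, and two such convex hulls at bounded Hausdorff distance have boundedly close highest representatives), or the explicit estimate that the Cartan projection is coarsely Lipschitz $G \to \fa^+$ with respect to the word-type or Riemannian metrics. I would cite the former. Once (i) and (ii) are in hand, decompose an arbitrary $q_1 \in Q$ as $k \cdot a \cdot k'$ with $a = \exp\mu(q_1) \in \exp(\mu(Q))$, similarly $q_2$, and chain four applications (two exact, via (i), and two coarse, via (ii)) to get $\|\mu(q_1 g q_2) - \mu(g)\| < C_Q$ with $C_Q = 2C_2$ depending only on $\operatorname{diam}(\mu(Q))$; taking the supremum over $q_1, q_2 \in Q$ preserves the uniform bound, completing the proof.
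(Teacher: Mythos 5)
The paper itself gives no proof of this lemma (it is quoted from \cite{Benoist1997proprietes}), so your attempt has to be measured against the standard arguments, and against those it has a genuine gap at its central step. The reduction via bi-$K$-invariance of $\mu$ is correct ($\mu(kg)=\mu(gk)=\mu(g)$ for $k\in K$, so one may assume $q_1,q_2$ lie in a fixed compact subset of $A$), but the heart of the matter, the vector-valued bound $\|\mu(ag)-\mu(g)\|\le C$, is never actually established. Your ``cleanest route'' is circular: the observation that $\exp\mu(ag)$ lies in $Ka(\exp\mu(g))K$ is just a reformulation of the setup, there is no projection to $\fa$ under which ``the triangle inequality'' can be applied to it, and the assertion that the $\fa^+$-representatives are boundedly close presupposes that $\mu(ag)$ lies in a bounded neighborhood of $\mathcal W\mu(g)$ --- which is exactly what has to be proved. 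The appeal to Kostant convexity as you state it is also too weak: knowing that some $\fa$-projection lies in $\operatorname{conv}(\mathcal W\mu(g))$ gives no proximity to $\mu(g)$, since that hull contains points such as $w\mu(g)$, $w\ne e$, at distance comparable to $\|\mu(g)\|$, and your premise that the two hulls are at bounded Hausdorff distance is essentially equivalent to the lemma itself. What is missing is two-sided control of a spanning family of linear functionals: the standard proof (essentially Benoist's) fixes, for each $\alpha\in\Pi$, a proximal irreducible representation $\rho_\alpha$ with highest weight $\chi_\alpha$ and a suitable $K$-invariant norm, so that $\chi_\alpha(\mu(g))=\log\|\rho_\alpha(g)\|$; submultiplicativity of operator norms gives $\chi_\alpha(\mu(q_1gq_2))\le\chi_\alpha(\mu(g))+C_Q$, the factorization $g=q_1^{-1}(q_1gq_2)q_2^{-1}$ gives the reverse inequality, and since the $\chi_\alpha$ form a basis of $\fa^*$ the vector bound follows. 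Neither the reverse factorization nor the passage through a basis of $\fa^*$ appears anywhere in your plan.

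Moreover, the one tool you explicitly discard is the one that works: the inequality $\|\mu(g)-\mu(h)\|\le d(go,ho)$ is true, not false. It is the $1$-Lipschitz property of the $\fa^+$-valued distance, equivalently Kassel's inequality $\|\mu(g_1gg_2)-\mu(g)\|\le\|\mu(g_1)\|+\|\mu(g_2)\|$ (see, e.g., the facts on $\mu$ collected in \cite{GGKW_gt}); it follows from Kostant convexity, or from the $1$-Lipschitzness of the map sending $Y\in\fp$ to its dominant representative in $\fa^+$ combined with the distance-nondecreasing property of $\exp$ in nonpositive curvature. Granting it, the lemma is immediate: $\|\mu(q_1gq_2)-\mu(q_1g)\|\le\|\mu(q_2)\|$, and $\|\mu(q_1g)-\mu(g)\|=\|\mu(g^{-1}q_1^{-1})-\mu(g^{-1})\|\le\|\mu(q_1^{-1})\|=\|\mu(q_1)\|$ using $\mu(x^{-1})=\i(\mu(x))$ and that the opposition involution is an isometry of $\fa$, so $C_Q=2\max_{q\in Q}\|\mu(q)\|$ suffices. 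So either invoke that inequality as the black box (it is a theorem, not a false statement), or run the representation-theoretic argument; as written, your proposal does neither, and the crucial step remains unproved.
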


 We then write $$\mu_{\theta} := p_{\theta} \circ \mu :G\to \fa_\theta^+.$$
In view of \eqref{att}, we have $\psi\circ \mu_\theta=\psi \circ \mu$ for all $\psi\in \fa_\theta^*$.

\subsection*{The $\theta$-boundary $\F_\theta$} 
We set $$\F_\theta=G/P_{\theta} \quad\text{and}\quad \F=G/P.$$
Let $$ \pi_\theta:\F\to \F_\theta$$ denote
 the canonical projection map given by $gP\mapsto gP_\theta$, $g\in G$. 
 We set \be\label{xit} \xi_\theta=[P_\theta] \in \F_{\theta}.\ee 
By the Iwasawa decomposition $G=KP=KAN$, the subgroup $K$ acts transitively on $\F_\theta$, and hence
 $\F_\theta\simeq K/ M_\theta.$

We consider the following notion of convergence of a sequence in $G$ (or in $X$) to an element of $\F_\theta$. For a sequence $g_i \in G$, we say $g_i \to \infty$ $\theta$-regularly if $\min_{\alpha\in \theta} \alpha(\mu(g_i)) \to \infty$ as $i \to \infty$.

\begin{definition} \label{fc} For a sequence $g_i\in G$  and $\xi\in \ft$, we write $\lim_{i\to \infty} g_i=\lim g_i o =\xi$ and
 say $g_i $ (or $g_io \in X$) converges to $\xi$ if \begin{itemize}
     \item $g_i \to \infty$ $\theta$-regularly; and
\item $\lim_{i\to\infty} \kappa_{i}\xi_\theta= \xi$ in $\F_\theta$ for some $\kappa_{i}\in K$ such that $g_i\in \kappa_{i} A^+ K$.
 \end{itemize}  
\end{definition}

\subsection*{Points in general position} Let $P_\theta^+$ be the
standard parabolic subgroup of $G$ opposite to $P_\theta$ such that $P_\theta\cap P_\theta^+=L_\theta$. We have $P_\theta^+ =w_0 P_{\i(\theta)}w_0^{-1}$ and hence 
$$\F_{\i(\theta)}=G/P_\theta^+.$$

For $g\in G$, we set
$$g_\theta^+ := gP_{\theta}\quad \text{ and }\quad  g_\theta^- := g w_0 P_{\i(\theta)};$$
as we fix $\theta$ in the entire paper, we write $g^{\pm}=g_\theta^{\pm}$ for simplicity when there is no room for confusion. Hence for the identity $e\in G$,
$(e^+, e^-)=(P_\theta, P_\theta^+)=(\xi_\theta, w_0\xi_{\i(\theta)})$.
The $G$-orbit
of $(e^+, e^-)$ is the unique open $G$-orbit
in $G/P_\theta\times G/P_\theta^+$ under the diagonal $G$-action. 
 We set
\be\label{f2} \F_{\theta}^{(2)}= \{(g_\theta^+, g_\theta^-): g\in G\}.\ee 
 Two elements
$\xi\in \F_\theta$ and $\eta\in \F_{\i(\theta)}$ are said to be in general position (or antipodal) if $(\xi, \eta)\in \F_\theta^{(2)}$.

\subsection*{Busemann maps and Gromov products}
The $\frak a$-valued Busemann map $\beta: \cal F\times G \times G \to\frak a $ is defined as follows: for $\xi\in \cal F$ and $g, h\in G$,
$$  \beta_\xi ( g, h):=\sigma (g^{-1}, \xi)-\sigma(h^{-1}, \xi)$$
where  $\sigma(g^{-1},\xi)\in \fa$ 
is the unique element such that $g^{-1}k \in K \exp (\sigma(g^{-1}, \xi)) N$ for any $k\in K$ with $\xi=kP$.
For $(\xi,g,h)\in \cal F_\theta\times G\times G$, we define
 \be\label{Bu} 
 \beta_{\xi}^\theta (g, h): = 
p_\theta ( \beta_{\xi_0} (g, h)) \quad\text{for $\xi_0\in \pi_\theta^{-1}(\xi)$};
\ee 
this is well-defined independent of the choice of $\xi_0$ 
\cite[Lemma 6.1]{Quint2002Mesures}. 
We also have $\|\beta_\xi^{\theta}(g, h)\|\le d(go, ho)$ for all $g, h\in G$ \cite[Lemma 8.9]{Quint2002Mesures}.
The Busemann map has the following properties: for all $\xi \in \F_{\theta}$ and $g_1, g_2, g_3\in G$,
$$\begin{aligned}
    \text{(Invariance)} \quad & \beta_{\xi}^{\theta}(g_1,  g_2) = \beta_{g_3 \xi}^{\theta}(g_3 g_1, g_3 g_2);\\
    \text{(Cocycle property)} \quad & \beta_{\xi}^{\theta}(g_1, g_2) = \beta_{\xi}^{\theta}(g_1, g_3) + \beta_{\xi}^{\theta}(g_3, g_2).
\end{aligned}$$
For $p, q \in X$ and $\xi \in \F_{\theta}$,  we set $\beta_{\xi}^{\theta}(p, q) := \beta_{\xi}^{\theta}(g, h)$ where $g, h \in G$ satisfies $go = p$ and $ho=q$. It is easy to check that this is well-defined.

\begin{Def}\label{def.defgromovprod}
    For $(\xi, \eta) \in \F_{\theta}^{(2)}$, we define the $\theta$-Gromov product as 
    $$
\cal G^{\theta}(\xi, \eta) = \frac{1}{2} ( \beta_{\xi}^{\theta}(e, g) + \i (\beta_{\eta}^{\i(\theta)}(e, g)))
$$ where $g \in G$ satisfies $(g^+, g^-) = (\xi, \eta)$. This does not depend on the choice of $g$ \cite[Lemma 9.11]{KOW_indicators}.
\end{Def}

\section{Classification of Patterson-Sullivan measures by tangent forms} \label{sec.PSmeas}
Let $G$ be a connected semisimple real algebraic group. We fix a non-empty subset $\theta$ of the set $\Pi$
of all simple roots. Throughout this section, let $\Ga$ be a discrete subgroup of $G$.
When $\Ga$ is $\theta$-Anosov, we have a complete classification of all linear forms $\psi\in \fa_\theta^*$ admitting a $(\Ga, \psi)$-Patterson-Sullivan measure (\cite{LO_invariant} \cite{sambarino2022report}, \cite{KOW_indicators}). The goal of this section is to review this classification, in addition to
recalling some basic notions such as the limit cone and the growth indicator of $\Ga$. We refer to \cite{KOW_indicators} for a more detailed discussion of the material of  this section.

The $\theta$-limit set of  $\Ga$ is defined as follows:
$$\lat=\lat(\Ga):=\{\lim {\ga}_i\in \F_\theta:\ {\ga}_i\in \Ga  \}$$ where $\lim \ga_i$ is defined as in Definition \ref{fc}.  If $\Ga < G$ is Zariski dense, then the limit set $\lat$ is the unique $\Ga$-minimal subset of $\F_{\theta}$ (\cite[Section 3.6]{Benoist1997proprietes}, \cite[Theorem 7.2]{Quint2002Mesures}). Furthermore, if we set $\La=\La_\Pi$,
then $ \pi_{\theta}(\La) = \La_{\theta}$.
For $\psi \in \fa_{\theta}^*$, a Borel probability measure $\nu$ on $\F_{\theta}$ is called a $(\Ga, \psi)$-conformal measure if for all $\ga \in \Ga$ and $\xi \in \F_{\theta}$, $$\frac{d \ga_* \nu}{d \nu}(\xi) = e^{\psi(\beta_{\xi}^{\theta}(e, \ga))}$$ where $\ga_*\nu(B) = \nu(\ga^{-1} B)$ for any Borel $B \subset \F_{\theta}$. A $(\Ga, \psi)$-conformal measure is called a $(\Ga, \psi)$-Patterson-Sullivan measure if it is supported on $\La_{\theta}$. 

In order to discuss which linear forms $\psi$ admit a Patterson-Sullivan measure, we need the definitions of the $\theta$-limit cones and growth indicators.

 The $\theta$-limit cone $\L_{\theta}=\L_\theta(\Ga)$ of $\Ga$ is defined as the asymptotic cone of $\mu_{\theta}(\Ga)$ in $\fa_{\theta}$, that is, $u\in \L_\theta$ if and only if $u=\lim t_i \mu_\theta(\ga_i)$ for some sequences $t_i\to 0$ and $\ga_i\in \Ga$.  If $\Ga$ is Zariski dense, $\L_\theta$ is a convex cone  with non-empty interior by \cite[Section 1.2]{Benoist1997proprietes}. Recalling the convention of dropping the subscript $\theta$ when $\theta=\Pi$, we write $\L=\L_\Pi$. We then have
$p_\theta(\L)=\L_\theta$.

\subsection*{Growth indicators}
We say that $\Ga$ is $\theta$-discrete 
if  the restriction $\mu_\theta|_{\Ga}:\Ga \to \fa_\theta^+$ is proper.
The $\theta$-discreteness of $\Ga$ implies that $\mu_\theta(\Ga)$ is a closed discrete subset of $\fa_\theta^+$. Indeed, $\Ga$ is $\theta$-discrete if and only if the counting measure on $\mu_\theta(\Gamma)$ weighted with multiplicity is a Radon measure on $\fa_\theta^+$.

\begin{Def}[$\theta$-growth indicator (\cite{Quint2002divergence}, \cite{KOW_indicators})] \label{def.growthindicator} For a $\theta$-discrete subgroup $\Ga<G$,
 the $\theta$-growth indicator $\psi_\Ga^{\theta}:\fa_\theta\to [-\infty, \infty] $ is defined as follows: if $u \in \fa_\theta$ is non-zero,
\be\label{gi2} \psi_\Ga^{\theta}(u)=\|u\| \inf_{u\in \cal C}
\tau^\theta_{\mathcal C}\ee 
where $\cal C\subset \fa_\theta$ ranges over all open cones containing $u$, and $\psi_{\Ga}^{\theta}(0) = 0$.
Here $-\infty\le \tau^{\theta}_{\cal C}\le \infty$ denotes the abscissa of convergence of the series ${\mathcal P}^{\theta}_{\cal C}(s)=\sum_{\ga\in \Ga, \mu_\theta(\ga)\in \mathcal C} e^{-s\|\mu_\theta(\ga)\|}$. As mentioned, we simply write $\psi_{\Ga} := \psi_{\Ga}^{\Pi}$.
\end{Def}
This definition is independent of the choice of a norm on $\fa_\theta$. It was proved in (\cite[Theorem 1.1.1]{Quint2002divergence}, \cite[Theorem 3.3]{KOW_indicators}) that 
$$\psi_{\Ga}^{\theta} < \infty, \quad \L_{\theta} = \{\psi_{\Ga}^{\theta} \ge 0\}\quad \text{ and } \quad \psi_{\Ga}^{\theta} > 0 \text{ on }\inte \L_{\theta}$$
where $\inte\L_\theta$ denotes the interior of $\L_\theta$ in the relative topology of $\fa_{\theta}$.
Moreover, $\psi_\Ga^\theta$ is upper semi-continuous and concave.
When $\theta=\i(\theta)$, it follows from \eqref{inverse} that
$\psi_\Ga^\theta$ is $\i$-invariant.

We say a linear form $\psi$ is tangent to $\psi_\Ga^\theta$ (at $u\in \fa_\theta-\{0\}$)
if $\psi\ge \psi_\Ga^\theta$ and $\psi(u)=\psi_\Ga^\theta(u)$.
For any  $u\in \inte \L_\theta$, there exists $\psi\in \fa_\theta^*$ tangent to $\psi_\Ga^\theta$ at $u$. Moreover, for any $\psi\in \fa_\theta^*$ tangent to $\psi_\Ga^\theta$
at an interior direction of $\fa_\theta^+$, there exists a $(\Ga, \psi)$-Patterson-Sullivan measure (\cite[Theorem 8.4]{Quint2002Mesures}, \cite[Proposition 5.9]{KOW_indicators}).

For $\theta$-Anosov subgroups, we have a more precise classification of Patterson-Sullivan measures in terms of tangent forms.

\begin{Def}\label{ta}
A finitely generated subgroup $\Ga<G$ is $\theta$-Anosov if there exists a constant $C > 1$ such that for all $\alpha \in \theta$ and $\ga\in \Ga$, we have \be \label{eqn.defanosov}
\alpha(\mu(\ga)) \ge C^{-1} |\ga| - C 
\ee where $|\cdot|$ denotes a fixed word metric on $\Ga$.
\end{Def}
We recall that all $\theta$-Anosov subgroups are assumed to be non-elementary in this paper. Define
 \be\label{ttt} \mathscr T_\Ga=\{\psi\in \fa_\theta^*:\text{$\psi$ is tangent to $\psi_\Ga^\theta$} \} .\ee 
 By the duality lemma (\cite[Section 4]{Quint_indicator}, \cite[Lemma 4.8]{samb_hyper}), the following can be deduced from \cite[Theorem A]{sambarino2022report}  (see \cite[Theorem 13.2]{KOW_indicators}):

\begin{theorem}\label{strict} Let $\Ga$ be a $\theta$-Anosov subgroup.
 Then
 \begin{enumerate}
     \item $\psi_\Ga^\theta$ is analytic on $\inte \L_{\theta}$, strictly concave on $\L_{\theta}$, and vertically tangent\footnote{It means that there is no linear form tangent to $\psi_\Ga^\theta$ at some $u\in \partial \L_\theta$.};
\item For any $\psi \in \mathscr T_\Ga$, there exists a unique unit vector $u=u_\psi\in \inte \L_\theta$ such that $\psi(u)=\psi_\Ga^\theta(u)$. If $\Ga$ is Zariski dense, the map
$\psi\mapsto u_\psi$ is a bijection between $\mathscr T_\Ga$ and
 $\{u\in \inte \L_{\theta}:\|u\|=1\}$. 
  \end{enumerate}
\end{theorem}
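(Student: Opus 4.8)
The plan is to derive Theorem~\ref{strict} from the thermodynamic formalism available for $\theta$-Anosov subgroups, developed in \cite{sambarino2022report}, \cite{samb_hyper} and \cite{Quint_indicator} and assembled in \cite[Theorem 12.2]{KOW_indicators}. The central device is the family of $\theta$-critical exponents: for $\varphi \in \fa_\theta^*$ with $\varphi > 0$ on $\L_\theta - \{0\}$, set $\delta_\varphi = \limsup_{T \to \infty} \frac{1}{T}\log \#\{\ga \in \Ga : \varphi(\mu_\theta(\ga)) \le T\}$. The $\theta$-Anosov inequality \eqref{eqn.defanosov} gives $\varphi(\mu_\theta(\ga)) \asymp |\ga|$, so $\delta_\varphi < \infty$; more importantly, it repackages the orbital growth of $\Ga$ as a H\"older reparametrization of the geodesic flow of the Gromov hyperbolic group $\Ga$ whose periods are recorded by the $\fa_\theta$-valued Jordan projection. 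Analyticity of topological pressure for H\"older reparametrizations of a topologically mixing flow, combined with the implicit function theorem, then makes $\varphi \mapsto \delta_\varphi$ real-analytic on the open cone $\{\varphi : \varphi > 0 \text{ on } \L_\theta - \{0\}\}$; and because the Anosov hypothesis (and, for the last assertion, Zariski density) forces these vector-valued periods to span $\fa_\theta$ and not be cohomologous to a fixed functional, the pressure is strictly convex, so the level hypersurface $D_\Ga^\theta := \{\varphi : \delta_\varphi = 1\}$ is properly embedded, real-analytic, and strictly convex inside that cone.

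Next I would invoke the standard convex-duality bridge between $D_\Ga^\theta$ and $\psi_\Ga^\theta$. The directional decomposition of the counting function gives $\delta_\varphi = \sup_{0 \ne u \in \L_\theta} \psi_\Ga^\theta(u)/\varphi(u)$, hence dually $\psi_\Ga^\theta(u) = \inf\{\varphi(u) : \varphi \in D_\Ga^\theta\}$ for $u \in \inte \L_\theta$ (see \cite[Section 4]{Quint_indicator}); after checking that a linear form tangent to $\psi_\Ga^\theta$ along $\L_\theta$ in fact dominates it on all of $\fa_\theta$, this identifies $\mathscr T_\Ga$ with $D_\Ga^\theta$. Routine Legendre-transform bookkeeping transports the analyticity and strict convexity of $D_\Ga^\theta$ into analyticity and strict concavity of $\psi_\Ga^\theta$ on $\inte \L_\theta$. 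Finally, a form $\varphi$ on the boundary of the dual cone of $\L_\theta$ vanishes at some $0 \ne u_0 \in \L_\theta$, so infinitely many $\ga \in \Ga$ have $\varphi(\mu_\theta(\ga))$ bounded and thus $\delta_\varphi = +\infty$; hence $D_\Ga^\theta$ is asymptotic to the boundary of the dual cone, which is precisely the assertion that $\psi_\Ga^\theta$ is vertically tangent along $\partial \L_\theta$. Together these prove part (1).

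Part (2) would then follow formally. If $\psi \in \mathscr T_\Ga$ is tangent to $\psi_\Ga^\theta$ at some $0 \ne u \in \L_\theta$, vertical tangency rules out $u \in \partial \L_\theta$, so $u \in \inte \L_\theta$, and I normalize $\|u\| = 1$. Were $\psi$ also tangent at a non-proportional unit direction $u'$, concavity would force $\psi \equiv \psi_\Ga^\theta$ on the segment $[u, u']$, contradicting strict concavity; so $u = u_\psi$ is unique. When $\Ga$ is Zariski dense, $\L_\theta$ has nonempty interior, every $u \in \inte \L_\theta$ carries a tangent form by \cite[Proposition 5.9]{KOW_indicators}, and analyticity shows this form is the unique supporting form $\nabla \psi_\Ga^\theta(u)$ at $u$, which by strict concavity touches $\psi_\Ga^\theta$ only at $u$. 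Hence $u \mapsto \nabla \psi_\Ga^\theta(u)$ and $\psi \mapsto u_\psi$ are mutually inverse between $\{u \in \inte \L_\theta : \|u\| = 1\}$ and $\mathscr T_\Ga$, which is the claimed bijection.

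The main obstacle is the joint analyticity and strict convexity of the critical hypersurface $D_\Ga^\theta$ (equivalently, strict concavity of $\psi_\Ga^\theta$): this is where the cited works do the genuine work, since it rests on the analytic dependence and strict convexity of the pressure of the reparametrized geodesic flow, and strict convexity in particular needs the non-degeneracy input that the $\fa_\theta$-valued periods of $\Ga$ actually span $\fa_\theta$ --- the step where the $\theta$-Anosov and Zariski density hypotheses are indispensable. Everything else (finiteness of $\delta_\varphi$, vertical tangency, and the bijection in part (2)) is soft once this regularity is available.
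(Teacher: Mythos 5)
Your overall route — critical exponents, analyticity/strict convexity of pressure for reparametrized flows, and Quint-type duality between the critical hypersurface $D_\Ga^\theta=\{\delta_\varphi=1\}$ and $\psi_\Ga^\theta$ — is exactly the route of the references the paper itself relies on (the paper gives no independent proof of this theorem; it deduces it from \cite[Theorem A]{sambarino2022report}, \cite[Section 4]{Quint_indicator}, \cite[Lemma 4.8]{samb_hyper}, see \cite[Theorem 12.2]{KOW_indicators}). Parts of your sketch (finiteness of $\delta_\varphi$, the duality, uniqueness of the tangency point from strict concavity, and the bijection in (2) given (1)) are fine at that level of delegation.

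However, your argument for vertical tangency has a genuine gap. First, from $u_0\in\L_\theta$ with $\varphi(u_0)=0$ you only get a sequence $\ga_i$ with $\mu_\theta(\ga_i)=t_iu_0+o(t_i)$, hence $\varphi(\mu_\theta(\ga_i))=o(\|\mu_\theta(\ga_i)\|)$; this is sublinear growth, not boundedness, and it does not by itself force $\delta_\varphi=+\infty$ (that would require exponentially many such elements, i.e. positivity of the growth rate in the direction $u_0$, which may fail at $\partial\L_\theta$). Second, and more seriously, even granting that $\delta_\varphi$ blows up on the boundary of the dual cone, this does not yield vertical tangency: a linear form tangent to $\psi_\Ga^\theta$ at some $u_0\in\partial\L_\theta$ with $\psi_\Ga^\theta(u_0)>0$ can be strictly positive on $\L_\theta-\{0\}$, hence lies in the interior of the dual cone and on $D_\Ga^\theta$ itself, so "asymptotic to the boundary of the dual cone" excludes nothing; and in part (2) you then use vertical tangency to push tangency points into $\inte\L_\theta$, which makes the logic circular. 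The actual mechanism in the cited works is different: for each $\psi$ with $\delta_\psi=1$ the tangency direction is identified with the derivative of the pressure (the barycenter of the corresponding equilibrium state), which always lies in $\inte\L_\theta$; interior tangency, its uniqueness, and vertical tangency all come out of that identification together with strict convexity of the pressure. Your write-up should either invoke \cite[Lemma 4.8]{samb_hyper} (or \cite[Section 4]{Quint_indicator}) at this precise point or reproduce that gradient/barycenter argument; the asymptotic-cone shortcut does not work.
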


The following theorem was proved  in \cite[Theorem 1.3]{LO_invariant} for $\theta=\Pi$ and $\Ga$ Zariski dense. The general case follows from  \cite[Theorem A]{sambarino2022report} (see \cite[Corollary 1.13]{KOW_indicators} for a more general discussion on conformal measures).

 \begin{theorem}\label{thm.uniquePS} 
 Let $\Ga$ be a  $\theta$-Anosov subgroup.   For any $\psi\in \mathscr T_\Ga$, there exists a unique $(\Ga, \psi)$-Patterson-Sullivan measure on $\La_\theta$
 which we denote by $\nu_\psi=\nu_{\psi, \theta}$. 
 The map $$\psi\mapsto \nu_\psi$$ is a surjection from $\mathscr T_\Ga$ to the space of
 all $\Ga$-Patterson-Sullivan measures.  If $\Ga$ is Zariski dense, then the map $\psi \mapsto \nu_{\psi}$ is bijective. Moreover, if $\psi_1\ne \psi_2$ in $\mathscr \T_\Ga$, then $\nu_{\psi_1}$ and $\nu_{\psi_2}$ are mutually singular to each other.
\end{theorem}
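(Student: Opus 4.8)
The plan is to assemble Theorem~\ref{thm.uniquePS} from the classification of tangent forms (Theorem~\ref{strict}), the Patterson--Sullivan construction, the higher-rank shadow lemma for Anosov groups, and an ergodicity input from thermodynamic formalism. \emph{Existence.} First, given $\psi\in\mathscr T_\Ga$, Theorem~\ref{strict}(2) says $\psi$ is tangent to $\psi_\Ga^\theta$ at a unit vector $u_\psi\in\inte\L_\theta$, and strict concavity of $\psi_\Ga^\theta$ gives $\psi>\psi_\Ga^\theta$ on $\L_\theta$ off the ray $\br_{>0}u_\psi$; hence $\psi>0$ on $\L_\theta-\{0\}$ and $\delta_\psi=\sup\{\psi_\Ga^\theta(v)/\psi(v):v\in\L_\theta-\{0\}\}=1$, attained only at $u_\psi$. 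I would then run the Patterson--Sullivan construction: form the normalized weighted orbital sums on $\F_\theta$ with weights $e^{-s\psi(\mu_\theta(\ga))}$, $s>1$, and extract a weak-$*$ subsequential limit as $s\to1^+$ (invoking Patterson's trick if needed). Lemma~\ref{lem.cptcartan} and the cocycle property of $\beta^\theta$ yield the conformality relation, and the ratio $\psi_\Ga^\theta/\psi$ attaining the value $1$ only at the interior direction $u_\psi$ forces the resulting measure onto $\La_\theta$; this is \cite[Theorem~8.4]{Quint2002Mesures} and \cite[Proposition~5.9]{KOW_indicators}.

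\emph{Surjectivity and uniqueness.} Next I would show that every $\Ga$-Patterson--Sullivan measure has parameter in $\mathscr T_\Ga$. Let $\nu$ be a $(\Ga,\psi')$-Patterson--Sullivan measure. Since $\Ga$ is Anosov, every point of $\La_\theta$ is a conical limit point, and Lemma~\ref{lem.shadow} gives $\nu(O_R(o,\ga o))\asymp e^{-\psi'(\mu_\theta(\ga))}$ along geodesic rays for $R$ large; a bounded-overlap count then gives $\delta_{\psi'}\le1$, while the shadow upper bound together with Borel--Cantelli and conicality rules out $\delta_{\psi'}<1$, so $\delta_{\psi'}=1$, whence $\psi'\ge\psi_\Ga^\theta$ with equality at an interior direction, i.e.\ $\psi'\in\mathscr T_\Ga$. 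Now fix $\psi\in\mathscr T_\Ga$ and let $\nu_1,\nu_2$ be two $(\Ga,\psi)$-Patterson--Sullivan measures; covering $\nu_1$-null sets by shadows and combining the shadow comparison for each $\nu_i$ with a Vitali-type lemma (Lemma~\ref{lem.lovitali}) gives $c^{-1}\nu_1\le\nu_2\le c\,\nu_1$, so $f:=d\nu_2/d\nu_1\in L^\infty(\nu_1)$, and the two conformality relations force $f\circ\ga=f$ $\nu_1$-a.e.\ for every $\ga\in\Ga$. The $\Ga$-action on $(\La_\theta,\nu_1)$ is ergodic: the associated Bowen--Margulis--Sullivan measure on $\Ga\backslash(\lta)$, built from $\nu_1$, its opposite counterpart on $\F_{\i(\theta)}$, and Lebesgue measure on $\fa_\theta$, is conservative because $\Ga$ is of $\psi$-divergence type (the series $\sum_{\ga\in\Ga}e^{-\psi(\mu_\theta(\ga))}$ diverges, by \cite[Theorem~A]{sambarino2022report} and \cite[Theorem~1.12]{KOW_indicators}), and conservativity together with the local product structure and the Hopf ratio ergodic theorem yields ergodicity of the flow, hence of the boundary action. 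Therefore $f$ is $\nu_1$-a.e.\ constant, equal to $1$, so $\nu_1=\nu_2$; consequently any $\Ga$-Patterson--Sullivan measure, being $(\Ga,\psi')$-conformal with $\psi'\in\mathscr T_\Ga$, coincides with $\nu_{\psi'}$, giving surjectivity of $\psi\mapsto\nu_\psi$.

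\emph{Injectivity and mutual singularity.} Finally, for $\psi_1\ne\psi_2$ in $\mathscr T_\Ga$ I claim $u_{\psi_1}\ne u_{\psi_2}$: by Theorem~\ref{strict}(1) $\psi_\Ga^\theta$ is analytic, hence differentiable, on $\inte\L_\theta-\{0\}$, so the form tangent to $\psi_\Ga^\theta$ at an interior unit vector $u$ is unique (namely $\nabla\psi_\Ga^\theta(u)$), and distinct tangent forms are tangent at distinct points. A standard consequence of the ergodicity above is that for $\nu_{\psi_i}$-a.e.\ $\xi$ and any geodesic ray $(\ga_n)$ in $\Ga$ toward $\xi$, $\mu_\theta(\ga_n)/\|\mu_\theta(\ga_n)\|\to u_{\psi_i}$; since $u_{\psi_1}\ne u_{\psi_2}$, these two almost-everywhere conditions are incompatible, so $\nu_{\psi_1}$ and $\nu_{\psi_2}$ are carried by disjoint Borel sets and hence mutually singular, and in particular $\nu_{\psi_1}\ne\nu_{\psi_2}$. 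When $\Ga$ is Zariski dense, Theorem~\ref{strict}(2) makes $\psi\mapsto u_\psi$ a bijection from $\mathscr T_\Ga$ onto the unit vectors of $\inte\L_\theta$, so then $\psi\mapsto\nu_\psi$ is a bijection onto all $\Ga$-Patterson--Sullivan measures.

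\emph{Main obstacle.} I expect the hard part to be the uniqueness clause, and inside it the conservativity of the Bowen--Margulis--Sullivan measure --- equivalently, the $\psi$-divergence type of $\Ga$ --- which is exactly where the Anosov hypothesis is indispensable and where Sambarino's thermodynamic formalism for Anosov representations enters. By contrast, the Patterson--Sullivan construction, the shadow-lemma comparison of $\nu_1$ and $\nu_2$, and the radial-direction computation are routine once divergence type, hence ergodicity, is available.
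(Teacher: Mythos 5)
The paper does not actually prove Theorem \ref{thm.uniquePS}; it is quoted from the literature (\cite[Theorem 1.3]{LO_invariant} for $\theta=\Pi$ and $\Ga$ Zariski dense, with the general case assembled from \cite[Theorem A]{sambarino2022report}, \cite[Theorem 1.12]{KOW_indicators} and \cite[Theorems 1.3, 9.4]{KOW_ergodic}). Your sketch reconstructs essentially the strategy of those references: Patterson's construction together with tangency at an interior direction for existence; the shadow lemma, conicality of all limit points and Borel--Cantelli to force $\delta_{\psi'}=1$, hence $\psi'\in\T_\Ga$ via Lemma \ref{lem.tangentcritexp}, for surjectivity; divergence type $\Rightarrow$ conservativity $\Rightarrow$ ergodicity (Hopf) for uniqueness; and a.e.\ directional behavior for singularity. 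So this is the same route, carried out rather than cited, and you correctly locate the genuinely hard input (divergence type and higher-rank ergodicity) in the cited works.

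Two caveats. First, your injectivity/mutual-singularity step rests on the claim that $\psi_1\ne\psi_2$ in $\T_\Ga$ forces $u_{\psi_1}\ne u_{\psi_2}$; the supporting-hyperplane argument you give needs $\L_\theta$ to have nonempty interior in $\fa_\theta$, i.e.\ essentially Zariski density. Without it, two distinct tangent forms can agree on the span of $\L_\theta$ and be tangent at the same direction, and your disjoint-carrier argument then says nothing; Remark \ref{rmk.jan13} records that in general singularity holds only when $\psi_1\ne\psi_2$ on $\L_\theta$. So the ``Moreover'' clause should be argued (as the paper uses it, e.g.\ in Theorem \ref{thm.nonsymmhmeas}) under the Zariski-density hypothesis, or your claim weakened accordingly. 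Second, two steps you label as standard are not formal consequences of what precedes them: the a.e.\ radial convergence $\mu_\theta(\ga_n)/\|\mu_\theta(\ga_n)\|\to u_{\psi}$ is itself a nontrivial theorem (part of the cited ergodic-dichotomy results, not a corollary of boundary ergodicity alone), and Lemma \ref{lem.lovitali} is about $d_\psi$-balls, not shadows --- in this paper the ball--shadow comparison (Theorem \ref{thm.ballshadowball}) is established later and only for symmetric $\psi$, so to avoid circularity you should run the Vitali/bounded-overlap argument directly for shadows, as in \cite{Quint2002Mesures} and \cite{KOW_indicators}. With those adjustments your outline matches the proofs in the cited sources.
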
 

\begin{Rmk} One immediate consequence of the last statement of Theorem \ref{thm.uniquePS}
is that at most one Patterson-Sullivan measure can be a Hausdorff measure on $\La_\theta$ with respect to a fixed metric (e.g., Riemannian metric).
\end{Rmk}

 When $\psi\in \fa_\theta^*$ is positive on $\L_{\theta}-\{0\}$,
 the abscissa of convergence of the $\psi$-Poincar\'e series $$s \mapsto \sum_{\ga \in \Ga} e^{-s \psi(\mu(\ga))}$$ is a well-defined positive number which we denote  by $\delta_\psi$ \cite[Lemma 4.3]{KOW_indicators}. Equivalently,
$\delta_\psi$ is also given by \eqref{dp00}.
 \begin{lem}\cite[Lemma 4.5]{KOW_indicators} \label{lem.tangentcritexp}
If $\psi\in \fa_\theta^*$ is positive on $\L_\theta-\{0\}$, then
$$\delta_\psi  \psi \in \TG .$$
 In particular, $\psi \in \T_{\Ga}$ if and only if $\delta_{\psi} = 1$.
 \end{lem}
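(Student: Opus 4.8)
The plan is to reduce everything to the variational identity
\[
\delta_\psi \;=\; \sup_{u\in\L_\theta-\{0\}}\frac{\psi_\Ga^\theta(u)}{\psi(u)},
\]
which expresses the abscissa of convergence of the $\psi$-Poincar\'e series in terms of the $\theta$-growth indicator; this is essentially classical (Quint \cite{Quint2002divergence}, \cite[\S4]{KOW_indicators}), but I would include a short proof. For the lower bound, fix $u\in\L_\theta$ with $\psi_\Ga^\theta(u)>0$ and an open cone $\cal C\ni u$, and set $M_{\cal C}=\max_{w\in\cal C,\ \|w\|=1}\psi(w)$. Since $\psi(\mu_\theta(\ga))\le M_{\cal C}\|\mu_\theta(\ga)\|$ for $\mu_\theta(\ga)\in\cal C$, the $\psi$-series dominates $\mathcal P^\theta_{\cal C}(sM_{\cal C})$, which diverges for $s<\tau^\theta_{\cal C}/M_{\cal C}$; hence $\delta_\psi\ge\tau^\theta_{\cal C}/M_{\cal C}$. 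Letting $\cal C$ shrink to the ray through $u$, continuity of $\psi$ gives $M_{\cal C}\to\psi(u/\|u\|)$ while $\tau^\theta_{\cal C}\searrow\psi_\Ga^\theta(u)/\|u\|>0$ by the definition of $\psi_\Ga^\theta$, so $\delta_\psi\ge\psi_\Ga^\theta(u)/\psi(u)$; directions with $\psi_\Ga^\theta(u)\le0$ contribute a nonpositive ratio, and the ratio is $0$-homogeneous, so this gives ``$\ge$'' in the identity. For the reverse inequality, write $\delta$ for the right-hand side; it is finite and attained because $\{u\in\L_\theta:\|u\|=1\}$ is compact, $\psi_\Ga^\theta$ is upper semicontinuous and nonnegative there, and $\psi$ is continuous and positive there. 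For any $\e>0$ one has $(\delta+\e)\psi>\psi_\Ga^\theta$ on that compact slice, and covering a neighborhood of it by finitely many open cones $\cal C$ with $\tau^\theta_{\cal C}<\min_{w\in\cal C,\ \|w\|=1}(\delta+\e)\psi(w)$ (possible by the definition of $\psi_\Ga^\theta$, shrinking cones, and upper semicontinuity, absorbing the finitely many $\ga$ with $\mu_\theta(\ga)$ outside this neighborhood) shows $\sum_{\ga}e^{-(\delta+\e)\psi(\mu(\ga))}<\infty$; hence $\delta_\psi\le\delta+\e$ for all $\e>0$.

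Granting the identity, I would deduce the tangency of $\delta_\psi\psi$ directly. For every $u\in\L_\theta-\{0\}$ the identity yields $\delta_\psi\psi(u)\ge\psi_\Ga^\theta(u)$; on $\fa_\theta-\L_\theta$ we have $\psi_\Ga^\theta\equiv-\infty$, and where $\psi_\Ga^\theta=0$ on $\partial\L_\theta$ we have $\delta_\psi\psi>0$ since $\delta_\psi>0$ (this is \cite[Lemma 4.3]{KOW_indicators}) and $\psi>0$ on $\L_\theta-\{0\}$. Thus $\delta_\psi\psi\ge\psi_\Ga^\theta$ on all of $\fa_\theta$. Moreover the supremum in the identity is attained at some unit vector $u_0$, so $\delta_\psi\psi(u_0)=\psi_\Ga^\theta(u_0)$, and since this common value equals $\delta_\psi\psi(u_0)>0$ we even get $u_0\in\inte\L_\theta$. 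Hence $\delta_\psi\psi$ is tangent to $\psi_\Ga^\theta$ at $u_0$, i.e.\ $\delta_\psi\psi\in\TG$.

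For the ``in particular'' statement: if $\delta_\psi=1$ then $\psi=\delta_\psi\psi\in\TG$ by the above. Conversely, if $\psi\in\TG$ then $\psi\ge\psi_\Ga^\theta$, so $\psi_\Ga^\theta(u)/\psi(u)\le1$ for all $u\in\L_\theta-\{0\}$ and the identity gives $\delta_\psi\le1$; and tangency provides some $u_1\ne0$ with $\psi(u_1)=\psi_\Ga^\theta(u_1)$, where finiteness of $\psi(u_1)$ forces $u_1\in\L_\theta$ and then $\psi(u_1)=\psi_\Ga^\theta(u_1)>0$, so the ratio at $u_1$ is $1$ and $\delta_\psi\ge1$. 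Thus $\delta_\psi=1$.

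The step I expect to be the main obstacle is the proof of the variational identity, specifically the convergence (upper-bound) half: it requires combining the cone-wise definition of the growth indicator with a compactness/finite-covering argument and upper semicontinuity on the boundary of the limit cone. Everything after the identity is formal, and if one is content to cite the identity from \cite{Quint2002divergence, KOW_indicators} the whole proof collapses to the last two paragraphs.
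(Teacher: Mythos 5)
Your argument is correct. Note that the paper itself offers no proof of this statement---it is quoted verbatim from \cite[Lemma 4.5]{KOW_indicators}---so the only meaningful comparison is with the standard argument behind that citation, which is exactly what you reconstruct: Quint's variational characterization $\delta_\psi=\sup_{u\in\L_\theta-\{0\}}\psi_\Ga^\theta(u)/\psi(u)$, followed by the formal deduction of tangency from upper semicontinuity of $\psi_\Ga^\theta$, compactness of the unit slice of $\L_\theta$, positivity of $\psi$ there, and the facts $\L_\theta=\{\psi_\Ga^\theta\ge 0\}$, $\psi_\Ga^\theta\equiv-\infty$ off $\L_\theta$, and $0<\delta_\psi<\infty$, all of which are quoted in the paper. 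Both halves of your proof of the identity are sound: the cone-domination lower bound, and the finite-cover convergence bound, where $\theta$-discreteness is what lets you absorb the finitely many $\ga$ with $\mu_\theta(\ga)$ outside the conical neighborhood. The one blemish is the parenthetical claim that $\psi_\Ga^\theta(u_0)=\delta_\psi\psi(u_0)>0$ forces $u_0\in\inte\L_\theta$: the paper only records the implication $\psi_\Ga^\theta>0$ on $\inte\L_\theta$, not its converse (for a lattice, $\psi_\Ga=2\rho$ is positive on wall directions of $\L=\fa^+$), so as written this step is unjustified. Since membership in $\T_\Ga$ only requires equality $\delta_\psi\psi(u_0)=\psi_\Ga^\theta(u_0)$ at some nonzero $u_0\in\fa_\theta$, you should simply delete that remark; the rest, including the ``in particular'' equivalence, is fine.
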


Since $\mu(g^{-1}) = \i(\mu(g))$ for all $g \in G$, we have that $\Ga$ is $\theta$-Anosov if and only if $\Ga$ is $\theta \cup \i(\theta)$-Anosov.  
If $\Ga$ is $\theta$-Anosov, then the canonical projection map $p:\La_{\theta \cup \i(\theta)} \to \La_{\theta}$ is a $\Ga$-equivariant homeomorphism.
Recalling that $\fa_\theta^*$ can be considered as a subset of $\fa_{\theta\cup\i(\theta)}^*$ from \eqref{inc}, we recall the following which will be of use.
\begin{lemma} \cite[Lemma 9.5]{KOW_indicators} \label{lem.cansymm} 
Let $\Ga$ be a $\theta$-Anosov subgroup.
For any $\psi \in \TG$, the measure $\nu_{\psi,\theta}$ coincides with the push-forward of 
 $\nu_{\psi, \theta\cup\i(\theta)}$ by $p$.
\end{lemma}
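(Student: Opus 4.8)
The plan is to transport $\nu_{\psi,\theta\cup\i(\theta)}$ along the $\Ga$-equivariant homeomorphism $p\colon\La_{\theta\cup\i(\theta)}\to\La_\theta$ and conclude via the uniqueness part of Theorem~\ref{thm.uniquePS}. A preliminary point is that $\nu_{\psi,\theta\cup\i(\theta)}$ is defined, i.e.\ that $\psi$, regarded as an element of $\fa_{\theta\cup\i(\theta)}^*$ through \eqref{inc}, is tangent to $\psi_\Ga^{\theta\cup\i(\theta)}$. Since $\psi\circ\mu=\psi\circ\mu_{\theta\cup\i(\theta)}$, the abscissa $\delta_\psi$ of the $\psi$-Poincar\'e series is unchanged whether $\psi$ is read over $\theta$ or over $\theta\cup\i(\theta)$, and $\delta_\psi=1$ because $\psi\in\TG$ (Lemma~\ref{lem.tangentcritexp}). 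By that lemma applied with $\theta\cup\i(\theta)$ it then suffices to check $\psi>0$ on $\L_{\theta\cup\i(\theta)}-\{0\}$. If $0\neq v=\lim_i t_i\mu_{\theta\cup\i(\theta)}(\ga_i)\in\L_{\theta\cup\i(\theta)}$ with $t_i\to0$, then, the orbit map being Lipschitz, $t_i|\ga_i|$ stays bounded away from $0$ along a subsequence, so the Anosov inequality \eqref{eqn.defanosov} gives $\alpha(p_\theta(v))=\lim_i t_i\alpha(\mu(\ga_i))>0$ for every $\alpha\in\theta$; hence $p_\theta(v)$ is a nonzero element of $\L_\theta$ and $\psi(v)=\psi(p_\theta(v))>0$. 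Thus $\psi$ is tangent to $\psi_\Ga^{\theta\cup\i(\theta)}$, and $\nu_{\psi,\theta\cup\i(\theta)}$ exists and is unique by Theorem~\ref{thm.uniquePS}.

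Next I would set $\nu:=p_*\nu_{\psi,\theta\cup\i(\theta)}$, a Borel probability measure supported on $p(\La_{\theta\cup\i(\theta)})=\La_\theta$, and check that it is $(\Ga,\psi)$-conformal. The crux is the compatibility of the $\psi$-weighted Busemann cocycles under $p$: given $\xi'\in\F_{\theta\cup\i(\theta)}$ with image $\xi:=p(\xi')\in\F_\theta$, pick $\xi_0\in\F$ lying over $\xi'$; since $P\subseteq P_{\theta\cup\i(\theta)}\subseteq P_\theta$, the same $\xi_0$ lies over $\xi$, so by the definition \eqref{Bu} of the $\theta$- and $(\theta\cup\i(\theta))$-Busemann maps together with the fact that $\psi$ is fixed by both $p_\theta$ and $p_{\theta\cup\i(\theta)}$ (see \eqref{att} and \eqref{inc}), for all $\ga\in\Ga$
$$\psi\bigl(\beta_\xi^\theta(e,\ga)\bigr)=\psi\bigl(\beta_{\xi_0}(e,\ga)\bigr)=\psi\bigl(\beta_{\xi'}^{\theta\cup\i(\theta)}(e,\ga)\bigr).$$
Using that $p$ is $\Ga$-equivariant (so $\ga_*\nu=p_*(\ga_*\nu_{\psi,\theta\cup\i(\theta)})$), the conformality of $\nu_{\psi,\theta\cup\i(\theta)}$, and a change of variables through the homeomorphism $p$, this identity yields $\ga_*\nu(B)=\int_B e^{\psi(\beta_\xi^\theta(e,\ga))}\,d\nu(\xi)$ for every Borel $B\subseteq\La_\theta$ and $\ga\in\Ga$, which is precisely the defining relation of a $(\Ga,\psi)$-conformal measure. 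As $\nu$ is supported on $\La_\theta$, it is a $(\Ga,\psi)$-Patterson--Sullivan measure, and Theorem~\ref{thm.uniquePS} forces $\nu=\nu_{\psi,\theta}$.

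I expect the only step requiring real care to be the preliminary one --- that $\psi$ stays a tangent form when $\theta$ is replaced by $\theta\cup\i(\theta)$ --- which is exactly where the Anosov hypothesis enters, through the lower bound $\alpha(\mu(\ga))\gtrsim|\ga|$ that prevents $p_\theta$ from collapsing part of the enlarged limit cone. The remaining steps are formal: the Busemann identity above is immediate from the definitions, and transporting a conformal measure along an equivariant homeomorphism is a routine change of variables.
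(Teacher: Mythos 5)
Your overall route is exactly the intended one: the paper gives no argument of its own but cites \cite[Lemma 9.5]{KOW_indicators}, whose proof is precisely this push-forward argument --- the cocycle compatibility $\psi(\beta^\theta_{p(\xi')}(e,\ga))=\psi(\beta_{\xi_0}(e,\ga))=\psi(\beta^{\theta\cup\i(\theta)}_{\xi'}(e,\ga))$, valid because $\psi\circ p_\theta=\psi$ and $\psi\circ p_{\theta\cup\i(\theta)}=\psi$, makes $p_*\nu_{\psi,\theta\cup\i(\theta)}$ a $(\Ga,\psi)$-Patterson--Sullivan measure on $\La_\theta$, and Theorem \ref{thm.uniquePS} identifies it with $\nu_{\psi,\theta}$. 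That part of your write-up, including the equivariance/change-of-variables step, is fine.

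The one step that is wrong as written is the positivity check. You assert $\alpha(p_\theta(v))=\lim_i t_i\,\alpha(\mu(\ga_i))$ for $\alpha\in\theta$, but a simple root is in general \emph{not} $p_\theta$-invariant (only elements of $\fa_\theta^*$, such as the fundamental weights, satisfy $\varphi\circ p_\theta=\varphi$); e.g.\ for $\SL_3(\R)$ and $\theta=\{\alpha_1\}$ one has $\alpha_1\circ p_\theta=\alpha_1+\tfrac12\alpha_2\neq\alpha_1$. So the displayed equality fails, and with it your stated justification that $p_\theta(v)\neq 0$. The conclusion is nevertheless correct and easily repaired: since $p_\theta$ is the average over the Weyl subgroup fixing $\fa_\theta$ pointwise, which sends each $\alpha\in\theta$ to a positive root, one has $\alpha\circ p_\theta\ge c\,\alpha$ on $\fa^+$ for some $c>0$; alternatively, test $p_\theta(v)$ against $\omega_\alpha\in\fa_\theta^*$ and use that $\omega_\alpha$ is a nonnegative combination of simple roots with positive $\alpha$-coefficient, so $\omega_\alpha(v)=\omega_\alpha(p_\theta(v))>0$ once $\alpha(v)>0$, which follows either from your Anosov-inequality computation applied to $\liminf t_i\alpha(\mu(\ga_i))$ or directly from Theorem \ref{thm.anosovbasic}(2) applied to $\Ga$ viewed as $\theta\cup\i(\theta)$-Anosov. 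With this one-line fix, $\psi>0$ on $\L_{\theta\cup\i(\theta)}-\{0\}$, Lemma \ref{lem.tangentcritexp} gives tangency over $\theta\cup\i(\theta)$, and the rest of your argument goes through.
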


\subsection*{Gromov hyperbolic space and quasi-isometry} We collect a few basic facts about $\theta$-Anosov subgroups which will be used repeatedly.

Recall that a geodesic metric space\footnote{that is, there exists a geodesic between any two distinct points.} $(Z, d_Z)$ is called a Gromov hyperbolic space if it satisfies a uniformly thin-triangle property, that is, there exists $T > 0$ such that for any geodesic triangle in $Z$, one side of the triangle is contained in the $T$-neighborhood of the union of two other sides.
We denote by $\partial Z$ the Gromov boundary of $Z$, which is the space of  equivalence classes of geodesic rays in $Z$. For any $z_1\ne z_2\in Z\cup \partial Z$,
there may be more than one geodesic connecting $z_1$ and $z_2$.
By the notation $[z_1, z_2]$, we mean  ``a"  geodesic in $Z$ connecting $z_1$  to $z_2$.
For $w\in Z$, the nearest-point projection of $w$ to a geodesic $[z_1,z_2]$ is
any point $w'\in [z_1, z_2]$ satisfying $d_Z(w,w')=\inf \{ d_Z(w, z): z\in [z_1, z_2]\}$. This is coarsely well-defined. We refer to \cite{Bridson1999metric} for basics on Gromov hyperbolic spaces.
Recall that $d$ denotes the Riemannian distance function on $X=G/K$.

\begin{theorem}[{\cite[Corollary 1.6]{KLP_2018}, \cite[Proposition 5.16, Lemma 5.23]{KLP_Anosov}, see also \cite{GGKW_gt}}]
    \label{thm.anosovbasic}\label{homeo}
Let $\Ga$ be a $\theta$-Anosov subgroup. Fix a word metric $\d_\Ga$ on $\Ga$ with respect to a finite symmetric generating set. We have:
    \begin{enumerate}
        \item $(\Ga, \d_\Ga)$ is a Gromov hyperbolic space\footnote{note that the metric space $(\Ga, \d_\Ga)$ is clearly a proper geodesic space};
        
        \item  $\L_\theta-\{0\}$ is contained in the relative interior of $\fa_\theta^+$ in $\fa_\theta$;
        
        \item The orbit map
$ (\G,\d_\G) \to (\Ga o, d)$ given by $ \g \mapsto \g o $
is a quasi-isometric embedding, i.e.,
 there exist $Q=Q_\Ga\ge 1$ such that for all $\ga_1, \ga_2\in \Ga$, 
$$ Q^{-1}\cdot  \d_{\Ga}(\ga_1, \ga_2) -Q \le d(\ga_1 o, \ga_2 o) \le Q \cdot \d_{\Ga}(\ga_1, \ga_2) +Q ;$$ 
\item 
The orbit map $\Ga \to \Ga o$ uniquely extends  to a $\Ga$-equivariant
continuous map $f:\Ga \cup  \partial \Ga \to  \Ga o \cup \La_{\theta}$ and $f|_{\partial \Ga}$ is a homeomorphism onto $\La_\theta$.
For $\theta=\i(\theta)$, $f$ maps two distinct points of $\partial \Ga$ to points in general position.  
\end{enumerate}
\end{theorem}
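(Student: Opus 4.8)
The plan is to prove the four assertions in the order (3), (2), (1), (4), since each builds on the previous ones, and to isolate the one genuinely deep input --- the higher-rank Morse lemma of Kapovich--Leeb--Porti --- on which parts (1) and (4) ultimately rest.

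\emph{Part (3): the orbit map is a quasi-isometry.} The upper bound is soft: a fixed finite symmetric generating set $S$ maps to a bounded subset of $X$, so $d(o, so) \le R_0$ for all $s \in S$; by the triangle inequality and the $\Ga$-invariance of $d$ on $X = G/K$, one gets $d(\ga_1 o, \ga_2 o) \le R_0\, \d_\Ga(\ga_1, \ga_2)$. For the lower bound, by $\Ga$-invariance it suffices to bound $\d_\Ga(e, \ga)$ by $d(o, \ga o) = \|\mu(\ga)\|$. Fix $\alpha \in \theta$: since $\alpha$ is a fixed linear functional, $\alpha(\mu(\ga)) \le \|\alpha\|_* \|\mu(\ga)\|$, so the Anosov inequality $\alpha(\mu(\ga)) \ge C^{-1}|\ga| - C$ together with the bi-Lipschitz equivalence of $|\cdot|$ and $\d_\Ga(e, \cdot)$ yields $\d_\Ga(e, \ga) \le Q\|\mu(\ga)\| + Q$ for a suitable $Q = Q_\Ga$. (This is standard; it is part of the equivalences recorded in \cite{KLP_Anosov} and \cite{GGKW_gt}.)

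\emph{Part (2): $\Ltm \subset \inte \fa_\theta^+$.} Let $u \in \Ltm$, written $u = \lim_i t_i \mu_\theta(\ga_i)$ with $t_i \to 0$, $\d_\Ga(e, \ga_i) \to \infty$ and $\|u\| = \lim_i t_i\|\mu_\theta(\ga_i)\| \ne 0$; in particular $\|\mu_\theta(\ga_i)\| \to \infty$, hence $\|\mu(\ga_i)\| \to \infty$, and by (3) we have $\d_\Ga(e, \ga_i) \asymp \|\mu(\ga_i)\|$. For $\alpha \in \theta$, combining the Anosov inequality with (3) produces a constant $c > 0$ with $\alpha(\mu(\ga_i)) \ge c\|\mu(\ga_i)\| - c^{-1}$ for all $i$; and since $p_\theta$ moves a dominant vector toward the interior of $\fa_\theta^+$ one has $\alpha(\mu_\theta(g)) \ge \alpha(\mu(g))$ for $\alpha \in \theta$ and all $g$, so $\alpha(\mu_\theta(\ga_i)) \gtrsim \|\mu(\ga_i)\| \asymp \|\mu_\theta(\ga_i)\|$. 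Multiplying by $t_i$ and letting $i \to \infty$ gives $\alpha(u) > 0$ for every $\alpha \in \theta$. As $\fa_\theta^+$ is the subset of $\fa_\theta$ cut out precisely by $\{\alpha \ge 0 : \alpha \in \theta\}$ (the roots in $\Pi - \theta$ vanish on $\fa_\theta$), this places $u$ in $\inte \fa_\theta^+$.

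\emph{Parts (1) and (4): hyperbolicity and the boundary extension.} By (3) the orbit map is a quasi-isometric embedding, and the Anosov inequality forces it to be \emph{uniformly regular} --- $\min_{\alpha \in \theta}\alpha(\mu(\ga))$ grows linearly in $\d_\Ga(e,\ga)$ --- so its image is a uniformly regular, undistorted ("URU"/Morse) quasigeodesic subset of $X$. Here I would invoke the higher-rank Morse lemma of Kapovich--Leeb--Porti \cite{KLP_2018}, \cite{KLP_Anosov}: such subsets enjoy a uniform thin-triangle property coming from the associated $\theta$-regular directions, and pulling this back through the quasi-isometry of (3) shows $(\Ga, \d_\Ga)$ has a uniform thin-triangle property, i.e. is Gromov hyperbolic (this is exactly \cite[Prop.~5.16, Lem.~5.23]{KLP_Anosov}). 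For (4), the Anosov property also supplies a continuous, $\Ga$-equivariant, transverse, dynamics-preserving limit map $\partial\Ga \to \ft$ with image $\lat$ --- one of the standard equivalent formulations (\cite{GGKW_gt}, \cite{KLP_Anosov}, \cite{KLP_2018}) --- and transversality is precisely the assertion that distinct boundary points go to points in general position. Since the orbit map is a Morse quasi-isometric embedding, it is boundary-continuous, so it extends to a homeomorphism $f : \Ga \cup \partial\Ga \to \Ga o \cup \lat$ for the Gromov compactification topology. The main obstacle is the higher-rank Morse lemma itself, which underlies (1) and (4); I would not reprove it but cite \cite{KLP_2018}, \cite{KLP_Anosov}. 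Everything else is either elementary (part (3)) or a short asymptotic argument resting on it (part (2)).
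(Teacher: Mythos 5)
The paper gives no proof of this theorem at all --- it is quoted as background with the proof deferred to \cite{KLP_2018}, \cite{KLP_Anosov} and \cite{GGKW_gt} --- and your proposal is correct and consistent with that treatment: parts (2) and (3) do follow, as you argue, directly from the defining Anosov inequality (the inequality $\alpha\circ p_\theta\ge \alpha$ on $\fa^+$ for $\alpha\in\theta$ that you invoke is a true standard fact, though your one-line justification is more slogan than proof), while the genuinely deep parts (1) and (4) are delegated, exactly as in the paper, to the Kapovich--Leeb--Porti Morse-lemma machinery. So the proposal is essentially the same route the paper takes, with the only caveat that in (1) the hyperbolicity of $(\Ga,\d_\Ga)$ should be read as citing KLP's theorem that uniformly regular quasi-isometrically embedded geodesic spaces are Gromov hyperbolic, rather than as literally pulling back thin triangles from the (non-hyperbolic) ambient symmetric space.
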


 We will henceforth identity $\partial \Ga$ and $\La_\theta$ using $f$. For any $\xi\ne \eta \in \Ga \cup \partial \G$, 
note that $f([\xi, \eta])=[\xi, \eta] o$ is the image of $[\xi, \eta]$ under the orbit map.

\section{Metric-like functions on $\Ga$-orbits and diamonds} \label{sec.metriclike}
We fix a non-empty subset $\theta \subset \Pi$. In this section, we assume that $\theta$ is symmetric, i.e., $\theta = \i(\theta)$. Recall the notation $X=G/K$ and $o=[K]\in X$.

For a linear form $\psi \in \fa_{\theta}^*$,  define $\d_{\psi} : X \times X \to \R$ as follows:  for $g, h \in G$, \be \label{eqn.defmetricsymm}
\d_{\psi}(g o, h o) :=\psi(\mu(g^{-1} h))=\psi(\mu_{\theta}(g^{-1} h)).
\ee
Since the Cartan projection $\mu$ is bi-$K$-invariant,
$\d_\psi$ is a well-defined left $G$-invariant function.

The main goal of this section is to prove the following theorem saying that  when $\Ga$ is $\theta$-Anosov,
$\d_\psi$ behaves like a metric, restricted
to the $\Ga$-orbit $\Ga o$ for a proper class of $\psi$'s:
\begin{theorem} [Coarse triangle inequality]\label{thm.triangle}
 Let $\Ga$ be a  $\theta$-Anosov subgroup. Let $\psi\in \fat$ be such that $\psi>0$ on $\Ltm$.  Then there exists a constant $D=D_\psi>0$ such that for all $\ga_1, \ga_2,\ga \in \Ga$,
    $$\d_\psi(\ga_1 o, \ga_2 o)\le \d_\psi (\ga_1 o, \ga o)+\d_\psi (\ga o, \ga_2 o)+D.$$
\end{theorem}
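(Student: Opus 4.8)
The plan is to reduce the coarse triangle inequality to a statement about the geometry of the symmetric space $X = G/K$ and then invoke the higher rank Morse lemma of Kapovich--Leeb--Porti. First I would observe that, since $\psi \in \fa_\theta^*$ is positive on $\L_\theta - \{0\}$ and $\L_\theta$ is a closed cone strictly contained in $\inte\fa_\theta^+$ (Theorem \ref{thm.anosovbasic}(2)), there is a constant $c > 0$ so that $\psi(u) \ge c\|u\|$ for all $u$ in a fixed closed cone $\C$ containing the $\theta$-limit cone in its interior. The crucial geometric input is that for $\gamma_1, \gamma_2, \gamma \in \Gamma$, the Cartan projections $\mu_\theta(\gamma_1^{-1}\gamma)$, $\mu_\theta(\gamma^{-1}\gamma_2)$ all lie (up to bounded error) in such a cone $\C$, because the $\theta$-Anosov property forces uniform $\theta$-regularity along the group; so on the relevant vectors $\psi$ is both bounded above by $\|\cdot\|$ (always true) and comparable to $\|\cdot\|$.

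Next I would set up the ``diamond'' machinery referenced in the section header. For $g, h \in G$ with $g^{-1}h$ $\theta$-regular, the $\theta$-diamond $\Diamond_\theta(go, ho)$ is the intersection of the Weyl cone from $go$ toward $(g^{-1}h)^+$ with the Weyl cone from $ho$ toward $(g^{-1}h)^-$; the key elementary fact is that $\mu_\theta(g^{-1}h)$ records, via the Busemann/Cartan data, the ``$\fa_\theta$-length'' of this diamond, and that $\d_\psi(go, ho) = \psi(\mu_\theta(g^{-1}h))$ is the associated $\psi$-length. I would then prove a \emph{concatenation estimate}: if $x, y, z \in \Gamma o$ with $y$ lying within bounded $d$-distance of a geodesic from $x$ to $z$ in $X$ (which will be guaranteed below), then $\d_\psi(x,z) \le \d_\psi(x,y) + \d_\psi(y,z) + O(1)$. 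This is where the Morse lemma enters: by Theorem \ref{thm.anosovbasic}(3) the orbit map $\Gamma \to \Gamma o$ is a quasi-isometric embedding, so a word geodesic $[\gamma_1,\gamma_2]$ in $\Gamma$ maps to a uniform quasigeodesic in $X$; the Kapovich--Leeb--Porti Morse lemma (their Theorem on $\theta$-regular quasigeodesics, cited in the excerpt) then says this quasigeodesic stays uniformly Hausdorff-close to a genuine $\theta$-diamond $\Diamond_\theta(\gamma_1 o, \gamma_2 o)$, and in particular any $\gamma o$ on the word geodesic lies uniformly close to that diamond. Then additivity of the $\fa_\theta$-valued Busemann cocycle along the diamond, combined with Lemma \ref{lem.cptcartan} to absorb the bounded Hausdorff error into an additive constant, yields $\psi(\mu_\theta(\gamma_1^{-1}\gamma_2)) = \psi(\mu_\theta(\gamma_1^{-1}\gamma)) + \psi(\mu_\theta(\gamma^{-1}\gamma_2)) + O(1)$ whenever $\gamma$ is on $[\gamma_1,\gamma_2]$.

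Finally, to get the inequality for \emph{arbitrary} $\gamma \in \Gamma$ (not just those on a chosen geodesic $[\gamma_1,\gamma_2]$), I would use $\delta$-hyperbolicity of $(\Gamma, \d_\Gamma)$: let $\gamma'$ be a nearest-point projection of $\gamma$ onto a word geodesic $[\gamma_1,\gamma_2]$. Thin triangles give $\d_\Gamma(\gamma_1,\gamma') + \d_\Gamma(\gamma',\gamma_2) = \d_\Gamma(\gamma_1,\gamma_2)$ exactly, and $\d_\Gamma(\gamma_1,\gamma) \ge \d_\Gamma(\gamma_1,\gamma') - O(1)$, $\d_\Gamma(\gamma,\gamma_2) \ge \d_\Gamma(\gamma',\gamma_2) - O(1)$. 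Pushing these through the quasi-isometry and using the two-sided comparison $\d_\psi(\cdot,\cdot) \asymp d(\cdot o,\cdot o)$ on $\Gamma$-orbit points (valid because the vectors stay in the cone $\C$) converts the word-metric inequalities into $\d_\psi(\gamma_1 o,\gamma_2 o) \le \d_\psi(\gamma_1 o,\gamma o) + \d_\psi(\gamma o,\gamma_2 o) + D$. The main obstacle I anticipate is the second step: one must be careful that the Morse-lemma closeness is measured in a way compatible with the \emph{$\fa_\theta$-valued} (not merely scalar) data, so that the error in $\mu_\theta$, and hence in $\psi \circ \mu_\theta$, is genuinely additive and bounded; handling this cleanly is precisely why the diamond formalism and Lemma \ref{lem.cptcartan} are needed, and it is the technical heart of the argument.
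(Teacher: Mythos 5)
Your outline follows the paper's strategy closely (cone-regularity of the orbit map, diamonds, the KLP Morse lemma, additivity of $\psi\circ\mu_\theta$ along diamonds with errors absorbed by Lemma \ref{lem.cptcartan}, and then Gromov hyperbolicity of $(\Ga,\d_\Ga)$ to treat an arbitrary middle point), and the first two steps are essentially Corollary \ref{tri1} of the paper. However, your final step contains a genuine gap. You derive the word-metric inequalities $\d_\Ga(\ga_1,\ga)\ge \d_\Ga(\ga_1,\ga')-O(1)$ and $\d_\Ga(\ga,\ga_2)\ge \d_\Ga(\ga',\ga_2)-O(1)$ for the nearest-point projection $\ga'$ of $\ga$ onto $[\ga_1,\ga_2]$, and then propose to ``push these through the quasi-isometry'' using $\d_\psi(\cdot,\cdot)\asymp d(\cdot\,o,\cdot\,o)\asymp\d_\Ga$. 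But these comparisons are only quasi-isometric (multiplicative constant $Q$ plus additive constant), so from $\d_\Ga(\ga_1,\ga)\ge\d_\Ga(\ga_1,\ga')-O(1)$ you can only conclude $\d_\psi(\ga_1 o,\ga o)\ge Q^{-2}\,\d_\psi(\ga_1 o,\ga' o)-O(1)$. The multiplicative loss $Q^{-2}$ is fatal: combined with the exact additivity $\d_\psi(\ga_1 o,\ga_2 o)=\d_\psi(\ga_1 o,\ga' o)+\d_\psi(\ga' o,\ga_2 o)+O(1)$ along $[\ga_1,\ga_2]$, it yields a defect growing linearly in $\d_\psi(\ga_1 o,\ga' o)$, not a uniform additive constant $D$. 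What is needed is the \emph{additive} bound $\d_\psi(\ga_1 o,\ga' o)\le\d_\psi(\ga_1 o,\ga o)+O(1)$ (and its analogue for $\ga_2$), and this cannot be extracted from word-metric estimates via a quasi-isometry.

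The fix must be carried out at the level of $\d_\psi$ itself, which is exactly what the paper's proof of Theorem \ref{thm:CTI} does. By hyperbolicity, the projection point is within bounded $\d_\Ga$-distance of points $y_1\in[\ga_1,\ga]$ and $y_2\in[\ga,\ga_2]$; one concatenates $[\ga_1,y_1]$, $[y_1,y_2]$, $[y_2,\ga_2]$ into a uniform quasi-geodesic and applies the diamond-additivity estimate (Corollary \ref{tri1}) along it to get $\d_\psi(\ga_1 o,\ga_2 o)\le\d_\psi(\ga_1 o,y_1 o)+\d_\psi(y_2 o,\ga_2 o)+O(1)$. Then, because $y_1$ lies \emph{on} the geodesic $[\ga_1,\ga]$, Corollary \ref{tri1} applied to that geodesic gives $\d_\psi(\ga_1 o,y_1 o)+\d_\psi(y_1 o,\ga o)\le\d_\psi(\ga_1 o,\ga o)+O(1)$, and the a priori almost-nonnegativity $\d_\psi\ge -D_2$ on orbit pairs (a consequence of $\C$-regularity and $\psi>0$ on $\C-\{0\}$, which you did not isolate but which is essential here) lets one discard $\d_\psi(y_1 o,\ga o)$, yielding $\d_\psi(\ga_1 o,y_1 o)\le\d_\psi(\ga_1 o,\ga o)+O(1)$; similarly for $y_2$. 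If you replace your last paragraph by this argument (equivalently: compare $\ga'$ to a nearby point of $[\ga_1,\ga]$ via Lemma \ref{lem.cptcartan} and then use $\d_\psi$-additivity along $[\ga_1,\ga]$, rather than transporting additive word-metric inequalities through the orbit quasi-isometry), your proof becomes the paper's.
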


Indeed, we prove Theorem \ref{thm.triangle} in a greater generality where the orbit $\Ga o$ is replaced by the image of a {\it uniformly regular} quasi-isometric embedding of a geodesic metric space into $X$.

\subsection*{Coarse triangle inequalities for uniformly regular quasi-isometric embeddings} 
We set $\cal W_\theta$ to be the set of all Weyl elements which fix $\fa_\theta$ pointwise.
We define a closed cone $\C$ in $\fa^+$ to be $\theta$-admissible if the following three conditions hold:
\begin{enumerate}
    \item $\C$ is $\i$-invariant: $\i(\C)=\C$;
    \item $\cal W_\theta \cdot \C=\bigcup_{w\in \cal W_\theta} \op{Ad}_w \C $ is convex;
    \item $\C\cap \left(\bigcup_{\alpha\in \theta} \ker \alpha\right) =\{0\}$.
\end{enumerate}

For a $\theta$-admissible cone $\C$, we say that an ordered pair $(x_1, x_2)$ of distinct points in $ X$ is {\em $\C$-regular} if for $g_1, g_2 \in G$ such that $g_1 o = x_1$ and $g_2 o = x_2$, we have 
$$\mu(g_1^{-1} g_2) \in \C.$$ In this case, $x_2 = g_2 o \in g_1 K (\exp \C) o$ and hence for some $g \in g_1 K$, $x_1 = g_1 o = go$ and $x_2 \in g (\exp \C) o$. Note that if $(x_1, x_2)$ is $\C$-regular, then $(x_2, x_1)$ is $\i(\C)$-regular and hence  $\C$-regular by the $\i$-invariance of $\C$. 

\begin{definition} Let
     $(Z, d_Z)$ be a metric space and $f:Z\to X$ be a map.
     For a cone $\C \subset \fa^+$ and a constant $B\ge 0$,
     $f$ is called {\em $(\C,B)$-regular} if the pair 
    $(f(z_1),f(z_2))$ is $\C$-regular for all $z_1, z_2 \in Z$ with $d_Z(z_1,z_2) \ge B$. We simply say $f$ is $\cal C$-regular if it is $(\cal C, B)$-regular for some $B \ge 0$.
\end{definition}

   Theorem \ref{thm.triangle} will be deduced as a special case of the following theorem: we write $\C_\theta=p_\theta(\C)$.
\begin{theorem}\label{thm:CTI}
    Let $Z$ be a geodesic metric space and
    $\C\subset \fa^+$ a $\theta$-admissible cone.
    Let $f: Z\to X$ be a $\C$-regular
    quasi-isometric embedding\footnote{A map $f:(Z, d_Z) \to (Y, d_Y) $ between metric spaces is called a $Q$-quasi-isometric embedding for $Q \ge 1$ if for all $z_1, z_2\in Z$, $ Q^{-1}  d_Z(z_1, z_2) -Q \le d_Y(f(z_1), f(z_2))\le Q d_Z(z_1, z_2) +Q .$ A map is called a quasi-isometric embedding if it is a $Q$-quasi-isometric embedding for some $Q \ge 1$.}.
    If $\psi \in \fat$ is positive on $\C -\{0\}$, then there exists a constant $D=D_\psi \ge 0$ such that for all $x_1,x_2,x_3\in f(Z)$,
    \[
     \d_\psi (x_1,x_3) \le \d_\psi (x_1,x_2) + \d_\psi (x_2,x_3) + D.
    \]
\end{theorem}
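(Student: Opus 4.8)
The plan is to reduce the statement to a single key geometric lemma in the symmetric space $X$, following the Morse-lemma philosophy sketched in the introduction. Write $g_i\in G$ with $g_i o=x_i$ for $i=1,2,3$. Since $f$ is a $\C$-regular quasi-isometric embedding of a geodesic space, the images $x_1,x_2,x_3$ lie on (or uniformly close to) a geodesic triangle in $f(Z)$ whose sides are uniformly regular; by the Kapovich--Leeb--Porti Morse lemma (Theorem \ref{thm:ML}, cited in the excerpt), each such side is uniformly Hausdorff-close to a diamond $\Diamond_{y,z}$ in $X$, where a diamond is the intersection of two opposite Weyl cones with tips at the endpoints. The first step is therefore to record the relevant properties of diamonds: a diamond $\Diamond_{x_1,x_3}$ associated to a $\C$-regular pair has all of its ``interior'' directions in $\inte\fa_\theta^+$, and the Busemann/Cartan data along it varies linearly. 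In particular I would establish that for $p$ on $[x_1,x_3]\subset f(Z)$, up to bounded error $p$ lies in $\Diamond_{x_1,x_3}$ and $\mu_\theta(g_1^{-1}g_3)=\mu_\theta(g_1^{-1}p')+\mu_\theta(p'^{-1}g_3)$ exactly when $p'$ is an exact diamond point, so that applying $\psi$ gives an \emph{exact} additivity $\d_\psi(x_1,x_3)=\d_\psi(x_1,p')+\d_\psi(p',x_3)$ along a diamond geodesic.

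\textbf{Key steps.} (i) Fix a geodesic triangle in $Z$ with vertices mapping near $x_1,x_2,x_3$, and let $m\in Z$ be a point on the side $[z_1,z_3]$ realizing (coarsely) the nearest-point projection of $z_2$; set $p=f(m)$. By $\delta$-thinness of the triangle in $Z$ and the quasi-isometry, $d(p,[x_1,x_2]f)$ and $d(p,[x_2,x_3]f)$ are uniformly bounded, hence $d(x_2,p)\le d(x_2,[x_1,x_3]f)+O(1)$ and more precisely $d(x_1,x_2)\ge d(x_1,p)-O(1)$, $d(x_2,x_3)\ge d(p,x_3)-O(1)$ from the triangle geometry in the hyperbolic space $Z$. (ii) Apply the Morse lemma to the $\C$-regular side $[x_1,x_3]f$: it stays within bounded Hausdorff distance of the diamond $\Diamond_{x_1,x_3}$, so there is a diamond point $p'$ with $d(p,p')\le C_0$. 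Along the diamond, $\mu_\theta$ is additive at $p'$, so $\d_\psi(x_1,x_3)=\d_\psi(x_1,p')+\d_\psi(p',x_3)$. (iii) Use Lemma \ref{lem.cptcartan} (the compact-perturbation bound for $\mu$) together with $\psi$ being a fixed bounded linear form to pass from $p'$ to $p$: $\abs{\d_\psi(x_1,p)-\d_\psi(x_1,p')}\le \norm\psi\cdot C(d(p,p'))\le D_1$, and similarly for $\d_\psi(p,x_3)$. (iv) Finally compare $\d_\psi(x_1,p)$ with $\d_\psi(x_1,x_2)$ and $\d_\psi(p,x_3)$ with $\d_\psi(x_2,x_3)$: since $p$ is $O(1)$-close to the side $[x_1,x_2]f$ which is itself $\C$-regular and Morse-close to $\Diamond_{x_1,x_2}$, I get $\d_\psi(x_1,x_2)=\d_\psi(x_1,p'')+\d_\psi(p'',x_2)\ge \d_\psi(x_1,p)-D_2$ because $\psi>0$ on $\C_\theta-\{0\}$ forces $\d_\psi(p'',x_2)\ge -O(1)$ (in fact $\ge c\, d(p'',x_2)-O(1)$), using that $p''$ is within bounded distance of $p$. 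Chaining the inequalities yields $\d_\psi(x_1,x_3)\le \d_\psi(x_1,x_2)+\d_\psi(x_2,x_3)+D$ with $D=D_1+D_2+\cdots$ depending only on $\psi$, $\C$, $Q$, and the Morse constants.

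\textbf{Main obstacle.} The delicate point is step (ii)--(iii): making the additivity of $\psi\circ\mu_\theta$ along a diamond \emph{exact} and then controlling the error incurred by replacing the actual triangle side with its associated diamond. This is where the three defining conditions of a $\theta$-admissible cone $\C$ are essential --- $\i$-invariance makes $\C$-regularity symmetric in the two endpoints, the convexity of $\cal W_\theta\cdot\C$ is what guarantees the diamond sits in the ``cone of additivity'' so that $\mu_\theta(g_1^{-1}g_3)=\mu_\theta(g_1^{-1}p')+\mu_\theta(p'^{-1}g_3)$ holds on the nose for diamond points (this is a statement about cones in $\fa_\theta$, proved using that $p_\theta$ is $\cal W_\theta$-equivariant-invariant), and condition (3) together with Theorem \ref{thm.anosovbasic}(2) ensures $\C_\theta\subset\inte\fa_\theta^+$ so that $\psi>0$ on $\C_\theta-\{0\}$ actually gives a \emph{linear} lower bound $\psi(\mu_\theta(\cdot))\gtrsim d(\cdot,\cdot)$, which is needed to absorb additive errors into $D$ rather than letting them compound multiplicatively. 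I expect the bulk of the work --- and the part most likely to hide subtleties --- to be the lemma identifying a diamond in $X$ with an explicit cone-additivity statement for $\mu_\theta$, and verifying that the Morse lemma can be applied uniformly along all three sides simultaneously. Once that is in place, Theorem \ref{thm.triangle} follows immediately by taking $Z=(\Ga,\d_\Ga)$, $f$ the orbit map (a $\C$-regular quasi-isometric embedding for a suitable $\theta$-admissible $\C$ containing $\L$ by Theorem \ref{thm.anosovbasic}), and noting $\L_\theta\subset\C_\theta$ so $\psi>0$ on $\L_\theta-\{0\}$ implies $\psi>0$ on the relevant directions.
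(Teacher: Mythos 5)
Your proposal is correct and takes essentially the same route as the paper's proof: coarse additivity of $\d_\psi$ along $\C$-regular quasi-geodesics obtained from the KLP Morse lemma, exact additivity of $\d_\psi$ on diamonds, and Lemma \ref{lem.cptcartan} to absorb bounded perturbations, combined with Gromov hyperbolicity of $Z$ and the lower bound $\d_\psi \ge -O(1)$ coming from $\C$-regularity and positivity of $\psi$. The only differences are cosmetic: the paper routes through a concatenated quasi-geodesic passing near the projection of $z_2$ onto $[z_1,z_3]$ rather than using the projection point itself, and it explicitly enlarges $\C$ to a $\theta$-admissible cone $\C'$ with $\psi>0$ on $\C'-\{0\}$ before invoking the Morse lemma, a technical step your plan should make explicit when written up.
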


We continue to use notation $\inte\fa^+$ and $\inte\fa_\theta^+$ for relative interiors in the topology of $\fa$ and $\fa_\theta$ respectively.
Unless mentioned otherwise, for any {\it proper} cone $\C$ in $\fa^+$ (resp. $\fa_\theta^+$), we denote by $\inte \C$ the interior of $\C$ in the relative topology of $\fa^+$ (resp. $\fa_\theta^+$). 

\subsection*{Proof of Theorem \ref{thm.triangle} assuming Theorem \ref{thm:CTI}} Let $\psi\in \fa_\theta^*$ be such that $\psi>0$ on $\L_\theta-\{0\}$. We will construct a $\theta$-admissible cone $\cal C\subset \fa^+$  such that $\L-\{0\}\subset \inte \C $ and $\psi$ is positive on $\C -\{0\}$.

Since $\theta=\i(\theta)$ by hypothesis, it follows from \eqref{inverse} that $\i|_{\fa_\theta}$ is an involution preserving $\L_\theta$.
 Since $\psi$ is positive on $\L_\theta-\{0\}$ and $\L_{\theta} - \{ 0\} \subset \inte \fa_{\theta}^+$ (Theorem \ref{thm.anosovbasic}(2)), we can choose a 
closed convex cone $\cal C_0  \subset {\rm int}\, \fa_{\theta}^+ \cup \{0\}$ satisfying \begin{enumerate}
    \item  $\L_{\theta}-\{0\} \subset {\rm int}\, \cal C_0$; 
    \item $\i(\C_0)=\C_0$;
\item $\psi>0$ on $\C_0 - \{0\}$. 
\end{enumerate}

 We observe that  $\cal W_\theta \cdot \fa^+ $ is equal to the union of all Weyl chambers containing
  $\fa_\theta^+$, and hence is a convex cone by \cite[Lemma 2.6]{KLP_Anosov}.
  
Let $\alpha\in \theta$. It follows from \eqref{eqn.defanosov} that
$ \L\cap \ker \alpha =\{0\}$. Since  $\ker \alpha \cap \fa^+$ is contained in the boundary of $\cal W_\theta \cdot \fa^+$, it follows from the convexity of $\cal W_\theta\cdot \fa^+$ that
$\cal W_\theta \cdot \fa^+$ is contained in the half space $\{\alpha\ge 0\}.$ Hence both $\inte \fa_\theta^+$ and  $\cal W_\theta \cdot \L -\{0\}$ are contained in the open half-space $\{\alpha>0\}$. Therefore
we can find a linear form $h_\alpha\in \fa^*$ such that
$$(\ker \alpha\cap \fa^+ ) -\{0\} \subset \{h_\alpha<0\} \quad\text{and}\quad 
(\cal C_0\cup  \cal W_\theta \cdot \L) -\{0\}\subset 
\{h_\alpha >0\} .$$

Now set $H:=\bigcap_{\alpha\in \theta, w\in \cal W_\theta} \{h_\alpha\circ \op{Ad}_{w} \ge 0\}$, which is clearly a $\cal W_\theta$-invariant convex cone. By our choice of $h_\alpha$,
$\inte H$ contains $\L-\{0\}$. Since $\theta=\i(\theta)$ and hence
$\cal W_\theta =\cal W_{\i(\theta)}$, we have that $\i(H)$ is also a $\cal W_\theta$-invariant convex cone whose interior contains $\L-\{0\}=\i(\L)-\{0\}$.

Define \[\C\coloneqq 
p_\theta^{-1}(\C_0) \cap \fa^+\cap  H \cap \i(H).\]
By construction, we have $
 \C \cap \left(\bigcup_{\alpha \in \theta} \ker \alpha \right)= \{0\}$.
 In particular, $\C$ is a proper closed cone in $\fa^+$.
Then  $\inte \cal C$ contains $\L-\{0\}$.
Since $\psi>0$ on $\C_0$ and $\psi$ is $p_\theta$-invariant,
$\psi>0$ on $\cal C$. Since $\i(\cal C_0)=\cal C_0$, we
have $\i(\cal C)=\cal C$.
Using the fact that $p_\theta : \fa \to \fa_\theta$ is $\cal W_\theta$-equivariant, we  have
that $$\cal W_\theta \cdot \C = \cal W_\theta \cdot  (p_\theta^{-1}(\C_0) \cap \fa^+) \cap H \cap \i(H)
    = p_\theta^{-1}(\C_0) \cap (\cal W_\theta \cdot \fa^+)\cap H\cap \i(H).$$ 
    Since $p_{\theta}^{-1}(\C_0)$, $\cal W_{\theta} \cdot \fa^+$, $H$, and $\i(H)$ are convex,
    it follows that $\cal W_\theta\cdot  \C $ is convex. 
    
Therefore $\C$ is $\theta$-admissible.
Since the orbit map $ (\G,\d_\G) \to (X, d)$, $\g \mapsto \g o $, is a quasi-isometric embedding by Theorem \ref{thm.anosovbasic}(3) and any open cone
containing $\L$ contains $\mu(\Ga)$ except for finitely many points,
Theorem \ref{thm.triangle} follows from Theorem \ref{thm:CTI} once we prove that
the orbit map
is a $\C$-regular embedding, as below.
\qed

\begin{lem}\label{below}
    Let $\C\subset \fa^+$ be a closed cone such that $\inte \C\supset \L - \{0\}$. Then the orbit map $(\Ga, \d_{\Ga}) \to (X, d)$ is $\C$-regular.
\end{lem}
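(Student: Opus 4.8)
The goal is to show that if $\C$ contains a neighborhood of $\L - \{0\}$ in $\fa^+$, then the orbit map $(\Ga, \d_\Ga) \to (X,d)$, $\ga \mapsto \ga o$, is $\C$-regular, i.e. there is $B \ge 0$ such that $\mu(\ga_1^{-1}\ga_2) \in \C$ whenever $\d_\Ga(\ga_1,\ga_2) \ge B$. By left $\Ga$-invariance of $\mu(\ga_1^{-1}\ga_2)$ and of the word metric, it suffices to prove: there is $B \ge 0$ so that $\mu(\ga) \in \C$ for all $\ga \in \Ga$ with $|\ga| = \d_\Ga(e,\ga) \ge B$. So the statement is really about the Cartan projections of elements of $\Ga$ of large word length.

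\textbf{Key steps.} First I would argue by contradiction: suppose there is a sequence $\ga_i \in \Ga$ with $|\ga_i| \to \infty$ but $\mu(\ga_i) \notin \C$ for all $i$. Since $\C$ contains a neighborhood of $\L - \{0\}$ in $\fa^+$, failing to be in $\C$ means $\mu(\ga_i)$ stays a definite ``angular distance'' away from $\L$; more precisely, the normalized directions $\mu(\ga_i)/\|\mu(\ga_i)\|$ (note $\|\mu(\ga_i)\| \to \infty$ by the Anosov inequality \eqref{eqn.defanosov}, since $\alpha(\mu(\ga_i)) \ge C^{-1}|\ga_i| - C \to \infty$) lie in the compact set $\{u \in \fa^+ : \|u\|=1\} \setminus \inte\C_0'$ for a suitable conical neighborhood, so after passing to a subsequence they converge to some unit vector $u \in \fa^+$ with $u \notin \inte \C$, in particular $u \notin \L$ since $\C$ is a neighborhood of $\L - \{0\}$. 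On the other hand, $u = \lim \mu(\ga_i)/\|\mu(\ga_i)\|$ with $t_i := 1/\|\mu(\ga_i)\| \to 0$, so by the very definition of the limit cone $\L = \L_\Pi$ as the asymptotic cone of $\mu(\Ga)$ (Section \ref{sec.PSmeas}), we have $u \in \L$. This contradiction establishes the claim. Then I would restate the conclusion for pairs: given $\ga_1, \ga_2$ with $\d_\Ga(\ga_1,\ga_2) \ge B$, write $\ga = \ga_1^{-1}\ga_2$, note $|\ga| = \d_\Ga(e,\ga) = \d_\Ga(\ga_1,\ga_2) \ge B$, so $\mu(\ga_1^{-1}\ga_2) = \mu(\ga) \in \C$, which is exactly $\C$-regularity of the orbit map with constant $B$.

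\textbf{Main obstacle.} The proof is short, and the only genuinely delicate point is making rigorous the statement ``$\mu(\ga_i) \notin \C$ with $\|\mu(\ga_i)\| \to \infty$ forces the normalized directions to stay uniformly away from $\L$.'' This uses that $\C$ is \emph{closed} and \emph{conical} and contains an open neighborhood of the compact set $\{u \in \L : \|u\| = 1\}$ (here compactness of the normalized limit cone follows from $\L - \{0\} \subset \inte\fa^+$, Theorem \ref{thm.anosovbasic}(2), though one does not even need this refinement — a neighborhood in $\fa^+$ of $\L - \{0\}$ suffices); hence there is $\e > 0$ such that the $\e$-neighborhood of $\{u \in \L : \|u\|=1\}$ inside $\{u \in \fa^+ : \|u\| = 1\}$ lies in $\C$. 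If $\mu(\ga_i) \notin \C$ then $\mu(\ga_i)/\|\mu(\ga_i)\|$ is at distance $\ge \e$ from every normalized element of $\L$, so any subsequential limit $u$ has the same property, yet $u \in \L$ by definition of the asymptotic cone — contradiction. One should also double-check that $\|\mu(\ga_i)\| \to \infty$: this is where the $\theta$-Anosov inequality is used, and it is worth noting that this is the only place the hypotheses on $\Ga$ beyond ``discrete'' enter, apart from the existence and basic structure of $\L$.
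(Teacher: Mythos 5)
Your proof is correct and follows essentially the same route as the paper: argue by contradiction, note that $\|\mu(\ga_i)\|\to\infty$ so the normalized Cartan projections have a limit which must lie in the asymptotic cone $\L$, yet cannot lie in $\L$ because $\C$ contains a neighborhood of $\L-\{0\}$ in $\fa^+$. The only cosmetic difference is that the paper deduces $\|\mu(g_i)\|\to\infty$ directly from $|g_i|\to\infty$ (discreteness/properness) rather than via the Anosov inequality, which changes nothing.
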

\begin{proof}
Suppose not. Then there exist two sequences $\{\ga_i\}, \{\ga_i'\} \subset \Ga$ such that $\d_{\Ga}(\ga_i, \ga_i')=|\ga_i^{-1}\ga_i'| > i$ and $\mu(\ga_i^{-1} \ga_i') \not\in \C$ for all $i\ge 1$. Setting $g_i=\ga_i^{-1}\ga_i'\in \G$, we then have that $\frac{\mu(g_i)}{\|\mu(g_i)\|} \notin \C$ for all $i \ge 1$.
Hence no limit of the sequence $\frac{\mu(g_i)}{\|\mu(g_i)\|} $ belongs to $\inte \C$.
On the other hand, since $|g_i|\to \infty$,
we have $\|\mu(g_i)\|\to \infty$ and hence any limit of the sequence
$\frac{\mu(g_i)}{\|\mu(g_i)\|} $ must belong to the asymptotic cone of $\mu(\Ga)$, that is, $\L$.  This yields a contradiction to the hypothesis $\L -\{0\} \subset \inte\C $.
\end{proof}

\medskip 
The rest of this section is devoted to the proof of Theorem \ref{thm:CTI}.
We begin by recalling the following theorem; in particular, the metric space $Z$
in Theorem \ref{thm:CTI} is always Gromov hyperbolic.
\begin{theorem} \cite[Theorem 1.4]{KLP_2018} \label{thm.KLPhyp}
   Let $Z$ and $ f:Z\to X$ be as in Theorem \ref{thm:CTI}.
  Then  $Z$ is Gromov hyperbolic. If
     $Z$ is proper in addition, then  $f$ continuously extends to $$ f : \bar Z \to X \cup \F_{\theta}$$ where $\bar Z = Z \cup \partial Z$ is the Gromov compactification and $ f$ maps two distinct points in $\partial Z$ to points in general position.
\end{theorem}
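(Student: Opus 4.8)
\emph{Overall strategy.} After transporting from Euclidean buildings to the symmetric space $X=G/K$, this is the higher-rank Morse Lemma of Kapovich--Leeb--Porti, and the plan is to follow their approach, whose engine is the Morse Lemma for $\C$-regular quasigeodesics (Theorem~\ref{thm:ML}): there is a constant $R_0=R_0(\C,Q)$ such that the $f$-image of any geodesic segment (resp.\ ray, resp.\ line) in $Z$ lies within Hausdorff distance $R_0$ of a diamond (resp.\ a Weyl cone $V(x,\tau)$ with tip $x$ pointing to some $\tau\in\F_\theta$, resp.\ a subset of the parallel set of an antipodal pair of flags), whose defining flags are uniquely determined by, and vary continuously with, the endpoints of the quasigeodesic. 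I will use two consequences, both immediate from this and from the fact that a Weyl cone remembers its asymptotic flag: (i) two $\C$-regular quasigeodesic rays at finite Hausdorff distance have the same forward flag in $\F_\theta$; and (ii) if a long $\C$-regular quasigeodesic segment joins $x$ to $y$ with associated forward flag $\tau$, then $y\in\mathcal{N}_{R_0}(V(x,\tau))$, and, $\C$-regularity forcing $\min_{\alpha\in\theta}\alpha(\mu(g^{-1}h))$ to grow with $d(x,y)$ when $go=x$, $ho=y$, the point $y$ lies within $\e$ of $\tau$ in $X\cup\F_\theta$ with $\e=\e(d(x,y))\to 0$, in the sense of Definition~\ref{fc}.

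\emph{Gromov hyperbolicity of $Z$.} I would first prove, entirely inside $X$, that $\C$-regular quasigeodesic triangles are uniformly thin: there is $T_0$ such that whenever $\gamma_{12},\gamma_{13},\gamma_{23}$ are $\C$-regular $(Q,Q)$-quasigeodesics with $\gamma_{ij}$ joining $x_i$ to $x_j$, one has $\gamma_{12}\subset\mathcal{N}_{T_0}(\gamma_{13}\cup\gamma_{23})$. By the Morse Lemma this reduces to the analogous statement for the three diamonds $D_{ij}=D(x_i,x_j)$, which is established by a local analysis near each vertex: near $x_1$ the diamonds $D_{12}$ and $D_{13}$ are, up to bounded error, Weyl cones issuing from $x_1$, and using the geometry of parallel sets one shows that they fellow-travel out to a well-defined coarse distance and then coarsely diverge; doing this at all three vertices and adding up the fellow-traveling lengths along $D_{12}$ exhausts $\gamma_{12}$ up to $T_0$. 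Granting this, the $f$-image of a geodesic triangle $[z_1,z_2]\cup[z_2,z_3]\cup[z_3,z_1]$ in $Z$ is a $\C$-regular quasigeodesic triangle, hence $T_0$-thin in $X$, and pulling this back through the $Q$-quasi-isometric embedding $f$ shows that each side of the triangle in $Z$ lies in the $Q(T_0+Q)$-neighborhood of the union of the other two; so $Z$ satisfies the uniform thin-triangle condition and is Gromov hyperbolic. The main obstacle of the whole argument is precisely this intrinsic thinness of diamond triangles: as diamonds are not geodesics and $X$ is not hyperbolic, the fellow-traveling-then-diverging behaviour of two diamonds through a common vertex, and the additivity of the three fellow-traveling lengths, must be read off from the fine geometry of Weyl cones, parallel sets and the $\theta$-regularity hypothesis $\C\cap\bigcup_{\alpha\in\theta}\ker\alpha=\{0\}$ rather than from coarse hyperbolic geometry.

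\emph{Boundary extension.} Assume $Z$ is proper. Given $\xi\in\partial Z$, choose a geodesic ray $r$ with $r(\infty)=\xi$; then $f\circ r$ is a $\C$-regular quasigeodesic ray, hence within $R_0$ of a unique Weyl cone $V(f(r(0)),\tau)$, and by (ii) the sequence $f(r(n))$ converges to $\tau$ in the sense of Definition~\ref{fc}. If $r'$ is another ray with $r'(\infty)=\xi$, it stays at finite Hausdorff distance from $r$ (as $Z$ is hyperbolic), so $f\circ r'$ and $f\circ r$ do too, and by (i) they have the same forward flag; hence $f(\xi):=\tau$ is well defined and yields a map $f:\bar Z\to X\cup\F_\theta$. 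For continuity at $\xi$, let $z_n\to\xi$ in $\bar Z$ with $z_n\in Z$ and fix a basepoint $z_0$; the geodesics $[z_0,z_n]$ subconverge locally uniformly (properness and the Arzel\`a--Ascoli theorem) to a ray representing $\xi$, so the quasigeodesic segments $f([z_0,z_n])$ fellow-travel $f\circ r$ on longer and longer initial portions, whence by the Morse Lemma their forward flags $\tau_n$ tend to $f(\xi)$; combined with (ii) this gives $f(z_n)\to f(\xi)$ in $X\cup\F_\theta$. The case $z_n\in\partial Z$ follows by approximating each $z_n$ by interior points.

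\emph{General position.} Let $\xi\neq\eta$ in $\partial Z$. Since $Z$ is a proper geodesic Gromov hyperbolic space, there is a bi-infinite geodesic line $\ell:\R\to Z$ with $\ell(+\infty)=\xi$ and $\ell(-\infty)=\eta$; as $\C$ is $\i$-invariant (part of $\theta$-admissibility), both the forward ray $t\mapsto\ell(t)$ and the backward ray $t\mapsto\ell(-t)$ have $f$-images that are $\C$-regular quasigeodesic rays, with forward flags $f(\xi)$ and $f(\eta)$ in $\F_\theta$ respectively. By the line form of the Morse Lemma, $f\circ\ell$ stays within bounded distance of a subset of the parallel set whose two ideal flags are $f(\xi)$ and $f(\eta)$; since such a parallel set exists exactly when the two flags are antipodal, $(f(\xi),f(\eta))\in\F_\theta^{(2)}$, i.e.\ $f(\xi)$ and $f(\eta)$ are in general position.
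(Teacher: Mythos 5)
You should first be aware that the paper does not prove this statement at all: it is imported verbatim as \cite[Theorem 1.4]{KLP_2018}, and everything the present paper builds (Theorem \ref{thm:CTI}, Theorem \ref{morse2}, etc.) treats it as a black box. So there is no in-paper proof to match; what you have written is an attempted reconstruction of Kapovich--Leeb--Porti's own argument. Judged on those terms, the outline is faithful to their strategy: reduce hyperbolicity of $Z$ to uniform thinness of $\C$-regular quasigeodesic triangles in $X$ via the Morse Lemma (Theorem \ref{thm:ML}) and pull the thinness back through the quasi-isometric embedding; define $f$ on $\partial Z$ by the asymptotic flag of the image of a ray, using that regular rays at finite Hausdorff distance determine the same flag; get antipodality from the line case of the Morse Lemma. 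The last point is legitimate only because the existence of the parallel set $gM_\theta Ao$ with $g^{\pm}=c(\pm\infty)$ is itself part of the conclusion of Theorem \ref{thm:ML}(3) --- you are not deriving antipodality from the geometry, you are reading it off from a statement that already asserts it; that is fine as long as you are explicit that this is what the cited Morse Lemma delivers.

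The genuine gap is in the hyperbolicity step, and you have in effect named it yourself. The claim that two diamonds (or Weyl cones) issuing from a common vertex ``fellow-travel out to a well-defined coarse distance and then coarsely diverge,'' and that the three fellow-traveling lengths add up to exhaust a side up to a uniform constant, is precisely the analytic core of the KLP theorem; nothing in your sketch supplies it. It is not a routine consequence of the Morse Lemma: the Morse Lemma only places each side near a diamond, and the thinness of a triangle of diamonds depends crucially on the $\theta$-admissibility hypotheses ($\C\cap\bigcup_{\alpha\in\theta}\ker\alpha=\{0\}$, convexity of $\cal W_\theta\cdot\C$), since without them one can build fat flat triangles inside a maximal flat. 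A complete argument has to carry out the projection/divergence analysis in parallel sets (or an equivalent local-to-global/Morse argument as in \cite{KLP_2018}), and as written your proof defers exactly that. A smaller, fixable issue: your claim (ii) --- that $y$ is $\e(d(x,y))$-close to the flag $\tau$ in the sense of Definition \ref{fc}, uniformly --- needs a short compactness argument (uniform $\theta$-regularity of $\C$-regular pairs plus $K$-invariance) before it can be used in the continuity step; state and prove it rather than treating it as immediate.
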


\subsection*{Diamonds}
The notion of diamonds in $X$, due to Kapovich-Leeb-Porti, plays a key role in the proof of Theorem \ref{thm:CTI}. We fix
$$\text{a $\theta$-admissible cone $\C \subset \fa^+$ }$$ in the following. 
  For a $\C$-regular pair $(x_1,x_2)$ of points in $X$, define the {\em $\C$-cone} with the tip at $x_1$ containing $x_2$ to be 
\[
 V_\C(x_1,x_2) = gM_{\theta}(\exp \C) o,
\]
where $g=g(x_1, x_2) \in G$ is any element such that $x_1 = go$ and $x_2 \in g (\exp \C) o$; it is easy to check such $g$ always exists and this definition is independent of the choice of $g$. 
For any $h\in G$, we have  $ hV_\C(x_1,x_2) = V_\C( hx_1, hx_2) $.

\begin{definition}[Diamonds] \label{def.diamond}
  For a $\C$-regular pair $(x_1,x_2)$ of points in $X$,  the  
{\em $\C$-diamond} with tips at $x_1$ and $x_2$ is defined as
\[
 \Diamond_\C(x_1,x_2) = V_\C(x_1,x_2) \cap V_\C(x_2,x_1).
\]
\end{definition}
The $\C$-cones and $\C$-diamonds are convex subsets of $X$, see \cite[Propositions 2.10 and 2.13]{KLP_Anosov}. Note also the equivariance property that
for $h\in G$,  $ h \Diamond_\C(x_1,x_2) =  \Diamond_\C(hx_1, hx_2) $.
It follows that
for any $\C$-regular pair $(x_1, x_2)$,
the diamond $\Diamond_\C(x_1,x_2)$ is of the form $h \Diamond_\C(o,ao)$ for some $a\in \exp \C$
and $h\in G$. Therefore the following example describes all diamonds up to translations.
\begin{example}
 For $a \in \exp \C$, the diamond
    $\Diamond_{\C}(o, a o)$ can be explicitly described as follows.
First note that as we can take $g(o, ao) =e$, we have $V_{\C}(o, a o) = M_{\theta}(\exp \C) o.$ Recalling that
$\i=-\op{Ad}_{w_0}$, we also have 
        $a o = a w_0 o $ and
       $ o  = (aw_0) (w_0^{-1} a^{-1} w_0) o \in aw_0 (\exp \C) o .$
 So we can take $g(ao, o)=aw_0$. 
 Since $w_0 M_{\theta} w_0^{-1} = M_{\theta}$ and $w_0 (\exp \C) w_0^{-1} = \exp (-\C)$,
 we have $V_{\C}(a o, o) = a w_0 M_{\theta} (\exp \C) o = a M_{\theta} \exp (- \C) o$.  Therefore  $$\Diamond_{\C}(o, a o) = M_{\theta} (\exp \C) o \cap a M_{\theta} \exp(-\C) o.$$ See Figure \ref{fig.diamondinfa}.
    \begin{figure}[h]
    \begin{tikzpicture}[scale=1.0, every node/.style={scale=1}]
   
        \draw[draw = white, fill=green!10, opacity = 0.5] (0, 0) -- (4, 1) -- (4, 4) -- (2, 4) -- (0, 0);
        \draw (0, 0) -- (4, 1);
        \draw (0, 0) -- (2, 4);
        \draw (4, 3.8) node[left]{$\C$};

        \draw[draw = white, fill=orange!10, opacity = 0.5] (3.5, 3) -- (1.5, -1) -- (-1, -1) -- (-1, 1.875) -- (3.5, 3);
        \draw (3.5, 3) -- (1.5, -1);
        \draw (3.5, 3) -- (-1, 1.875);
        \draw (-0.5, -0.8) node[right]{$\log a - \C$};

        \filldraw (3.5, 3) circle(1pt);
        \draw (3.5, 3) node[right]{$\log a$};

        \draw[fill=pink!20, opacity=0.9] (0, 0) -- (16/7, 4/7) -- (3.5, 3) -- (3.5 - 16/7, 3 - 4/7) -- (0, 0);
        \draw (1.75, 1.5) node{$\Diamond_{\C}(o, ao)$};

        \filldraw (0, 0) circle(1pt);
        \draw (0, 0) node[left]{\tiny $0$};
    \end{tikzpicture}
    \caption{Diamond drawn in $\fa$} \label{fig.diamondinfa}
    \end{figure}
\end{example}

\begin{lemma}[Simultaneously nesting property] \label{lem.nesting}
If $(x_1, x_2)$ is $\C$-regular, then for any $x \in \Diamond_{\C}(x_1, x_2)$, there exist $g \in G$ and $a \in \exp \C$ such that $$x_1 = go, \quad x = ga o  ,\quad x_2 \in  g M_{\theta} (\exp \C) o \cap  g a (M_{\theta} \exp \C) o.$$
\end{lemma}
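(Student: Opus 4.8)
The statement asserts that a point $x$ in the diamond $\Diamond_\C(x_1,x_2)$ can be written so that it ``simultaneously nests'' both tip-cones: writing $x_1 = go$ and $x = gao$ with $a \in \exp\C$, we want $x_2$ to lie in $gM_\theta(\exp\C)o \cap ga(M_\theta\exp\C)o$. By the equivariance $h\Diamond_\C(x_1,x_2) = \Diamond_\C(hx_1,hx_2)$ and the normalization in the example preceding the lemma, the plan is to \emph{first reduce to the normalized case} $x_1 = o$, $x_2 = bo$ for some $b \in \exp\C$, so that $\Diamond_\C(o,bo) = M_\theta(\exp\C)o \cap bM_\theta\exp(-\C)o$; the general case then follows by applying a left translation $h \in G$ with $hx_1 = o$ and $hx_2 \in (\exp\C)o$, and transporting the conclusion back via $h^{-1}$.

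In this normalized setting, suppose $x \in \Diamond_\C(o,bo)$. \textbf{Step 1:} Since $x \in V_\C(o,bo) = M_\theta(\exp\C)o$, there is $m \in M_\theta$ and $a_0 \in \exp\C$ with $x = m a_0 o$. Set $g := m$ and $a := a_0$, so $x_1 = o = go$ (as $m \in M_\theta \subset K$) and $x = gao$; this already realizes the first two required identities. \textbf{Step 2:} The remaining task is to check $x_2 = bo \in gM_\theta(\exp\C)o \cap ga(M_\theta\exp\C)o$, i.e. $b o \in mM_\theta(\exp\C)o$ and $bo \in ma(M_\theta\exp\C)o$. The first inclusion, $bo \in mM_\theta(\exp\C)o = M_\theta(\exp\C)o = V_\C(o,bo)$, holds because $(o,bo)$ is $\C$-regular by hypothesis. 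For the second inclusion I would use that $x \in V_\C(bo,o)$ as well (since $x$ is in the diamond): by the normalized description, $V_\C(bo,o) = bM_\theta\exp(-\C)o$, so $x = b n c o$ for some $n \in M_\theta$, $c \in \exp(-\C)$; hence $bo = x (bnc)^{-1} b o = x\, n^{-1} c^{-1} o \in x M_\theta(\exp\C)o = ga o \cdot M_\theta(\exp\C)$-orbit, i.e. $bo \in ga M_\theta(\exp\C)o$. Combining, $bo \in gM_\theta(\exp\C)o \cap gaM_\theta(\exp\C)o$, which is the claim.

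\textbf{Step 3 (de-normalization).} Apply $h^{-1}$ to the conclusion of Steps~1--2 to recover the statement for the original $\C$-regular pair $(x_1,x_2)$: replacing $g$ by $h^{-1}g$ and leaving $a$ unchanged gives $x_1 = (h^{-1}g)o$, $x = (h^{-1}g)ao$, and $x_2 \in (h^{-1}g)M_\theta(\exp\C)o \cap (h^{-1}g)a(M_\theta\exp\C)o$, using that left translation commutes with all the cone/orbit constructions.

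\textbf{Main obstacle.} The delicate point is Step~2: carefully unwinding the definitions $V_\C(\cdot,\cdot) = gM_\theta(\exp\C)o$ and keeping track of which element $g(x_1,x_2)$ is chosen, together with the identities $w_0M_\theta w_0^{-1} = M_\theta$ and $w_0(\exp\C)w_0^{-1} = \exp(-\C)$ that produce the explicit form of $V_\C(bo,o)$. One must also verify that $M_\theta(\exp\C)o \cdot M_\theta(\exp\C) \subseteq $ (the relevant $M_\theta\exp\C$-saturation) is used only in the combinatorially correct way — i.e. that the expression $xM_\theta(\exp\C)o$ with $x = gao$ really equals $gaM_\theta(\exp\C)o$, which follows since $a$ normalizes nothing extra but $x = gao$ and we are taking the $M_\theta(\exp\C)$-orbit of $x$ on the right; here one should be slightly careful that the set $M_\theta(\exp\C)$ is not a group, so ``$xM_\theta(\exp\C)o$'' must be read as $\{x\, m\, \exp(v)\, o : m\in M_\theta, v\in\C\}$ and shown to contain $bo$, which is exactly what Step~2 produces. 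I expect the rest to be bookkeeping with the equivariance and $\C$-regularity already established in \cite[Propositions 2.10 and 2.13]{KLP_Anosov}.
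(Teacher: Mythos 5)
Your Step 2 contains the genuine gap, and it is exactly the point where the whole difficulty of the lemma lives. From $x\in V_\C(x_2,o)=bM_\theta\exp(-\C)o$ you write $x=bnco$ with $n\in M_\theta$, $c\in\exp(-\C)$, and you correctly observe that $bo=(bnc)c^{-1}n^{-1}o\in (bnc)\,M_\theta(\exp\C)o$. But the conclusion you need is $bo\in ga\,M_\theta(\exp\C)o$ with the \emph{specific} lift $ga=ma_0$ chosen in Step 1 from the other cone, and the set $\tilde x\,M_\theta(\exp\C)o$ is not determined by the point $x=\tilde x o\in X$ alone: two lifts of $x$ differ by right multiplication by some $k\in K$, and $kM_\theta(\exp\C)o\neq M_\theta(\exp\C)o$ unless $k$ is (essentially) in $M_\theta$. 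Here $bnc=ma_0k$ for some $k\in K$, and your identification ``$xM_\theta(\exp\C)o=gaM_\theta(\exp\C)o$'' silently assumes $k\in M_\theta$. That compatibility of the two framings of $x$ (one coming from the cone at $x_1$, one from the cone at $x_2$) is precisely the content of the lemma, not bookkeeping; without it you have only shown the tautology $x_2\in V_\C(x,x_2)$.

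The paper's proof is devoted to exactly this point: after normalizing so that $x_1=o$, $x=ao$ with $a\in\exp\C$ and $x_2=ma_0o\in M_\theta(\exp\C)o$, it first pins down the $K$-ambiguity in the cone based at $x_2$ by showing that the element $k_0\in K$ with $V_\C(x_2,o)=ma_0k_0M_\theta(\exp\C)o$ must lie in $w_0M_\theta$ (using $w_0M_\theta w_0^{-1}=M_\theta$ and $w_0(\exp\C)w_0^{-1}=\exp(-\C)$), and then, feeding $ao\in V_\C(x_2,o)$ into this, produces $k\in K$ and $a'\in\exp\C$ with $aka'\in ma_0M_\theta$; the decisive step is passing to $G/P_\theta$, where $kP_\theta=a^{-1}ma_0M_\theta a'^{-1}P_\theta=P_\theta$ forces $k\in K\cap P_\theta=M_\theta$, and hence $x_2=aka'o\in aM_\theta(\exp\C)o$. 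Some argument of this kind (controlling the $K$-ambiguity via the parabolic, or an equivalent structural fact) is unavoidable; your proposal needs to supply it before the reduction and the first three bullet points, which are fine and agree with the paper's normalization, can be assembled into a proof. There is also a harmless slip in the algebra: $(bnc)^{-1}b=c^{-1}n^{-1}$, not $n^{-1}c^{-1}$, though both land in $M_\theta(\exp\C)o$ applied to $o$.
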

\begin{proof}
  We may first assume that $x_1 = o$. By the $\C$-regularity of the pair $(x_1, x_2)$, we have $x_2 \in K (\exp \C) o$. By multiplying an element of $K$ to $x_1$ and $x_2$, we may also assume that $x_1 = o$ and $x_2 \in (\exp \C) o$, and hence $x \in M_{\theta} (\exp \C) o$. We again multiply an element of $M_{\theta}$ to $x_1, x_2$ and $x$ if necessary so that we have $x_1 = o$, $x_2 \in M_{\theta}(\exp \C) o$, and $x = ao$ for some $a \in \exp \C$.
    Then it suffices to show that $x_2 = a k a' o$ for some $k \in M_{\theta}$ and $a' \in \exp \C$. We write $x_2 = m a_0 o$ for $m \in M_{\theta}$ and $a_0 \in \exp \C$. We then have $ o \in V_{\C}(x_2, o) = m a_0 k_0 M_{\theta} (\exp \C) o$ for some $k_0 \in K$. Hence we have $$k_0^{-1} w_0^{-1} (w_0 a_0^{-1} w_0^{-1}) \in M_{\theta} (\exp \C) K.$$ This implies $k_0^{-1} \in M_{\theta} w_0$ and hence $k_0 \in w_0 M_{\theta}$. Since $a o \in V_{\C}(x_2, o)$ as well, we now have $a o \in m a_0 w_0 M_{\theta} (\exp \C) o$. Then for some $k \in K$, we have $$a k \in m a_0 w_0 M_{\theta} \exp \C w_0^{-1} = m a_0 M_{\theta} \exp (- \C).$$ Hence for some $a' \in \exp \C$, we have $$a k a' \in m a_0 M_{\theta}.$$ Looking at $G/P_{\theta}$, we have $ k P_{\theta} = a^{-1} m a_0 M_{\theta} a'^{-1} P_{\theta} = P_{\theta}$. Therefore $k \in M_{\theta}$. Since $x_2 = m a_0 o = aka' o$, the claim follows.
\end{proof}

\begin{lem}
    For any $\C$-regular pair $(g_1 o, g_2 o)$ with $g_1, g_2\in G$ and  for any $ go \in  \Diamond_{\cal C}(g_1 o, g_2 o)$ with $g\in G$,
     \be\label{mue} \mu_{\theta}(g_1^{-1}g) + \mu_{\theta}(g^{-1} g_2) = \mu_{\theta}(g_1^{-1}g_2).\ee 
\end{lem}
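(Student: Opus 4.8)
The plan is to reduce to the model case $\Diamond_\C(o,ao)$ by left-translation, then exploit the additivity of the Busemann map along the $\theta$-cone directions. By left-invariance of $\mu_\theta$ and the equivariance $h\Diamond_\C(x_1,x_2)=\Diamond_\C(hx_1,hx_2)$, applying $g_1^{-1}$ I may assume $g_1=e$, so $(o,g_2o)$ is $\C$-regular and $go\in\Diamond_\C(o,g_2o)$. The identity to prove then reads $\mu_\theta(g)+\mu_\theta(g^{-1}g_2)=\mu_\theta(g_2)$.

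First I would apply the simultaneously nesting property (Lemma~\ref{lem.nesting}) to the point $x=go\in\Diamond_\C(o,g_2o)$: there exist $g'\in G$ and $a\in\exp\C$ with $o=g'o$, $go=g'ao$, and $g_2o\in g'M_\theta(\exp\C)o\cap g'a M_\theta(\exp\C)o$. After multiplying by a suitable element of $K$ (which does not change $\mu_\theta$), I may take $g'=e$; so $g\in M_\theta(\exp\C)K$ with $go=ao$, giving $\mu_\theta(g)=\mu_\theta(a)=p_\theta(\log a)$ since $a\in\exp\C\subset\exp\fa^+$, and also $g_2o\in a M_\theta(\exp\C)o$, so $a^{-1}g_2\in M_\theta(\exp\C)K$, whence $\mu_\theta(g^{-1}g_2)=\mu_\theta(a^{-1}g_2)=p_\theta(\log(a^{-1}g_2))$ after again absorbing $K$-factors. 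Simultaneously $g_2o\in M_\theta(\exp\C)o$ so $\mu_\theta(g_2)=p_\theta$ of the corresponding $\fa^+$-element. The point is that once everything sits inside $M_\theta(\exp\C)$-translates with a common $\exp\C$-direction, the three Cartan projections, after projecting by $p_\theta$, add up.

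To make the last step precise I would argue via the Busemann cocycle rather than manipulating $KA^+K$ decompositions directly. Writing $\xi=\xi_\theta=[P_\theta]$ and using that for $a'\in\exp\C\subset\exp(\inte\fa_\theta^+\cup\{0\})$ one has $\beta^\theta_{\xi}(e,a'k)=p_\theta(\log a')=\mu_\theta(a'k)$ for $k\in K$ (since $a'k\in K a' N$ up to $M_\theta$, and $\C$-regularity places us on the ``sunny'' side where Cartan and Busemann projections to $\fa_\theta$ agree), the nesting data tells me that $go=ao$ and $g_2o$ both lie in the cone $M_\theta(\exp\C)o$ with tip $o$ and that $g_2o$ additionally lies in the cone with tip $go$ in the same $\C$-direction. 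Then the cocycle property $\beta^\theta_\xi(e,g_2)=\beta^\theta_\xi(e,g)+\beta^\theta_\xi(g,g_2)$, combined with the identification of each Busemann term with the corresponding $\mu_\theta$ along these cone directions, yields $\mu_\theta(g_2)=\mu_\theta(g)+\mu_\theta(g^{-1}g_2)$, which is \eqref{mue} after translating back by $g_1$.

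The main obstacle I anticipate is the bookkeeping in the second step: verifying that the $K$- and $M_\theta$-factors produced by Lemma~\ref{lem.nesting} can be absorbed consistently so that the three Cartan projections are genuinely computed from a single $\exp\C$-element and its ``residue'' $a^{-1}g_2$, and that $a^{-1}g_2$ indeed lands back in $M_\theta(\exp\C)K$ with $\fa_\theta^+$-component $p_\theta(\log a')$. Equivalently, one must confirm that along $\C$-directions the Cartan projection to $\fa_\theta$ is additive on the diamond — this is exactly the content of the convexity/nesting of $\C$-cones from \cite[Propositions 2.10 and 2.13]{KLP_Anosov}, so the proof is really an application of those structural facts; the rest is routine once the normalizations are fixed.
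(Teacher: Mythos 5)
Your first paragraph is fine and matches the paper's set-up: after left-translating by $g_1^{-1}$ and applying Lemma \ref{lem.nesting}, the whole lemma reduces to the single identity $p_\theta(\log \tilde a)=p_\theta(\log a)+p_\theta(\log a')$, where $go=ao$ and $g_2o=\tilde k\tilde a o=aka'o$ with $k,\tilde k\in M_\theta$ and $a,a',\tilde a\in\exp\C$. The gap is that your second paragraph does not actually prove this identity. The cocycle relation $\beta^\theta_{\xi_\theta}(e,g_2)=\beta^\theta_{\xi_\theta}(e,g)+\beta^\theta_{\xi_\theta}(g,g_2)$ is a tautology; all the content lies in identifying the three Busemann terms with $\mu_\theta(g_2)$, $\mu_\theta(g)$, $\mu_\theta(g^{-1}g_2)$, and you only address (and only loosely) the easy identification $\beta^\theta_{\xi_\theta}(e,a'c)=p_\theta(\log a')$ --- whose stated justification is off anyway: it follows from $(a'c)^{-1}=c^{-1}a'^{-1}\in Ka'^{-1}N$ and needs no regularity or ``sunny side'' condition, and your inclusion $\exp\C\subset\exp(\inte\fa_\theta^+\cup\{0\})$ is false, since $\C$ is a cone in $\fa^+$, not in $\fa_\theta$. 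The decisive point is the middle identification $\beta^\theta_{\xi_\theta}(g,g_2)=\mu_\theta(g^{-1}g_2)$, and it is neither ``the content of the convexity of cones'' in \cite{KLP_Anosov} nor routine bookkeeping: it holds because the rotation that Lemma \ref{lem.nesting} inserts between $a$ and $a'$ lies in $M_\theta$, not merely in $K$, and because $A$ and $M_\theta$ fix $\xi_\theta$ while $\beta^\theta_{\xi_\theta}(e,\cdot)$ vanishes on $K$. Concretely, writing $a^{-1}g_2=ka'c$ with $k\in M_\theta$, $c\in K$, one has $\beta^\theta_{\xi_\theta}(g,g_2)=\beta^\theta_{\xi_\theta}(a,g_2)=\beta^\theta_{a^{-1}\xi_\theta}(e,a^{-1}g_2)=\beta^\theta_{\xi_\theta}(e,ka'c)=\beta^\theta_{\xi_\theta}(e,a'c)=p_\theta(\log a')=\mu_\theta(g^{-1}g_2)$, and similarly $\beta^\theta_{\xi_\theta}(e,g)=p_\theta(\log a)=\mu_\theta(g)$ and $\beta^\theta_{\xi_\theta}(e,g_2)=p_\theta(\log\tilde a)=\mu_\theta(g_2)$ from the representation $g_2=\tilde k\tilde a\tilde c$. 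If the inserted rotation were only known to lie in $K$, the step $\beta^\theta_{\xi_\theta}(e,ka'c)=p_\theta(\log a')$ would fail; this is exactly where the work of the lemma sits, and exactly what your phrase ``absorbing the $K$- and $M_\theta$-factors'' has to mean.

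Once that verification is written out, your route is correct and genuinely different from the paper's: the paper proves the additivity by writing $a=a_1a_2$ and $a'=a_1'a_2'$ in $A_\theta^+B_\theta^+$, taking a Cartan decomposition of $a_2ka_2'$ inside $S_\theta=M_\theta B_\theta^+M_\theta$, and running a Weyl-element argument to conclude $\tilde a\in B_\theta A_\theta^+$ with $A_\theta$-component $a_1a_1'$, whereas the Busemann computation at the fixed flag $\xi_\theta$ replaces the Weyl-group step by the cocycle and invariance properties together with a short Iwasawa computation, which is arguably cleaner. But as submitted, the proposal defers precisely this computation to cited structural facts that do not contain it, so it has a genuine gap at the heart of the lemma.
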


    \begin{proof} By Lemma \ref{lem.nesting}, 
    there exists $h\in G$, $a, \tilde{a}, a' \in \exp \C$ and $\tilde{k}, k \in M_\theta $
    such that $g_1o=ho$, $go=hao$, and $g_2o= h \tilde{k} \tilde{a} o = ha k a'o$.
     Without loss of generality, we may assume $h=e$ in proving \eqref{mue}.
     
     We write 
$$  a  = a_1 a_2 \in A_{\theta}^+B_{\theta}^+ \quad \text{ and }\quad  a'  = a_1' a_2' \in A_{\theta}^+ B_{\theta}^+. $$
We then have $$aka' = a_2 k a_2' (a_1a_1').$$ Since $a_2 k a_2' \in S_{\theta}$, we can write its Cartan decomposition $a_2 k a_2' = m b m' \in M_{\theta}B_{\theta}^+M_{\theta}$, and hence $$aka' = m (b a_1 a_1')m'.$$
Let $w \in \cal W$ be a Weyl element such that $b a_1 a_1' \in w A^+ w^{-1}$.
Since $\tilde{a} = \exp \mu(aka')$, we must have $b a_1 a_1' = w \tilde{a} w^{-1}$. Hence we have $$g_2 o = aka' o = m w \tilde{a} o.$$ On the other hand, we also have $g_2 o = \tilde{k} \tilde{a} o$ where $\tilde{k} \in M_{\theta}$. This implies $mw \in M_{\theta}$; in particular, $w \in M_{\theta}$. Therefore $\tilde{a} = w^{-1} b a_1 a_1' w = (w^{-1} b w)(a_1 a_1') \in B_{\theta}A_{\theta}^+$, which implies \be \label{eqn.addalmostconc}
p_{\theta}(\log \tilde{a}) = \log a_1 + \log a_1' = p_{\theta}(\log a) + p_{\theta}(\log a').
\ee
Since $$
    \mu_{\theta}(g_1^{-1} g) = p_{\theta}(\log a), \quad
    \mu_{\theta}(g^{-1} g_2)   = p_{\theta}(\log a'), \text{ and} \quad
    \mu_{\theta}(g_1^{-1}g_2)   = p_{\theta}(\log \tilde{a}),
$$
this finishes the proof.
 \end{proof}

As an immediate corollary, we get that $\d_{\psi}$ is additive on each diamond  for any $\psi\in \fat$:
\begin{lemma}[Additivity of $\d_\psi$ on diamonds]\label{dia}
  Let $\psi \in \fa_{\theta}^*$.
   For any $\C$-regular pair $(x_1, x_2)$ and  for any $x\in  \Diamond_{\cal C}(x_1, x_2)$,
     we have  $$\d_\psi(x_1,x)+ \d_{\psi}(x,x_2) = \d_\psi(x_1,x_2).$$
  \end{lemma}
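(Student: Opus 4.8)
The plan is to read this off as an immediate corollary of the preceding lemma, i.e.\ of the identity \eqref{mue} for the $\theta$-component of the Cartan projection. First I would pick representatives $g_1, g_2, g \in G$ with $x_1 = g_1 o$, $x_2 = g_2 o$ and $x = g o$; since $(x_1, x_2)$ is $\C$-regular and $x \in \Diamond_\C(x_1, x_2)$, the hypotheses of the preceding lemma are satisfied, so
\[
\mu_\theta(g_1^{-1} g) + \mu_\theta(g^{-1} g_2) = \mu_\theta(g_1^{-1} g_2).
\]

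Next I would recall that $\psi \in \fa_\theta^*$ means $\psi \circ p_\theta = \psi$ by \eqref{att}, hence $\psi \circ \mu = \psi \circ p_\theta \circ \mu = \psi \circ \mu_\theta$ (as already noted after Lemma \ref{lem.cptcartan}); combined with the definition \eqref{eqn.defmetricsymm} this gives $\d_\psi(h_1 o, h_2 o) = \psi(\mu_\theta(h_1^{-1} h_2))$ for all $h_1, h_2 \in G$. Applying the linear form $\psi$ to both sides of the displayed identity then yields
\[
\d_\psi(x_1, x) + \d_\psi(x, x_2) = \psi\bigl(\mu_\theta(g_1^{-1} g)\bigr) + \psi\bigl(\mu_\theta(g^{-1} g_2)\bigr) = \psi\bigl(\mu_\theta(g_1^{-1} g_2)\bigr) = \d_\psi(x_1, x_2),
\]
which is exactly the asserted additivity.

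There is essentially no obstacle left at this point: all of the geometric content — that a point of a diamond lies in both of its defining $\C$-cones (Lemma \ref{lem.nesting}) and that the corresponding $\theta$-components of the Cartan projections add up exactly, via the Cartan decomposition of $S_\theta$ — has already been absorbed into the preceding lemma. The only bookkeeping remaining is that $\d_\psi$, and hence both sides above, are independent of the choice of representatives $g_1, g_2, g$, which is immediate from the bi-$K$-invariance of $\mu$ noted just after \eqref{eqn.defmetricsymm}. I would also emphasize that the additivity obtained here is \emph{exact}, with no coarse error term; the coarse triangle inequality of Theorem \ref{thm.triangle} will instead come from comparing general triples with triples lying in a common diamond.
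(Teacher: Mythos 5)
Your proof is correct and matches the paper's: Lemma \ref{dia} is stated there as an immediate corollary of the identity \eqref{mue}, obtained exactly as you do by applying the linear form $\psi$ (using $\psi\circ\mu=\psi\circ\mu_\theta$ for $\psi\in\fa_\theta^*$) to both sides. No gaps.
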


\subsection*{KLP Morse lemma}
The Morse lemma due to Kapovich-Leeb-Porti, which we will call the KLP Morse lemma, is stated as follows \cite[Theorem 5.16, Corollary 5.28]{KLP_2018}: the image of an interval in $\R $ under a $Q$-quasi-isometry is called a $Q$-quasi-geodesic.

\begin{theorem}[KLP Morse lemma]\label{thm:ML} Let $\C,\cal C' \subset \fa^+$ be  $\theta$-admissible closed cones
such that $\inte \cal C'$ contains $ \cal C-\{0\}$. Let $Q, B\ge 1$ be constants.
There exists a constant $D_0 = D_0(\C,\C', Q, B) \ge 0$  so that the following holds: let $I \subset \R$ be an interval and $c : I \to X$ a $(\C, B)$-regular $Q$-quasi-geodesic.
    \begin{enumerate}
        \item If $I = [a, b]$ with $b - a \ge B$, then the image $c(I)$ is contained in the $D_0$-neighborhood of the diamond $\Diamond_{\C'}(c(a),c(b))$.
        \item If $I = [a, \infty)$ for some $a \in \R$, then $c(I)$ is contained in the $D_0$-neighborhood of  the cone $g M_{\theta}(\exp \C') o$ where $g \in G$ is such that $g o = c(a)$ and $g^+ = c(\infty) \in \F_{\theta}$.
        \item If $I = \R$, then $c(I)$ is contained in the $D_0$-neighborhood of  the parallel set $g M_{\theta} A o$ where $g \in G$ is such that $g^{\pm} = c(\pm \infty) \in \F_{\theta}$.
    \end{enumerate}
 We note that the above applies for an interval in $\Z$, as any
  $\C$-regular quasi-isometric embedding $c:I\cap \Z \to X$ can be extended to
 a  $\C$-regular quasi-geodesic $I\to X$ simply by setting $c(t) := c( \lfloor t \rfloor)$
 where $\lfloor t \rfloor$ is the largest integer not bigger than $t$.
\end{theorem}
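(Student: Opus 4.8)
We follow \cite{KLP_2018}. The whole content is the bounded-length assertion (1); the ray and line assertions, and the $\Z$-version, follow from it by routine limiting arguments. If $c:I\cap\Z\to X$ is a $\C$-regular quasi-isometric embedding then $c(t):=c(\lfloor t\rfloor)$ extends it to a $\C$-regular quasi-geodesic $I\to X$ with constants changed only by a controlled amount, since $d(c(t),c(\lfloor t\rfloor))=0$; so it suffices to treat $I\subset\R$. For (2), write the ray $[a,\infty)$ as an increasing union of segments $[a,b_n]$ with $b_n\to\infty$; by properness of $Z$ and the continuous boundary extension $f:\bar Z\to X\cup\F_\theta$ of Theorem \ref{thm.KLPhyp}, $c(b_n)\to c(\infty)=g^+$ in $X\cup\F_\theta$, where $go=c(a)$, and for a slightly fatter admissible cone $\C''$ with $\C\subset\C''\subset\C'$ the diamonds $\Diamond_{\C''}(c(a),c(b_n))$ increase to a bounded neighborhood of the cone $gM_\theta(\exp\C')o$ (the cone $V_{\C''}(c(a),c(b_n))$ converges to the cone based at $c(a)$ pointing toward $g^+$, which lies inside $gM_\theta(\exp\C')o$, while $V_{\C''}(c(b_n),c(a))$ eventually contains any fixed bounded piece of it). Applying (1) to each $[a,b_n]$ yields (2); (3) is identical with segments $[a_n,b_n]$, $a_n\to-\infty$, $b_n\to\infty$, using that $c(\pm\infty)$ exist and are in general position by Theorem \ref{thm.KLPhyp}, and that the diamonds exhaust a bounded neighborhood of the parallel set $gM_\theta Ao$ with $g^\pm=c(\pm\infty)$.

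For (1) I would argue in two stages. \emph{Stage 1 (local estimate):} there are constants $\ell_0,D_1$ depending on $(\C,\C',Q,B)$ so that any $(\C,B)$-regular $Q$-quasi-geodesic $c:[a,b]\to X$ with $B\le b-a\le\ell_0$ stays within distance $D_1$ of $\Diamond_{\C'}(c(a),c(b))$. The mechanism is contraction on the flag variety: $\C$-regularity together with the quasi-geodesic inequality and Lemma \ref{lem.cptcartan} force $\min_{\alpha\in\theta}\alpha(\mu(c(s)^{-1}c(t)))$ to grow at least linearly in $|s-t|$, and an element $h\in G$ with $\min_{\alpha\in\theta}\alpha(\mu(h))\ge t$ moves the complement of a fixed neighborhood of its attracting flag $h^+$ into a ball of radius $\lesssim e^{-ct}$ about $h^+$ in $\F_\theta$ (and dually for $h^-$ under $h^{-1}$). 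Hence the suitably normalized flags $c(s)^+$ converge exponentially fast to $c(b)^+$, and $c(s)^-$ to $c(a)^-$, as $s$ ranges over $[a,b]$; since $c(s)$ lies on the parallel set determined by $c(s)^\pm$ and these parallel sets are exponentially close to the one through $c(a)^-$ and $c(b)^+$, the point $c(s)$ is pushed into a bounded neighborhood of that parallel set, and the $\C$-regularity at the two endpoints cuts this down to the diamond $\Diamond_{\C'}(c(a),c(b))$.

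\emph{Stage 2 (propagation):} subdivide $[a,b]$ into consecutive subintervals of length in $[\ell_0/2,\ell_0]$ with consecutive overlaps of length $\ge B$, and apply Stage 1 to each piece, obtaining a chain of diamonds each of which contains the corresponding portion of $c$ within distance $D_1$. It remains to merge this chain into a single diamond, for which the key is a \emph{uniform betweenness lemma}: if $(x,y)$ and $(y,z)$ are $\C$-regular and $y$ lies within bounded distance of a $\C$-regular quasi-geodesic from $x$ to $z$, then $y$ lies within distance $D_2$ of $\Diamond_{\C'}(x,z)$, with $D_2$ independent of $d(x,z)$. This again follows from the flag-contraction estimate: the attracting flag of $x^{-1}z$ is exponentially close, in $\min_{\alpha\in\theta}\alpha(\mu(x^{-1}y))$, to that of $x^{-1}y$, and the repelling flag of $x^{-1}z$ exponentially close to that of $y^{-1}z$, so $V_{\C'}(x,z)$ coarsely contains $V_{\C'}(x,y)$ near $y$ and $V_{\C'}(z,x)$ coarsely contains $V_{\C'}(z,y)$ near $y$; intersecting these locates $y$ near $\Diamond_{\C'}(x,z)$. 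Applying the betweenness lemma along the chain above, with its single uniform constant, gives $c([a,b])$ within distance $D_0$ of $\Diamond_{\C'}(c(a),c(b))$.

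\emph{The main difficulty} is Stage 2, namely establishing the betweenness lemma \emph{uniformly in} $d(x,z)$: a naive induction fails, since the admissible cone would have to be enlarged at each of unboundedly many steps, and the resolution (due to Kapovich--Leeb--Porti) is that the relevant regularity $\min_{\alpha\in\theta}\alpha(\mu(\cdot))$ grows linearly along the quasi-geodesic while the flag-contraction is exponential in it, so the accumulated error is a convergent geometric series and a single constant suffices. Pinning down the precise contraction estimate on $\F_\theta$, and verifying that $\C$-regularity of a quasi-geodesic produces the needed linear-in-length lower bounds on the $\theta$-roots of the relative Cartan projections, is the technical heart of the argument; the remaining ingredients used above---the limiting arguments for (2) and (3), the $\Z$-extension, and the bookkeeping of the nested admissible cones $\C\subset\C''\subset\C'$---are routine.
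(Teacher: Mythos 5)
The paper does not actually prove this statement: it is imported verbatim from Kapovich--Leeb--Porti \cite[Theorem 5.16, Corollary 5.28]{KLP_2018}, and the only content the paper adds is the closing remark about $\Z$-intervals, which you dispose of in the same trivial way ($c(t):=c(\lfloor t\rfloor)$). So the real question is whether your sketch would stand as a proof of the KLP theorem itself, and there it has a genuine gap.

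The gap sits exactly where you flag the ``main difficulty''. Stage 1, as you state it (intervals with $B\le b-a\le \ell_0$), is vacuous: such a segment has diameter at most $Q\ell_0+Q$, and since the tip $c(a)$ lies in $\Diamond_{\C'}(c(a),c(b))$, the whole segment is automatically within $Q\ell_0+Q$ of the diamond --- the exponential flag-contraction mechanism you describe does no work at this scale. Consequently all of the content is in Stage 2, the uniformity of your ``betweenness lemma'' in $d(x,z)$, and this is precisely what is not proved: the sentence asserting that the accumulated error is a convergent geometric series because ``regularity grows linearly while contraction is exponential'' is a placeholder for the local-to-global machinery that constitutes the bulk of KLP's argument (straight and spaced midpoint sequences, Finsler/zigzag convexity of $\C$-cones and diamonds, nesting of shifted cones under sufficient regularity), none of which is reconstructed. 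Likewise, in (2) and (3) the claim that the diamonds $\Diamond_{\C''}(c(a),c(b_n))$ exhaust a bounded neighborhood of the cone $gM_\theta(\exp\C')o$, resp.\ of the parallel set $gM_\theta A o$, needs a semicontinuity argument for $\C$-cones as $c(b_n)\to c(\infty)$ with the error absorbed by the enlargement $\C''\subset\C'$; it is asserted, not shown. For the purposes of this paper a citation is all that is required, and your outline correctly identifies the shape of KLP's proof; but as a self-contained argument it is incomplete at its decisive step.
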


\begin{figure}[h]
\begin{tikzpicture}[scale=0.9]
    \draw (-2, 0) -- (2, 0) -- (0, 3.4641016151) -- (-2, 0);
    \draw[very thick] (2, 0) -- (0, 3.4641016151);
    \draw (1.2, 3.4641016151/2) node[above] {$\fa_{\theta}^+$};

    \draw[very thick, color=red, fill=red!30, opacity=0.3] (1.7, 0.5196152423) .. controls (-0.5, 0.5) .. (0.3, 2.9444863729) --  (1.7, 0.5196152423);

    \draw[very thick, color=blue, fill=blue!30, opacity=0.5] (1.5, 0.8660254038) .. controls (0, 1) .. (0.5, 2.5980762114) -- (1.5, 0.8660254038);

    \draw[color=blue] (0.75, 1.5) node {$\C$};

    \draw[color=red] (-0.5, 0.5) node {$\C'$};

    \draw[very thick, color=teal] (0, 3.4641016151) -- (-2, 0) -- (2, 0);
    \draw[color=teal] (-2, 0) node[left] {$\bigcup_{\alpha \in \theta} \ker \alpha$};
    \draw[color=white] (2, 0) node[right] {$\bigcup_{\alpha \in \theta} \ker \alpha$};
\end{tikzpicture}
\caption{Choice of $\C'$ viewed on the unit sphere of $\fa^+$} \label{fig.choiceofcones}
\end{figure}
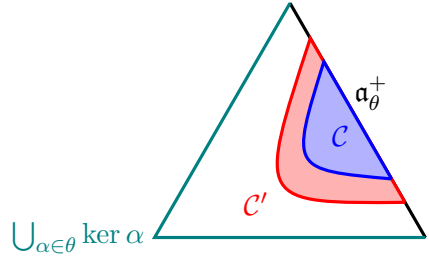

As an application of Theorem \ref{thm:ML}, we get the following:

  \begin{corollary}\label{tri1}
    Given $\cal C, \cal C', Q, B\ge 1$ as in Theorem \ref{thm:ML} and $\psi\in \fa_\theta^*$, there exists a constant $D_1 = D_1(\C,\C', Q, B,\psi) \ge 0$ so that the following holds: let $I \subset \R$ be an interval and $c : I \to X$ a $(\C, B)$-regular $Q$-quasi-geodesic. Then
 for all $a \le t \le b$ in $I$, we have
    \be \label{ca}
    |\d_\psi( c(a), c(b)) - \d_\psi( c(a), c(t)) - \d_\psi(c(t), c(b))| \le D_1.
    \ee 
\end{corollary}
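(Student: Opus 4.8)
\textbf{Proof proposal for Corollary \ref{tri1}.}

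The plan is to apply the KLP Morse lemma (Theorem \ref{thm:ML}) to the restriction $c|_{[a,b]}$ together with the additivity of $\d_\psi$ on diamonds (Lemma \ref{dia}), and to control the error using Lemma \ref{lem.cptcartan}. First I would fix an auxiliary $\theta$-admissible cone $\C''$ with $\C'' \supset \C' - \{0\}$ (in a neighborhood sense), so that Theorem \ref{thm:ML}(1) gives a constant $D_0 = D_0(\C', \C'', Q, B)$ with the following property: for any $a \le b$ in $I$ with $b - a \ge B$, the point $c(t)$ for $a \le t \le b$ lies in the $D_0$-neighborhood of the diamond $\Diamond_{\C''}(c(a), c(b))$. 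Hence there exists $x_t \in \Diamond_{\C''}(c(a), c(b))$ with $d(c(t), x_t) \le D_0$. Since $(c(a), c(b))$ is $\C'$-regular, hence $\C''$-regular, Lemma \ref{dia} applies and gives
\[
\d_\psi(c(a), x_t) + \d_\psi(x_t, c(b)) = \d_\psi(c(a), c(b)).
\]

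Next I would replace $x_t$ by $c(t)$ in this identity at the cost of a bounded error. Write $c(a) = g_1 o$, $c(b) = g_2 o$, $c(t) = g_3 o$, and $x_t = g_4 o$ with $d(g_3 o, g_4 o) = \|\mu(g_3^{-1} g_4)\| \le D_0$. Then $g_3^{-1} g_4$ lies in the compact set $Q_0 = \{ g \in G : \|\mu(g)\| \le D_0 \}$ (which is compact since $\mu$ is proper), so by Lemma \ref{lem.cptcartan} there is $C_0 = C_0(D_0)$ with
\[
\|\mu(g_1^{-1} g_3) - \mu(g_1^{-1} g_4)\| \le C_0 \quad \text{and} \quad \|\mu(g_3^{-1} g_2) - \mu(g_4^{-1} g_2)\| \le C_0,
\]
using $g_1^{-1} g_4 = (g_1^{-1} g_3)(g_3^{-1} g_4)$ and $g_4^{-1} g_2 = (g_4^{-1} g_3)(g_3^{-1} g_2)$ with $g_3^{-1} g_4, g_4^{-1} g_3 \in Q_0$. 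Applying $\psi$ (and bounding $|\psi(v)| \le \|\psi\| \, \|v\|$), we get $|\d_\psi(c(a), c(t)) - \d_\psi(c(a), x_t)| \le \|\psi\| C_0$ and similarly for the other term, so combining with the diamond identity yields
\[
|\d_\psi(c(a), c(b)) - \d_\psi(c(a), c(t)) - \d_\psi(c(t), c(b))| \le 2 \|\psi\| C_0,
\]
which is the desired bound with $D_1 := 2 \|\psi\| C_0$ in the case $b - a \ge B$.

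Finally I would handle the remaining case $b - a < B$ (and the degenerate subcases where $t = a$ or $t = b$, where the inequality is trivial). When $0 < b - a < B$ and $a < t < b$, all three pairwise $d$-distances $d(c(a), c(b))$, $d(c(a), c(t))$, $d(c(t), c(b))$ are at most $QB + Q$ since $c$ is a $Q$-quasi-isometric embedding, so each of $\d_\psi(c(a), c(b))$, $\d_\psi(c(a), c(t))$, $\d_\psi(c(t), c(b))$ is bounded in absolute value by $\|\psi\|(QB + Q)$, and the left-hand side of \eqref{ca} is at most $3\|\psi\|(QB+Q)$. Taking $D_1$ to be the maximum of $2\|\psi\| C_0$ and $3\|\psi\|(QB+Q)$ gives a single constant depending only on $\C, \C', Q, B, \psi$ (through $D_0$ and these bounds), completing the proof. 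The main obstacle is the first case: one must be careful that the Morse lemma is being applied to the finite-interval quasi-geodesic $c|_{[a,b]}$ with the correct inclusion of cones $\C' \subset \C''$, and that the $\C'$-regularity of the pair $(c(a), c(b))$ — needed to invoke Lemma \ref{dia} — follows from $c$ being $(\C, B)$-regular together with $b - a \ge B$ and $\C \subset \C'$; everything else is a routine comparison estimate via Lemma \ref{lem.cptcartan}.
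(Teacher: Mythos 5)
Your proposal is correct and follows essentially the same route as the paper's proof: apply the KLP Morse lemma to place $c([a,b])$ in a bounded neighborhood of a diamond with tips $c(a),c(b)$, use the additivity of $\d_\psi$ on diamonds (Lemma \ref{dia}), transfer from the diamond point $x_t$ back to $c(t)$ via Lemma \ref{lem.cptcartan}, and dispose of the case $b-a<B$ by the quasi-isometric bound $\|\psi\|(QB+Q)$. The only deviation is your auxiliary cone $\C''$, which is superfluous: since $(\C,\C')$ is already a pair satisfying the hypotheses of Theorem \ref{thm:ML}, one can apply the Morse lemma directly and work with $\Diamond_{\C'}(c(a),c(b))$, as the paper does.
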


\begin{proof}
  Suppose that $b - a \ge B$. Since $c$ is $(\cal C, B)$-regular,
    the pair $(c(a), c(b))$ is $\cal C$-regular. 
    Applying Theorem \ref{thm:ML}, we obtain that the image of $ c : [a,b] \to X$ lies in the $D_0$-neighborhood of $\Diamond_{\mathcal C'}(c(a), c(b))$. For each $a < t < b$, 
    choose $x_t\in  \Diamond_{\mathcal C'}(c(a), c(b))$ so that
    $d(x_t, c(t)) \le  D_0$. Hence
    by Lemma \ref{dia},
    \be \label{eqn.tri1det}
    \d_\psi(c(a),x_t)+ \d_{\psi}(x_t, c(b)) = \d_\psi(c(a),c(b)).
    \ee 

  For each $a \le t \le b$, write $c(t)=g_t o$ and $x_t=h_t o$  for $g_t, h_t\in G$. We then have $\| \mu(h_t^{-1} g_t)\|\le D_0$.
  By applying
  Lemma \ref{lem.cptcartan} to a compact subset $\{g \in G : \|\mu(g)\|\le D_0\}$, 
  we have for all $a<t<b$,
$$        |\d_{\psi}(c(a), x_t) - \d_{\psi}(c(a) , c(t)) |  = | \psi( \mu(g_a^{-1} h_t) - \mu(g_a^{-1} g_t )) | \le C
    $$
  where $C>0$ is a uniform constant depending only on $\psi$ and $D_0$.
    Similarly, we have $|\d_\psi (x_t, c(b)) - \d_\psi  (c(t), c(b)) |\le C $.
By \eqref{eqn.tri1det},  this implies that
$$
    |\d_\psi( c(a), c(b)) - \d_\psi( c(a), c(t)) - \d_\psi(c(t), c(b))| \le  2 C.
    $$

Setting $D_1= 2C + 3 \|\psi\|(QB + Q)$ where $\|\psi\|$ is the operator norm of $\psi$, we have shown that \eqref{ca} holds whenever $b-a\ge B$.
  If $b - a < B$, then the image of $c([a, b])$ has diameter smaller than $Q(b-a) + Q < QB + Q$. Then $$\d_{\psi}(c(t_1), c(t_2)) < \|\psi\| (QB + Q)$$ for all $t_1, t_2 \in [a, b]$, and hence the left hand side of \eqref{ca} is bounded above by $ 3\|\psi\| (QB + Q)\le D_1$. This completes the proof.
\end{proof}

 We are ready to give:
\subsection*{Proof of Theorem \ref{thm:CTI}} Let $f : Z \to X$ be as in Theorem \ref{thm:CTI}.
Let $\psi \in \fa_{\theta}^*$ be such that $\psi >0$ on $ \C - \{0\}$.
 Choose a $\theta$-admissible cone  $\cal C' \subset \fa^+$ 
such that $\inte \cal C'$ contains  $\cal C-\{0\}$ and such that $\psi>0$ on $\C'-\{0\}$.
 Let $x_1, x_2, x_3 \in f(Z)$ be a triple of distinct points. We choose $z_1, z_2, z_3 \in Z$ such that $x_i = f(z_i)$ for $i = 1,2, 3$.
 Choose geodesics $c_1$ and $c_2$ in $Z$ 
 connecting $z_1$ to $z_2$ and $z_2$ to $z_3$ respectively.
  By Theorem \ref{thm.KLPhyp}, $(Z, d_Z)$ is Gromov hyperbolic.
  We denote by $z$ the nearest-point projection of $z_2$ to a geodesic segment connecting $z_1$ and $z_3$. Then by the Gromov hyperbolicity of $(Z, d_Z)$, there exists a uniform constant $\delta > 0$ so that 
  the $\delta$-neighborhood of $z$ intersects both geodesics $c_1$ and $c_2$. We choose two points $y_1 \in c_1$ and $y_2 \in c_2$ which are $\delta$-close to $z$. We concatenate the segment of $c_1$ connecting $z_1$ and $y_1$, a geodesic connecting $y_1$ and $y_2$, and the segment of $c_2$ connecting $y_2$ and $z_3$, and denote the concatenated path by $c$. We can parameterize $c : [0, b] \to Z$ so that $c$ is a $q$-quasi-geodesic for some  $b > 0$ and uniform $q \ge 1$ by the Gromov hyperbolicity of $(Z, d_Z)$ and the choice of $y_1$ and $y_2$.

  Since $f$ is a $\C$-regular quasi-isometric embedding, so is $f \circ c$. Hence we  get
\be \label{eqn.jan091}
\d_\psi(x_1, x_3) \le  \d_\psi(x_1, f(y_1)) + \d_\psi( f(y_1) ,x_3) +D_1
\ee where $D_1$ is the constant given by Corollary \ref{tri1}.
Applying Corollary \ref{tri1} to the restriction of $f \circ c$ to the interval $[c^{-1}(y_1), b]$ again, we have 
\be \label{eqn.jan092}
\d_\psi( f(y_1) ,x_3) \le \d_\psi(f(y_1), f(y_2)) + \d_{\psi}( f(y_2), x_3) + D_1.
\ee
Since $d_Z(y_1, y_2) \le 2 \delta$, combining \eqref{eqn.jan091} and \eqref{eqn.jan092} yields
\be \label{eqn.jan09conc1}
\d_{\psi}(x_1, x_3) \le \d_\psi(x_1, f(y_1)) + \d_{\psi}( f(y_2), x_3) + D_1'.
\ee
 where $D_1':= \sup \{ \d_\psi(f(w_1), f(w_2)) :{d_Z(w_1, w_2) \le 2 \delta} \} + 2 D_1<\infty $.

Since $f$ is $\C$-regular and $\psi > 0$ on $\C - \{0\}$, there exists $D_2 > 0$ such that \be \label{eqn.jan09yes}
\d_{\psi}(f(w_1), f(w_2)) \ge - D_2
\ee for all $w_1, w_2 \in Z$; indeed, if $f$ is $(\C, B)$-regular for some $B\ge0$,
 then $\d_{\psi}(f(w_1), f(w_2)) \ge 0$ whenever
$d_Z(w_1, w_2)\ge B$, and 
$\sup \{ |\d_\psi (f(w_1), f(w_2))| :d_Z(w_1, w_2)<B\}  $ is bounded  by a uniform constant depending only on $B$,
the quasi-isometry constant of $f$, and $\|\psi\|$.
 
Hence applying \eqref{eqn.jan09yes} and Corollary \ref{tri1} to $f ( c_1 )$,
we have \be \label{eqn.jan09conc2}
\begin{aligned}
    \d_\psi (x_1, f(y_1)) & \le \d_\psi (x_1, f(y_1))  +\d_\psi(f(y_1),x_2) + D_2\\
    & \le \d_\psi(x_1,x_2)  +D_1 + D_2.
\end{aligned} \ee
Similarly, we also get
 \be \label{eqn.jan09conc3}
 \d_\psi(f(y_2),x_3) \le \d_\psi(x_2,x_3)+D_1 + D_2.
 \ee
 Combining \eqref{eqn.jan09conc1}, \eqref{eqn.jan09conc2} and \eqref{eqn.jan09conc3}, we obtain
 $$\d_{\psi}(x_1, x_3) \le \d_{\psi}(x_1, x_2) + \d_{\psi}(x_2, x_3) + D_1' + 2( D_1 + D_2).$$
This completes the proof of Theorem \ref{thm:CTI}.
\qed

\medskip

We state the following consequence of the KLP Morse lemma applied to Anosov subgroups:
\begin{theorem}[Morse lemma for Anosov subgroups] \label{morse2} Let $\theta=\i(\theta)$.
    Let $\Ga$ be a $\theta$-Anosov subgroup  and $f : \Ga \cup \partial \Ga \to \Ga o \cup \La_{\theta}$ be the extension of the orbit map $\ga \mapsto \ga o$ given in Theorem \ref{thm.anosovbasic}(4). Then there exist a cone $\C \subset \fa^+$ and constants $B, D_0 \ge 1$ such that for any geodesic $[\xi, \eta]$ in $\Ga$, the following holds:
    \begin{enumerate}
        \item If $\xi, \eta \in \Ga$ and $\d_{\Ga}(\xi, \eta) \ge B$, then $f([\xi, \eta])$ is contained in the $D_0$-neighborhood of the diamond $\Diamond_{\C}(f(\xi), f(\eta))$.
        \item If $\xi \in \Ga$ and $\eta \in \partial \Ga$, then $f([\xi, \eta])$ is contained in the $D_0$-neighborhood of $g M_{\theta} (\exp \C) o$ where $g \in G$ is such that $g o = \xi$ and $g P_{\theta} = f(\eta)$.
        \item If $\xi, \eta \in \partial \Ga$, then $f([\xi, \eta])$ is contained in the $D_0$-neighborhood of $g M_{\theta} A o$ where $g \in G$ is  such that $g P_{\theta} = f(\xi)$ and $g w_0 P_{\theta} = f(\eta)$.
    \end{enumerate}
Moreover, the cone $\C$ can be taken arbitrarily close to $\L$ as long as its interior contains $\L - \{0\}$.
\end{theorem}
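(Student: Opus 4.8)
The plan is to deduce this statement directly from the KLP Morse lemma (Theorem \ref{thm:ML}) applied to the orbit map $f$, which is a quasi-isometric embedding by Theorem \ref{thm.anosovbasic}(3). First I would fix the cone: since $\L-\{0\}\subset\inte\fa^+$ by Theorem \ref{thm.anosovbasic}(2), I can choose a $\theta$-admissible cone $\C\subset\fa^+$ containing a neighborhood of $\L-\{0\}$ in $\fa^+$, exactly as in the proof of Theorem \ref{thm.triangle} (and, if desired, as close to $\L$ as one likes). By Lemma \ref{below}, the orbit map $(\Ga,\d_\Ga)\to(X,d)$ is then $\C$-regular, i.e.\ $(\C,B)$-regular for some $B\ge 1$; denote by $Q\ge 1$ the quasi-isometry constant from Theorem \ref{thm.anosovbasic}(3). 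Choosing a slightly larger $\theta$-admissible cone $\C'$ containing a neighborhood of $\C-\{0\}$ (this only enlarges the diamonds/cones in the conclusion, and one may rename $\C'$ as $\C$ at the end since $\C'$ can also be taken near $\L$), Theorem \ref{thm:ML} supplies a constant $D_0=D_0(\C,\C',Q,B)$.

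Next I would handle the three cases by specializing the path. Given a geodesic $[\xi,\eta]$ in $\Ga$ with endpoints in $\Ga\cup\partial\Ga$, parametrize it by an interval $I\subset\Z$ (a segment, a ray, or all of $\Z$ in the three respective cases) and extend it to a quasi-geodesic $c:I'\to X$ of $\R$ via the orbit map composed with the floor-function trick noted at the end of Theorem \ref{thm:ML}; this $c$ is a $\C$-regular $Q'$-quasi-geodesic for a uniform $Q'$ (absorbing the constant-speed reparametrization). In case (1), with $\d_\Ga(\xi,\eta)\ge B$, Theorem \ref{thm:ML}(1) gives that $f([\xi,\eta])=c(I)$ lies in the $D_0$-neighborhood of $\Diamond_{\C'}(f(\xi),f(\eta))$, using that the endpoints of $c$ are $f(\xi)$ and $f(\eta)$. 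In case (2), Theorem \ref{thm:ML}(2) applies: the endpoint at infinity of $c$ is $f(\eta)\in\F_\theta$ by the boundary extension in Theorem \ref{thm.anosovbasic}(4) (compatibly with the extension of $f$ to $\partial Z$ in Theorem \ref{thm.KLPhyp}), so $f([\xi,\eta])$ lies in the $D_0$-neighborhood of $gM_\theta(\exp\C')o$ where $go=f(\xi)$ and $gP_\theta=f(\eta)$. In case (3), Theorem \ref{thm:ML}(3) applies with both endpoints at infinity equal to $f(\xi)$ and $f(\eta)$, giving the $D_0$-neighborhood of the parallel set $gM_\theta A o$ with $gP_\theta=f(\xi)$, $gw_0P_\theta=f(\eta)$; here one also uses that $f$ sends distinct boundary points to points in general position (Theorem \ref{thm.anosovbasic}(4)), so that such a $g$ exists.

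The only genuinely nontrivial point is the identification of the boundary values of the extended quasi-geodesic $c$ with the prescribed flag data. This requires that the continuous extension of $f\circ c$ to $X\cup\F_\theta$ furnished by Theorem \ref{thm.KLPhyp} agrees, on the endpoints of $[\xi,\eta]$ in $\partial\Ga$, with the $\Ga$-equivariant homeomorphism $f:\partial\Ga\to\La_\theta$ of Theorem \ref{thm.anosovbasic}(4); since both extensions are continuous extensions of the same orbit map along geodesic rays, they coincide, and this is where the hypotheses of the two theorems are reconciled. The remaining steps — verifying $\theta$-admissibility of $\C$, the uniform quasi-geodesic constant after reparametrization, and the freedom to take $\C$ near $\L$ — are routine and handled exactly as in the proof of Theorem \ref{thm.triangle}. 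Finally, renaming $\C'$ as $\C$ (legitimate since it can be chosen arbitrarily close to $\L$ while containing a neighborhood of $\L-\{0\}$) and setting $B,D_0$ as above yields the statement. \qed
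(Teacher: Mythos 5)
Your proposal is correct and follows essentially the same route as the paper: choose the $\theta$-admissible cones $\C\subset\C'$ as in the proof of Theorem \ref{thm.triangle}, note via Lemma \ref{below} and Theorem \ref{thm.anosovbasic}(3) that the orbit map is a $(\C,B)$-regular $Q$-quasi-isometric embedding, parametrize the geodesic by an interval in $\Z$, and apply the three cases of the KLP Morse lemma (Theorem \ref{thm:ML}), with $\C'$ renamed as $\C$ in the conclusion. Your extra remark reconciling the boundary extension of Theorem \ref{thm.KLPhyp} with the homeomorphism of Theorem \ref{thm.anosovbasic}(4) is a point the paper leaves implicit, but it does not change the argument.
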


\begin{proof}
    Let $\C\subset \fa^+$  be the $\theta$-admissible cone as in the proof of Theorem \ref{thm.triangle}, and choose a $\theta$-admissible cone $\C' \subset \fa^+$ whose interior in $\fa^+$ contains $\cal C-\{0\}$. Then by Lemma \ref{below} and Theorem \ref{thm.anosovbasic}(3), the orbit map $f|_{\Ga}$ is a $(\C, B) $-regular $Q$-quasi-isometry between $(\Ga, \d_\Ga)$ and $(\Ga o, d)$ for some $B, Q\ge 1$.  Let $D_0=D_0(\C, \C', Q, B)$
be as given by Theorem \ref{thm:ML}.

Now note that  any geodesic $[\xi,\eta]$ in $(\G, \d_\Ga)$ can be written as $[\xi, \eta]=\{\ga_i:i\in I\}$ for an interval $I$ in $\Z$, and $\iota: i\mapsto \ga_i$ is an isometry between $I$ and $[\xi, \eta]$.
Since  $c:=f \circ \iota $ is a $(\cal C, B)$-regular Q-quasi-geodesic, we can apply Theorem \ref{thm:ML} which implies the above claims (1)-(3) where the cone $\C$ in the statement is given by $\C'$ in this proof.
Note from the proof of Theorem \ref{thm.triangle} that the cone $\C'$ can be taken arbitrarily close to the limit cone $\L$ of $\Ga$ as long as $\inte \C'$ contains $\L - \{0\}$.
\end{proof}

\section{Conformal premetrics on limit sets} \label{sec.confmetric}
Let $\Ga$ be a $\theta$-Anosov subgroup of a connected semisimple real algebraic group $G$.
We assume $\theta = \i(\theta)$ in this section.  Fix a linear form $\psi \in \fa_\theta^*$ positive on $\L_{\theta} - \{0\}$.
The goal of this section is to define  a premetric $d_\psi$ on the limit set $\La_\theta$, which is conformal, almost symmetric, and satisfies almost triangle inequality with bounded multiplicative error.
 We also discuss how this definition can be extended to non-symmetric $\theta$ at the end of the section.

Recall the definition of the Gromov product from Definition \ref{def.defgromovprod}. The $\theta$-Anosov property of $\Gamma$ implies that any two distinct points in $\La_\theta$ are in general position:
if $\xi\ne \eta$ in $\La_\theta$, then $(\xi, \eta)\in \F_\theta^{(2)}$.
Therefore the following premetric on $\La_\theta$ is well-defined:
\begin{Def}
    For $\xi, \eta \in \La_{\theta}$, we set \be\label{eqn.defconfmetric}
d_{\psi}(\xi, \eta) =\begin{cases}
e^{-\psi( \cal G^{\theta}(\xi, \eta))} & \quad \text{if } \xi \neq \eta
\\ 
  0 &\quad\text {if } \xi = \eta.\end{cases}  \ee 
\end{Def}

We first observe the following $\Ga$-conformal property of $d_\psi$:
\begin{lemma} \label{lem.gromovconformal}
    For $\ga \in \Ga$ and $\xi, \eta \in \La_{\theta}$, we have $$d_{\psi}(\ga^{-1}\xi, \ga^{-1}\eta) = e^{\frac{1}{2}\psi(\beta_{\xi}^{\theta}(e, \ga) + \i (\beta_{\eta}^{\theta}(e, \ga )))} d_{\psi}(\xi, \eta).$$ 
\end{lemma}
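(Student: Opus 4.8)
The plan is to deduce the displayed formula from a cocycle identity for the $\theta$-Gromov product, using only the invariance and cocycle properties of the Busemann map $\beta^\theta$ recorded above. Since $d_\psi(\xi,\eta)=e^{-\psi(\cal G^\theta(\xi,\eta))}$ whenever $\xi\neq\eta$ by \eqref{eqn.defconfmetric}, and both sides of the asserted formula vanish when $\xi=\eta$ (as then $\ga^{-1}\xi=\ga^{-1}\eta$), it suffices to prove that for all distinct $\xi,\eta\in\La_\theta$ and all $\ga\in\Ga$,
\be\label{plan.gromovcocycle}
\cal G^\theta(\ga^{-1}\xi,\ga^{-1}\eta)=\cal G^\theta(\xi,\eta)-\tfrac12\bigl(\beta_\xi^\theta(e,\ga)+\i(\beta_\eta^\theta(e,\ga))\bigr).
\ee
Applying $-\psi$ to both sides of \eqref{plan.gromovcocycle}, exponentiating, and recalling that $\i$ preserves $\fa_\theta$ since $\theta=\i(\theta)$, one recovers exactly the claim.

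To establish \eqref{plan.gromovcocycle}, first choose $g\in G$ with $(g^+,g^-)=(\xi,\eta)$; such $g$ exists because distinct points of $\La_\theta$ are in general position, i.e.\ $(\xi,\eta)\in\F_\theta^{(2)}$. Since $\theta=\i(\theta)$ we have $\F_{\i(\theta)}=\F_\theta$, so the formula of Definition \ref{def.defgromovprod} reads $\cal G^\theta(\xi,\eta)=\tfrac12(\beta_\xi^\theta(e,g)+\i(\beta_\eta^\theta(e,g)))$. Next I would observe that $h:=\ga^{-1}g$ is an admissible representative for the pair $(\ga^{-1}\xi,\ga^{-1}\eta)$: indeed $h^+=\ga^{-1}gP_\theta=\ga^{-1}\xi$ and $h^-=\ga^{-1}gw_0P_\theta=\ga^{-1}\eta$. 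Hence $\cal G^\theta(\ga^{-1}\xi,\ga^{-1}\eta)=\tfrac12\bigl(\beta_{\ga^{-1}\xi}^\theta(e,\ga^{-1}g)+\i(\beta_{\ga^{-1}\eta}^\theta(e,\ga^{-1}g))\bigr)$.

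Finally I would expand $\beta_{\ga^{-1}\xi}^\theta(e,\ga^{-1}g)$: by the invariance property of $\beta^\theta$ (applied with the translating element $\ga^{-1}$) it equals $\beta_\xi^\theta(\ga,g)$, and then by the cocycle property together with $\beta_\xi^\theta(\ga,e)=-\beta_\xi^\theta(e,\ga)$ it equals $\beta_\xi^\theta(e,g)-\beta_\xi^\theta(e,\ga)$; the identical computation gives $\beta_{\ga^{-1}\eta}^\theta(e,\ga^{-1}g)=\beta_\eta^\theta(e,g)-\beta_\eta^\theta(e,\ga)$. Substituting these two expressions into the formula for $\cal G^\theta(\ga^{-1}\xi,\ga^{-1}\eta)$ and using linearity of $\i$ on $\fa_\theta$ yields \eqref{plan.gromovcocycle}.

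There is no genuine obstacle in this argument: it is a direct manipulation of the Busemann cocycle. The only points requiring care are the identification $\F_{\i(\theta)}=\F_\theta$ afforded by $\theta=\i(\theta)$ (which is precisely why the lemma is stated with $\beta_\eta^\theta$ rather than $\beta_\eta^{\i(\theta)}$), the verification that $\ga^{-1}g$ is a legitimate representative in Definition \ref{def.defgromovprod}, and the sign bookkeeping in the invariance and cocycle relations for $\beta^\theta$.
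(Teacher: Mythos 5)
Your argument is correct and is essentially the paper's own proof: both choose $g$ with $(g^+,g^-)=(\xi,\eta)$, note that $\ga^{-1}g$ represents the translated pair, and use the invariance and cocycle properties of $\beta^\theta$ to obtain $2\,\cal G^{\theta}(\ga^{-1}\xi,\ga^{-1}\eta)=2\,\cal G^{\theta}(\xi,\eta)-\beta_{\xi}^{\theta}(e,\ga)-\i(\beta_{\eta}^{\theta}(e,\ga))$, from which the claim follows by the definition of $d_{\psi}$. The only difference is presentational (you isolate the cocycle identity for $\cal G^{\theta}$ and treat the trivial case $\xi=\eta$ explicitly), so nothing further is needed.
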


\begin{proof}
   Let $\xi \neq \eta$, and
     $g \in G$ be such that $g^+ = \xi$ and $g^- = \eta$. Then for any $\ga\in \Ga$, $$\begin{aligned}
        2 \cal G^{\theta}(\ga^{-1} \xi, \ga^{-1} \eta) & = \beta_{\ga^{-1} \xi}^{\theta}(e, \ga^{-1} g ) + \i(\beta_{\ga^{-1} \eta}^{\theta}(e, \ga^{-1} g )) \\
        & = 2 \cal G^{\theta}(\xi, \eta) +\beta_{\xi}^{\theta}(\ga , e) + \i(\beta_{\eta}^{\theta}(\ga , e)) \\
        & = 2 \cal G^{\theta}(\xi, \eta) - \beta_{\xi}^{\theta}(e, \ga) - \i(\beta_{\eta}^{\theta}(e, \ga)).
    \end{aligned}$$ Now the claim follows from the definition of $d_{\psi}$.
\end{proof}

Recall that $\cal G^{\theta}(\xi, \eta)
=\i (\cal G^{\theta}( \eta,\xi))$ for all $\xi,\eta\in \La_\theta$. 
Hence
if $\psi$ is $\i$-invariant, then $d_\psi$ is symmetric.
We have the following in general:
\begin{prop}[Metric-like properties of $d_\psi$] \label{prop.lotriangle}\leavevmode
\begin{enumerate}
    \item There exists $R=R(\psi)>1$ such that  for all $\xi, \eta \in \La_{\theta}$,
    $$R^{-1} d_\psi(\eta,\xi)\le d_\psi(\xi, \eta) \le R\;  d_\psi (\eta,
    \xi) .$$
    
    \item There exists $N=N(\psi) > 0$ such that 
    for all $\xi_1, \xi_2, \xi_3 \in \La_{\theta}$, $$d_{\psi}(\xi_1, \xi_3) \le N (d_{\psi}(\xi_1, \xi_2) + d_{\psi}(\xi_2, \xi_3)).$$
\end{enumerate}
\end{prop}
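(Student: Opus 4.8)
The plan is to reduce everything to the single function $(\xi,\eta)\mapsto\psi(\cal G^\theta(\xi,\eta))$ on $\La_\theta^{(2)}$, since $d_\psi(\xi,\eta)=e^{-\psi(\cal G^\theta(\xi,\eta))}$ for $\xi\ne\eta$ and $\cal G^\theta(\eta,\xi)=\i(\cal G^\theta(\xi,\eta))$; thus (1) is the assertion that $|(\psi-\psi\circ\i)(\cal G^\theta(\xi,\eta))|$ is uniformly bounded, and (2) is the ultrametric-type inequality $\psi(\cal G^\theta(\xi_1,\xi_3))\ge\min\bigl(\psi(\cal G^\theta(\xi_1,\xi_2)),\psi(\cal G^\theta(\xi_2,\xi_3))\bigr)-O(1)$. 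The engine of the argument is the Morse Lemma for Anosov subgroups (Theorem \ref{morse2}), applied to bi-infinite geodesics and to rays: combined with $\|\beta_\zeta^\theta(g_1,g_2)\|\le d(g_1o,g_2o)$ and the cocycle identity, it gives two coarse formulas I will use repeatedly. First, if $\gamma$ is a vertex of a geodesic $[\xi,\eta]$ in $\Ga$ whose orbit image lies within $D_0$ of the parallel set of $(\xi,\eta)$, then $\cal G^\theta(\xi,\eta)=\tfrac12\bigl(\beta_\xi^\theta(e,\gamma)+\i\,\beta_\eta^\theta(e,\gamma)\bigr)+O(1)$. Second, if $\gamma'\in\Ga$ lies on a geodesic ray from $e$ to $\zeta\in\partial\Ga$, then $\beta_\zeta^\theta(e,\gamma')=\mu_\theta(\gamma')+O(1)$ — a short Iwasawa computation using that $\gamma'o$ is $D_0$-close to $kM_\theta(\exp\C)o$ for some $k\in K$ with $kP_\theta=\zeta$ — and, more generally, $\beta_\zeta^\theta(\gamma',\gamma'')$ lies in a bounded neighborhood of the cone $p_\theta(\C)$ whenever $\gamma''$ lies on the sub-ray $[\gamma',\zeta)$.

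For part (1) I would take $\gamma$ to be the vertex of $[\xi,\eta]$ closest to the identity; by standard hyperbolic geometry $\gamma o$ is within $O(1)$ of the center of the triangle with vertices $e,\xi,\eta$, hence within $O(1)$ of each of the rays $[e,\xi]$ and $[e,\eta]$. The ray formula then gives $\beta_\xi^\theta(e,\gamma)=\mu_\theta(\gamma)+O(1)=\beta_\eta^\theta(e,\gamma)$, so the first coarse formula becomes $\cal G^\theta(\xi,\eta)=\tfrac12(\id+\i)\mu_\theta(\gamma)+O(1)$, which lies up to $O(1)$ in the $\i$-fixed subspace of $\fa_\theta$ (this uses $\theta=\i(\theta)$, so $\i$ preserves $\fa_\theta$). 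The same $\gamma$ serves for $(\eta,\xi)$ (same geodesic, same parallel set), so $\cal G^\theta(\xi,\eta)=\cal G^\theta(\eta,\xi)+O(1)$, and applying $\psi$ gives $R^{-1}d_\psi(\eta,\xi)\le d_\psi(\xi,\eta)\le Rd_\psi(\eta,\xi)$ with $R=e^{O(1)}$.

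For part (2) I would first replace $\psi$ by its $\i$-symmetrization $\bar\psi=\tfrac12(\psi+\psi\circ\i)$, which is $\i$-invariant, still positive on $\L_\theta-\{0\}$ (as $\i$ preserves $\L_\theta$), and, by part (1), satisfies $\bar\psi\circ\cal G^\theta=\psi\circ\cal G^\theta+O(1)$ on $\La_\theta^{(2)}$; so it suffices to prove the ultrametric inequality for $\bar\psi\circ\cal G^\theta$. Given distinct $\xi_1,\xi_2,\xi_3\in\La_\theta$, let $p\in\Ga$ be a center of the geodesic triangle on $\xi_1,\xi_2,\xi_3$, within a uniform $\delta$ of each of its three sides; applying the first coarse formula with $p$ on each side and using $\bar\psi\circ\i=\bar\psi$ gives $\bar\psi(\cal G^\theta(\xi_i,\xi_j))=\tfrac12\bar\psi(a_i)+\tfrac12\bar\psi(a_j)+O(1)$ with $a_m=\beta_{\xi_m}^\theta(e,p)$, and a one-line computation ($\tfrac12(A+C)\ge\min(\tfrac12(A+B),\tfrac12(B+C))$ iff $\max(A,C)\ge B$) reduces the desired inequality to $\max(\bar\psi(a_1),\bar\psi(a_3))\ge\bar\psi(a_2)-O(1)$. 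By the four-point (Steiner-tree) structure of $\{e,\xi_1,\xi_2,\xi_3\}$ together with the ray formula, two of the three $a_m$ equal $\mu_\theta(p)+O(1)$ — precisely those $\xi_m$ for which $p$ lies on $[e,\xi_m]$ — and I would conclude by distinguishing cases: if $\xi_2$ is one of these two the bound is immediate, since $\bar\psi(\mu_\theta(p))$ then appears in the left-hand maximum; otherwise $a_1,a_3=\mu_\theta(p)+O(1)$ and it remains to show $\bar\psi(a_2)\le\bar\psi(\mu_\theta(p))+O(1)$. For the latter I would telescope along the tree: with $q$ the vertex of $[e,\xi_2]$ nearest $p$, the cocycle identity and the ray formula give $\bar\psi(a_2)=\bar\psi(\mu_\theta(q))+\bar\psi(\beta_{\xi_2}^\theta(q,p))+O(1)$; the middle term is $\le O(1)$ because $q$ lies on $[p,\xi_2)$, so $\beta_{\xi_2}^\theta(p,q)\in p_\theta(\C)+O(1)$ and $\bar\psi\ge0$ on $p_\theta(\C)$; and $\bar\psi(\mu_\theta(q))\le\bar\psi(\mu_\theta(p))+O(1)$ because $q$ lies on $[e,p]$, so the diamond/nesting property (Lemma \ref{lem.nesting}) gives $\mu_\theta(p)-\mu_\theta(q)\in p_\theta(\C)+O(1)$.

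The step I expect to be the real work is making the passage from the Morse Lemma to the two coarse formulas precise with uniform additive constants: identifying the ideal-endpoint data of the relevant $\C$-cones and diamonds with the vectors $\mu_\theta(\gamma)$ and with the cone $p_\theta(\C)$ up to bounded error (using the nesting and convexity properties established in Section \ref{sec.metriclike}), and checking that the hyperbolic-geometry bookkeeping — centers of triangles, nearest-point projections — indeed places the auxiliary vertices $\gamma$, $p$, $q$ coarsely on the geodesics claimed. Once these are in place, both parts follow as above.
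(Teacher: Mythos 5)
Your proposal is correct, and for part (1) it is essentially the paper's route: the paper deduces (1) from Lemma \ref{lem.gromovsym} (cited to \cite[Lemma 6.6]{LO_invariant}), which states exactly your claim that $\cal G^{\theta}(\xi,\eta)$ agrees up to bounded error with the $\i$-symmetrization $\tfrac12(\mu_\theta(\ga_{\xi,\eta})+\i\mu_\theta(\ga_{\xi,\eta}))$ at the nearest-point projection $\ga_{\xi,\eta}$; your argument via the Morse lemma, the coarse formula $\cal G^\theta(\xi,\eta)=\tfrac12(\beta^\theta_\xi(e,\ga)+\i\beta^\theta_\eta(e,\ga))+O(1)$ for $\ga$ on $[\xi,\eta]$, and the ray formula $\beta^\theta_\zeta(e,\ga')=\mu_\theta(\ga')+O(1)$ is in substance a reproof of that lemma (the same Busemann--Cartan comparison appears in the paper's proof of Proposition \ref{ss}, via shadows and Lemma \ref{lem.buseandcartan}). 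For part (2) the paper gives no internal argument at all: it cites \cite[Lemma 6.11]{LO_invariant} and asserts the proof carries over verbatim to general $\theta$. Your self-contained argument — reduce to $\bar\psi$ via Proposition \ref{prop.bilip}, pass to a quasi-center $p$ of the ideal triangle, note $p$ is uniformly close to at least two of the rays $[e,\xi_m]$ (your pigeonhole over the three thin triangles with vertex $e$ is sound), and control the remaining Busemann value $\beta^\theta_{\xi_2}(e,p)$ by telescoping through the projection $q$ using positivity of $\bar\psi$ on $p_\theta(\C)$ and diamond additivity (Lemma \ref{lem.nesting}/\eqref{mue}, or equivalently Corollary \ref{tri1}) — is a correct replacement built entirely from tools already in the paper (Theorem \ref{morse2}, Lemma \ref{lem.cptcartan}, Lemma \ref{below}), and is in the same spirit as the cited Lee--Oh argument, which also rests on thin-triangle geometry plus the Gromov-product/Cartan-projection comparison. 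What your version buys is independence from the external reference; what it costs is redoing with uniform constants the Morse-lemma bookkeeping (shadow membership of endpoints, $\C$-regularity, $\i$-invariance of the chosen admissible cone so that $\bar\psi\ge 0$ on $p_\theta(\C)$) that the paper has already packaged elsewhere. No gaps beyond the routine constant-chasing you yourself flag; just note explicitly the degenerate cases where two of the $\xi_i$ coincide, which are immediate.
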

The second property was 
 obtained in \cite[Lemma 6.11]{LO_invariant} and the same proof can be repeated for a general $\theta$ in verbatim.
The first property follows from Lemma \ref{lem.gromovsym} below.  For  $x\ne  y$ in the Gromov boundary $\partial \Ga$ and a bi-infinite geodesic $[x, y]$ in $\Ga$, we denote by $\ga_{x, y} \in [x, y]$
the nearest-point projection of the identity $e$ to $[x,y]$ in $(\Ga, \d_\Ga)$, that is, $\ga_{x, y} \in [x, y]$ is an element such that $\d_\Ga (e, \ga_{x,y})=\inf\{ \d_\Ga (e, g):g\in [x,y]\}$, which is coarsely well-defined.
 Recall the map $f : \Ga \cup \partial \Ga \to \Ga o \cup \La_{\theta}$ from Theorem \ref{thm.anosovbasic}(4). The following was proved in \cite[Lemma 6.6]{LO_invariant} for $\theta=\Pi$ and the same proof works for a general~$\theta$:
\begin{lemma} \label{lem.gromovsym}
    There exists $C_1 > 0$ such that for any $x \neq y \in \partial \Ga$, $$\left\| \cal G^{\theta}(f(x), f(y)) - \frac{\mu_{\theta}(\ga_{x,y}) + \i(\mu_{\theta}(\ga_{x,y}))}{2} \right\| < C_1.$$
    In particular, for $\xi \neq \eta \in \La_{\theta}$, we have $$\| \cal G^{\theta}(\xi, \eta) - \cal G^{\theta}(\eta, \xi) \| < 2C_1.$$
\end{lemma}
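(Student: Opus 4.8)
The plan is to show that $\cal G^{\theta}(f(x),f(y))$ and $\tfrac12\big(\mu_{\theta}(\ga_{x,y})+\i(\mu_{\theta}(\ga_{x,y}))\big)$ agree up to an additive error that is bounded uniformly over $x\ne y$. The two inputs are the KLP Morse lemma for $\Ga$ (Theorem \ref{morse2}), which locates geodesic rays and bi-infinite geodesics of $\Ga$ inside $X$, and the defining formula for $\cal G^{\theta}$ (Definition \ref{def.defgromovprod}) together with the cocycle property of the Busemann map. Write $\ga:=\ga_{x,y}$. Since distinct points of $\La_{\theta}$ are antipodal, $(f(x),f(y))\in\F_{\theta}^{(2)}$, and $\cal G^{\theta}$ does not depend on the auxiliary group element, so we will evaluate it with a $g\in G$, $g^{\pm}=(f(x),f(y))$, whose orbit point $go$ lies within bounded distance of $\ga o$. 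Such a $g$ exists: by Theorem \ref{morse2}(3), $\ga o\in f([x,y])$ lies within $D_0$ of $g_1 M_{\theta}A o$ for some $g_1$ with $g_1^{\pm}=(f(x),f(y))$, and $\{h o:h^{\pm}=(f(x),f(y))\}=g_1 L_{\theta}o\supseteq g_1 M_{\theta}A o$ (as $M_{\theta}A\subset L_{\theta}$), so we may take $g$ with $g^{\pm}=(f(x),f(y))$ and $d(go,\ga o)\le D_0$.

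The core estimate is that $\beta_{f(x)}^{\theta}(o,\ga o)=\mu_{\theta}(\ga)+O(1)$ and, symmetrically, $\beta_{f(y)}^{\theta}(o,\ga o)=\mu_{\theta}(\ga)+O(1)$, with a uniform $O(1)$. For the first: by Gromov hyperbolicity of $(\Ga,\d_\Ga)$ applied to the ideal triangle with vertices $e,x,y$, the point $\ga$ lies within a uniformly bounded $\d_\Ga$-distance of the ray $[e,x]$, hence $\ga o$ lies within a bounded $d$-distance of $f([e,x])=[e,x]o$ (the orbit map being a quasi-isometry, Theorem \ref{thm.anosovbasic}(3)). By Theorem \ref{morse2}(2), $[e,x]o$ lies in the $D_0$-neighborhood of the $\C$-cone $k_0 M_{\theta}(\exp\C)o$ with $k_0\in K$, $k_0 P_{\theta}=f(x)$; combining, $\ga o$ is within bounded distance of a point $k_0 m a o$ with $m\in M_{\theta}$, $a\in\exp\C$. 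On such a cone point the Busemann cocycle is exact: by invariance $\beta_{f(x)}^{\theta}(o,k_0 m a o)=\beta_{P_{\theta}}^{\theta}(o,m a o)$, and choosing the fiber representative $mP\in\pi_{\theta}^{-1}(P_{\theta})$ and reading off the Iwasawa decomposition gives $\beta_{P_{\theta}}^{\theta}(o,m a o)=p_{\theta}(\log a)$, which is exactly $\mu_{\theta}(k_0 m a)$ since $\mu(k_0 m a)=\log a\in\fa^{+}$. Transferring the bounded error through $\|\beta_{\xi}^{\theta}(p,q)\|\le d(p,q)$ and Lemma \ref{lem.cptcartan} then yields $\beta_{f(x)}^{\theta}(o,\ga o)=\mu_{\theta}(\ga)+O(1)$; the same argument run with the ray $[e,y]$ (which also passes within bounded distance of $\ga$) gives the symmetric statement.

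Assembling: with $g$ as above, Definition \ref{def.defgromovprod}, the hypothesis $\i(\theta)=\theta$ (so $\beta^{\i(\theta)}=\beta^{\theta}$), and the cocycle property give
\[
2\,\cal G^{\theta}(f(x),f(y))=\beta_{f(x)}^{\theta}(o,\ga o)+\i(\beta_{f(y)}^{\theta}(o,\ga o))+\beta_{f(x)}^{\theta}(\ga o,go)+\i(\beta_{f(y)}^{\theta}(\ga o,go)),
\]
where the last two terms have norm $\le 2D_0$ since $d(\ga o,go)\le D_0$. By the core estimate the right-hand side equals $\mu_{\theta}(\ga)+\i(\mu_{\theta}(\ga))+O(1)$, which is the first asserted bound for a suitable uniform $C_1$. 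For the ``in particular'' part, recall $\cal G^{\theta}(\eta,\xi)=\i(\cal G^{\theta}(\xi,\eta))$; since $\cal G^{\theta}(\xi,\eta)$ is within $C_1$ of the $\i$-fixed vector $\tfrac12(\mu_{\theta}(\ga)+\i(\mu_{\theta}(\ga)))$ and $\i$ is a linear isometry of $\fa_{\theta}$, $\cal G^{\theta}(\eta,\xi)$ is within $C_1$ of the same vector, so $\|\cal G^{\theta}(\xi,\eta)-\cal G^{\theta}(\eta,\xi)\|<2C_1$.

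The only substantial point is the core estimate — that the $\fa_{\theta}$-valued Busemann cocycle from $o$ along an Anosov geodesic ray agrees up to bounded error with the $\theta$-Cartan projection — and it rests entirely on the uniform neighborhood control of the KLP Morse lemma and the explicit value of $\beta^{\theta}$ on a $\C$-cone; everything else is bounded-error bookkeeping via Lemma \ref{lem.cptcartan}. The point requiring care is that all the constants involved (the Morse constant $D_0$, the hyperbolicity constant, the quasi-isometry constant, and the $\d_\Ga$-distance from $\ga$ to $[e,x]$) are uniform in $x,y$; this holds because each is intrinsic to $\Ga$ and to the fixed cone $\C$. This is exactly the argument of \cite[Lemma 6.6]{LO_invariant} (carried out there for $\theta=\Pi$), which goes through verbatim for general $\theta$ upon substituting the $\theta$-Morse lemma and the $\theta$-Gromov product.
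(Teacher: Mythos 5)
Your proof is correct: the paper does not reprove this lemma but defers to \cite[Lemma 6.6]{LO_invariant}, and your argument --- placing $\ga_{x,y}$ within uniformly bounded $\d_\Ga$-distance of both rays $[e,x]$ and $[e,y]$ by hyperbolicity, using the Morse lemma (Theorem \ref{morse2}) together with the exact value of the Busemann cocycle on a $\C$-cone and Lemma \ref{lem.cptcartan} to get $\beta^\theta_{f(x)}(o,\ga_{x,y}o)=\mu_\theta(\ga_{x,y})+O(1)$ (and likewise for $f(y)$), then evaluating $\cal G^\theta$ at a group element $g$ with $g^\pm=(f(x),f(y))$ and $go$ near $\ga_{x,y}o$ --- is exactly the argument of the cited lemma and of the closely related computations the paper does carry out (Lemma \ref{lem.buseandcartan}, Proposition \ref{ss}). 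Your deduction of the second inequality from $\cal G^\theta(\eta,\xi)=\i(\cal G^\theta(\xi,\eta))$ and the $\i$-invariance of the comparison vector is likewise the intended one.
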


\subsection*{Symmetrization} 
Consider the following symmetrization of $\psi\in \fa_\theta^*$:
$$\bar \psi := \frac{\psi + \psi \circ \i}{2}\in \fa_{\theta\cup \i(\theta)}^*.$$ 
Since we are assuming $\theta=\i(\theta)$, we have
$\bar\psi\in \fa_\theta^*$ as well. Since $\L_{\theta}$ is $\i$-invariant, we have $\bar \psi > 0$ on $\L_{\theta} - \{0\}$.
Lemma \ref{lem.gromovsym} implies that $d_{\bar \psi}$ and $d_{\psi}$ are Lipschitz equivalent:

\begin{proposition} \label{prop.bilip}
    There exists $R \ge 1$ such that for any $\xi, \eta \in \La_{\theta}$, we have $$R^{-1} d_{\psi}(\xi, \eta) \le d_{\bar \psi} (\xi, \eta) \le R d_{\psi}(\xi, \eta).$$
\end{proposition}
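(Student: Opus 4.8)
The plan is to deduce the Lipschitz equivalence of $d_\psi$ and $d_{\bar\psi}$ directly from the definition \eqref{eqn.defconfmetric} together with Lemma \ref{lem.gromovsym}. Recall that for $\xi\neq\eta$ in $\La_\theta$ we have $d_\psi(\xi,\eta)=e^{-\psi(\cal G^\theta(\xi,\eta))}$ and $d_{\bar\psi}(\xi,\eta)=e^{-\bar\psi(\cal G^\theta(\xi,\eta))}$, so
\[
\frac{d_{\bar\psi}(\xi,\eta)}{d_\psi(\xi,\eta)}=e^{(\psi-\bar\psi)(\cal G^\theta(\xi,\eta))}=e^{\frac12(\psi-\psi\circ\i)(\cal G^\theta(\xi,\eta))}.
\]
Thus it suffices to bound $\bigl|(\psi-\psi\circ\i)(\cal G^\theta(\xi,\eta))\bigr|$ by a uniform constant, and then one may take $R=e^{C}$ for that constant $C$.

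First I would invoke Lemma \ref{lem.gromovsym}: writing $x,y\in\partial\Ga$ for the preimages of $\xi,\eta$ under the homeomorphism $f$ and $\ga_{x,y}$ for the nearest-point projection of $e$ to $[x,y]$, we have
\[
\Bigl\|\cal G^\theta(\xi,\eta)-\tfrac12\bigl(\mu_\theta(\ga_{x,y})+\i(\mu_\theta(\ga_{x,y}))\bigr)\Bigr\|<C_1.
\]
The vector $v:=\tfrac12\bigl(\mu_\theta(\ga_{x,y})+\i(\mu_\theta(\ga_{x,y}))\bigr)$ is $\i$-invariant, hence $(\psi-\psi\circ\i)(v)=\psi(v)-\psi(\i v)=\psi(v)-\psi(v)=0$. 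Therefore
\[
\bigl|(\psi-\psi\circ\i)(\cal G^\theta(\xi,\eta))\bigr|
=\bigl|(\psi-\psi\circ\i)(\cal G^\theta(\xi,\eta)-v)\bigr|
\le \|\psi-\psi\circ\i\|\cdot C_1,
\]
where $\|\psi-\psi\circ\i\|$ is the operator norm, which is finite. Setting $R:=\exp(\|\psi-\psi\circ\i\|\,C_1)$ gives $R^{-1}d_\psi(\xi,\eta)\le d_{\bar\psi}(\xi,\eta)\le R\,d_\psi(\xi,\eta)$ for all $\xi\neq\eta$; the case $\xi=\eta$ is trivial since both sides vanish.

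This argument is essentially immediate once Lemma \ref{lem.gromovsym} is in hand, so there is no real obstacle; the only point requiring a little care is the bookkeeping that the ``almost $\i$-invariant'' vector appearing in Lemma \ref{lem.gromovsym} is \emph{exactly} $\i$-invariant, which is what makes the linear functional $\psi-\psi\circ\i$ annihilate it and reduces the estimate to the $C_1$-error term. Alternatively, one could phrase the whole thing as a consequence of the second displayed inequality in Lemma \ref{lem.gromovsym} together with $\i$-invariance of $\bar\psi$, but the route through the explicit vector $v$ is the cleanest.
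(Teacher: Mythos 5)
Your proof is correct and follows essentially the same route as the paper: both hinge on Lemma \ref{lem.gromovsym}, since the paper's ``in particular'' bound $\| \cal G^{\theta}(\xi,\eta) - \cal G^{\theta}(\eta,\xi)\| < 2C_1$, combined with $\cal G^{\theta}(\eta,\xi)=\i(\cal G^{\theta}(\xi,\eta))$, is exactly the observation you make via the $\i$-invariant vector $v$, and then one exponentiates the uniform bound on $(\psi-\bar\psi)(\cal G^{\theta}(\xi,\eta))$. The only (harmless) difference is that the paper quotes the symmetrized consequence directly with constant $e^{\|\psi\|C_1}$, while you rederive it from the first displayed estimate of the lemma, using that $\i$ preserves $\fa_\theta$ under the standing assumption $\theta=\i(\theta)$.
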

\begin{proof}
  Since $\cal G^{\theta}(\eta, \xi) = \i ( \cal G^{\theta}(\xi, \eta))$ for all $\eta \ne \xi$ in $\La_\theta$,
 it follows from Lemma \ref{lem.gromovsym} with the constant $C_1$ therein that $$| \psi(\cal G^{\theta} (\xi, \eta)) - \bar \psi ( \cal G^{\theta}(\xi, \eta))| = \frac{1}{2} | \psi (\cal G^{\theta}(\xi, \eta) - \cal G^{\theta}(\eta, \xi)) | < \| \psi \| C_1.$$  It suffices to  set $R = e^{\|\psi\| C_1}$ to finish the proof.
\end{proof}

We also record the following Vitali covering type lemma which is a standard consequence of Proposition \ref{prop.lotriangle}(2) (cf. \cite{LO_invariant}):
here $B_\psi(\xi, r)=\{\eta\in \La_\theta: d_\psi(\xi, \eta)<r\}$.
\begin{lemma} \cite[Lemma 6.12]{LO_invariant} \label{lem.lovitali}
    There exists $N_0 = N_0(\psi) \ge 1$ satisfying the following: for any finite collection $B_{\psi}(\xi_1, r_1), \cdots, B_{\psi}(\xi_n, r_n)$ with $\xi_i \in \La_{\theta}$ and $r_i > 0$ for $i = 1, \cdots, n$, there exists a disjoint subcollection $B_{\psi}(\xi_{i_1}, r_{i_1}), \cdots, B_{\psi}(\xi_{i_k}, r_{i_k})$ such that $$\bigcup_{i = 1}^n B_{\psi}(\xi_i, r_i) \subset \bigcup_{j = 1}^k B_{\psi}(\xi_{i_j}, N_0 r_{i_j}).$$
\end{lemma}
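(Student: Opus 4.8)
The statement to prove is the Vitali covering type lemma, Lemma 6.12 from \cite{LO_invariant}, adapted to a general symmetric $\theta$. The plan is to run the classical greedy selection argument for Vitali-type covering lemmas, using only the quasi-triangle inequality with multiplicative constant $N$ from Proposition \ref{prop.lotriangle}(2) together with the almost-symmetry from Proposition \ref{prop.lotriangle}(1); no genuine metric is needed.

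\medskip

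\emph{Sketch of the argument.} First I would fix the constants $R = R(\psi) > 1$ from Proposition \ref{prop.lotriangle}(1) and $N = N(\psi) > 0$ from Proposition \ref{prop.lotriangle}(2), and set $N_0 := 2NR$ (or any sufficiently large constant; the precise value only needs to dominate the bound produced below). Given the finite collection $B_\psi(\xi_1, r_1), \dots, B_\psi(\xi_n, r_n)$, I would select a disjoint subcollection greedily: reorder the balls so that $r_1 \ge r_2 \ge \dots \ge r_n$, put $\xi_{i_1} = \xi_1$ into the subcollection, and inductively, having chosen $B_\psi(\xi_{i_1}, r_{i_1}), \dots, B_\psi(\xi_{i_{j}}, r_{i_{j}})$, let $i_{j+1}$ be the smallest index (if any) such that $B_\psi(\xi_{i_{j+1}}, r_{i_{j+1}})$ is disjoint from all previously selected balls. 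This terminates after finitely many steps and produces a pairwise disjoint subcollection indexed by $i_1 < \dots < i_k$.

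\medskip

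The key step is the covering claim. Let $B_\psi(\xi_m, r_m)$ be an arbitrary ball from the original collection. If it was selected, there is nothing to prove. Otherwise, by the greedy rule there is a selected ball $B_\psi(\xi_{i_j}, r_{i_j})$ with $r_{i_j} \ge r_m$ (since it was available at the stage when $\xi_m$ was rejected, or rejected earlier, its radius is at least as large) that intersects $B_\psi(\xi_m, r_m)$. Pick $\zeta$ in the intersection; then $d_\psi(\xi_m, \zeta) < r_m$ and $d_\psi(\xi_{i_j}, \zeta) < r_{i_j}$. Now for any $\eta \in B_\psi(\xi_m, r_m)$, two applications of Proposition \ref{prop.lotriangle}(2) (through $\zeta$ and then through $\xi_m$), together with Proposition \ref{prop.lotriangle}(1) to flip $d_\psi(\zeta, \xi_m)$ into $d_\psi(\xi_m, \zeta)$, give
\[
d_\psi(\xi_{i_j}, \eta) \le N\big(d_\psi(\xi_{i_j}, \zeta) + d_\psi(\zeta, \eta)\big)
\le N\, r_{i_j} + N^2 R\big(d_\psi(\xi_m, \zeta) + d_\psi(\xi_m, \eta)\big)
< N r_{i_j} + 2 N^2 R\, r_m.
\]
Since $r_m \le r_{i_j}$, the right-hand side is at most $(N + 2N^2 R)\, r_{i_j}$, so with $N_0 := N + 2N^2 R$ we get $\eta \in B_\psi(\xi_{i_j}, N_0 r_{i_j})$. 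Hence $B_\psi(\xi_m, r_m) \subset B_\psi(\xi_{i_j}, N_0 r_{i_j})$, and taking the union over all $m$ yields the desired inclusion $\bigcup_{i=1}^n B_\psi(\xi_i, r_i) \subset \bigcup_{j=1}^k B_\psi(\xi_{i_j}, N_0 r_{i_j})$.

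\medskip

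There is essentially no obstacle here: the only subtlety compared to the classical Vitali lemma is that $d_\psi$ is merely a premetric satisfying a quasi-triangle inequality and is only almost symmetric, which is exactly why the inflation factor $N_0$ depends on $N$ and $R$ rather than being the familiar constant $5$; booking the constants carefully in the two-step estimate above is the entire content. Since this is verbatim the argument of \cite[Lemma 6.12]{LO_invariant} with the general-$\theta$ versions of Proposition \ref{prop.lotriangle}(1),(2) substituted in, one may alternatively just cite that reference and note that the proof goes through unchanged.
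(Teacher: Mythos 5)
Your proof is correct: the greedy selection plus the two-step estimate using Proposition \ref{prop.lotriangle}(1),(2) gives exactly the desired covering with $N_0 = N + 2N^2R$ depending only on $\psi$. This is the same standard Vitali-type argument the paper invokes — it simply cites \cite[Lemma 6.12]{LO_invariant}, whose proof is the argument you wrote out, so there is nothing further to reconcile.
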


\begin{Rmk}\label{nons}
Recall that the canonical projection $p : \La_{\theta \cup \i(\theta)} \to \La_{\theta}$ is a $\Ga$-equivariant homeomorphism and that $\fa_\theta^*\subset \fa_{\theta\cup \i(\theta)}^*$.
Using this homeomorphism,
we can also define a function $d_{\psi}$ on $\La_{\theta}$ even when $\theta$ is not symmetric, so that $p : (\La_{\theta \cup \i(\theta)}, d_{\psi}) \to (\La_{\theta}, d_\psi)$ is an isometry: $$d_{\psi}(\xi, \eta):= d_{\psi}(p^{-1}({\xi}), p^{-1}(\eta))$$ for all $\xi, \eta \in \La_{\theta}$.
In this regard, the above discussion is still valid without the symmetric hypothesis on $\theta$.
\end{Rmk}

\section{Compatibility of shadows and $d_{\psi}$-balls} \label{sec.ballinshadow}

As before, let $\Ga$ be a  $\theta$-Anosov subgroup of a connected semisimple real algebraic group $G$.
We fix a word metric $\d_{\Ga}$ on $\Ga$.
Fix a linear form $$\psi \in \fa_{\theta}^*$$ which is positive on $\L - \{0\}$ and
$\psi = \psi \circ \i$.
Recall the premetric $\d_\psi$ on $\Ga o$ defined in \eqref{eqn.defmetricsymm} and the conformal premetric
$d_\psi$ on $\La_\theta$ defined by \eqref{eqn.defconfmetric}.
\begin{lem} Both $(\Ga o, \d_\psi)$ and $(\La_\theta, d_\psi)$ are symmetric.
    \end{lem}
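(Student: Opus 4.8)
The plan is to reduce each assertion to a single algebraic identity and then invoke the standing hypothesis that $\psi$ is symmetric, that is, $\psi=\psi\circ\i$. Since $\d_\psi$ and $d_\psi$ are already premetrics, it remains only to verify the two-sided symmetries $\d_\psi(x,y)=\d_\psi(y,x)$ on $\Ga o$ and $d_\psi(\xi,\eta)=d_\psi(\eta,\xi)$ on $\La_\theta$.

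First I would treat $(\Ga o,\d_\psi)$. For $g,h\in\Ga$, the identity \eqref{inverse} gives $\mu(h^{-1}g)=\mu\big((g^{-1}h)^{-1}\big)=\i(\mu(g^{-1}h))$. Regarding $\psi\in\fa_\theta^*$ as a linear form on $\fa$ via \eqref{att}, I then compute
\[
\d_\psi(ho,go)=\psi(\mu(h^{-1}g))=\psi\big(\i(\mu(g^{-1}h))\big)=(\psi\circ\i)(\mu(g^{-1}h))=\psi(\mu(g^{-1}h))=\d_\psi(go,ho),
\]
the fourth equality using $\psi=\psi\circ\i$.

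Next I would treat $(\La_\theta,d_\psi)$. The case $\xi=\eta$ is immediate from \eqref{eqn.defconfmetric}, so let $\xi\neq\eta$ in $\La_\theta$; by the $\theta$-Anosov property these lie in general position, so both $\cal G^\theta(\xi,\eta)$ and $\cal G^\theta(\eta,\xi)$ are defined, and $\cal G^\theta(\xi,\eta)=\i(\cal G^\theta(\eta,\xi))$. Combining this with $\psi=\psi\circ\i$ gives $\psi(\cal G^\theta(\xi,\eta))=(\psi\circ\i)(\cal G^\theta(\eta,\xi))=\psi(\cal G^\theta(\eta,\xi))$, hence $d_\psi(\xi,\eta)=e^{-\psi(\cal G^\theta(\xi,\eta))}=e^{-\psi(\cal G^\theta(\eta,\xi))}=d_\psi(\eta,\xi)$.

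I do not foresee any obstacle here: both halves are one-line computations once $\psi=\psi\circ\i$ is applied. The only point worth a remark is consistency of notation: since $\theta=\i(\theta)$ throughout this section, $\i$ preserves $\fa_\theta$ and $\psi\circ\i$ is again an element of $\fa_\theta^*$; for general $\theta$ the statement follows instead from Remark~\ref{nons}, transporting along the $\Ga$-equivariant isometry $p:(\La_{\theta\cup\i(\theta)},d_\psi)\to(\La_\theta,d_\psi)$.
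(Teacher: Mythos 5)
Your proof is correct and follows essentially the same route as the paper: the first claim via $\mu(g^{-1})=\i(\mu(g))$ together with $\psi=\psi\circ\i$, and the second via $\cal G(\xi,\eta)=\i(\cal G(\eta,\xi))$. Your closing remark handling general $\theta$ through Remark~\ref{nons} is exactly how the paper deals with it, since its proof works directly with $\cal G^{\theta\cup\i(\theta)}$, which amounts to transporting along the isometry $p$.
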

\begin{proof}
For $g_1, g_2\in G$, we have 
$$\d_\psi(g_1 o, g_2 o)=\psi(\mu (g_1^{-1}g_2))
=\psi \circ \i (\mu(g_2^{-1} g_1))=\d_\psi (g_2 o, g_1 o).$$
The second claim follows similarly since $\cal G^{\theta \cup \i(\theta)}(\xi, \eta)= \i \cal G^{\theta \cup \i(\theta)}(\eta, \xi)$ for all $\xi, \eta\in \F_{\theta \cup \i(\theta)}$ in general position.  
\end{proof}

Shadows play a basic role in studying the metric property of $(\La_\theta, d_\psi)$ in relation with the geometry of the symmetric space $X$, as in the original work of Sullivan.
We recall the definition of shadows in $\F_{\theta}$.
 For $p, q \in X$, the shadow $O_R^{\theta}(p, q)$ of the Riemannian ball $B(q, R)$ viewed from $p$ is defined as $$ 
\begin{aligned}
    O_R^{\theta}(p,q)   = \{ gP_{\theta} \in \F_{\theta} : g \in G, \ go = p, \ d(q, gA^+o) < R \}.
    \end{aligned}
    $$
We refer to \cite{KOW_ergodic} and \cite{KOW_indicators} for basic properties of these shadows.

 The main technical ingredient of this paper is the following theorem which says that shadows in $\La_\theta$ are comparable with $d_{\psi}$-balls.
\begin{theorem} \label{thm.ballshadowball}  \label{thm.ballinshadow} \label{thm.shadowinball}
    Let $\psi \in \fa_{\theta}^*$ be such that $\psi > 0$  on $\L- \{0\}$ and  $\psi = \psi \circ \i$. Then there exist constants $c, R_0 > 0$ such that for any $R > R_0$, there exists $c' = c'_R > 0$ so that the following holds: for any $\xi \in \La_{\theta}$ and any $g \in \Ga$ on a geodesic ray $[e,\xi]$
    in $\Ga$, we have \be\label{incbs} B_{\psi}(\xi, c e^{-\d_{\psi}(o, go)}) \subset O_R^{\theta}(o, g o) \cap \La_{\theta} \subset B_{\psi}(\xi, c' e^{-\d_{\psi}(o, go)}).\ee 
\end{theorem}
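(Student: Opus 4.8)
The plan is to reduce the statement to the two key comparison results already prepared in the paper: the Morse lemma for Anosov subgroups (Theorem \ref{morse2}), which says that $f([e,\xi])$ tracks the cone $M_\theta(\exp\C)o$ up to bounded Hausdorff distance, and the identity \eqref{eqn.gromovdistintro} of the introduction (Proposition \ref{ss}), which says that $\psi(\cal G^\theta(\xi,\eta))=\d_\psi(o,[\xi,\eta]o)+O(1)$. Since $\theta=\i(\theta)$ here, the premetric $\d_\psi$ on $\Ga o$ satisfies the coarse triangle inequality of Theorem \ref{thm.triangle}, and $\d_\psi$ is additive on diamonds (Lemma \ref{dia}); these are the tools that let one manipulate $\d_\psi$ as if it were a genuine metric. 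First I would fix $g\in[e,\xi]$ at (word) distance roughly $n$ from $e$, so that $\d_\psi(o,go)=\psi(\mu(g))$ grows linearly in $n$ with bounded fluctuation.

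For the \emph{left inclusion} $B_\psi(\xi,ce^{-\d_\psi(o,go)})\subset O_R^\theta(o,go)\cap\La_\theta$: take $\eta\in\La_\theta$ with $d_\psi(\xi,\eta)$ small, i.e. $\psi(\cal G^\theta(\xi,\eta))$ large, say $\ge \d_\psi(o,go)+\log(1/c)$. By Proposition \ref{ss} this forces $\d_\psi(o,[\xi,\eta]o)$ to be at least $\d_\psi(o,go)$ up to a uniform additive error; combined with the coarse triangle inequality and additivity on diamonds, the nearest-point projection $\ga_{\xi,\eta}$ of $e$ to $[\xi,\eta]$ must lie $\d_\psi$-past $go$ along a geodesic sharing the ray toward $\xi$ (here I use that geodesic rays/biinfinite geodesics toward a common endpoint fellow-travel, Gromov hyperbolicity of $(\Ga,\d_\Ga)$). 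Then by Theorem \ref{morse2}(3), $f([\xi,\eta])$ stays in a bounded neighborhood of the parallel set through $g$; since $\eta$ is the forward endpoint of a $\C$-regular quasi-geodesic passing within bounded distance of $go$, the definition of shadow gives $\eta\in O_R^\theta(o,go)$ once $R=R_0$ is large enough to absorb the Morse constant $D_0$ and the quasi-isometry constant $Q$. Choosing $c=c(\psi)$ to swallow all additive errors in $\d_\psi$ (converted to multiplicative errors in $d_\psi$) finishes this inclusion.

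For the \emph{right inclusion} $O_R^\theta(o,go)\cap\La_\theta\subset B_\psi(\xi,c'e^{-\d_\psi(o,go)})$ with $c'=c'_R$ depending on $R$: take $\eta\in O_R^\theta(o,go)\cap\La_\theta$. By definition there is a Weyl chamber from $o$ toward $\eta$ passing within $R$ of $go$, so $go$ lies within a bounded ($R$-dependent) distance of a geodesic ray $[o,\eta]o$; by Theorem \ref{morse2}(2) applied to $[e,\eta]$ in $\Ga$, the point $go$ is within $D_0+O_R(1)$ of $f([e,\eta])$, hence $g$ is within a bounded word-distance of $[e,\eta]$ in $(\Ga,\d_\Ga)$. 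Since $g\in[e,\xi]$ as well, the two rays $[e,\xi]$ and $[e,\eta]$ fellow-travel up to the $\d_\Ga$-ball of radius $\approx n$, so by Gromov hyperbolicity the nearest-point projection $\ga_{\xi,\eta}$ of $e$ to $[\xi,\eta]$ lies at word-distance $\ge n-O_R(1)$ from $e$. Feeding this into Lemma \ref{lem.gromovsym} and the quasi-isometry bound on $\d_\psi$ gives $\psi(\cal G^\theta(\xi,\eta))\ge\d_\psi(o,go)-\log c'$ for a suitable $c'_R$, i.e. $d_\psi(\xi,\eta)<c'_R e^{-\d_\psi(o,go)}$.

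The main obstacle I expect is the first inclusion, specifically turning the coarse lower bound $\d_\psi(o,[\xi,\eta]o)\gtrsim\d_\psi(o,go)$ into the statement that the geodesic $[\xi,\eta]$ actually \emph{passes} (coarsely, in the $\d_\psi$-sense) beyond the point $g$ on the ray toward $\xi$, so that $go$ sits in a diamond $\Diamond_\C(f(\ga_{\xi,\eta}),\cdot)$ and the shadow membership can be read off. This is where the failure of $(\Ga o,\d_\psi)$ to be a geodesic space bites, and one must combine the thin-triangle property in the $\d_\psi$-metric (from the introduction's discussion, via Proposition \ref{ss}) with additivity of $\d_\psi$ on diamonds rather than any genuine convexity argument. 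A secondary technical point is bookkeeping the dependence of constants: $c$ and $R_0$ must be \emph{uniform}, while $c'$ is allowed (and needs) to depend on $R$, since a larger shadow can only be controlled by a larger $d_\psi$-ball.
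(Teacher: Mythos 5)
Your overall architecture is sound (the quantifier bookkeeping is right, and the tools you name -- Proposition \ref{ss}, the coarse triangle inequality of Theorem \ref{thm.triangle}, diamond additivity, the Morse lemma, thin triangles in $\d_\psi$, Lemma \ref{lem.gromovsym} -- are indeed the relevant ones), but both inclusions have concrete problems as written. For the left inclusion, the endgame is garbled: you invoke Theorem \ref{morse2}(3) and assert that $\eta$ is the endpoint of a $\C$-regular quasi-geodesic ``passing within bounded distance of $go$.'' If that quasi-geodesic is $[\xi,\eta]$, the claim is false in exactly the regime you are in -- the hypothesis $d_\psi(\xi,\eta)<ce^{-\d_\psi(o,go)}$ together with Proposition \ref{ss} and Theorem \ref{thm.triangle} forces $\d_\psi(go,[\xi,\eta]o)$ to be \emph{large} (bigger than $2T_\psi$), and the parallel set of Theorem \ref{morse2}(3) does not pass through $go$. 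Moreover, membership in $O_R^\theta(o,go)$ requires a Weyl chamber \emph{based at} $o$ asymptotic to $\eta$ passing near $go$, not an arbitrary quasi-geodesic toward $\eta$. The correct completion is the opposite use of the same separation: since the image triangle is $T_\psi$-thin in $\d_\psi$ (Proposition \ref{prop.thintriangle}) and $go$ is $\d_\psi$-far from $[\xi,\eta]o$, the point $go$ must be $T_\psi$-close to $[e,\eta]o$; converting to the Riemannian metric and applying Theorem \ref{morse2}(2) to the ray $[e,\eta]$ then yields $\eta\in O_{R_0}^\theta(o,go)$ for a uniform $R_0$. This is how the paper argues, and your sketch can be repaired to match it.

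The right inclusion contains two genuine gaps. First, your step ``by Theorem \ref{morse2}(2) applied to $[e,\eta]$, the point $go$ is within $D_0+O_R(1)$ of $f([e,\eta])$'' is not a consequence of the Morse lemma: Theorem \ref{morse2}(2) says the orbit of the ray lies in a bounded neighborhood of the cone $kM_\theta(\exp\C)o$, and gives no information in the converse direction, namely that an orbit point lying $R$-close to the chamber $kA^+o$ must lie close to the orbit of the ray $[e,\eta]$. That converse is precisely the second inclusion of Proposition \ref{prop.shadowgromovbooundary} (comparability of shadows in $\partial\Ga$ and in $\La_\theta$), which the paper proves by a separate compactness argument using continuity of shadows in the viewpoint (Proposition \ref{prop.contshadow}); your proposal supplies no substitute for it. Second, even granting that step, your final deduction -- feeding ``$\d_\Ga(e,\ga_{\xi,\eta})\ge n-O_R(1)$'' into ``the quasi-isometry bound on $\d_\psi$'' -- only yields a multiplicative comparison $\d_\psi(o,\ga_{\xi,\eta}o)\ge Q_\psi^{-1}(n-O_R(1))-Q_\psi$ versus $\d_\psi(o,go)\le Q_\psi n+Q_\psi$, hence an estimate of the form $d_\psi(\xi,\eta)\le Ce^{-\epsilon\,\d_\psi(o,go)}$ with $\epsilon\approx Q_\psi^{-2}<1$, which is strictly weaker than the required $d_\psi(\xi,\eta)\le c'_Re^{-\d_\psi(o,go)}$. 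To get the additive bound one must locate the projection $\ga_{\xi,\eta}$ within a bounded ($R$-dependent) distance of the sub-ray $[g,\xi]$ of $[e,\xi]$ -- this is the paper's Lemma \ref{lem.hypgp} -- and then exploit coarse monotonicity of $\d_\psi$ \emph{along that ray} (the uniform progression Lemma \ref{lp}, i.e. diamond additivity, combined with Theorem \ref{thm.triangle}) before applying Lemma \ref{lem.gromovsym}. Without these two ingredients the second inclusion does not close.
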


Since the proof of this theorem is quite lengthy, we will prove the first inclusion in this section and the second inclusion in the next section. The rest of this section is devoted to the proof of the first inclusion. In view of Remark \ref{nons}, we assume
that $$\theta = \i(\theta).$$ 
Strictly speaking, $\d_\psi$ is not a metric on the $\Ga$-orbit $\Ga o$. Nevertheless, we will still employ terminologies
for the metric space on $(\G o, \d_\psi)$ for convenience. For instance, for a subset $B\subset \Ga o$, 
$\d_\psi( g o, B)= \inf_{ho\in B}  \d_\psi (go, ho)$ and
the $R$-neighborhood
of $B$ is given by $\{go\in \Ga o: \d_\psi( g o, B)<R\}$, etc.

Two main ingredients of the proof of the first inclusion of \eqref{incbs} are the following, which allow us to treat  $(\Ga o, \d_\psi) $
almost like a Gromov hyperbolic space:
\begin{enumerate}
    \item $(\Ga o, \d_\psi)$ satisfies a triangle inequality up to an additive error (Theorem \ref{thm.triangle});
    \item the $\psi$-Gromov product $\psi(\cal G(\xi, \eta))$ is equal to
    the premetric $\d_\psi (o, [\xi, \eta] o)$ up to an additive error (Proposition \ref{ss}). 
\end{enumerate}
In the rank one case, the property (2) is a well-known consequence of a uniform thin-triangle property and the Morse lemma of the rank one
symmetric space. Higher rank symmetric spaces have neither of these properties. Our proof of (2) is based on the KLP Morse lemma using diamonds as well as
a uniform thin-triangle property of the orbit $(\Ga o, \d_{\psi})$.

We begin with the following:
\begin{prop} \label{prop.thintriangle}\label{Qp}
The orbit map $(\Ga, \d_{\Ga}) \to (\Ga o, \d_{\psi})$, $\ga \mapsto \ga o$,
is a {\it quasi-isometry}, i.e.,  there exists $Q_{\psi} \ge 1$ such that for any $\ga_1, \ga_2 \in \Ga$, $$Q_{\psi}^{-1}\cdot  \d_{\Ga}(\ga_1, \ga_2) - Q_{\psi} \le \d_{\psi}(\ga_1 o, \ga_2 o) \le Q_{\psi} \cdot \d_{\Ga}(\ga_1, \ga_2) + Q_{\psi}.$$
In particular, the images of geodesic triangles in $\Ga$ under the orbit map are uniformly thin, that is, there exists $T_{\psi}>0$ such that for any $\xi_1, \xi_2, \xi_3 \in \Ga \cup \partial \Ga$, the image $[\xi_1, \xi_{3}]o$ is contained in the $T_{\psi}$-neighborhood of $([\xi_1, \xi_2] \cup [\xi_2, \xi_3])o$ with respect to $\d_{\psi}$. 
    \end{prop}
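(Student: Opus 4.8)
The plan is to deduce the quasi-isometry from two facts already in hand --- the Riemannian orbit map $(\Ga,\d_\Ga)\to(\Ga o,d)$ is a quasi-isometry (Theorem~\ref{thm.anosovbasic}(3)) and $\psi$ is positive on $\L-\{0\}$ --- and then to obtain the thin-triangle clause formally from Gromov hyperbolicity of $(\Ga,\d_\Ga)$ (Theorem~\ref{thm.anosovbasic}(1)). The only structural input I will use about $\d_\psi$ is that $\d_\psi(\ga_1 o,\ga_2 o)=\psi(\mu(\ga_1^{-1}\ga_2))$ depends only on $g:=\ga_1^{-1}\ga_2$, and that $\|\mu(g)\|=d(o,go)$. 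First, the upper estimate is immediate: $\d_\psi(\ga_1 o,\ga_2 o)=\psi(\mu(g))\le\|\psi\|\,\|\mu(g)\|=\|\psi\|\,d(\ga_1 o,\ga_2 o)$, and Theorem~\ref{thm.anosovbasic}(3) bounds the right-hand side by $\|\psi\|(Q\,\d_\Ga(\ga_1,\ga_2)+Q)$; absorbing constants gives the desired upper bound with a suitable $Q_\psi$.

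For the lower estimate I would exploit the positivity of $\psi$ on the limit cone. Since $\L$ is a closed cone and $\psi>0$ on $\L-\{0\}$, one has $\psi\ge\e_0\|\cdot\|$ on $\L$ for some $\e_0>0$, and by continuity there is a closed cone $\C\subset\fa^+$ that contains a neighborhood of $\L-\{0\}$ in $\fa^+$ and on which $\psi\ge\tfrac{\e_0}{2}\|\cdot\|$. By Lemma~\ref{below} the orbit map is $\C$-regular, so there is $B\ge0$ with $\mu(g)\in\C$ whenever $|g|\ge B$; for such $g$ one gets $\d_\psi(\ga_1 o,\ga_2 o)=\psi(\mu(g))\ge\tfrac{\e_0}{2}\|\mu(g)\|=\tfrac{\e_0}{2}\,d(\ga_1 o,\ga_2 o)\ge\tfrac{\e_0}{2}(Q^{-1}\d_\Ga(\ga_1,\ga_2)-Q)$ by Theorem~\ref{thm.anosovbasic}(3). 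Since $\d_\psi(\ga_1 o,\ga_2 o)$ depends only on $g$ and there are only finitely many $g\in\Ga$ with $|g|<B$, for those $g$ the quantity $\psi(\mu(g))$ is bounded below by a constant; enlarging $Q_\psi$ then yields $\d_\psi(\ga_1 o,\ga_2 o)\ge Q_\psi^{-1}\d_\Ga(\ga_1,\ga_2)-Q_\psi$, completing the proof that the orbit map $(\Ga,\d_\Ga)\to(\Ga o,\d_\psi)$ is a quasi-isometry. (Alternatively one could bypass Lemma~\ref{below}: if the lower bound failed along $g_i$ with $|g_i|\to\infty$, a subsequential limit of $\mu(g_i)/\|\mu(g_i)\|$ would lie in $\L-\{0\}$ while having non-positive $\psi$-value, contradicting $\psi>0$ on $\L-\{0\}$.)

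Finally, for the thin-triangle clause: Gromov hyperbolicity of $(\Ga,\d_\Ga)$ gives $T>0$ such that, for any $\xi_1,\xi_2,\xi_3\in\Ga\cup\partial\Ga$, the side $[\xi_1,\xi_3]$ lies in the $T$-neighborhood of $[\xi_1,\xi_2]\cup[\xi_2,\xi_3]$ in $(\Ga,\d_\Ga)$; feeding this through the quasi-isometry just established, each $\ga\in[\xi_1,\xi_3]$ admits $\ga'\in[\xi_1,\xi_2]\cup[\xi_2,\xi_3]$ with $\d_\psi(\ga o,\ga' o)\le Q_\psi T+Q_\psi$, so $[\xi_1,\xi_3]o$ lies in the $T_\psi$-neighborhood of $([\xi_1,\xi_2]\cup[\xi_2,\xi_3])o$ with respect to $\d_\psi$, where $T_\psi:=Q_\psi T+Q_\psi$ and "neighborhood" is understood as recalled in this section --- crucially, this step does not need $\d_\psi$ to be a genuine metric. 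I expect the main (and fairly mild) obstacle to be the lower estimate: $\psi$ is only known to be positive on $\L-\{0\}$, so one must first push $\mu(g)$ into a cone on which $\psi$ is uniformly positive, which is exactly what Lemma~\ref{below} provides; everything else is bookkeeping with the quasi-isometry constants.
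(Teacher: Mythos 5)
Your proof is correct and follows essentially the same route as the paper: both arguments reduce the quasi-isometry to comparing $\psi(\mu(g))$ with $\|\mu(g)\|=d(o,go)$ on a closed cone $\C$ containing a neighborhood of $\L-\{0\}$ on which $\psi$ is uniformly positive (via Lemma \ref{below}, with the finitely many exceptional $g$ absorbed into the additive constant), and then compose with the quasi-isometry of Theorem \ref{thm.anosovbasic}(3); the thin-triangle clause is likewise obtained in both by pushing the uniform thinness of geodesic triangles in the Gromov hyperbolic space $(\Ga,\d_\Ga)$ through the quasi-isometry just established. Your explicit treatment of the thinness transfer (pointwise, without needing $\d_\psi$ to be a metric) is if anything slightly more careful than the paper's one-line remark.
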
 
\begin{proof}
 The second part follows  since $(\Ga,\d_\Ga)$ is a Gromov hyperbolic space (Theorem \ref{thm.anosovbasic}(1)), and hence it has a uniform thin-triangle property which is a quasi-isometry invariance. Since  the orbit map $(\Ga, \d_{\Ga}) \to (\Ga o, d)$ is a quasi-isometry (Theorem \ref{thm.anosovbasic}(3)), the first part of the above proposition follows from the following claim that the identity map $(\Ga o, d)\to (\Ga o, \d_\psi)$ is a quasi-isometry:
there exists $C_\psi\ge 1$ such that 
  for all  $\ga_1, \ga_2 \in \Ga$, we have 
 \be  \label{eqn.qiriempsi}
  C_{\psi} ^{-1} d(\ga_1 o, \ga_2 o)  - C_{\psi} \le \d_{\psi}(\ga_1 o, \ga_2 o) \le C_{\psi} d(\ga_1 o, \ga_2 o) + C_{\psi}.\ee 
 We can take a  cone $\C$ whose relative interior in $\fa^+$ contains $\L - \{0\}$ in $\fa^+$ such that $\psi > 0$ on $\C - \{0\}$.  Hence we can chose $C_1>1$ so that
 $$ C_1^{-1}< \min_{u\in \C, \|u\|=1} \psi(u) \le  \max_{u\in \C, \|u\|=1} \psi(u) <C_1.$$ 
 
 On the other hand, $\mu(\ga) \in \C$ for all but finitely many $\ga \in \Ga$ (Lemma \ref{below}), and hence
 $C_2:=\max\{| \psi (\mu(\ga))|:\mu(\ga)\notin \C\}<\infty$. If we set $C=C_1+C_2$, then 
$$C^{-1} \|\mu(\ga)\| - C \le \psi(\mu(\ga)) \le C \| \mu(\ga) \| + C.$$
 Since both $d$ and $\d_\psi$ are left $\Ga$-invariant, this implies the claim.
\end{proof}

We  use the Morse property to obtain that the image of a geodesic ray under the orbit map has a uniform progression:

\begin{lemma}[Uniform progression lemma] \label{lp}
For any $r>0$, there exists $n_r>0$ such that
for any geodesic ray $\{ \ga_0 = e,\g_1,\ga_2, \cdots\}$ in $(\G,\d_\G)$,
 \[
 \d_\psi (o,\g_{i+n}o) \ge \d_\psi (o,\g_{i}o)+ r
 \]
 for all $i\in \N$ and all $n\ge n_r$.
\end{lemma}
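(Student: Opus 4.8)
The plan is to combine the near-additivity of $\d_\psi$ along the orbit of the geodesic ray with a linear lower bound for $\d_\psi(\ga_i o,\ga_{i+n}o)$ in terms of $n$. The coarse triangle inequality of Theorem~\ref{thm.triangle} is of no help by itself here, since it only bounds $\d_\psi(o,\ga_{i+n}o)$ from \emph{above}; what is needed instead is the reverse, near-additive estimate furnished by Corollary~\ref{tri1} (which ultimately rests on the KLP Morse lemma and the additivity of $\d_\psi$ on diamonds), together with the fact that $\d_\psi$ grows at least linearly along a $\C$-regular quasi-geodesic.

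First I would fix the cones exactly as in the proof of Theorem~\ref{thm.triangle}: a $\theta$-admissible cone $\C\subset\fa^+$ containing a neighborhood of $\L-\{0\}$ in $\fa^+$ with $\psi>0$ on $\C-\{0\}$, and an auxiliary $\theta$-admissible $\C'$ containing a neighborhood of $\C-\{0\}$ as required for Theorem~\ref{thm:ML}. By Lemma~\ref{below} and Theorem~\ref{thm.anosovbasic}(3), the orbit map $(\G,\d_\G)\to(X,d)$ is a $(\C,B)$-regular $Q$-quasi-isometric embedding for suitable $B,Q\ge 1$, and by compactness of $\{u\in\C:\|u\|=1\}$ and positivity of $\psi$ there I fix $C_1>0$ with $\psi(u)\ge C_1^{-1}\|u\|$ for all $u\in\C$. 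Now, given a geodesic ray $\{e=\ga_0,\ga_1,\dots\}$, the map $c\colon \Z_{\ge 0}\to X$, $c(j)=\ga_j o$, is a $(\C,B)$-regular $Q$-quasi-geodesic with $c(0)=o$ (using the $\Z$-version of the Morse lemma noted after Theorem~\ref{thm:ML}), so Corollary~\ref{tri1} applied with $(a,t,b)=(0,i,i+n)$ produces a constant $D_1$, independent of $i$, $n$ and of the ray, with
\[
\d_\psi(o,\ga_{i+n}o)\ \ge\ \d_\psi(o,\ga_i o)+\d_\psi(\ga_i o,\ga_{i+n}o)-D_1 .
\]

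It then remains only to bound $\d_\psi(\ga_i o,\ga_{i+n}o)$ from below. By left $\G$-invariance of $\d_\psi$ we have $\d_\psi(\ga_i o,\ga_{i+n}o)=\psi(\mu(\ga_i^{-1}\ga_{i+n}))$, and since $\ga_i$ and $\ga_{i+n}$ both lie on the geodesic ray, $\d_\G(e,\ga_i^{-1}\ga_{i+n})=\d_\G(\ga_i,\ga_{i+n})=n$; hence for $n\ge B$ the pair $(o,\ga_i^{-1}\ga_{i+n}o)$ is $\C$-regular, i.e.\ $\mu(\ga_i^{-1}\ga_{i+n})\in\C$. Using the choice of $C_1$ together with $\|\mu(g)\|=d(o,go)\ge Q^{-1}\d_\G(e,g)-Q$ gives
\[
\d_\psi(\ga_i o,\ga_{i+n}o)\ \ge\ C_1^{-1}\,\|\mu(\ga_i^{-1}\ga_{i+n})\|\ \ge\ C_1^{-1}\bigl(Q^{-1}n-Q\bigr),
\]
so $\d_\psi(o,\ga_{i+n}o)-\d_\psi(o,\ga_i o)\ge C_1^{-1}(Q^{-1}n-Q)-D_1$. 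Choosing $n_r\ge\max\{B,\ Q(C_1(r+D_1)+Q)\}$ makes the right-hand side at least $r$ for all $n\ge n_r$, and this holds uniformly in $i\in\N$ and in the chosen geodesic ray, which is exactly the assertion. I do not foresee any genuine obstacle beyond assembling the earlier results and tracking constants; the one point requiring care is recognizing that Theorem~\ref{thm.triangle} points the wrong way, so the lower bound must be extracted from the near-additivity of $\d_\psi$ along the quasi-geodesic image of the ray.
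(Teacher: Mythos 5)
Your proof is correct and follows essentially the same route as the paper: near-additivity of $\d_\psi$ along the ray via the KLP Morse lemma and additivity on diamonds (you invoke Corollary \ref{tri1}, the paper applies Theorem \ref{morse2} and Lemma \ref{dia} directly with Lemma \ref{lem.cptcartan}), combined with a linear-in-$n$ lower bound for $\d_\psi(\ga_i o,\ga_{i+n}o)$. The only cosmetic difference is that you re-derive that lower bound from $\C$-regularity and $\psi\gtrsim\|\cdot\|$ on $\C$, whereas the paper quotes the quasi-isometry $(\Ga,\d_\Ga)\to(\Ga o,\d_\psi)$ of Proposition \ref{prop.thintriangle}, whose proof is exactly your argument.
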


\begin{proof}
    Fix $r>0$.  
    By Theorem \ref{morse2}, there exist a cone $\C \subset \fa^+$ and $B, D_0 \ge 0$ so that
    for all $n \ge B$ and $i \ge 0$,
    the sequence $o, \ga_1 o, \cdots, \ga_{i+n} o$ is contained in the $D_0$-neighborhood of the diamond $\Diamond_{\C}( o,\ga_{i+n} o)$ in $(X, d)$. We may also assume that $\psi > 0$ on $\C - \{0\}$ as $\C$ can be arbitrarily close to $\L$.
    For each $i \ge 0$, choose a point $x_i \in \Diamond_{\C}(o, \ga_{i + n} o)$ which is $D_0$-close to $\ga_i o$.
    Applying Lemma \ref{dia}, we obtain that 
    \begin{equation}\label{eq:lp}
        \d_\psi( o,  x_i)+ \d_\psi(x_i, \ga_{i+n} o) = \d_\psi( o,  \ga_{i+n} o).
    \end{equation}
  Since the orbit map $(\Ga, \d_\Ga)\to (\Ga o, d)$  is a
    $Q_{\psi}$-quasi-isometry by Proposition \ref{prop.thintriangle}, 
    we get that for all $i \ge 0$,
    \begin{equation}\label{eq1:lp}
         \d_\psi( \ga_i o,  \ga_{i+n} o) \ge Q_{\psi}^{-1} \d_\Ga (\ga_i, \ga_{i+n}) -Q_{\psi} =
         Q_{\psi}^{-1} n  -Q_{\psi}.
    \end{equation}
    
    By applying Lemma \ref{lem.cptcartan} to a compact subset $\{ g\in G: \|\mu(g)\| \le D_0\}$, we have
    \be \label{eq2:lp}
    \begin{aligned}
    & |\d_\psi( o,  x_i) - \d_\psi( o, \ga_i o)| \le C  \quad\text{ and}\\
    & |\d_\psi(  x_i, \ga_{i+n} o) - \d_\psi(\ga_i o, \ga_{i+n} o)| \le C 
   \end{aligned}
   \ee
where $C$  depends only on $D_0$ and $\|\psi\|$.
Putting \eqref{eq:lp}, \eqref{eq1:lp}, and \eqref{eq2:lp} together, we get
    \begin{align*}
        \d_\psi( o,  \ga_{i+n} o) &= 
        \d_\psi( o,  x_i)+ \d_\psi(x_i, \ga_{i+n} o)\\
        &\ge \d_\psi( o,  \ga_i o) + \d_\psi( \ga_i o, \ga_{i+n} o) - 2 C \\
        &\ge \d_\psi( o,  \ga_i o) + (Q_{\psi}^{-1}n -Q_{\psi}) - 2 C .
    \end{align*}
    Hence setting $n_r = B + Q_{\psi}(r + 2 C  + Q_{\psi})$ finishes the proof.
\end{proof}

 \begin{lem}[Small inscribed triangle] \label{lem.inscribetri}
    There exists $C > 0$ satisfying the following property: 
    Let $[\xi, \eta]$ be a bi-infinite geodesic in $(\Ga, \d_{\Ga})$. 
    If  $\ga o $ is the nearest-point projection of $o$ to $[\xi, \eta] o$ in the $\d_\psi$-metric, i.e., $\ga \in [\xi, \eta]$ is such that
 $\d_{\psi}(o, \ga o) =  \d_{\psi}(o, [\xi, \eta]o)$,
    then there exist $u \in [e, \xi]$ and $v \in [e, \eta]$ so that  $\{uo, v o, \gamma o\}$ has $\d_\psi$-diameter less than $C$.
 \end{lem}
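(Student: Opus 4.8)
The plan is to locate the ``center'' of the ideal geodesic triangle with vertices $e,\xi,\eta$ inside the Gromov hyperbolic space $(\Ga,\d_\Ga)$, to take $u$ and $v$ near that center, and to prove that the $\d_\psi$-nearest-point projection $\ga$ is close to the center \emph{in the $\d_\psi$-metric}. Since $(\Ga,\d_\Ga)$ is a proper geodesic Gromov hyperbolic space (Theorem~\ref{thm.anosovbasic}(1)), the small-insize property of ideal triangles provides a constant $\delta_0>0$, depending only on the hyperbolicity constant, together with points $w_0\in[\xi,\eta]$, $u_0\in[e,\xi]$, $v_0\in[e,\eta]$ such that $\d_\Ga(w_0,u_0),\d_\Ga(w_0,v_0)\le\delta_0$ (here $[e,\xi]$, $[e,\eta]$ are any geodesic rays from $e$). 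Recall from Proposition~\ref{prop.thintriangle} that the orbit map $(\Ga,\d_\Ga)\to(\Ga o,\d_\psi)$ is a $Q_\psi$-quasi-isometry, and that $\d_\psi$ is symmetric on $\Ga o$ since $\psi=\psi\circ\i$. As $w_0\in[\xi,\eta]$, minimality of $\ga$ gives $\d_\psi(o,\ga o)\le\d_\psi(o,w_0 o)$; the crux is to upgrade this to a uniform bound $\d_\Ga(\ga,w_0)\le M$. Granting this, the coarse triangle inequality (Theorem~\ref{thm.triangle}) combined with the quasi-isometry bounds shows at once that $\{u_0 o,v_0 o,\ga o\}$ has $\d_\psi$-diameter at most a constant (e.g.\ $\d_\psi(u_0 o,\ga o)\le\d_\psi(u_0 o,w_0 o)+\d_\psi(w_0 o,\ga o)+D_\psi$, and similarly for the other pairs), so we may take $u=u_0$ and $v=v_0$.

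To bound $\d_\Ga(\ga,w_0)$, I would first reduce to the case where $\ga$ lies on the sub-ray $[w_0,\eta]$ of the bi-infinite geodesic $[\xi,\eta]$ (otherwise swap the roles of $\xi$ and $\eta$, using that $w_0$ is also $\delta_0$-close to $[e,\xi]$). Write $[e,\eta]=\{\tau_0=e,\tau_1,\dots\}$ parametrized by $\d_\Ga$-arclength, and let $v_0=\tau_{j_0}$ with $j_0=\d_\Ga(e,v_0)$. Since the geodesic rays $[w_0,\eta]\subset[\xi,\eta]$ and $[v_0,\eta]\subset[e,\eta]$ share the ideal endpoint $\eta$ and start within $\delta_0$ of one another, they fellow-travel synchronously with an error $\delta_0'$ depending only on the hyperbolicity constant. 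Writing $m:=\d_\Ga(w_0,\ga)$, this gives $\d_\Ga(\tau_{j_0},w_0)\le\delta_0'$ and $\d_\Ga(\tau_{j_0+m},\ga)\le\delta_0'$.

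Now comes the key step, a contradiction argument using the uniform progression lemma. Apply Lemma~\ref{lp} to the geodesic ray $\{\tau_i\}$, which emanates from $e$, with $r:=2(Q_\psi\delta_0'+Q_\psi)+2D_\psi+1$, where $D_\psi$ is the additive constant of Theorem~\ref{thm.triangle}; this yields $n_r$ such that $\d_\psi(o,\tau_{j_0+n}o)\ge\d_\psi(o,\tau_{j_0}o)+r$ for all $n\ge n_r$. Assume $m\ge n_r$. Applying the coarse triangle inequality and the estimate $\d_\psi(g_1 o,g_2 o)\le Q_\psi\d_\Ga(g_1,g_2)+Q_\psi$ to the pairs $(\ga,\tau_{j_0+m})$ and $(w_0,\tau_{j_0})$, one gets $\d_\psi(o,\ga o)\ge\d_\psi(o,\tau_{j_0+m}o)-(Q_\psi\delta_0'+Q_\psi)-D_\psi$ and $\d_\psi(o,w_0 o)\le\d_\psi(o,\tau_{j_0}o)+(Q_\psi\delta_0'+Q_\psi)+D_\psi$. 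Combining these with $\d_\psi(o,\tau_{j_0+m}o)\ge\d_\psi(o,\tau_{j_0}o)+r$ and the choice of $r$ forces $\d_\psi(o,\ga o)\ge\d_\psi(o,w_0 o)+1$, contradicting $\d_\psi(o,\ga o)\le\d_\psi(o,w_0 o)$. Hence $\d_\Ga(\ga,w_0)<n_r=:M$, a constant depending only on $\psi$ and $\Ga$ (through $Q_\psi$, $D_\psi$, and the hyperbolicity constant of $(\Ga,\d_\Ga)$), and the proof concludes as in the first paragraph.

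I expect the main obstacle to be precisely this last step. Comparing $\d_\psi$ with $\d_\Ga$ via the quasi-isometry only yields \emph{multiplicative} control, which cannot pin $\ga$ near $w_0$; the required \emph{additive} control is exactly what the uniform progression lemma (Lemma~\ref{lp}) supplies, and that lemma itself rests on the KLP Morse lemma and the additivity of $\d_\psi$ on diamonds. Care must also be taken that Lemma~\ref{lp} is applied along a geodesic ray emanating from $e$ with distances measured from $o$, which is why the estimate is routed through the ray $[e,\eta]=\{\tau_i\}$ and the synchronized points $\tau_{j_0}$, $\tau_{j_0+m}$ rather than carried out directly on $[\xi,\eta]$.
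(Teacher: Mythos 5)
Your argument is correct, and it runs on the same engine as the paper's proof — the uniform progression lemma (Lemma \ref{lp}) combined with the coarse triangle inequality (Theorem \ref{thm.triangle}) and the quasi-isometry of Proposition \ref{prop.thintriangle}, played off against the minimality of $\d_{\psi}(o,\ga o)$ — but it is organized along a genuinely different route. The paper never mentions the word-metric center of the ideal triangle: it works intrinsically in the premetric $\d_\psi$, using the $T_\psi$-thinness of orbit-image triangles to locate the first index $j$ on $[e,\eta]$ with $\d_\psi(\ga o, v_j o)\le T_\psi+D$, then shows by the progression/minimality contradiction that $v_{j-n'}o$ is $\d_\psi$-far from $[\xi,\eta]o$, hence (by thinness again) $\d_\psi$-close to $[e,\xi]o$, which produces $u$, with $v=v_j$. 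You instead anchor everything at the word-metric center $w_0$ of the ideal triangle $(e,\xi,\eta)$ and prove the stronger intermediate statement that the $\d_\psi$-nearest-point projection $\ga$ lies within a uniformly bounded \emph{word-metric} distance of $w_0$, transporting the progression estimate from the ray $[e,\eta]$ to the relevant segment of $[\xi,\eta]$ via synchronous fellow-traveling of asymptotic rays. What your route buys is a cleaner conceptual picture (and the extra fact $\d_\Ga(\ga,w_0)=O(1)$, which the paper does not establish); what it costs is reliance on two standard word-hyperbolic facts not needed in the paper's argument — the insize/center property of ideal triangles and synchronous fellow-traveling of asymptotic rays — which you should cite (e.g.\ to \cite{Bridson1999metric}), and a little care with the discrete (integer) parametrization of geodesics in $(\Ga,\d_\Ga)$ and with the case distinction according to which side of $w_0$ the point $\ga$ lies on; both are routine and your sketch handles them correctly.
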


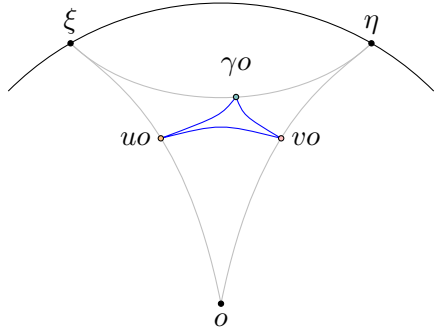
\begin{figure}[h]
\begin{tikzpicture}[scale = 1,  every node/.style={scale=1}]

    \draw (2.8284271247, 2.8284271247) arc(45:135:4);
 \draw[color=gray!50] (0, 0) .. controls (0.5, 2.5) and (1.5, 3) .. (2, 3.4641016151);    \draw[color=gray!50] (2, 3.4641016151) .. controls (1, 2.5) and (-1, 2.5) .. (-2, 3.4641016151);
 \draw[color=gray!50] (-2, 3.4641016151) .. controls (-1.5, 3) and (-0.5, 2.5) .. (0, 0);

    \filldraw (2, 3.4641016151) circle(1pt);
    \filldraw (2, 3.4641016151) node[above] {$\eta$};
    \filldraw (-2, 3.4641016151) circle(1pt);
    \filldraw (-2, 3.4641016151) node[above] {$\xi$};

    \filldraw (0, 0) circle(1pt);
    \draw (0, 0) node[below] {$o$};

    \draw[blue] (0.8, 2.2) .. controls (0, 2.4) .. (-0.8, 2.2) .. controls (0, 2.5) .. (0.2, 2.75) .. controls (0.3, 2.5) .. (0.8, 2.2);

    \filldraw[fill=pink] (0.8, 2.2) circle(1pt);
    \draw (0.8, 2.2) node[right] {$v  o$};

    \filldraw[fill=orange!50] (-0.8, 2.2) circle(1pt);
    \draw (-0.8, 2.2) node[left] {$u o$};

    \filldraw[fill=teal!50] (0.2, 2.75) circle(1pt);
    \draw (0.2, 2.9) node[above] {$\ga o$};

\end{tikzpicture}
\caption{Small inscribed triangle} \label{fig.smallinscribed}
\end{figure}

\begin{proof} Recall from Proposition \ref{prop.thintriangle} that there exists $T_\psi > 0$ so that every triangle in $\Gamma o$, obtained as the image of a geodesic triangle in $(\Ga, \d_{\Ga})$ under the orbit map, is $T_{\psi}$-thin in the $\d_\psi$-metric. 
By the $T_{\psi}$-thinness of $(\Ga o, \d_\psi)$, 
we have either  $\d_{\psi} (\ga o, [e, \xi] o)\le T_{\psi}$ or $\d_{\psi} (\ga o, [e, \eta] o) \le T_{\psi}$. We will assume the latter case; the other case can be treated similarly. We write $[e, \eta] = \{v_i\}_{i \ge 0}$.
We then can choose $j$ so that $j = \min \{ i \ge 0: \d_\psi( \ga o, v_io)\le T_{\psi} + D\}$ where $D$ is given in Theorem \ref{thm.triangle}. Let 
 $n' = n_{3 T_{\psi} { +3D}}$ be the constant from Lemma \ref{lp}. 
 If $j<n'$, then we set $u=v=e$ and note that
 \be \label{eww} 
 \begin{aligned} 
 \d_\psi (\ga o, o) &\le \d_\psi(\ga o, v_j o)+ \d_\psi (v_j o, o) + D \\ 
 &\le D_1:= T_\psi + (n' +1)Q_\psi +2D 
 \end{aligned}
 \ee
 where $Q_\psi$ is given by Proposition \ref{Qp}. Hence the triangle
$\{uo, vo, \ga o\}=\{o, o, \ga o\}$ has $\d_\psi$-diameter at most $D_1$.

Now suppose that $j>n'$. We claim that
$$\d_\psi(v_{j - n'}o, [\xi, \eta] o) > T_{\psi}.$$
Indeed, otherwise, 
$\d_\psi(v_{j - n'}o, \ga'o) \le T_{\psi}$ for some $\ga' \in [\xi, \eta]$, and hence we have 
$$\begin{aligned}
    \d_{\psi}(o, \ga' o) & \le \d_{\psi}(o, v_{j - n'} o) + \d_{\psi}( v_{j - n'} o, \ga' o) {+ D} \\
    &  \le (\d_{\psi}(o, v_{j} o) - 3 T_{\psi} { - 3 D}) + T_{\psi} { + D}\\
    & = \d_{\psi}(o, v_{j} o) - 2 T_{\psi} {-2 D}\\
    & \le \d_{\psi}(o, v_{j} o) - {\d_{\psi}(\ga o, v_{j}o)}   - T_{\psi} {- D}\\
    & \le \d_{\psi}(o, \ga o) - T_{\psi},
\end{aligned}$$
where the first and the last inequalities follow from Theorem \ref{thm.triangle} and the second is from Lemma \ref{lp}. This yields a contradiction to the minimality of $\d_{\psi}(o, \ga o)$, proving the claim. 

\begin{figure}[h]
\begin{tikzpicture}[scale = 1.1, every node/.style={scale=1}]

    \draw (2.8284271247, 2.8284271247) arc(45:135:4);

    \draw[color=gray!50] (0, 0) .. controls (0.5, 2.5) and (1.5, 3) .. (2, 3.4641016151) .. controls (1, 3) and (-1, 3) .. (-2, 3.4641016151) .. controls (-1.5, 3) and (-0.5, 2.5) .. (0, 0);

    \filldraw (0, 0) circle(1pt);
    \draw (0, 0) node[below] {$o$};

    \draw[dotted] (0.8, 2.2) -- (0.2, 3);
    \draw (0.5, 2.6) node {\tiny$ T_{\psi} + D$};

    \draw[dotted] (0.5, 1.3) -- (-0.2, 1.2);
    \draw (0.15, 1.2) node[above] {\tiny$T_{\psi}$};
    
    \filldraw[color=red] (0.2, 0.5) circle(1pt);
    \filldraw[color=red] (0.2, 1) circle(1pt);
    \filldraw[fill=red] (0.5, 1.3) circle(1pt);
    \draw (0.5, 1.1) node[right] {$v_{j-n'} o$};
    \filldraw[color=red] (0.6, 1.7) circle(1pt);
    \filldraw[color=red] (0.7, 1.5) circle(1pt);
    \filldraw[fill=red] (0.8, 2.2) circle(1pt);
    \draw (0.8, 2.2) node[right] {$v_j o$};
    \filldraw[color=red] (0.9, 2) circle(1pt);
    \filldraw[color=red] (1.0, 2.7) circle(1pt);
    \filldraw[color=red] (1.3, 2.8) circle(1pt);
    \filldraw[color=red] (1.5, 2.7) circle(1pt);
    \filldraw[color=red] (1.7, 3) circle(1pt);

    \filldraw[color=orange] (-0.2, 0.8) circle(1pt);
    \filldraw[fill=orange] (-0.2, 1.2) circle(1pt);
    \draw (-0.2, 1) node[left] {$u o$};
    \filldraw[color=orange] (-0.5, 1.5) circle(1pt);
    \filldraw[color=orange] (-0.6, 1.4) circle(1pt);
    \filldraw[color=orange] (-0.7, 1.8) circle(1pt);
    \filldraw[color=orange] (-0.8, 2.5) circle(1pt);
    \filldraw[color=orange] (-0.9, 2.3) circle(1pt);
    \filldraw[color=orange] (-1.0, 2.6) circle(1pt);
    \filldraw[color=orange] (-1.3, 2.9) circle(1pt);
    \filldraw[color=orange] (-1.5, 2.7) circle(1pt);
    \filldraw[color=orange] (-1.7, 3) circle(1pt);

    \filldraw[color=blue] (1.8, 3.5) circle(1pt);
    \filldraw[color=blue] (1.6, 3.5) circle(1pt);
    \filldraw[color=blue] (1.3, 3.2) circle(1pt);
    \filldraw[color=blue] (1.2, 3.3) circle(1pt);
    \filldraw[color=blue] (1, 3.3) circle(1pt);
    \filldraw[color=blue] (0.7, 3.1) circle(1pt);
    \filldraw[color=blue] (0.4, 3.3) circle(1pt);
    \filldraw[fill=blue] (0.2, 3) circle(1pt);
    \draw (0.2, 2.9) node[left] {$\ga o$};
    \filldraw[color=blue] (-0.1, 3.3) circle(1pt);
    \filldraw[color=blue] (-0.3, 3.2) circle(1pt);
    \filldraw[color=blue] (-0.5, 3.3) circle(1pt);
    \filldraw[color=blue] (-0.8, 3) circle(1pt);
    \filldraw[color=blue] (-0.9, 3.1) circle(1pt);
    \filldraw[color=blue] (-1.1, 3.3) circle(1pt);
    \filldraw[color=blue] (-1.4, 3.3) circle(1pt);
    \filldraw[color=blue] (-1.7, 3.3) circle(1pt);

\end{tikzpicture}
\caption{Choice of $\ga o$, $v_j o$, $v_{j-n'} o$ and $u_k o$} \label{fig.choices}
\end{figure}

Since the triangle consisting of the sides 
$[\xi, \eta]o$, $[e, \xi]o$, and $[e, \eta] o = \{v_i o\}_{i \ge 0}$
 is $T_{\psi}$-thin, the above claim implies that  $v_{j - n'} o$ lies in the $T_{\psi}$-neighborhood of $[e, \xi] o$.  Hence there exists  $u \in [e, \xi]$  such that
$\d_\psi(v_{j-n'} o, u o)\le T_{\psi}$ (see Figure \ref{fig.choices}).
Since $\d_{\psi}(v_j, v_{j - n'}) \le Q_{\psi} n' + Q_{\psi}$, we have so far obtained \begin{itemize}
    \item $\d_{\psi}(\ga o, v_j o) \le T_\psi  + D $;
    \item $\d_{\psi}(v_j o, u o) \le Q_{\psi} n' + Q_{\psi} + T_{\psi} + D$;
    \item $\d_{\psi}(\ga o, u o) \le Q_{\psi} n' + Q_{\psi} + 2T_{\psi} + 3D$.
\end{itemize}
Therefore the triangle $\{uo, v_j o, \ga o\}$
has $\d_\psi$-diameter  at most  $D_2 = Q_{\psi} n' + Q_{\psi} + 2T_{\psi} + 3D$. It remains to set $C=\max (D_1, D_2)$.
\end{proof}

 The following was shown for $\theta = \Pi$ in \cite[Lemma 5.7]{LO_invariant} which directly implies the statement for general $\theta$:

    \begin{lemma} \label{lem.buseandcartan}
        There exists $\kappa > 0$ such that for any $g, h \in G$ and $R>0$, we have $$\sup_{\xi \in O^\theta_R(g o, ho)}  \| \beta_{\xi}^{\theta}(go, ho) - \mu_{\theta}(g^{-1}h) \| \le \kappa R.$$
    \end{lemma}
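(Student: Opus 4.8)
The plan is to deduce the statement from its minimal‑parabolic case together with the behaviour of the projection $p_\theta$. First, reduce to $g=e$: the Busemann cocycle is $G$‑equivariant, $\beta^\theta_{h\xi}(hg_1,hg_2)=\beta^\theta_\xi(g_1,g_2)$, and $\mu$ is bi‑$K$‑invariant, so replacing $\xi$ by $g^{-1}\xi$ (which lies in $O^\theta_R(o,g^{-1}ho)$ by equivariance of shadows) reduces the claim to bounding $\|\beta^\theta_\xi(o,ho)-\mu_\theta(h)\|$ for $\xi\in O^\theta_R(o,ho)$. Next, directly from the definition of shadows, $\pi_\theta\big(O^\Pi_R(o,ho)\big)=O^\theta_R(o,ho)$, because the defining conditions $go=o$ and $d(ho,gA^+o)<R$ do not involve $\theta$; hence every $\xi\in O^\theta_R(o,ho)$ admits a preimage $\xi_0\in O^\Pi_R(o,ho)$ under $\pi_\theta$. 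Since $\beta^\theta_\xi(o,ho)=p_\theta\beta_{\xi_0}(o,ho)$ and $\mu_\theta(h)=p_\theta\mu(h)$, we get
\[
\beta^\theta_\xi(o,ho)-\mu_\theta(h)=p_\theta\big(\beta_{\xi_0}(o,ho)-\mu(h)\big).
\]
As $p_\theta$ is the orthogonal projection onto $\fa_\theta$ for the $\cal W$‑invariant inner product, it is norm non‑increasing, and the bound follows from the $\theta=\Pi$ case of the lemma, which is \cite[Lemma 5.7]{LO_invariant}, with the same constant $\kappa$.

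For completeness, here is how the argument runs directly (it works for arbitrary $\theta$, not only $\theta=\Pi$). Write $\xi=kP_\theta$ with $k\in K$ and choose $a\in A^+$ with $d(ho,kao)<R$; such an $a$ exists because $\xi\in O^\theta_R(o,ho)$. Using the Iwasawa characterization of $\sigma$ one computes $\beta^\theta_\xi(o,kao)=p_\theta(\log a)$: indeed $ka\in K\exp(\log a)K$ with leftmost $K$‑factor $k$, so $\sigma((ka)^{-1},kP)=-\log a$ and $\sigma(e,kP)=0$, whence $\beta_{kP}(e,ka)=\log a$. By the cocycle property, $\beta^\theta_\xi(o,ho)=\beta^\theta_\xi(o,kao)+\beta^\theta_\xi(kao,ho)$, and since $\|\beta^\theta_\xi(kao,ho)\|\le d(kao,ho)<R$ we obtain $\|\beta^\theta_\xi(o,ho)-p_\theta(\log a)\|<R$. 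It remains to compare $p_\theta(\log a)=\mu_\theta(ka)$ with $\mu_\theta(h)=\mu_\theta(ka\cdot q)$, where $q:=(ka)^{-1}h$ satisfies $\|\mu(q)\|=d(kao,ho)<R$.

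The one substantive input is thus the coarse‑Lipschitz property of the Cartan projection: there is a universal constant $C\ge 1$ with $\|\mu(g_1g_2)-\mu(g_1)\|\le C\|\mu(g_2)\|$ for all $g_1,g_2\in G$ (equivalently, the $\fa^+$‑valued distance on $X$ obeys a triangle‑type inequality). Granting this, $\|\mu_\theta(ka)-\mu_\theta(h)\|\le\|\mu(ka)-\mu(kaq)\|\le C\|\mu(q)\|<CR$, and combined with the previous paragraph this gives the claim with $\kappa=C+1$. The reduction to $\theta=\Pi$ in the first paragraph sidesteps even this point, since it is already absorbed into the cited lemma; there the only thing to verify is the compatibility $\pi_\theta(O^\Pi_R)=O^\theta_R$ together with $\beta^\theta=p_\theta\circ\beta$ and $\mu_\theta=p_\theta\circ\mu$, which is immediate.
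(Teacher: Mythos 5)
Your first paragraph is precisely the paper's own argument: the paper proves this lemma by citing \cite[Lemma 5.7]{LO_invariant} for $\theta=\Pi$ and observing that the general case follows directly, namely via $\pi_\theta\big(O^{\Pi}_R(go,ho)\big)=O^{\theta}_R(go,ho)$, $\beta^{\theta}_\xi=p_\theta\circ\beta_{\xi_0}$ and $\mu_\theta=p_\theta\circ\mu$, with $p_\theta$ norm non-increasing for the $\cal W$-invariant norm, so that reduction is correct and identical in approach (the constant only changes by the operator norm of $p_\theta$, which is $1$ here). Your supplementary direct argument is also essentially sound, but be aware that the input $\|\mu(g_1g_2)-\mu(g_1)\|\le C\,\|\mu(g_2)\|$ is not provided by the paper: Lemma \ref{lem.cptcartan} only yields a constant depending on a fixed compact set, not a bound linear in $R$, so for that self-contained route you would need to invoke the standard $1$-Lipschitz property of the Cartan projection (equivalently, the triangle inequality for the $\fa^+$-valued distance on $X$) with a proper reference.
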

    
We now prove that the $\psi$-Gromov product $\psi(\cal G(\xi, \eta))$ behaves like the distance $\d_\psi (o,
    [\xi, \eta] o)$ up to an additive error:
    
\begin{prop}[Comparison between $\psi$-Gromov product and $\d_{\psi}$-distance] \label{ss}
    There exists $C_1 > 0$ such that
    for any $\xi\ne  \eta \in \La_{\theta}=\partial \Ga$, we have 
     $$|\psi(\cal G^{\theta}(\xi, \eta)) - \d_{\psi}(o, [\xi, \eta] o) | \le C_1.$$
\end{prop}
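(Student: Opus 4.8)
The strategy is to compare both sides of the asserted inequality with the single quantity $\d_\psi(o,\ga_{\xi,\eta}\,o)$, where $\ga_{\xi,\eta}\in[\xi,\eta]$ denotes the (coarsely well-defined) nearest-point projection of $e$ to the bi-infinite geodesic $[\xi,\eta]$ in the \emph{word} metric $\d_\Ga$. For the Gromov-product side I would apply Lemma \ref{lem.gromovsym}: identifying $\partial\Ga$ with $\La_\theta$, it yields a constant $C_0>0$ with
\[
 \left\|\cal G^\theta(\xi,\eta)-\tfrac12\bigl(\mu_\theta(\ga_{\xi,\eta})+\i(\mu_\theta(\ga_{\xi,\eta}))\bigr)\right\|<C_0 .
\]
Applying the linear form $\psi$ and using $\psi=\psi\circ\i$ (so the symmetrization is harmless) together with $\psi=\psi\circ p_\theta$, one obtains $|\psi(\cal G^\theta(\xi,\eta))-\d_\psi(o,\ga_{\xi,\eta}\,o)|\le\|\psi\|\,C_0$, since $\d_\psi(o,\ga_{\xi,\eta}\,o)=\psi(\mu(\ga_{\xi,\eta}))$ by the definition of $\d_\psi$. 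The proposition is thereby reduced to the purely metric assertion that $\d_\psi(o,\ga_{\xi,\eta}\,o)$ and $\d_\psi(o,[\xi,\eta]\,o)$ differ by a uniformly bounded amount.

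For the latter, let $\ga\in[\xi,\eta]$ realize $\d_\psi(o,\ga o)=\d_\psi(o,[\xi,\eta]\,o)$; such a $\ga$ exists because $\d_\psi(o,g o)\to\infty$ as $g\to\xi$ or $g\to\eta$ along $[\xi,\eta]$, by the quasi-isometry of Proposition \ref{prop.thintriangle}. One inequality, $\d_\psi(o,[\xi,\eta]\,o)\le\d_\psi(o,\ga_{\xi,\eta}\,o)$, is immediate from $\ga_{\xi,\eta}\in[\xi,\eta]$. For the reverse, I would apply Lemma \ref{lem.inscribetri} to this $\ga$, producing $u\in[e,\xi]$ and $v\in[e,\eta]$ with $\{uo,vo,\ga o\}$ of bounded $\d_\psi$-diameter; since the orbit map $(\Ga,\d_\Ga)\to(\Ga o,\d_\psi)$ is a quasi-isometry (Proposition \ref{prop.thintriangle}), $\d_\Ga(u,\ga)$ and $\d_\Ga(v,\ga)$ are then bounded by a uniform constant, so $\ga\in[\xi,\eta]$ lies uniformly $\d_\Ga$-close to both $[e,\xi]$ and $[e,\eta]$. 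In the proper Gromov hyperbolic space $(\Ga,\d_\Ga)$ (Theorem \ref{thm.anosovbasic}(1)), a point of $[\xi,\eta]$ that is simultaneously close to $[e,\xi]$ and to $[e,\eta]$ lies within uniformly bounded $\d_\Ga$-distance of the center of the ideal triangle on $\{e,\xi,\eta\}$, hence of $\ga_{\xi,\eta}$ — a standard thin-triangle estimate (cf.\ \cite{Bridson1999metric}). Pushing this bound forward by the orbit quasi-isometry bounds $\d_\psi(\ga o,\ga_{\xi,\eta}\,o)$, and then the coarse triangle inequality (Theorem \ref{thm.triangle}) gives $\d_\psi(o,\ga_{\xi,\eta}\,o)\le\d_\psi(o,\ga o)+\d_\psi(\ga o,\ga_{\xi,\eta}\,o)+D=\d_\psi(o,[\xi,\eta]\,o)+O(1)$. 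Combined with the first paragraph, this proves the proposition.

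The step I expect to be the main obstacle is the ``coarse uniqueness of the nearest-point projection'': identifying the $\d_\psi$-minimizer $\ga$ with the word-metric projection $\ga_{\xi,\eta}$ up to bounded error. Since $(\Ga o,\d_\psi)$ is not a genuine geodesic metric space, one cannot argue with projections directly there; the idea is to transport the configuration through the orbit quasi-isometry into $(\Ga,\d_\Ga)$ — where $[e,\xi]$, $[e,\eta]$ and $[\xi,\eta]$ genuinely form a thin geodesic triangle — apply thin-triangle geometry to locate $\ga$ near the triangle center, and transport the resulting bound back using only the coarse triangle inequality of Theorem \ref{thm.triangle}. The remaining ingredients — the reduction via Lemma \ref{lem.gromovsym} and the concluding application of Theorem \ref{thm.triangle} — are routine.
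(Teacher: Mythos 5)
Your proof is correct, and it is worth contrasting with the paper's. Both arguments start from the same two pillars: the $\d_\psi$-minimizer $\ga\in[\xi,\eta]$ with the small inscribed triangle of Lemma \ref{lem.inscribetri}, and the coarse triangle inequality of Theorem \ref{thm.triangle} (plus the orbit quasi-isometry of Proposition \ref{prop.thintriangle}). The difference is the bridge to the Gromov product. The paper never introduces the word-metric projection $\ga_{\xi,\eta}$ at this stage: from the bounded inscribed triangle it deduces, via the Morse lemma (Theorem \ref{morse2}), that $\xi,\eta\in O_R^{\theta}(o,\ga o)$ for a uniform $R$, and then the shadow estimate of Lemma \ref{lem.buseandcartan} compares $\beta_\xi^{\theta}(o,\ga o)$ and $\beta_\eta^{\theta}(o,\ga o)$ with $\mu_\theta(\ga)$, giving $|\psi(\cal G^{\theta}(\xi,\eta))-\d_\psi(o,\ga o)|=O(1)$ directly. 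You instead invoke Lemma \ref{lem.gromovsym} (quoted from \cite{LO_invariant}), which already compares $\cal G^{\theta}(\xi,\eta)$ with $\tfrac12(\mu_\theta(\ga_{\xi,\eta})+\i(\mu_\theta(\ga_{\xi,\eta})))$, and—using $\psi=\psi\circ\i$ exactly as the paper does to symmetrize—reduce the proposition to the purely coarse statement that the $\d_\psi$-minimizer and the $\d_\Ga$-projection $\ga_{\xi,\eta}$ are uniformly $\d_\Ga$-close. Your justification of that step is sound: the inscribed triangle plus the quasi-isometry puts $\ga$ at bounded $\d_\Ga$-distance from both rays $[e,\xi]$ and $[e,\eta]$, and in a proper hyperbolic space a point of $[\xi,\eta]$ uniformly close to both rays is uniformly close to $\ga_{\xi,\eta}$; this is standard (e.g. combine the inequality $d(e,x)\ge d(e,\ga_{\xi,\eta})+d(\ga_{\xi,\eta},x)-O(\delta)$ for $x\in[\xi,\eta]$ with a Gromov-product estimate showing $d(e,\ga)\le d(e,\ga_{\xi,\eta})+O(1)$ when $\ga$ is close to both rays), though it deserves a short proof or precise citation, being of the same nature as the paper's Lemma \ref{lem.hypgp}. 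What your route buys: it avoids shadows and Lemma \ref{lem.buseandcartan} entirely, reusing Lemma \ref{lem.gromovsym}, which the paper needs anyway for Propositions \ref{prop.lotriangle} and \ref{prop.bilip}. What the paper's route buys: it stays inside the shadow formalism already set up, and its intermediate conclusion $\xi,\eta\in O_R^{\theta}(o,\ga o)$ is exactly the kind of statement fed into the ball–shadow comparisons of Sections \ref{sec.ballinshadow} and \ref{sec.shadowinball}.
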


\begin{proof} Let $\ga\in [\xi, \eta]$  be such that $\d_{\psi}(o, \ga o) = \d_{\psi}(o, [\xi, \eta] o)$.
 Consider geodesic rays $[e, \xi] $ and $[e, \eta] $  in $(\Ga, \d_\Ga)$.
  Let $k, \ell \in K$ and $h \in G$ be such that $kP_{\theta} = \xi$, $\ell P_{\theta} = \eta$, $h P_{\theta}=\xi$ and $h w_0 P_{\theta} =  \eta$.
For the constant $D_0$ given by Theorem \ref{morse2}, we have
    \be \label{eqn.morsetri}
    \begin{aligned}
   & \sup_{u \in [e, \xi]} d(u o, kM_{\theta}A^+ o) \le D_0; \\
    &\sup_{v \in [e, \eta]} d(v o, \ell M_{\theta}A^+ o)  \le  D_0; \\
   & \sup_{g \in [\xi, \eta]}  d (g o, hM_{\theta} A o)  \le D_0.
    \end{aligned}
    \ee 
 Since $\ga\in [\xi, \eta]$ by the choice,
 the third inequality implies
    that $d (\ga o, hM_\theta A o) \le D_0$. We may assume that $h$ satisfies that $d(h o, \ga o) \le D_0$, by replacing $h$ with an element of $hM_{\theta}A$ if necessary.
 
    We first claim that  for some uniform $R > 0$ depending only on $\Ga$ and $\psi$,
    $$\xi, \eta \in O_R^{\theta}(o, \ga o) .$$
To show the claim, let $C > 0$ be the constant given by Lemma \ref{lem.inscribetri} and choose $u \in [e, \xi]$ and $v \in [e, \eta]$ so that the triangle $\{u o, v o, \ga o\}$ has  $\d_{\psi}$-diameter smaller than $C$ (see Figure \ref{fig.shadow}). Hence, for the constant $C' := C_{\psi}(C + C_{\psi})$, where $C_{\psi}$ is given in  \eqref{eqn.qiriempsi}, the Riemannian diameter of the triangle $\{u o, v o, \ga o\}$ is less than $C'$. It then follows from the first two inequalities of \eqref{eqn.morsetri} that $$d(\ga o, k M_{\theta}A^+ o) <  D_0 + C' \quad \text{and} \quad d(\ga o, \ell M_{\theta}A^+ o) <  D_0 + C'.$$ Since $k P_{\theta} = \xi$ and $\ell P_{\theta} = \eta$, we have $$\xi, \eta \in O_{D_0 + C'}^{\theta}(o, \ga o),$$ showing the claim with $R = D_0 + C'$

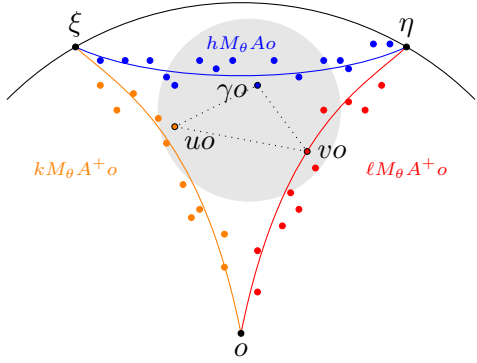
\begin{figure}[h]
\begin{tikzpicture}[scale = 1.1, every node/.style={scale=1}]

    \filldraw[color=gray!20, opacity=0.5] (0.1, 2.7) circle(1.1);

    \draw (2.8284271247, 2.8284271247) arc(45:135:4);

    \draw[color=red] (0, 0) .. controls (0.5, 2.5) and (1.5, 3) .. (2, 3.4641016151);
    \draw[red] (2, 2) node {\tiny$\ell M_{\theta} A^+ o$};

    \draw[color=blue] (2, 3.4641016151) .. controls (1, 3) and (-1, 3) .. (-2, 3.4641016151);
    \draw[blue] (0, 3.5) node {\tiny$hM_{\theta}A o$};

    \draw[color=orange] (-2, 3.4641016151) .. controls (-1.5, 3) and (-0.5, 2.5) .. (0, 0);
    \draw[orange] (-2, 2) node {\tiny$k M_{\theta} A^+ o$};

    \filldraw (2, 3.4641016151) circle(1pt);
    \filldraw (2, 3.4641016151) node[above] {$\eta$};
    \filldraw (-2, 3.4641016151) circle(1pt);
    \filldraw (-2, 3.4641016151) node[above] {$\xi$};

    \filldraw (0, 0) circle(1pt);
    \draw (0, 0) node[below] {$o$};

    \draw[dotted] (0.8, 2.2) -- (-0.8, 2.5) -- (0.2, 3) -- (0.8, 2.2);

    \filldraw[color=red] (0.2, 0.5) circle(1pt);
    \filldraw[color=red] (0.2, 1) circle(1pt);
    \filldraw[color=red] (0.5, 1.3) circle(1pt);
    \filldraw[color=red] (0.6, 1.7) circle(1pt);
    \filldraw[color=red] (0.7, 1.5) circle(1pt);
    \filldraw[fill=red] (0.8, 2.2) circle(1pt);
    \draw (0.8, 2.2) node[right] {$v o$};
    \filldraw[color=red] (0.9, 2) circle(1pt);
    \filldraw[color=red] (1.0, 2.7) circle(1pt);
    \filldraw[color=red] (1.3, 2.8) circle(1pt);
    \filldraw[color=red] (1.5, 2.7) circle(1pt);
    \filldraw[color=red] (1.7, 3) circle(1pt);

    \filldraw[color=orange] (-0.2, 0.8) circle(1pt);
    \filldraw[color=orange] (-0.2, 1.2) circle(1pt);
    \filldraw[color=orange] (-0.5, 1.5) circle(1pt);
    \filldraw[color=orange] (-0.6, 1.4) circle(1pt);
    \filldraw[color=orange] (-0.7, 1.8) circle(1pt);
    \filldraw[fill=orange] (-0.8, 2.5) circle(1pt);
    \draw (-0.5, 2.5) node[below] {$u o$};
    \filldraw[color=orange] (-0.9, 2.3) circle(1pt);
    \filldraw[color=orange] (-1.0, 2.6) circle(1pt);
    \filldraw[color=orange] (-1.3, 2.9) circle(1pt);
    \filldraw[color=orange] (-1.5, 2.7) circle(1pt);
    \filldraw[color=orange] (-1.7, 3) circle(1pt);

    \filldraw[color=blue] (1.8, 3.5) circle(1pt);
    \filldraw[color=blue] (1.6, 3.5) circle(1pt);
    \filldraw[color=blue] (1.3, 3.2) circle(1pt);
    \filldraw[color=blue] (1.2, 3.3) circle(1pt);
    \filldraw[color=blue] (1, 3.3) circle(1pt);
    \filldraw[color=blue] (0.7, 3.1) circle(1pt);
    \filldraw[color=blue] (0.4, 3.3) circle(1pt);
    \filldraw[fill=blue] (0.2, 3) circle(1pt);
    \draw (0.2, 2.9) node[left] {$\ga o$};
    \filldraw[color=blue] (-0.1, 3.3) circle(1pt);
    \filldraw[color=blue] (-0.3, 3.2) circle(1pt);
    \filldraw[color=blue] (-0.5, 3.3) circle(1pt);
    \filldraw[color=blue] (-0.8, 3) circle(1pt);
    \filldraw[color=blue] (-0.9, 3.1) circle(1pt);
    \filldraw[color=blue] (-1.1, 3.3) circle(1pt);
    \filldraw[color=blue] (-1.4, 3.3) circle(1pt);
    \filldraw[color=blue] (-1.7, 3.3) circle(1pt);

\end{tikzpicture}
\caption{The dotted triangle is of diameter less than $C$ and the gray ball has radius $R$.} \label{fig.shadow}
\end{figure}

Therefore by Lemma \ref{lem.buseandcartan}, we get  $$\| \beta_{\xi}^{\theta}(o, \ga o) - \mu_{\theta}(\ga) \| \le \kappa R \quad \text{and} \quad \| \beta_{\eta}^{\theta}(o, \ga o) - \mu_{\theta}(\ga) \| \le \kappa R.$$ 
Since $\beta_{\xi}^{\theta}(o, \ga o) = \beta_{\xi}^{\theta}(o, ho) + \beta_{\xi}^{\theta}(ho, \ga o)$ and $\|\beta_{\xi}^{\theta}(ho, \ga o) \| \le d(ho, \ga o) \le D_0$, we have
$$\|\beta_{\xi}^{\theta}(o, h o) - \mu_{\theta}(\ga) \| \le \kappa R + D_0$$ and similarly $$\|\beta_{\eta}^{\theta}(o, h o) - \mu_{\theta}(\ga) \| \le \kappa R + D_0.$$
Recalling the definition $\cal G^{\theta}(\xi, \eta) = \frac{1}{2}(\beta_{\xi}^{\theta}(o, ho) + \i(\beta_{\eta}^{\theta}(o, ho)))$, and using $\psi = \psi \circ \i$, we obtain that $$|\psi(\cal G^{\theta}(\xi, \eta)) - \d_{\psi}(o, \ga o) | \le \| \psi \| ( \kappa R + D_0),$$
as desired.
\end{proof}

We are now ready to prove the first inclusion in Theorem \ref{thm.ballshadowball} which we formulate again as follows:

\begin{prop} \label{prop.ballinshadowpart}
 There exist constants $c, R_0 > 0$ such that for any $\xi \in \La_{\theta}$ and  $g  \in [e,\xi]$ in $\Ga$, we have \be\label{inccc} B_{\psi}(\xi, c e^{-\d_{\psi}(o, go)}) \subset O_{R_0}^{\theta}(o, g o) \cap \La_{\theta}.\ee 
\end{prop}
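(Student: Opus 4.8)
The plan is to read the hypothesis $d_\psi(\xi,\eta)<c\,e^{-\d_\psi(o,go)}$ as the geometric statement that the bi-infinite geodesic $[\xi,\eta]$ in $\Ga$ passes $\d_\psi$-close to $o$, and then to show that this forces $go$ — which sits on the ray $[e,\xi]$ — to lie within a uniformly bounded $\d_\psi$-distance of the ray $[e,\eta]$. Once that is known, the KLP Morse lemma for $[e,\eta]$ (Theorem \ref{morse2}(2)) places $go$ within bounded Riemannian distance of a Weyl chamber with $\theta$-endpoint $\eta$, which is exactly the assertion $\eta\in O^\theta_{R_0}(o,go)$. We may take $c=1$ (shrinking $c$ only shrinks the ball; any fixed $c$ works at the cost of enlarging $R_0$) and assume $\xi\neq\eta$, since for $\xi=\eta$ the conclusion is immediate from Theorem \ref{morse2}(2) applied to $[e,\xi]$. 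Using $\psi=\psi\circ\i$ we have $\d_\psi(o,go)=\psi(\mu(g))=:n$ and $d_\psi(\xi,\eta)=e^{-\psi(\cal G^\theta(\xi,\eta))}$, so the hypothesis reads $\psi(\cal G^\theta(\xi,\eta))>n$, and Proposition \ref{ss} then gives $\d_\psi(o,[\xi,\eta]o)>n-C_1$.

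The core step is to bound $\d_\psi(go,[e,\eta]o)$ by a constant independent of $\xi,\eta,g$. First I would apply the small inscribed triangle lemma (Lemma \ref{lem.inscribetri}) to $[\xi,\eta]$ to produce $u\in[e,\xi]$, $v\in[e,\eta]$, and the $\d_\psi$-nearest point $\ga\in[\xi,\eta]$ to $o$, with $\{uo,vo,\ga o\}$ of $\d_\psi$-diameter less than $C$; since $\d_\psi(o,\ga o)=\d_\psi(o,[\xi,\eta]o)>n-C_1$, the coarse triangle inequality (Theorem \ref{thm.triangle}) yields $\d_\psi(o,uo)>n-C_1-C-D$. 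Writing $[e,\xi]=\{\ga_0=e,\ga_1,\dots\}$, $g=\ga_k$, $u=\ga_j$, I split on whether $k\le j$ or $k>j$. If $k\le j$, then $g$ lies on the initial subsegment of $[e,\xi]$ from $e$ to $u$; applying the uniform thin-triangle property of $(\Ga o,\d_\psi)$ (Proposition \ref{prop.thintriangle}) to the triangle with sides $[e,u]$, $[e,v]$ and a geodesic $[u,v]$ — the last of bounded $\d_\psi$-diameter because $\d_\psi(uo,vo)<C$ forces $\d_\Ga(u,v)$ bounded — shows $go$ is $\d_\psi$-close to $[e,v]o\subset[e,\eta]o$. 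If $k>j$, the uniform progression lemma (Lemma \ref{lp}) with $r_0:=C_1+C+D+1$ forces $k-j<n_{r_0}$: otherwise $n=\d_\psi(o,\ga_k o)\ge\d_\psi(o,\ga_j o)+r_0=\d_\psi(o,uo)+r_0>n$, which is absurd; hence $\d_\Ga(g,u)$ is bounded, so $\d_\psi(go,uo)$ and then $\d_\psi(go,vo)$ are bounded, and again $go$ is $\d_\psi$-close to $[e,\eta]o$. In both cases $\d_\psi(go,[e,\eta]o)\le T^*$ for a uniform $T^*$.

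To finish, pick $v'\in[e,\eta]$ with $\d_\psi(go,v'o)\le T^*$. Since the identity map $(\Ga o,d)\to(\Ga o,\d_\psi)$ is a quasi-isometry (\eqref{eqn.qiriempsi}), $d(go,v'o)$ is bounded by a uniform constant; and by the Morse lemma (Theorem \ref{morse2}(2)) applied to $[e,\eta]$, the ray $[e,\eta]o$ lies in the $D_0$-neighborhood of $\ell M_\theta(\exp\C)o$ for some $\ell\in K$ with $\ell P_\theta=\eta$. As $\C\subset\fa^+$ we have $\exp\C\subset A^+$, so $\ell M_\theta(\exp\C)o\subset\bigcup_{m\in M_\theta}\ell m A^+o$; hence $go$ is within a uniform distance $R_0$ of $\ell m A^+o$ for some $m\in M_\theta$. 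Since $\ell m\in K$ with $(\ell m)o=o$ and $(\ell m)P_\theta=\ell P_\theta=\eta$ (as $m\in M_\theta\subset P_\theta$), this says precisely $\eta\in O^\theta_{R_0}(o,go)$, and $\eta\in\La_\theta$ by hypothesis, which completes the proof.

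The principal difficulty is the case $k>j$, where the $\d_\psi$-closest point of $[\xi,\eta]$ to $o$ lies "beyond" $g$ along $[e,\xi]$ and a priori $g$ can be far from $[e,\eta]$ in both metrics. The uniform progression lemma is what resolves this: it converts the near-equality $\d_\psi(o,uo)\approx\d_\psi(o,go)$ — itself the quantitative content of the hypothesis via Proposition \ref{ss} — into a uniform bound on the combinatorial distance $\d_\Ga(g,u)$, which then transfers back to $\d_\psi$ through the quasi-isometry. The remaining work is keeping the additive constants $C_1,C,D,T_\psi$ and the quasi-isometry constants genuinely uniform in $\xi,\eta,g$, which is routine once $c$ is fixed.
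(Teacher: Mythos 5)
Your argument is correct, and it ends exactly as the paper's does: once $\d_\psi(go,[e,\eta]o)$ is uniformly bounded, both proofs pass to the Riemannian metric via \eqref{eqn.qiriempsi}, apply Theorem \ref{morse2}(2) to the ray $[e,\eta]$, and read off membership in the shadow. The middle step, however, is genuinely different. The paper takes $c=e^{-(2T_\psi+C_1+D)}$ small precisely so that Proposition \ref{ss} together with the coarse triangle inequality yields $\d_\psi(go,[\xi,\eta]o)>2T_\psi$, after which the $T_\psi$-thinness of the ideal triangle with vertices $e,\xi,\eta$ forces $\d_\psi(go,[e,\eta]o)\le T_\psi$ in one stroke. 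You instead keep $c=1$ and control the position of $g$ along $[e,\xi]$ relative to the near-branch point $u$ furnished by Lemma \ref{lem.inscribetri}: thinness of the finite triangle on $e,u,v$ when $g$ precedes $u$, and the uniform progression lemma (Lemma \ref{lp}) when $g$ follows $u$ --- essentially the mechanism the paper reserves for the opposite inclusion in Proposition \ref{si}. Your route buys the inclusion for an arbitrary fixed $c$, at the cost of a case analysis that the paper's smaller choice of $c$ avoids; both arguments draw on the same toolbox (Proposition \ref{ss}, Theorem \ref{thm.triangle}, Proposition \ref{prop.thintriangle}, Theorem \ref{morse2}). Two minor remarks: your opening gloss that the hypothesis means $[\xi,\eta]$ passes $\d_\psi$-close to $o$ is backwards --- it means $[\xi,\eta]o$ stays at $\d_\psi$-distance at least roughly $\d_\psi(o,go)$ from $o$ --- though the inequalities you actually run are the correct ones; and when invoking Lemma \ref{lem.inscribetri} you should say that $u$ may be taken on the same ray $[e,\xi]$ containing $g$, which is legitimate because the constant in that lemma is uniform over the choice of rays (its proof uses only the uniform thinness), and this is what makes your comparison of $k\le j$ versus $k>j$ meaningful.
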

\begin{proof}
    Let $C_1, D > 0$ be the constants given by Proposition \ref{ss} and Theorem \ref{thm.triangle} respectively. Recall the constant $T_{\psi}$ in Proposition \ref{prop.thintriangle}; the image of any geodesic triangle in $(\Ga, \d_{\Ga})$ under the orbit map, is $T_{\psi}$-thin in the $\d_\psi$-metric. We now claim that \eqref{inccc} holds with $c := e^{- (2 T_{\psi} + C_1 + D)} $.
  Fix $\xi \in \La_{\theta}$ and an element $g\in [e, \xi]$.  Let $\eta \in B_{\psi}(\xi, c e^{-\d_{\psi}(o, go)} )$, that is,
    \be\label{ball} \psi(\cal G^{\theta}(\xi, \eta)) > \d_{\psi}(o, go) + 2 T_{\psi} +  C_1 + D.\ee 
  
    Let $\ga \in [\xi, \eta]$ be chosen so that $\d_{\psi}(o, \ga o) = \d_{\psi}(o, [\xi, \eta] o)$. By Proposition \ref{ss}, we have $$\d_{\psi}(o, \ga o) \ge \psi(\cal G^{\theta}(\xi, \eta)) -  C_1 .$$
    Hence by \eqref{ball},
    \be\label{ball2} \d_{\psi}(o, \ga o) >\d_{\psi}(o, go) + 2 T_{\psi} + D.\ee 
    Let $g' \in [\xi, \eta]$ be such that $\d_{\psi}(go, g' o) = \d_{\psi}(go, [\xi, \eta] o)$.
    By Theorem \ref{thm.triangle}, 
    we also have $$\d_{\psi}(o, \ga o) \le \d_{\psi}(o, g' o) \le \d_{\psi}(o, g o) + \d_{\psi}(g o, [\xi, \eta] o) + D.$$ Together with \eqref{ball2},
    this implies \be\label{ball3} \d_{\psi}(g o, [\xi, \eta] o) > 2 T_{\psi} .\ee 

    \begin{figure}[h]
\begin{tikzpicture}[ scale = 1, every node/.style={scale=1}]

    \filldraw[fill=gray!10, opacity=0.3] (-0.2, 1.2) circle(1.4);
    \filldraw[fill=gray!30, opacity=0.3] (-0.2, 1.2) circle(0.7);

    \draw (2.8284271247, 2.8284271247) arc(45:135:4);

    \draw[thick, color=red] (0, -1) .. controls (0.5, 2.5) and (1.5, 3) .. (2, 3.4641016151);
    \draw[red] (0.5, -0.2) node[right] {\small $\ell M_{\theta} A^+ o$};

    \draw[color=gray!50] (2, 3.4641016151) .. controls (1, 3) and (-1, 3) .. (-2, 3.4641016151);
%    \draw[blue] (0, 3.6) node {\small $\{\ga_i o\}_{i \in \Z}$};

    \draw[color=gray!50] (-2, 3.4641016151) .. controls (-1.5, 3) and (-0.5, 2.5) .. (0, -1);

    \filldraw (2, 3.4641016151) circle(1pt);
    \filldraw (2, 3.4641016151) node[above] {$\eta$};
    \filldraw (-2, 3.4641016151) circle(1pt);
    \filldraw (-2, 3.4641016151) node[above] {$\xi$};

    \filldraw (0, -1) circle(1pt);
    \draw (0, -1) node[below] {$o$};

    \draw[dashed] (-0.2, 1.2) -- (-1.6, 1.2);
    \draw[dashed] (-0.2, 1.2) -- (0.5, 1.2);
    \draw (0.15, 1.2) node[above] {\tiny $T_{\psi}$};

    \draw (-0.3, 1.2) node[above] {$g o$};
    
    \filldraw[fill=orange] (-0.2, 1.2) circle(1pt);
    \draw (-1.25, 1.2) node[above] {\tiny $2 T_{\psi}$};
    
    \filldraw[color=red] (0.2, 0.5) circle(1pt);
    \filldraw[color=red] (0.2, 1) circle(1pt);
    \filldraw[color=red] (0.5, 1.3) circle(1pt);
    \filldraw[color=red] (0.6, 1.7) circle(1pt);
    \filldraw[color=red] (0.7, 1.5) circle(1pt);
    \filldraw[color=red] (0.8, 2.2) circle(1pt);
%    \draw[color=red] (1.5, 2.7) node[right] {\small $\{v_i o\}$};
    \filldraw[color=red] (0.9, 2) circle(1pt);
    \filldraw[color=red] (1.0, 2.7) circle(1pt);
    \filldraw[color=red] (1.3, 2.8) circle(1pt);
    \filldraw[color=red] (1.5, 2.7) circle(1pt);
    \filldraw[color=red] (1.7, 3) circle(1pt);

    \filldraw[color=orange] (-0.2, 0.8) circle(1pt);
    \filldraw[color=orange] (-0.5, 1.5) circle(1pt);
    
    \filldraw[color=orange] (-0.6, 1.4) circle(1pt);
    \filldraw[color=orange] (-0.7, 1.8) circle(1pt);
    \filldraw[color=orange] (-0.8, 2.5) circle(1pt);
    \filldraw[color=orange] (-0.9, 2.3) circle(1pt);
    \filldraw[color=orange] (-1.0, 2.6) circle(1pt);
    \filldraw[color=orange] (-1.3, 2.9) circle(1pt);
    \filldraw[color=orange] (-1.5, 2.7) circle(1pt);
    \filldraw[color=orange] (-1.7, 3) circle(1pt);

    \filldraw[color=blue] (1.8, 3.5) circle(1pt);
    \filldraw[color=blue] (1.6, 3.5) circle(1pt);
    \filldraw[color=blue] (1.3, 3.2) circle(1pt);
    \filldraw[color=blue] (1.2, 3.3) circle(1pt);
    \filldraw[color=blue] (1, 3.3) circle(1pt);
    \filldraw[color=blue] (0.7, 3.1) circle(1pt);
    \filldraw[color=blue] (0.4, 3.3) circle(1pt);
    \filldraw[color=blue] (0.2, 3) circle(1pt);
    \filldraw[color=blue] (-0.1, 3.3) circle(1pt);
    \filldraw[color=blue] (-0.3, 3.2) circle(1pt);
    \filldraw[color=blue] (-0.5, 3.3) circle(1pt);
    \filldraw[color=blue] (-0.8, 3) circle(1pt);
    \filldraw[color=blue] (-0.9, 3.1) circle(1pt);
    \filldraw[color=blue] (-1.1, 3.3) circle(1pt);
    \filldraw[color=blue] (-1.4, 3.3) circle(1pt);
    \filldraw[color=blue] (-1.7, 3.3) circle(1pt);

\end{tikzpicture}
\caption{$go$ is far  from $[\xi, \eta]o$ and hence close to $[e, \eta]o$; so $\eta$ lies in the shadow
$O^{\theta}_{T' + D_0}(o, go)$.} \label{fig.shadow2}
\end{figure}
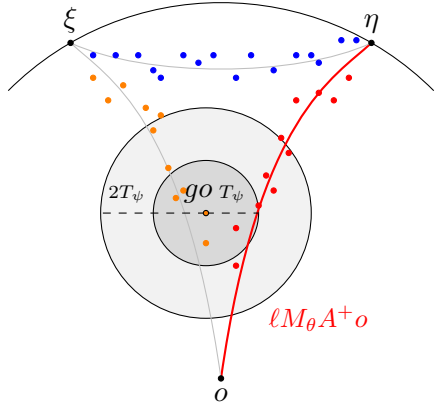

Since the triangle $[e, \xi]o \cup [\xi, \eta]o \cup [e, \eta ]o$ is $T_{\psi}$-thin in  $\d_{\psi}$-metric,
 $go$ is contained in the $T_{\psi}$-neighborhood of $[\xi, \eta]o \cup [e, \eta ]o$. Since $\d_{\psi}(g o, [\xi, \eta] o) > 2 T_{\psi}$ by \eqref{ball3}, we must have $\d_{\psi}(g o, [e, \eta] o) \le  T_{\psi} $ (see Figure \ref{fig.shadow2}). For the constant $T' := C_{\psi}(T_{\psi} + C_{\psi})$ where $C_{\psi}$ is as in  \eqref{eqn.qiriempsi}, we have $$d(g o, [e, \eta] o) \le T'.$$
  With the constant $D_0$ given in Theorem \ref{morse2}, there exists $\ell \in K$ so that $\ell P_{\theta} = \eta$ and $[e, \eta] o$ is contained in the $D_0$-neighborhood of $\ell M_{\theta}A^+ o$ in the Riemannian distance $d$. This implies that $$\eta \in O_{T' + D_0}^{\theta}(o, go)\cap \La_\theta.$$ This completes the proof with $R_0=T'+D_0$.
\end{proof}

\section{Shadows inside balls: the second inclusion in Theorem \ref{thm.ballshadowball}} \label{sec.shadowinball}
We continue the setup from section \ref{sec.ballinshadow}. Hence $\i(\theta)=\theta$ and 
$\psi\in \fa_\theta^*$ is a linear form such that
$\psi>0$ on $\L-\{0\}$ and $\psi=\psi \circ \i$.
In this section, we prove the second inclusion of Theorem \ref{thm.ballshadowball}, which can be stated as follows:

\begin{prop} \label{si}
    For any $r > 0$, there exists $c' = c'_r > 0$ such that for any $\xi \in \La_{\theta}$ and any $g \in [e,\xi]$ in $\Ga$, we have $$O_r^{\theta}(o, g o) \cap \La_{\theta} \subset B_{\psi}(\xi, c' e^{-\d_{\psi}(o, go)}).$$
\end{prop}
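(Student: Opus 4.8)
The plan is to unwind the definitions, reducing the statement to a single inequality between the Gromov products of $(\xi,\eta)$ based at the two points $o$ and $go$, and then to obtain that inequality from the cocycle property of the Busemann map together with the KLP Morse lemma. Since $g\in\Ga$ we have $\d_\psi(o,go)=\psi(\mu(g))$, and by \eqref{eqn.defconfmetric} the asserted inclusion is equivalent to $\psi(\cal G^{\theta}(\xi,\eta))\ge\psi(\mu(g))-\log c'_r$ for a suitable $c'_r=c'_r(\Ga,\psi,r)$. I would first record two shadow memberships of $go$: because $g$ lies on a geodesic ray $[e,\xi]$, Theorem~\ref{morse2}(2) places $go$ within $D_0$ of the Weyl cone at $o$ towards $\xi$, which translates into $\xi\in O_{R_1}^{\theta}(o,go)$ for a uniform $R_1=D_0+1$; and $\eta\in O_r^{\theta}(o,go)$ holds by hypothesis. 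Feeding both into Lemma~\ref{lem.buseandcartan} gives
\[
\psi\bigl(\beta_{\xi}^{\theta}(o,go)\bigr)\ge\psi(\mu(g))-\|\psi\|\kappa R_1,\qquad
\psi\bigl(\beta_{\eta}^{\theta}(o,go)\bigr)\ge\psi(\mu(g))-\|\psi\|\kappa r .
\]

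The key step is an identity. Fix $g_0\in G$ with $(g_0^+,g_0^-)=(\xi,\eta)$, so that $(g^{-1}g_0)^{\pm}=(g^{-1}\xi,g^{-1}\eta)$; by Definition~\ref{def.defgromovprod} together with the invariance of $\beta^{\theta}$ one has $\tfrac12\bigl(\beta_{\xi}^{\theta}(go,g_0o)+\i\,\beta_{\eta}^{\i(\theta)}(go,g_0o)\bigr)=\cal G^{\theta}(g^{-1}\xi,g^{-1}\eta)$, while $\tfrac12\bigl(\beta_{\xi}^{\theta}(o,g_0o)+\i\,\beta_{\eta}^{\i(\theta)}(o,g_0o)\bigr)=\cal G^{\theta}(\xi,\eta)$. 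Subtracting these and applying the cocycle property $\beta^{\theta}_{\bullet}(o,g_0o)=\beta^{\theta}_{\bullet}(o,go)+\beta^{\theta}_{\bullet}(go,g_0o)$ yields
\[
\beta_{\xi}^{\theta}(o,go)+\i\,\beta_{\eta}^{\theta}(o,go)=2\bigl(\cal G^{\theta}(\xi,\eta)-\cal G^{\theta}(g^{-1}\xi,g^{-1}\eta)\bigr).
\]
Applying $\psi$, using $\psi=\psi\circ\i$ and the two displayed bounds, one gets $2\psi(\cal G^{\theta}(\xi,\eta))-2\psi(\cal G^{\theta}(g^{-1}\xi,g^{-1}\eta))\ge 2\psi(\mu(g))-\|\psi\|\kappa(R_1+r)$. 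Since $g^{-1}\xi\ne g^{-1}\eta$ are points of $\La_{\theta}$, the quantity $\psi(\cal G^{\theta}(g^{-1}\xi,g^{-1}\eta))=-\log d_{\psi}(g^{-1}\xi,g^{-1}\eta)$ is bounded below by $-\log\diam(\La_{\theta},d_{\psi})$ (alternatively one may quote Proposition~\ref{ss}, which bounds $\psi\circ\cal G^{\theta}$ below by $\d_\psi(o,[\,\cdot\,,\cdot\,]o)$, hence below uniformly). Rearranging gives $\psi(\cal G^{\theta}(\xi,\eta))\ge\psi(\mu(g))-\tfrac12\|\psi\|\kappa(R_1+r)-\log\diam(\La_{\theta},d_{\psi})$, which is exactly the desired bound with $c'_r=e\cdot\diam(\La_{\theta},d_{\psi})\,e^{\frac12\|\psi\|\kappa(R_1+r)}$ (the case $\xi=\eta$ being trivial).

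I do not expect a serious obstacle here: the content is the cocycle identity, read as saying that the two shadow conditions at $go$ — one automatic because $g$ lies on a ray towards $\xi$, the other the hypothesis on $\eta$ — control precisely the two Busemann terms whose $\i$-symmetrized average is the difference of the Gromov products at $o$ and at $go$, together with the trivial lower bound for the Gromov product of two distinct limit points. Notably this avoids the nearest-point and thin-triangle machinery used for the first inclusion. The two places that need a little care are: converting the Weyl-cone conclusion of the Morse lemma for Anosov subgroups (Theorem~\ref{morse2}(2)) into a shadow membership $\xi\in O_{R_1}^{\theta}(o,go)$ with $R_1$ independent of $\xi$ and $g$; and keeping the signs straight in the Gromov-product identity, in particular verifying that $\tfrac12\bigl(\beta_{\xi}^{\theta}(go,g_0o)+\i\,\beta_{\eta}^{\i(\theta)}(go,g_0o)\bigr)$ is genuinely $\cal G^{\theta}(g^{-1}\xi,g^{-1}\eta)$ — both of which follow from the invariance and cocycle formulas for $\beta^{\theta}$ and the independence-of-representative statement in Definition~\ref{def.defgromovprod}.
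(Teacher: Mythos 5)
Your proof is correct, and it takes a genuinely different route from the paper's. The paper proves this inclusion with the hyperbolic-group machinery: it transfers the hypothesis $\eta\in O_r^\theta(o,go)$ to a shadow $O_R^\Ga(e,g)$ in the Gromov boundary (Proposition~\ref{prop.shadowgromovbooundary}, whose second inclusion is a compactness/contradiction argument), then uses Lemma~\ref{lem.hypgp} to locate the projection point $\ga_{\xi,\eta}$ beyond $g$ on $[e,\xi]$, and finally combines the uniform progression lemma (Lemma~\ref{lp}), the coarse triangle inequality (Theorem~\ref{thm.triangle}) and Lemma~\ref{lem.gromovsym} to convert $\d_\psi(o,\ga_{\xi,\eta}o)\gtrsim\d_\psi(o,go)$ into the Gromov-product bound. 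You instead run the classical change-of-basepoint argument directly in the symmetric space: both shadow memberships ($\eta\in O_r^\theta(o,go)$ by hypothesis, and $\xi\in O_{D_0+1}^\theta(o,go)$ from Theorem~\ref{morse2}(2), exactly as in the first inclusion of Proposition~\ref{prop.shadowgromovbooundary}) control the Busemann cocycles via Lemma~\ref{lem.buseandcartan}, and the identity $\beta_\xi^\theta(o,go)+\i\beta_\eta^\theta(o,go)=2(\cal G^\theta(\xi,\eta)-\cal G^\theta(g^{-1}\xi,g^{-1}\eta))$ — which is precisely the computation in Lemma~\ref{lem.gromovconformal} — reduces everything to a uniform lower bound on $\psi\circ\cal G^\theta$ over distinct pairs of $\La_\theta$, i.e.\ finiteness of $\diam(\La_\theta,d_\psi)$; your fallback justification of that bound via Proposition~\ref{ss} together with the uniform lower bound on $\psi(\mu(\ga))$, $\ga\in\Ga$, is sound and avoids any circularity, since Proposition~\ref{ss}, Lemma~\ref{lem.buseandcartan}, Lemma~\ref{lem.gromovconformal} and Theorem~\ref{morse2} are all established before this proposition. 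What your approach buys is a shorter and more conceptual proof of this particular inclusion that bypasses Proposition~\ref{prop.shadowgromovbooundary}'s second inclusion, Lemma~\ref{lem.hypgp}, the progression lemma and even the coarse triangle inequality; what the paper's route buys is that it runs entirely on the coarse-metric package $(\Ga o,\d_\psi)$ already built for the first inclusion, with the shadow comparison in $\partial\Ga$ stated as a reusable statement in its own right. The only small points to keep explicit in a write-up are the strictness of the inequality for membership in the open ball $B_\psi$ (your extra factor $e$ handles this) and the one-line verification, via $M_\theta\subset P_\theta$, that the Morse-lemma conclusion really yields $\xi\in O_{D_0+1}^\theta(o,go)$.
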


In addition to the coarse triangle inequality of $\d_\psi$ (Theorem \ref{thm.triangle}) and the uniform progression lemma (Lemma \ref{lp}), we will use the property that the shadows in $(\Ga, \d_\Ga)$
are comparable to shadows in $\La_\theta$ (Proposition \ref{prop.shadowgromovbooundary}) and that the half-spaces spanned by
shadows of balls in $(\Ga, \d_{\Ga})$ stay deeper than the balls from the viewpoints (see Figure \ref{fig.shadowopplight} and Lemma \ref{lem.hypgp}).

In the Gromov hyperbolic space $(\Ga,\d_\Ga)$,  for $R>0$ and $\ga_1, \ga_2\in \Ga$,
the shadow $O_R^{\Ga}(\ga_1, \ga_2)$ is defined as the set of
all $\xi\in \partial \Ga$ such that a geodesic ray $[\ga_1, \xi]$ 
intersects the $R$-ball centered at $\ga_2$:
$$ O_R^{\Ga}(\ga_1, \ga_2)=\{\xi\in \partial \Ga: \d_\Ga (\ga_2, [\ga_1, \xi])<R\} .   $$
Clearly, shadows are $\Ga$-equivariant in the sense that for any $\ga\in \G$, we have $\ga O_R^{\Ga}(\ga_1, \ga_2)=
O_R^{\Ga}(\ga \ga_1, \ga \ga_2)$.

The following proposition states that shadows in $\partial \Ga$ and shadows in $\La_{\theta}$ are compatible via the boundary map $f : \partial \Ga \to \La_{\theta}$:
recall that the orbit map $(\Ga, \d_{\Ga}) \to (\Ga o, d)$ is a $Q$-quasi-isometry for some $Q \ge 1$ (Theorem \ref{thm.anosovbasic}(3)) and let $R_0 := Q + D_0 + 1$ where $D_0$ is given in Theorem \ref{morse2}.
\begin{prop} \label{prop.shadowgromovbooundary}
 For any $R>R_0$, there exists $R_1, R_2 > 0$ such that for any $\ga_1, \ga_2 \in \Ga$, $$f(O_{R_1}^{\Ga}(\ga_1, \ga_2)) \subset O_R^{\theta} (\ga_1 o, \ga_2 o) \cap \La_{\theta} \subset f(O_{R_2}^{\Ga}(\ga_1, \ga_2)).$$
\end{prop}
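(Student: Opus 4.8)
The plan is to reduce to the case $\ga_1 = e$ and then prove the two inclusions separately, the second being the substantial one. Since $f$ is $\Ga$-equivariant, both shadow families transform equivariantly (namely $\ga\, O_R^{\Ga}(\ga_1,\ga_2) = O_R^{\Ga}(\ga\ga_1,\ga\ga_2)$ and $h\, O_R^{\theta}(p,q) = O_R^{\theta}(hp,hq)$), and $\La_\theta$ is $\Ga$-invariant, applying $\ga_1^{-1}$ reduces the statement to comparing $f\big(O_{\bullet}^{\Ga}(e,\ga)\big)$ with $O_{\bullet}^{\theta}(o,\ga o)\cap\La_\theta$ for $\ga := \ga_1^{-1}\ga_2 \in \Ga$. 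Throughout I fix the cone $\C\subset\fa^+$ and constants $B, D_0 \ge 1$ from Theorem \ref{morse2}, and the quasi-isometry constant $Q$ from Theorem \ref{thm.anosovbasic}(3), so that $R_0 = Q + D_0 + 1$.

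For the inclusion $f\big(O_{R_1}^{\Ga}(e,\ga)\big) \subset O_{R}^{\theta}(o,\ga o)\cap\La_\theta$: given $\hat\xi\in O_{R_1}^{\Ga}(e,\ga)$, choose $\ga'\in[e,\hat\xi]$ with $\d_\Ga(\ga,\ga') < R_1$. By the cone case of Theorem \ref{morse2}, the set $[e,\hat\xi]o$ lies in the $D_0$-neighborhood of $kM_\theta(\exp\C)o$ for some $k\in K$ with $kP_\theta = f(\hat\xi)$; picking $m\in M_\theta$ and $a\in\exp\C$ with $d(\ga' o, kmao)\le D_0$, the quasi-isometry bound gives $d(\ga o, kmao) \le d(\ga o,\ga' o) + D_0 \le QR_1 + Q + D_0$. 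Since $km\in K$ satisfies $(km)o = o$ and $(km)P_\theta = kP_\theta = f(\hat\xi)$, and $a\in\exp\C\subset A^+$, this yields $d\big(\ga o, (km)A^+ o\big) \le QR_1 + Q + D_0$. As $R > R_0$, we may fix $R_1 = R_1(R) > 0$ with $QR_1 + Q + D_0 < R$, and then $f(\hat\xi)\in O_{R}^{\theta}(o,\ga o)\cap\La_\theta$.

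The reverse inclusion $O_{R}^{\theta}(o,\ga o)\cap\La_\theta \subset f\big(O_{R_2}^{\Ga}(e,\ga)\big)$ is where the real work lies. Write $\xi = f(\hat\xi)$ for $\xi$ in the left side; the goal is $\d_\Ga(\ga,[e,\hat\xi]) \le R_2 = R_2(R)$, which by $\delta$-hyperbolicity of $(\Ga,\d_\Ga)$ is equivalent to the Gromov product $(\ga\,|\,\hat\xi)_e$ differing from $\d_\Ga(e,\ga)$ by only $O(R)$. The input from the symmetric space is that $\xi\in O_{R}^{\theta}(o,\ga o)$ forces, via Lemma \ref{lem.buseandcartan}, $\|\beta_\xi^\theta(o,\ga o) - \mu_\theta(\ga)\| \le \kappa R$; moreover, applying the cone case of Theorem \ref{morse2} to $[e,\hat\xi] = \{e = \ga_0, \ga_1, \dots\}$ shows each $\ga_n o$ is $D_0$-close to the cone towards $\xi$, so that $\xi\in O_{O(D_0)}^{\theta}(o,\ga_n o)$ and, by Lemma \ref{lem.buseandcartan} again, $\beta_\xi^\theta(o,\ga_n o) = \mu_\theta(\ga_n) + O(1)$ uniformly in $n$. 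Combining these through the cocycle identity $\beta_\xi^\theta(o,\ga_n o) = \beta_\xi^\theta(o,\ga o) + \beta_\xi^\theta(\ga o,\ga_n o)$, together with monotonicity of $\mu_\theta$ along the ray (in the spirit of Lemma \ref{lp}, letting one choose $n$ with $\d_\Ga(e,\ga_n)$ only boundedly above $\d_\Ga(e,\ga)$), should pin down $\d_\Ga(\ga,\ga_n) \le \d_\Ga(e,\ga_n) - \d_\Ga(e,\ga) + O(R)$, which is exactly the needed bound on $(\ga\,|\,\hat\xi)_e$.

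The hard part will be performing that last comparison without incurring multiplicative errors: since the orbit map $(\Ga,\d_\Ga)\to(\Ga o,d)$ is only a quasi-isometry, relating word-metric Gromov products directly to Riemannian data in $X$ turns an additive estimate into a multiplicative one, which is useless for a \emph{constant} $R_2$. The remedy is to keep all errors additive by arguing with the vector-valued cocycle $\beta_\xi^\theta$, which is $1$-Lipschitz for $d$, together with the linear Anosov estimate $\alpha(\mu(\cdot)) \ge C^{-1}|\cdot| - C$ of Definition \ref{ta}, rather than with scalar distances. Should this prove too cumbersome, an alternative I may present instead extends $[e,\ga]$ to a geodesic ray $[e,\hat\eta]$, uses Theorem \ref{morse2} to place $\ga o$ simultaneously near the Weyl chamber towards $\xi$ and near the cone towards $\eta := f(\hat\eta)$, deduces from a convergence estimate in $X$ that $\xi$ and $\eta$ coincide in $\F_\theta$ up to scale $e^{-\d_\Ga(e,\ga)}$, and transports this back through $f$ to conclude that $\ga$ lies in a bounded shadow of $e$ pointing at $\hat\xi$.
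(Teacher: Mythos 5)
Your reduction to $\ga_1=e$ and your proof of the first inclusion are correct and essentially the paper's own argument: the cone case of Theorem \ref{morse2} plus the choice of $R_1$ with $QR_1+Q+D_0<R$.

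The second inclusion is where the real content lies, and your sketch has a genuine gap at exactly the decisive step. All the quantities you propose to combine --- $\beta_\xi^\theta(o,\ga o)=\mu_\theta(\ga)+O(R)$ from Lemma \ref{lem.buseandcartan}, $\beta_\xi^\theta(o,\ga_n o)=\mu_\theta(\ga_n)+O(1)$ along the ray, the cocycle identity, and ``monotonicity of $\mu_\theta$'' --- only control Busemann differences and $\theta$-Cartan data, and such data can never bound $\d_\Ga(\ga,\ga_n)$ (equivalently $d(\ga o,\ga_n o)$) \emph{from above}: the inequality $\|\beta_\xi^\theta(g,h)\|\le d(go,ho)$ goes the wrong way, and when $\theta\neq\Pi$ two points lying within bounded distance of the same cone $kM_\theta A^+o$ over $\xi$, with the same value of $p_\theta$ of their depths, can be arbitrarily far apart, since the $B_\theta$-directions are invisible to both $\beta_\xi^\theta$ and $\mu_\theta$ (and even in favorable cases nothing you cite guarantees that the ray's depths ever come near the depth of $\ga o$; in rank $\ge 2$ a path of bounded steps can go to infinity while avoiding a prescribed ball). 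So the claimed estimate $\d_\Ga(\ga,\ga_n)\le \d_\Ga(e,\ga_n)-\d_\Ga(e,\ga)+O(R)$ is not a consequence of the listed ingredients; it is a restatement of what must be proved, namely that $\ga$ lies boundedly close to $[e,\hat\xi]$. Your fallback alternative has a related defect: ``transporting back through $f$'' an estimate in $\F_\theta$ at scale $e^{-\d_\Ga(e,\ga)}$ presupposes a uniform comparison between balls in $\La_\theta$ and word-metric shadows, which is essentially Theorem \ref{thm.ballshadowball}; in the paper that comparison is proved \emph{using} the present proposition, so this route is circular as stated.

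For the record, the paper closes this inclusion by a soft compactness argument rather than a quantitative one: if it failed for some $R>R_0$, there would be $\ga_i\in\Ga$ and $x_i\in\partial\Ga- O_i^{\Ga}(e,\ga_i)$ with $f(x_i)\in O_R^{\theta}(o,\ga_i o)$; translating by $\ga_i^{-1}$ and passing to subsequences $\ga_i^{-1}\to y$, $\ga_i^{-1}x_i\to x$ in $\partial\Ga$, the continuity of shadows in the viewpoint (Proposition \ref{prop.contshadow}) places $f(x)$ in a shadow based at the boundary point $f(y)$, hence in general position with $f(y)$, so $f(x)\neq f(y)$; on the other hand $x_i\notin O_i^{\Ga}(\ga_i^{-1}\cdot, \cdot)$ forces the geodesics $[\ga_i^{-1}x_i,\ga_i^{-1}]$ to leave every ball about $e$, whence $x=y$, a contradiction. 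Either adopt this argument (the resulting $R_2$ is non-effective, which is all the later applications need), or supply a genuinely new quantitative input controlling the position of $\ga$ relative to the flag $\xi$ itself, not merely its Cartan projection; your current draft contains neither.
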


In proving this proposition, we will also need to consider shadows whose viewpoints are on the boundary $\F_\theta$.
For $\eta\in \F_\theta$, $p\in X$, and $R>0$, the
$\theta$-shadow $O_R^{\theta}(\eta, p)$ is defined as follows: $$O_R^{\theta}(\eta, p) = \{g P_{\theta} \in \F_{\theta} : g \in G, \ gw_0P_{\theta} = \eta, \ d(p, go)<R  \}.$$
We will need the following proposition on continuity of shadows:
\begin{prop} [{Continuity of shadows on viewpoints,  \cite[Proposition 3.4]{KOW_ergodic}}] \label{prop.contshadow} 
    Let $p \in X$, $\eta \in \F_{\theta}$ and $r>0$.
    If a sequence $q_i \in X$ converges to $ \eta $ as $i \to \infty$ as in Definition \ref{fc}, then for any $0<\e<r$, we have \be \label{eqn.approxshadows}
    O_{r - \varepsilon}^{\theta}(\eta, p) \subset O_r^{\theta}(q_i, p) \subset O_{r + \varepsilon}^{\theta}(\eta, p) \quad \text{for all large } i\ge 1.
    \ee
\end{prop}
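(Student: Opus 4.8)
The plan is to recast membership in each of the two kinds of shadows as a statement about the distance from the fixed ball $B(p,r)$ to a subset of $X$, and then to deduce the conclusion from the convergence $q_i\to\eta$ by a compactness argument.

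\textbf{Reformulation.} For $q\in X$ and $\zeta\in\F_\theta$, fix $g\in G$ with $go=q$ and $gP_\theta=\zeta$, and set $C(q,\zeta):=gM_\theta(\exp\fa^+)o$; this is independent of the choice of $g$, and by the (existential) definition of shadows one has $\zeta\in O_R^\theta(q,p)\iff d(p,C(q,\zeta))<R$. Similarly, for $\eta\in\F_\theta$ with $(\zeta,\eta)\in\F_\theta^{(2)}$, fix $g\in G$ with $gP_\theta=\zeta$ and $gw_0P_\theta=\eta$, and set $S(\eta,\zeta):=\{g\ell o:\ell\in L_\theta\}$, a flat slab joining $\eta$ to $\zeta$ (again independent of $g$); then $\zeta\in O_R^\theta(\eta,p)\iff d(p,S(\eta,\zeta))<R$. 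Hence it suffices to prove that for every $\e>0$ there is $i_0$ such that
\[
|\,d(p,C(q_i,\zeta))-d(p,S(\eta,\zeta))\,|\le\e\qquad\text{for all }i\ge i_0
\]
uniformly over the $\zeta$ that are relevant, namely those with $(\zeta,\eta)\in\F_\theta^{(2)}$ and $\min\bigl(d(p,C(q_i,\zeta)),d(p,S(\eta,\zeta))\bigr)\le r$. A preliminary observation, using only $q_i\to\eta$, is that such $\zeta$ eventually all lie in a single compact subset $\mathcal Z\subset\{\zeta:(\zeta,\eta)\in\F_\theta^{(2)}\}$: a chamber issuing from a point near $\eta$ that meets $B(p,r)$ must point away from $\eta$ near $B(p,r)$, so its endpoint $\zeta$ is confined to a fixed neighborhood of the relatively compact set of directions ``opposite to $\eta$ as seen from $B(p,r+1)$''.

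\textbf{Dynamical input.} By Definition \ref{fc} write $q_i=g_io$ with $g_i\in\kappa_iA^+K$, $\kappa_iP_\theta\to\eta$ in $\F_\theta$, and $\min_{\alpha\in\theta}\alpha(\mu(g_i))\to\infty$; absorbing a bounded $K$-factor via Lemma \ref{lem.cptcartan}, we may take $g_i=\kappa_ia_i$ with $a_i=\exp H_i\in A^+$ and $\min_{\alpha\in\theta}\alpha(H_i)\to\infty$. For $\zeta\in\mathcal Z$ write the frame at $q_i$ pointing at $\zeta$ as $\kappa_ia_ik_{i,\zeta}$ with $k_{i,\zeta}\in K$ and $k_{i,\zeta}P_\theta=a_i^{-1}\kappa_i^{-1}\zeta$. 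Since $\zeta$ is in general position with $\eta$ and $\kappa_iP_\theta\to\eta$, the point $\kappa_i^{-1}\zeta$ stays in a compact subset on which the $a_i^{-1}$-contraction of $\F_\theta$ toward $w_0P_\theta$ is uniform; as $a_i\to\infty$ $\theta$-regularly we get $a_i^{-1}\kappa_i^{-1}\zeta\to w_0P_\theta$, while the remaining bounded data of the frame is controlled by Lemma \ref{lem.cptcartan}. Consequently $\kappa_ia_ik_{i,\zeta}$ converges, up to a bounded ambiguity in $K$ and in $M_\theta$, to a frame $g(\zeta)$ with $g(\zeta)P_\theta=\zeta$ and $g(\zeta)w_0P_\theta=\eta$. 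Reading this off, the cone $C(q_i,\zeta)=\kappa_ia_ik_{i,\zeta}M_\theta(\exp\fa^+)o$ Hausdorff-converges, on the bounded set $B(p,r+1)$, to the slab $S(\eta,\zeta)$, and by compactness of $\mathcal Z$ together with the uniform control of the bounded perturbations afforded by Lemma \ref{lem.cptcartan}, this convergence is uniform for $\zeta\in\mathcal Z$. Granting this, the displayed inequality holds for all large $i$, and hence $O_{r-\e}^\theta(\eta,p)\subset O_r^\theta(q_i,p)\subset O_{r+\e}^\theta(\eta,p)$ for all large $i$.

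\textbf{The main difficulty, and an alternative.} The crux is the \emph{uniformity in $\zeta$}: the $a_i^{-1}$-contraction on $\F_\theta$ degenerates as $\zeta$ approaches the locus where $(\zeta,\eta)\notin\F_\theta^{(2)}$, which is exactly why one must first confine the relevant $\zeta$ to the compact family $\mathcal Z$, and one must then track the $K$- and $M_\theta$-ambiguities uniformly over $\mathcal Z$ using Lemma \ref{lem.cptcartan}. An alternative organization avoiding explicit Hausdorff estimates is to prove the upper semicontinuity statement ``$q_i\to\eta$, $\zeta_i\in\overline{O^\theta_r}(q_i,p)$, $\zeta_i\to\zeta$ $\implies$ $\zeta\in\overline{O^\theta_r}(\eta,p)$'' by extracting subsequences and invoking Lemma \ref{lem.cptcartan} and the properness of $X$, and to combine it with the openness of $O^\theta_r(\eta,p)$ in $\F_\theta$; the two inclusions of the proposition then follow by contradiction. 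In either route the genuine content is the same compactness bookkeeping built on Lemma \ref{lem.cptcartan} and the defining property of the convergence $q_i\to\eta$.
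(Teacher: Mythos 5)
You should first note that the paper itself gives no argument for this proposition; it is quoted from \cite[Proposition 3.4]{KOW_ergodic}, so your proof must stand on its own, and as written it has genuine gaps, one of which is an outright error for non-symmetric $\theta$. The set entering the definition of $O_R^{\theta}(\eta,p)$ is $\{g'o: g'P_\theta=\zeta,\ g'w_0P_\theta=\eta\}=g\,(P_\theta\cap w_0P_\theta w_0^{-1})\,o$, and $P_\theta\cap w_0P_\theta w_0^{-1}$ equals $L_\theta$ only when $\theta=\i(\theta)$: for $G=\SL_3(\R)$ and $\theta=\{\alpha_1\}$ its identity component is generated by $A$ and the root subgroups for $\alpha_1$ and $-\alpha_2$, and its orbit of $o$ is $4$-dimensional while $L_\theta o$ is $3$-dimensional. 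So your equivalence between $\zeta\in O_R^{\theta}(\eta,p)$ and $d(p,S(\eta,\zeta))<R$ with $S(\eta,\zeta)=gL_\theta o$ fails in general, and since the hypothesis only provides $\theta$-regular convergence you cannot reduce to the symmetric set $\theta\cup\i(\theta)$. The same asymmetry breaks your dynamical step: contraction of $a_i^{-1}$ on $\F_\theta$ toward $w_0\xi_\theta$, uniformly on compact subsets of the open cell, requires $\min_{\alpha\in\i(\theta)}\alpha(\log a_i)\to\infty$, which $\theta$-regularity does not give. Concretely, for $\theta=\{\alpha_1\}$ and $\log a_i=\mathrm{diag}(\tfrac{2t_i}{3},-\tfrac{t_i}{3},-\tfrac{t_i}{3})$ with $t_i\to\infty$, the element $a_i^{-1}$ has two equal top eigenvalues and does not attract $\P(\R^3)$ toward $w_0\xi_\theta$; correspondingly the cones $C(q_i,\zeta)$ do not converge to the $L_\theta$-slab, and the relevant limit object is the larger set above.

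Even if you assume $\theta=\i(\theta)$ (where both the slab identification and the contraction statement are correct), the two claims carrying all the content are asserted rather than proved. Your ``preliminary observation'' that the relevant $\zeta$ eventually lie in a fixed compact set $\mathcal Z$ of flags in general position with $\eta$ is essentially the transversality content of the second inclusion itself; the one-line justification is not a proof, and without an independent argument (for instance: from $h_ko=q_{i_k}$, $h_kP_\theta=\zeta_k$ and a point $h_k(\exp u_k)o\in B(p,r)$ with $u_k\in\fa^+$, extract $h_k\exp u_k\to g$, note $gP_\theta=\lim\zeta_k$, and identify $gw_0P_\theta=\eta$ using $\theta$-regularity and Lemma \ref{lem.cptcartan}) your scheme is circular. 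Likewise, the locally uniform Hausdorff convergence of the cones to the slabs on $B(p,r+1)$, uniformly over $\mathcal Z$, is precisely what must be established: convergence of the flags $a_i^{-1}\kappa_i^{-1}\zeta$ controls directions in $\F_\theta$ but not the two-sided metric estimates you invoke (that the cone comes within $\e$ of the slab's closest point to $p$, and that it never comes appreciably closer to $p$ than the slab does), and Lemma \ref{lem.cptcartan} controls Cartan projections only, not positions of cones in $X$. Finally, your alternative route is also incomplete: upper semicontinuity plus openness of $O_r^{\theta}(\eta,p)$ can yield only the second inclusion; the first inclusion needs a separate attraction (lower semicontinuity) statement---that for $\zeta\in O_{r-\e}^{\theta}(\eta,p)$ the cones from $q_i$ toward $\zeta$ eventually pass within $r$ of $p$---which neither organization of your argument supplies.
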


\subsection*{Proof of Proposition \ref{prop.shadowgromovbooundary}} Let $R>R_0$.
    By the $\Ga$-equivariance of $f$ as well as of shadows, we may assume $\ga_1 = e$ and write $\ga_2 = \ga$. 
 By applying Theorem \ref{morse2}(2), 
 we get that for any $\xi \in \partial \Ga$ and $k\in K$ with $kP_{\theta} =f(\xi)$,
 the image  $[e, \xi]o$ is contained in the  $D_0$-neighborhood of $kM_\theta (\exp \C) o\subset
 k M_{\theta} A^+ o$ in the symmetric space $(X, d)$.
 Since $R> R_0=Q+D_0+1 $, we can
 choose $R_1>0$ so that $ Q R_1+ Q+D_0 < R$. 
Now if  $\xi\in O^\Ga_{R_1}(e, \ga)$, and hence $[e, \xi]$
 intersects the ball $\{g\in \Ga: \d_\Ga( \ga , g)<R_1\}$,
 then $kM_{\theta}A^+ o$ intersects the $ Q R_1+ Q+D_0 $-neighborhood of $\ga o$, and hence the $R$-neighborhood of $\ga o$.  
Therefore  $f(\xi) \in O_{R}^{\theta}(o, \ga o)$. This shows the first inclusion.

To prove  the second inclusion, 
 suppose that the claim does not hold for some $R>R_0$. Then for each $i \ge 1$, there exists $\ga_i \in \Ga$ such that $$O_R^{\theta}(o, \ga_i o) \cap \La_{\theta} \not \subset f(O_i^{\Ga}(e, \ga_i));$$ in other words, there exists  $x_i \in \partial \Ga - O_i^{\Ga}(e, \ga_i)$ such that $f(x_i) \in O_R^{\theta}(o, \ga_i o)$. By the $\Ga$-equivariance of $f$, it follows that $$\ga_i^{-1}x_i \notin O_i^{\Ga}(\ga_i^{-1}, e) \quad \text{and} \quad f(\ga_i^{-1}x_i) \in O_R^{\theta}(\ga_i^{-1} o, o) \quad \text{for all } i \ge 1.$$ After passing to a subsequence, we may assume that $\ga_i^{-1} \to y \in \partial \Ga$ and $\ga_i^{-1}x_i \to x$ as $i \to \infty$.
By Theorem \ref{homeo}(4), we deduce $\ga_i^{-1} o \to f(y)$ as $i \to \infty$.
    Applying Proposition \ref{prop.contshadow} to $q_i = \ga_i^{-1} o$, $p = o$ and $\eta = f(y)$, we have for some $\varepsilon > 0$ that 
    $$ O_{R}^{\theta}(\ga_i^{-1}o, o) \subset O_{R + \varepsilon/2}^{\theta}(f(y), o) \quad \text{for all } i \ge 1.$$
    Since $f(\ga_i^{-1} x_i) \in O_{R}^{\theta}(\ga_i^{-1}o, o)$ for all $i \ge 1$ and $f(\ga_i^{-1} x_i) $ converges to $ f(x)$ as $i \to \infty$, we have
    $$f(x) \in O_{R + \varepsilon}^{\theta}(f(y), o).$$ This implies that $f(x)$ is in general position with $f(y)$, i.e.,
    $(f(x), f(y))\in \F_\theta^{(2)}$, and in particular $f(x) \neq f(y)$. On the other hand, since $\ga_i^{-1}x_i \notin O_i^{\Ga}(\ga_i^{-1}, e)$ for all $i \ge 1$, the sequence of geodesics $[\ga_i^{-1} x_i, \ga_i^{-1}]$ escapes any large ball centered at $e$. This implies that two sequences $\ga_i^{-1} x_i$ and $\ga_i^{-1}$ must have the same limit, and hence $x = y$ which is a contradiction. Therefore the claim follows.
\qed

\medskip

The  Gromov product in $(\Ga, \d_{\Ga})$ is defined as follows: for $\alpha, \beta, \ga \in \Ga$, $$(\alpha, \beta)_{\ga} = \frac{1}{2} \left( \d_{\Ga}(\alpha, \ga) + \d_{\Ga}(\beta, \ga) - \d_{\Ga}(\alpha, \beta)\right)$$ and for $x, y \in \partial \Ga$, $$(x, y)_{\ga} = \sup \liminf_{i, j \to \infty} (x_i, y_j)_{\ga}$$ where the supremum is taken over all sequences $\{x_i\}, \{y_j\}$ in $\Ga$ such that $\lim_{i \to \infty} x_i = x$ and $\lim_{j \to \infty} y_j = y$. The Gromov product for a pair of a point in $\Ga$ and a point in $\partial \Ga$ is defined similarly. The Gromov product $(x, y)_{\ga}$ is known to measure distance from $\ga$ and to a geodesic $[x, y]$ up to a uniform additive error (see \cite{Bridson1999metric} for basic properties of Gromov hyperbolic spaces).

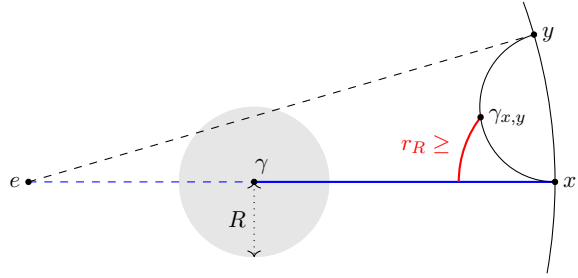
\begin{figure}[h]
\begin{tikzpicture}[scale=1, every node/.style={scale=0.8}]

        \draw[draw=none, fill=gray!20, opacity=0.5] (0, 0) circle(1);
        
        \draw[draw=none, fill=gray!20, opacity=0.5] (0, 0) circle(1);
  
        \draw (4, 0) arc(0:20:7);
        \draw (4, 0) arc(0:-10:7);

        \draw[dashed, blue] (-3, 0) -- (0, 0);
        \draw[blue, thick] (0, 0) -- (4, 0);

        \draw[dashed] (-3, 0) -- (3.72, 1.96);

	\filldraw (0, 0) circle(1pt);
	\draw (0.1, 0) node[above] {$\ga$};
        \draw[dotted, <->] (0, 0) -- (0, -1);
        \draw (0, -0.5) node[left] {$R$};

        \filldraw (3.72, 1.96) circle(1pt);
        \draw (3.72, 1.96) node[right] {$y$};
 	
	\filldraw (-3, 0) circle(1pt);
	\draw (-3, 0) node[left] {$e$};

        \draw (4, 0) arc(-90:-254:1);

        \filldraw (4, 0) circle(1pt);
        \draw (4, 0) node[right] {$x$};

        \draw[thick, red] (2.72, 0) arc(180:141:1.4212670404);
        \draw[red] (2.72, 0.5) node[left] {$r_R \ge$};
        
        \filldraw (3.01, 0.86) circle(1pt);
        \draw (3.01, 0.86) node[right] {$\ga_{x, y}$};

\end{tikzpicture}
\caption{Pictorial description of Lemma \ref{lem.hypgp}.} \label{fig.shadowopplight}
\end{figure}

The following lemma says that the half-space spanned by
the shadow is opposite to the light; more precisely, for any $x\in \partial \Ga$ and  $\ga\in [e,x]$, the half-space spanned by
all geodesics connecting $x$ and $O^\Ga_R(e,\ga)$ lies farther than $\ga$, viewed from $e$:

\begin{lemma} \label{lem.hypgp}
    Given $R > 0$, there exists
    $r = r_R > 0$  such that for any $x\in \partial \Ga$, $\ga\in [e,x]$, and
     $y \in O_R^{\Ga}(e, \ga)$,  we have  
     $$\d_{\Ga}(\ga_{x,y}, [\ga, x] ) \le r$$ where
    $\ga_{x,y}\in [x, y]$ denotes the nearest-point projection of $e$ to a geodesic $[x,y]$.
\end{lemma}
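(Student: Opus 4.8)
The plan is to work entirely inside the Gromov hyperbolic proper geodesic space $(\Ga,\d_\Ga)$ (Theorem \ref{thm.anosovbasic}(1)); fix a hyperbolicity constant $\delta$, write $(\,\cdot\,,\cdot\,)_e$ for the Gromov product based at $e$, and let $O(\delta)$ stand for any quantity bounded by a universal multiple of $\delta$. We may assume $x\neq y$, as otherwise no bi-infinite geodesic $[x,y]$ exists and there is nothing to prove. Let $[e,x]$ be the geodesic ray from the hypothesis (so $\ga\in[e,x]$) and take $[\ga,x]$ to be its terminal sub-ray; for any other choice of geodesic $[\ga,x]$ the conclusion follows after enlarging $r_R$ by $O(\delta)$, since geodesics with the same endpoints are uniformly Hausdorff-close. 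Since $y\in O_R^{\Ga}(e,\ga)$, fix a geodesic ray $[e,y]$ and a point $\ga'\in[e,y]$ with $\d_\Ga(\ga,\ga')=\d_\Ga(\ga,[e,y])<R$, and fix a bi-infinite geodesic $[x,y]$ with $\ga_{x,y}\in[x,y]$ a point nearest to $e$.

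\emph{Step 1 (bounding $\d_\Ga(e,\ga)$).} Because $\ga\in[e,x]$ we have $(\ga,x)_e=\d_\Ga(e,\ga)$, and because $\ga'\in[e,y]$ we have $(\ga',y)_e=\d_\Ga(e,\ga')$; also $(\ga,\ga')_e\ge\d_\Ga(e,\ga)-R$ and $(\ga,\ga')_e\le\min\{\d_\Ga(e,\ga),\d_\Ga(e,\ga')\}$ since $\d_\Ga(\ga,\ga')<R$. Applying the $\delta$-inequality for Gromov products twice to the chain $x,\ga,\ga',y$ based at $e$ (taking $\liminf$ along sequences approximating the ideal points $x,y$) gives $(x,y)_e\ge\min\{(x,\ga)_e,(\ga,\ga')_e,(\ga',y)_e\}-O(\delta)=(\ga,\ga')_e-O(\delta)$, whence
\[
 \d_\Ga(e,\ga)\ \le\ (x,y)_e+R+O(\delta).
\]
Call this bound $(\star)$.

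\emph{Step 2 (placing $\ga_{x,y}$ on $[e,x]$).} By the standard fact that $\d_\Ga(e,[x,y])$ coarsely equals $(x,y)_e$, we have $(x,y)_e-O(\delta)\le\d_\Ga(e,\ga_{x,y})\le(x,y)_e+O(\delta)$. The ideal geodesic triangle with vertices $e,x,y$ is $O(\delta)$-slim, so $\ga_{x,y}$ lies within $O(\delta)$ of $[e,x]\cup[e,y]$. If it lies within $O(\delta)$ of some $q\in[e,y]$, then $\d_\Ga(e,q)\le\d_\Ga(e,\ga_{x,y})+O(\delta)\le(x,y)_e+O(\delta)$, so $q$ lies in the initial segment of $[e,y]$ of length $\le(x,y)_e+O(\delta)$ and hence is within $O(\delta)$ of $[e,x]$ by the fellow-traveling of $[e,x]$ and $[e,y]$ near $e$. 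Thus in all cases $\ga_{x,y}$ is within $O(\delta)$ of some $p\in[e,x]$, and $\d_\Ga(e,p)\ge\d_\Ga(e,\ga_{x,y})-O(\delta)\ge(x,y)_e-O(\delta)\ge\d_\Ga(e,\ga)-R-O(\delta)$ by $(\star)$.

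\emph{Step 3 (conclusion).} Both $\ga$ and $p$ lie on the ray $[e,x]$ at parameters $\d_\Ga(e,\ga)$ and $\d_\Ga(e,p)$, with $\d_\Ga(e,p)\ge\d_\Ga(e,\ga)-R-O(\delta)$. If $\d_\Ga(e,p)\ge\d_\Ga(e,\ga)$ then $p\in[\ga,x]$; otherwise $\d_\Ga(\ga,p)=\d_\Ga(e,\ga)-\d_\Ga(e,p)\le R+O(\delta)$ and $\ga\in[\ga,x]$. In either case $\d_\Ga(p,[\ga,x])\le R+O(\delta)$, so
\[
 \d_\Ga(\ga_{x,y},[\ga,x])\ \le\ \d_\Ga(\ga_{x,y},p)+\d_\Ga(p,[\ga,x])\ \le\ R+O(\delta),
\]
and we may take $r_R:=R+O(\delta)$. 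The argument invokes only standard hyperbolic-space facts (slimness of ideal triangles, the coarse identity $\d_\Ga(e,[x,y])=(x,y)_e+O(\delta)$, and fellow-traveling near the basepoint, all in \cite{Bridson1999metric}), so there is no genuine obstacle; the points requiring care are the use of the $\delta$-inequality with two \emph{ideal} endpoints in Step 1, and keeping the chosen geodesics compatible (in particular $[\ga,x]$ as the sub-ray of the ray $[e,x]$ containing $\ga$, and $\ga'$ lying on the same ray $[e,y]$ appearing in the triangle $e,x,y$). Alternatively, one can bypass Step 2 by passing to an approximating $\R$-tree for the finite configuration $\{e,x,y,\ga\}$ together with the geodesics joining them, where the assertion reduces to an immediate computation with the branch point of the three tree-geodesics $[\bar e,\bar x]$, $[\bar e,\bar y]$, $[\bar x,\bar y]$.
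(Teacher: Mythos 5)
Your proof is correct, but it is organized differently from the paper's. The paper bases all Gromov products at the point $\ga$: the shadow condition gives $(e,y)_{\ga}<R+\delta/2$, the hyperbolic inequality forces one of $(e,\ga_{x,y})_{\ga}$, $(\ga_{x,y},y)_{\ga}$ to be at most $R+\delta$, and the argument then splits into two cases — in the first, a thin triangle with vertices $x,\ga,\ga'$ (with $\ga'\in[\ga_{x,y},y]$ close to $\ga$) pushes $\ga_{x,y}$ onto $[\ga,x]$; in the second, the nearest-point property of $\ga_{x,y}$ places it near some $\ga_k\in[e,x]$ and the bound on $(e,\ga_k)_{\ga}$ forces $k\ge i-(R+\delta+\delta_2)$, so a nearby $\ga_j$ with $j\ge i$ does the job. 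You instead base everything at $e$: the single inequality $\d_{\Ga}(e,\ga)\le (x,y)_e+R+O(\delta)$ (via the chain $x,\ga,\ga',y$), the coarse identity $\d_{\Ga}(e,[x,y])=(x,y)_e+O(\delta)$, slimness of the ideal triangle $e,x,y$ together with fellow-traveling of $[e,x]$ and $[e,y]$ to place $\ga_{x,y}$ within $O(\delta)$ of $[e,x]$ at parameter at least $(x,y)_e-O(\delta)$, and then a trivial dichotomy along the ray. This buys a more uniform argument with no substantive case split and an explicit constant of the form $r_R=R+O(\delta)$ (the paper's constant is likewise linear in $R$), at the cost of invoking the boundary-extended Gromov product inequalities and the fellow-traveling/visibility facts, which you correctly flag as the only points needing care; both proofs draw on the same standard coarse-hyperbolic toolbox, and yours is a legitimate, slightly cleaner alternative.
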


\begin{proof}
    Let $[e, x] = \{\ga_i\}_{i \ge 0}$.
    We fix $\ga := \ga_i$ and $y \in O_R^{\Ga}(e, \ga)$. 
In terms of the Gromov product, we have 
    $(e, y)_{\ga} < R + \delta / 2$ for some uniform $\delta > 0$ depending only on $\Ga$.
    On the other hand, the hyperbolicity of $\Ga$ also implies that we can take $\delta$ large enough so that 
    $$(e, y)_{\ga}
    \ge \min \{ (e, \ga_{x,y})_{\ga}, (\ga_{x,y}, y)_{\ga} \} - \delta/2$$ and that every geodesic triangle in $\Ga \cup \partial \Ga$ is $\delta$-thin. 
    Therefore $$\min \{ (e, \ga_{x,y})_{\ga}, (\ga_{x,y}, y)_{\ga} \}  < R+\delta .$$

 First consider the case when $(\ga_{x,y}, y)_{\ga} < R + \delta$. Then for some constant $\delta_1$ depending on $R + \delta$, there exists $\ga' \in [\ga_{x,y}, y]$ such that $\d_{\Ga}(\ga', \ga) < \delta_1$. Consider the  geodesic triangle with vertices  $x, \ga, \ga'$. Since this triangle is $\delta$-thin and $\ga_{x,y} \in [x, \ga']$, the $\delta$-neighborhood of $\ga_{x,y}$ intersects $[x, \ga] \cup [\ga, \ga']$. Hence it follows from $\d_{\Ga}(\ga, \ga') < \delta_1$ that the $(\delta + \delta_1)$-neighborhood of $\ga_{x,y}$ intersects the geodesic $[x, \ga]$. Namely, $$\d_{\Ga}(\ga_{x,y}, [\ga, x] ) \le \delta+\delta_1.$$

    Now consider the case that $(e, \ga_{x,y})_{\ga} < R + \delta$. Since $\ga_{x,y}$ is the nearest-point projection of $e$ to $[x, y]$, there exists a constant $\delta_2$ depending only on $\Ga$ such that the $\delta_2$-neighborhood of $\ga_{x,y}$ intersects both geodesic rays $[e, x]$ and $[e, y]$. In particular, there exists $\ga_{k}\in [e, x]$ such that $\d_{\Ga}(\ga_{x,y}, \ga_{k}) < \delta_2$. This implies $$(e, \ga_{k})_{\ga} \le (e, \ga_{x,y})_{\ga} + \d_{\Ga}(\ga_{x,y}, \ga_{k}) < R + \delta + \delta_2.$$
    Since both $\ga =\ga_i $ and $ \ga_{k}$ lie on the geodesic $[e, x]$, this implies that ${k} \ge i - (R + \delta + \delta_2)$. 
    Let $j$ be the unique integer such that $k + R + \delta + \delta_2 \le j \le k + R + \delta + \delta_2  + 1$.  Note that since $k \ge i - (R + \delta + \delta_2)$, we have
    $j \ge i$, and hence $\ga_j \in [\ga, x]$. Then 
    \begin{align*}
        \d_{\Ga}(\ga_{x,y}, [\ga, x] ) &\le  \d_{\Ga}(\ga_{x,y}, \ga_j) \\
        &\le \d_{\Ga}(\ga_{x, y}, \ga_k) + \d_{\Ga}(\ga_k, \ga_j)
         \\ &\le 
    \delta_2 + (j-k)\le R + \delta + 2 \delta_2 + 1 . \end{align*}
    Therefore it remains to set  $r = R + \delta + \delta_1 + 2 \delta_2 + 1$.
\end{proof}

Now we are ready to prove:
\subsection*{Proof of Proposition \ref{si}} Let $\xi \in \La_\theta=\partial\Ga$ and $g\in [e,\xi]$ in $\Ga$. Fix $r>0$, and
    let $\eta \in O_r^{\theta}(o, g o) \cap \La_{\theta}$ distinct from $\xi$. We will continue to use the convention of identifying $\La_\theta$ and $\partial \Ga$ in this proof.
    As in Lemma \ref{lem.hypgp}, we let $\ga_{\xi,\eta}$ be the nearest-point projection of $e$ to a bi-infinite geodesic $[\xi, \eta]$ in $(\Ga, \d_\Ga)$. By Proposition \ref{prop.shadowgromovbooundary}, there exists $R>0$, depending only on $r$, such that $\eta \in O_R^{\Ga}(e, g)$. 
    Write the geodesic ray $[e,\xi]$ as a sequence $\{g_k\}_{k \ge 0}$ with $g_0=e$.
  Since  $g\in [e,\xi]$ by the hypothesis,  we have  $g_i = g$ for some $i \ge 0$.
    Then for $r_R > 0$ given in Lemma \ref{lem.hypgp}, there exists $j \ge i$ such that
$$\d_{\Ga}(\ga_{\xi,\eta}, g_j) \le r_R.$$

    Let $n_1 \ge 0$ and $D \ge 0$ be given by Lemma \ref{lp} (uniform progression lemma) and Theorem  \ref{thm.triangle} (coarse triangle inequality) respectively. 
   We then have  $$\begin{aligned}
        \d_{\psi}(o, g_j o) & \ge \d_{\psi}(o, g_{i - n_1} o) + 1 \\
        & \ge \d_{\psi}(o, g o) - \d_{\psi}(g_{i  - n_1} o, g o) - D + 1
    \end{aligned} .$$ Since $g = g_i$, 
    we have  $\d_{\psi}(g_{i - n_1} o, go) \le Q_{\psi}(n_1 + 1)$ where $Q_{\psi}$ is the constant in Proposition \ref{prop.thintriangle}. Hence we deduce by setting $D' := Q_{\psi}(n_1 + 1) + D$ that $$\d_{\psi}(o, g_j o) \ge \d_{\psi}(o, g o ) - D'.$$

    On the other hand, applying the coarse triangle inequality (Theorem \ref{thm.triangle}) again, we have $$\d_{\psi}(o, g_j o) \le \d_{\psi}(o, \ga_{\xi,\eta} o) + \d_{\psi}(\ga_{\xi,\eta} o, g_j o) + D.$$ Since $\d_{\Ga}(\ga_{\xi,\eta}, g_j) \le r_R$,  we have $\d_{\psi}(\ga_{\xi,\eta} o, g_j o) \le Q_{\psi}(r_R + 1)$ by Proposition \ref{prop.thintriangle}, and hence $$\d_{\psi}(o, \ga_{\xi,\eta} o) \ge \d_{\psi}(o, go) - D' - Q_{\psi}(r_R + 1) - D.$$ Since we have $|\psi( \cal G^{\theta}(\xi, \eta)) - \d_{\psi}(o, \ga_{\xi,\eta} o) | < \|\psi\|C_1$
    with $C_1$ given by Lemma \ref{lem.gromovsym}, $$\psi( \cal G^{\theta}(\xi, \eta)) \ge \d_{\psi}(o, go) - D' - Q_{\psi}(r_R + 1) - D - \|\psi\|C_1 .$$ Setting $c' := e^{D' + Q_{\psi}(r_R + 1) + D + \|\psi\|C_1}$, we have $$d_{\psi}(\xi, \eta) \le c' e^{-\d_{\psi}(o, go)}.$$
    Hence $\eta \in B_\psi (\xi, c' e^{{-\d_{\psi}(o, go)}}) $ as desired.
\qed

\subsection*{Comparing Gromov products}

\begin{lem} \label{comG}

    Let $\psi \in \fa_{\theta}^*$ be such that $\psi > 0$ on $\L - \{0\}$. Then there exists $c >0$, depending only on $\psi$, such that for all $x, y\in \partial\Ga$,
    $$ Q_{\bar \psi}^{-1} \cdot (x, y)_e - c \le \psi(\cal G^{\theta}(f(x), f(y))) \le Q_{\bar \psi} \cdot (x, y)_e + c$$
    where $Q_{\bar\psi}$ is as given in Proposition \ref{Qp}.
\end{lem}

\begin{proof}
    Let $x \neq y \in \partial \Ga$ and set $\ga_{x, y} \in \Ga$ the nearest-point projection of $e$ to $[x, y]$ in $(\Ga, \d_{\Ga})$. By Lemma \ref{lem.gromovsym}, we have $$| \bar \psi( \cal G^{\theta}(f(x), f(y))) - \bar \psi(\mu_{\theta}(\ga_{x, y}))| < C_1$$
    where $C_1$ is given in Lemma \ref{lem.gromovsym}.
    As in the proof of Proposition \ref{prop.bilip}, $$| \bar \psi( \cal G^{\theta}(f(x), f(y))) - \psi( \cal G^{\theta}(f(x), f(y)))| < \| \psi \| C_1.$$
    Since $$Q_{\bar \psi}^{-1} \cdot \d_{\Ga}(e, \ga_{x, y}) - Q_{\bar \psi} \le \bar \psi(\mu_{\theta}(\ga_{x, y})) \le Q_{\bar \psi} \cdot \d_{\Ga}(e, \ga_{x, y}) + Q_{\bar \psi}  $$
    by    Proposition \ref{prop.thintriangle} with $Q_{\bar \psi} \ge 1$ therein, we now have that $$Q_{\bar \psi}^{-1} \cdot \d_{\Ga}(e, \ga_{x, y})- c' \le \psi( \cal G^{\theta}(f(x), f(y))) \le Q_{\bar \psi} \cdot \d_{\Ga}(e, \ga_{x, y}) +c'$$
    where $c' :=  Q_{\bar \psi} + C_1 (1 + \|\psi\|)$.
    Since $(\Ga, \d_{\Ga})$ is Gromov hyperbolic, we have that $|(x, y)_e - \d_{\Ga}(e, \ga_{x, y})|$ is uniformly bounded. Hence the claim follows.
\end{proof}

\section{Ahlfors regularity of Patterson-Sullivan measures} \label{sec.localsize}
As before, let $\Ga$ be a  $\theta$-Anosov subgroup of a connected semisimple real algebraic group $G$.
Recall from Theorem \ref{thm.uniquePS} that the space of $\Ga$-Patterson-Sullivan measures on $\La_\theta$ is parameterized by the set
$$  \mathscr T_\Ga=\{\psi\in \fa_\theta^*: \psi \text{ is tangent to } \psi_\Ga^\theta \}. $$ 
We continue to use the notation $\nu_\psi$ for the unique $(\Ga, \psi)$-Patterson-Sullivan measure on $\La_{\theta}$.
Recall that $d_\psi $ is the premetric on $\La_\theta$ defined by $d_\psi(\xi, \eta)=e^{-\psi(\cal G(\xi, \eta))}$ for all $\xi\ne \eta$ in $\La_\theta$ and
$B_{\psi}(\xi, r)=\{\eta\in \La_\theta: d_\psi(\xi, \eta)<r\}$.

The Ahlfors regularity is an important notion in fractal geometry:
\begin{Def}\label{aaa} A premetric space $(Z, d)$  is called Ahlfors $s$-regular
 if there exist a Borel measure $\nu$ on $Z$ and $C\ge 1$ so that for all $z\in Z$ and $r\in [0, \diam Z)$,
 $$C^{-1} r^s \le \nu (B(z,r)) \le C r^s$$
where $B(z, r)=\{w\in Z: d(z,w)<r\}$.
Such a measure $\nu$ is also called Ahlfors $s$-regular. 
\end{Def}

The goal of this section is to deduce the following from Theorem \ref{thm.ballshadowball}:

\begin{theorem} \label{thm.exactdim} For any symmetric  $\psi \in  \mathscr T_\Ga$, 
the measure
$\nu_\psi$ is Ahlfors one-regular on $(\La_\theta, d_\psi)$.
\end{theorem}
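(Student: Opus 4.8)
The strategy is to combine the two-sided comparison between $d_\psi$-balls and shadows (Theorem~\ref{thm.ballshadowball}) with the higher-rank Sullivan shadow lemma to get the measure estimate \eqref{nucom}. First I would recall the shadow lemma (Lemma~\ref{lem.shadow} in the paper): for a symmetric tangent form $\psi\in\mathscr T_\Ga$, there is a constant $C_0\ge 1$ and, for each sufficiently large $R$, a constant $C_R\ge 1$ such that for all $\ga\in\Ga$,
\[
C_R^{-1}\, e^{-\psi(\mu(\ga))}\;\le\;\nu_\psi\big(O_R^\theta(o,\ga o)\cap\La_\theta\big)\;\le\; C_0\, e^{-\psi(\mu(\ga))}.
\]
Here the normalization $\delta_\psi=1$ (equivalently $\psi\in\mathscr T_\Ga$, by Lemma~\ref{lem.tangentcritexp}) is what makes the exponent appear with coefficient exactly $1$, which is why the regularity exponent is $1$ and not $\delta_\psi$. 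Note $\d_\psi(o,\ga o)=\psi(\mu(\ga))$ by definition \eqref{eqn.defmetricsymm}.

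\textbf{Upper bound.} Fix $\xi\in\La_\theta$ and $0<r<\diam(\La_\theta,d_\psi)$. Let $[e,\xi]=\{g_k\}_{k\ge 0}$ be a geodesic ray in $\Ga$ from $e$ toward $\xi$. Since $\d_\psi(o,g_k o)=\psi(\mu(g_k))\to\infty$ along the ray (this follows from the uniform progression Lemma~\ref{lp}, since $\psi>0$ on $\L-\{0\}$ and the ray is $\cal C$-regular), I can choose $g=g_k$ to be the last point along the ray with $c\, e^{-\d_\psi(o,g o)}\ge r$, where $c$ is the constant from the first inclusion in Theorem~\ref{thm.ballshadowball}; the uniform progression lemma also bounds the multiplicative jump $e^{\d_\psi(o,g_{k+1}o)-\d_\psi(o,g_k o)}$ by a uniform constant, so in fact $r\le c\,e^{-\d_\psi(o,go)}\le c''\, r$ for a uniform $c''$. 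By the first inclusion of Theorem~\ref{thm.ballshadowball}, $B_\psi(\xi,r)\subset B_\psi(\xi,c\,e^{-\d_\psi(o,go)})\subset O_{R_0}^\theta(o,go)\cap\La_\theta$, so by the upper bound of the shadow lemma with $R=R_0$,
\[
\nu_\psi(B_\psi(\xi,r))\le C_0\, e^{-\psi(\mu(g))}=C_0\, e^{-\d_\psi(o,go)}\le C_0 c^{-1} c''\, r .
\]

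\textbf{Lower bound.} With the same choice of $g=g_k$ (or, if one prefers, the first point past the threshold; again the uniform progression lemma controls the jump), fix a large $R>R_0$ and use the second inclusion of Theorem~\ref{thm.ballshadowball}: $O_R^\theta(o,go)\cap\La_\theta\subset B_\psi(\xi,c'_R\, e^{-\d_\psi(o,go)})$. Hence by the lower bound of the shadow lemma (for that $R$),
\[
\nu_\psi\big(B_\psi(\xi, c'_R\, e^{-\d_\psi(o,go)})\big)\ge \nu_\psi\big(O_R^\theta(o,go)\cap\La_\theta\big)\ge C_R^{-1}\, e^{-\psi(\mu(g))}=C_R^{-1}\, e^{-\d_\psi(o,go)} .
\]
Since $c'_R\, e^{-\d_\psi(o,go)}$ is comparable to $r$ up to a uniform multiplicative constant (by the choice of $g$ and the uniform progression lemma), replacing $r$ by a uniformly smaller radius $r'\asymp r$ and using monotonicity of $\nu_\psi$ gives $\nu_\psi(B_\psi(\xi,r))\ge C^{-1} r$ for a uniform $C$; a short argument handling the small-$r$ regime (where one must also check $B_\psi(\xi,r)$ is genuinely smaller than the whole space) completes \eqref{nucom}. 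Combining the two bounds yields Ahlfors $1$-regularity of $\nu_\psi$ on $(\La_\theta,d_\psi)$.

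\textbf{Main obstacle.} The delicate point is the bookkeeping that links the \emph{continuous} radius $r$ to the \emph{discrete} family of shadows $O_R^\theta(o,g_k o)$: one needs that consecutive thresholds $c\,e^{-\d_\psi(o,g_k o)}$ do not jump by more than a bounded factor, so that for every $r$ there is a $g_k$ with both inclusions of Theorem~\ref{thm.ballshadowball} usable simultaneously and with the corresponding shadow radii comparable to $r$. This is exactly where the uniform progression Lemma~\ref{lp} (a consequence of the KLP Morse lemma and additivity of $\d_\psi$ on diamonds) is essential; without a genuine metric on $\Ga o$ one cannot argue by continuity, so the discreteness has to be exploited directly. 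Everything else is a routine assembly of the shadow lemma with Theorem~\ref{thm.ballshadowball}.
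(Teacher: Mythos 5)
Your proposal follows essentially the same route as the paper: choose along the geodesic ray $[e,\xi]$ the threshold index where $c\,e^{-\d_\psi(o,\ga_k o)}$ (resp. $c'e^{-\d_\psi(o,\ga_k o)}$) crosses $r$, sandwich $B_\psi(\xi,r)$ between shadows via the two inclusions of Theorem \ref{thm.ballshadowball}, and apply the shadow lemma (Lemma \ref{lem.shadow}); the normalization $\delta_\psi=1$ gives exponent one, exactly as in the paper.

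One justification is misattributed, though the fact you need is true and easy to supply. The uniform progression lemma (Lemma \ref{lp}) gives a \emph{lower} bound on the growth of $\d_\psi(o,\ga_k o)$ over $n_r$ steps; it does not bound the one-step jump $\d_\psi(o,\ga_{k+1}o)-\d_\psi(o,\ga_k o)$ from above, which is what your bookkeeping between the discrete thresholds and the continuous radius $r$ requires. (Lemma \ref{lp} is correctly invoked only for $\d_\psi(o,\ga_k o)\to\infty$, i.e. that the threshold indices exist.) The bounded jump is instead obtained, as in the paper, from the coarse triangle inequality (Theorem \ref{thm.triangle}) together with $\d_\psi(\ga_k o,\ga_{k+1}o)\le 2Q_\psi$ from Proposition \ref{prop.thintriangle}, yielding $\d_\psi(o,\ga_{k+1}o)\le \d_\psi(o,\ga_k o)+2Q_\psi+D$; alternatively, since $\ga_{k+1}=\ga_k s$ with $s$ a generator, Lemma \ref{lem.cptcartan} gives $|\psi(\mu(\ga_{k+1}))-\psi(\mu(\ga_k))|\le \|\psi\| C$ directly. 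With that substitution your argument matches the paper's proof, including the separate choices of the last index above the threshold (upper bound) and the first index below it (lower bound).
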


\begin{remark}\label{sss} When $\Ga$ is a convex cocompact subgroup of $G=\SO^\circ (n,1)$, $\mathscr T_\Ga$ is a singleton consisting of the critical exponent $\delta_\Ga$ (more precisely, the multiplication by $\delta_\Ga$ on $\R$), and the metric $d_{\delta_\Ga}$ is the $\delta_\Ga$-power of a
$K$-invariant Riemannian metric on $\S^{n-1}$. Hence Theorem \ref{thm.exactdim} is equivalent to Sullivan's theorem \cite[Theorem 7]{Sullivan1979density} that
the Patterson-Sullivan measure of a Riemannian ball of radius $r$ is comparable to $r^{\delta_\Ga}$.
\end{remark}

We use the higher rank version of Sullivan's shadow lemma. The following is a special case of \cite[Lemma 7.2]{KOW_indicators}:
    \begin{lemma}[Shadow lemma] \label{lem.shadow} Let $\Ga < G$ be a non-elementary $\theta$-Anosov
     subgroup. For all large enough $R > 0$, there exists $c_0 = c_0 (\psi, R) \ge 1$ such that for all $\ga \in \Ga$, 
    $$
    c_0^{-1} e^{-\psi(\mu_{\theta}(\ga))} \le  \nu_\psi (O_R^{\theta}(o, \ga o)) \le c_0 e^{-\psi(\mu_{\theta}(\ga))}.
    $$
\end{lemma}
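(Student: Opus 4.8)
Throughout I follow the classical template of Sullivan's shadow lemma, transported to the $\theta$-boundary. By Lemma \ref{lem.cansymm} together with the compatibility of $\theta$-shadows under the canonical projection $\pi_\theta\colon \F_{\theta\cup\i(\theta)}\to\F_\theta$, I may assume $\theta=\i(\theta)$, so that shadows $O^\theta_r(\eta,\cdot)$ from boundary viewpoints $\eta\in\La_\theta\subset\F_\theta$ are defined and the machinery of Sections \ref{sec.ballinshadow}--\ref{sec.shadowinball} applies. I also fix $R$ larger than the Morse constant $D_0$ of Theorem \ref{morse2}; then, extending a geodesic $[\ga^{-1},e]$ in $\Ga$ to a geodesic ray $[\ga^{-1},y]$ through $e$ and applying Theorem \ref{morse2}(2), the point $o=f(e)$ lies within $D_0$ of some Weyl cone based at $\ga^{-1}o$ whose endpoint is $f(y)\in\La_\theta$, so $O^\theta_R(\ga^{-1}o,o)\cap\La_\theta\ne\emptyset$. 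The first step is a change of viewpoint. Since $O^\theta_R(o,\ga o)=\ga\,O^\theta_R(\ga^{-1}o,o)$ by $G$-equivariance of shadows, the $(\Ga,\psi)$-conformality of $\nu_\psi$ gives
\[
\nu_\psi\bigl(O^\theta_R(o,\ga o)\bigr)=(\ga^{-1})_*\nu_\psi\bigl(O^\theta_R(\ga^{-1}o,o)\bigr)=\int_{O^\theta_R(\ga^{-1}o,o)}e^{\psi(\beta_\xi^\theta(e,\ga^{-1}))}\,d\nu_\psi(\xi).
\]
For $\xi$ in this shadow, Lemma \ref{lem.buseandcartan} with $(g,h)=(\ga^{-1},e)$ gives $\|\beta_\xi^\theta(\ga^{-1}o,o)-\mu_\theta(\ga)\|\le\kappa R$, and since $\beta_\xi^\theta(e,\ga^{-1})=-\beta_\xi^\theta(\ga^{-1}o,o)$ we get $|\psi(\beta_\xi^\theta(e,\ga^{-1}))+\psi(\mu_\theta(\ga))|\le\|\psi\|\kappa R$. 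Hence
\[
e^{-\|\psi\|\kappa R}e^{-\psi(\mu_\theta(\ga))}\nu_\psi\bigl(O^\theta_R(\ga^{-1}o,o)\bigr)\le\nu_\psi\bigl(O^\theta_R(o,\ga o)\bigr)\le e^{\|\psi\|\kappa R}e^{-\psi(\mu_\theta(\ga))}\nu_\psi\bigl(O^\theta_R(\ga^{-1}o,o)\bigr).
\]
Since $\nu_\psi$ is a probability measure, $\nu_\psi(O^\theta_R(\ga^{-1}o,o))\le 1$, which proves the upper bound with $c_0=e^{\|\psi\|\kappa R}$; it remains to bound $\nu_\psi(O^\theta_R(\ga^{-1}o,o))$ below, uniformly in $\ga\in\Ga$.

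The next step is a uniform positivity statement for shadows from boundary viewpoints. First I would record that $\nu_\psi$ has no atoms, a standard fact for non-elementary $\theta$-Anosov $\Ga$ (using uniqueness of $\nu_\psi$, the divergence type of $\Ga$, and the absence of finite orbits of $\Ga$ on $\partial\Ga\simeq\La_\theta$). For any $\eta\in\La_\theta$, the $\theta$-Anosov property forces every $\xi\in\La_\theta\setminus\{\eta\}$ to be in general position with $\eta$ (Theorem \ref{thm.anosovbasic}(4)), so $\xi\in O^\theta_r(\eta,o)$ once $r$ exceeds the $X$-distance from $o$ to the associated flat; thus $\bigcup_{r>0}O^\theta_r(\eta,o)\supseteq\La_\theta\setminus\{\eta\}$ and therefore $\nu_\psi(O^\theta_r(\eta,o))\to 1$ as $r\to\infty$ by non-atomicity. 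Using continuity of $\theta$-shadows in the boundary viewpoint (the analogue of Proposition \ref{prop.contshadow}), the function $\eta\mapsto\nu_\psi(O^\theta_r(\eta,o))$ is lower semicontinuous, so the sets $U_r:=\{\eta\in\La_\theta:\nu_\psi(O^\theta_r(\eta,o))>1/2\}$ are open, increasing in $r$, and cover the compact set $\La_\theta$. Compactness then yields an $R_0>0$ with $\inf_{\eta\in\La_\theta}\nu_\psi(O^\theta_{R_0}(\eta,o))\ge 1/2=:\delta$.

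Finally I transfer this to the shadows from $\ga^{-1}o$. Let $R>\max(R_0+1,D_0)$, and set $\delta_R:=\inf_{\ga\in\Ga}\nu_\psi(O^\theta_R(\ga^{-1}o,o))$; I claim $\delta_R>0$. If not, choose $\ga_i$ with $\nu_\psi(O^\theta_R(\ga_i^{-1}o,o))\to 0$. By the $\theta$-Anosov inequality \eqref{eqn.defanosov}, $\min_{\alpha\in\theta}\alpha(\mu(\ga_i^{-1}))=\min_{\alpha\in\theta}\alpha(\mu(\ga_i))\to\infty$, so $\ga_i^{-1}o$ leaves every compact set $\theta$-regularly; after passing to a subsequence, $\ga_i^{-1}o\to\eta$ for some $\eta\in\La_\theta$ in the sense of Definition \ref{fc}. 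Applying Proposition \ref{prop.contshadow} with $q_i=\ga_i^{-1}o$, $p=o$ and $\varepsilon=1$, we obtain $O^\theta_{R-1}(\eta,o)\subset O^\theta_R(\ga_i^{-1}o,o)$ for all large $i$, hence $\nu_\psi(O^\theta_R(\ga_i^{-1}o,o))\ge\nu_\psi(O^\theta_{R-1}(\eta,o))\ge\delta$ since $R-1>R_0$; this contradicts $\nu_\psi(O^\theta_R(\ga_i^{-1}o,o))\to 0$. Therefore $\delta_R>0$, and combining with the first step, the lower bound holds with $c_0=e^{\|\psi\|\kappa R}/\delta_R$.

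The main obstacle is the middle step: establishing the \emph{uniform} lower bound $\inf_{\eta\in\La_\theta}\nu_\psi(O^\theta_{R_0}(\eta,o))>0$. This is where non-atomicity of $\nu_\psi$, the semicontinuity of $\theta$-shadows under moving boundary viewpoints, and a compactness argument on $\La_\theta$ all enter. Once this is in place, the change of viewpoint and the extraction of a convergent subsequence are routine, the latter relying only on the $\theta$-regularity at infinity built into the $\theta$-Anosov property.
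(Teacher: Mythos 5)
Your argument is sound in outline and is the classical Sullivan-type proof; note that the paper itself does not prove Lemma \ref{lem.shadow} but imports it as a special case of \cite[Lemma 7.2]{KOW_indicators}, and your two-step scheme (change of viewpoint via conformality plus Lemma \ref{lem.buseandcartan}, then a uniform lower bound for shadows seen from boundary viewpoints via antipodality of $\La_\theta$ and compactness) is essentially the standard route taken there. The first step is correct as written, and the reduction to $\theta=\i(\theta)$ is harmless but also unnecessary: the paper's definitions of $O_R^\theta(\eta,p)$, Lemma \ref{lem.buseandcartan} and Proposition \ref{prop.contshadow} are stated for arbitrary $\theta$, and the general-position statement you need for $\xi\ne\eta$ in $\La_\theta$ follows from Theorem \ref{thm.anosovbasic}(4) applied to the $\theta\cup\i(\theta)$-flags and then projecting; by contrast, the transfer of the two-sided shadow estimate through Lemma \ref{lem.cansymm} requires knowing that $p^{-1}\bigl(O_R^\theta\cap\La_\theta\bigr)$ is contained in a bounded-radius $\theta\cup\i(\theta)$-shadow, which you assert without argument.

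There are three loose ends you should tighten. First, non-atomicity of $\nu_\psi$ is stronger than you need and its usual proofs risk circularity with the shadow lemma; what your compactness argument actually requires is only that $\sup_{\eta}\nu_\psi(\{\eta\})<1$ (then replace the threshold $1/2$ by half of $1-\sup_\eta\nu_\psi(\{\eta\})$), and this follows from the absence of a global $\Ga$-fixed point in $\La_\theta\simeq\partial\Ga$ for non-elementary $\Ga$; alternatively, a non-circular proof of non-atomicity is available because every limit point is conical: for $\ga_i\in[e,\xi]$ one has $\xi\in O_{D_0+1}^\theta(o,\ga_i o)$ by Theorem \ref{morse2}(2), so $\nu_\psi(\{\ga_i^{-1}\xi\})=e^{\psi(\beta_\xi^\theta(e,\ga_i))}\nu_\psi(\{\xi\})\ge e^{\psi(\mu_\theta(\ga_i))-O(1)}\nu_\psi(\{\xi\})\to\infty$ unless $\nu_\psi(\{\xi\})=0$. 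Second, the "analogue of Proposition \ref{prop.contshadow}" for a moving boundary viewpoint $\eta_i\to\eta$ in $\F_\theta$ is asserted, not proved; it is true (choose witnesses $g_i\to g$ using that general position is an open condition), but it is an ingredient you must supply, or reorganize the compactness step with two radii so that only semicontinuity in the weaker two-radius form is used. Third, your contradiction argument tacitly assumes $|\ga_i|\to\infty$: if the minimizing sequence has a bounded subsequence you need $\nu_\psi\bigl(O_R^\theta(\ga^{-1}o,o)\bigr)>0$ for each fixed $\ga$, which does not follow from the nonemptiness you established; it does follow once you observe that shadows are open in $\F_\theta$ and that $\supp\nu_\psi=\La_\theta$ (minimality of the non-elementary action on $\partial\Ga$), but this case must be addressed. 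With these repairs the proof is complete.
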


\subsection*{Proof of Theorem \ref{thm.exactdim}} 
By Lemma \ref{lem.cansymm} and Remark \ref{nons}, it suffices to consider the case of $\theta = \i(\theta)$.
Let $c$ and $R_0$ be the constants as in Theorem \ref{thm.ballinshadow}.
    Fix $\xi \in \La_{\theta}$ and $0<r <\op{diam}(\La_\theta, d_\psi)$. Write the geodesic ray
    $[e, \xi] $ as  $\{\ga_k\}_{k \ge 0}$ in $(\Ga, \d_\Ga)$.  Setting \be\label{ir}  i = i_r := \max \{ k : r \le c e^{- \d_{\psi}(o, \ga_k o)} \},\ee  Theorem \ref{thm.ballinshadow} implies that
    for any $R > R_0$, $$B_{\psi}(\xi, r) \subset B_{\psi}(\xi, c e^{-\d_{\psi}(o, \ga_{i} o)}) \subset O_R^{\theta}(o, \ga_{i} o).$$  By Lemma \ref{lem.shadow}, we get
    \be \label{eqn.upperexdim}
    \nu_\psi(B_{\psi}(\xi, r)) \le c_0 e^{- \d_{\psi}(o, \ga_{i} o)}.
    \ee
    By the coarse triangle inequality of $\d_\psi$ (Theorem \ref{thm.triangle}), we have 
    $$\d_{\psi}(o, \ga_{i + 1} o) \le \d_{\psi}(o, \ga_{i} o) + \d_{\psi}(\ga_i o, \ga_{i + 1} o) + D$$ where $D$ is as in loc. cit. Since $\d_{\psi}(\ga_i o, \ga_{i + 1} o) \le 2 Q_{\psi}$ with $Q_{\psi}$ in Proposition \ref{prop.thintriangle}, we have $$\d_{\psi}(o, \ga_{i + 1} o) \le \d_{\psi}(o, \ga_{i} o) + D'$$ where $D' = D + 2 Q_{\psi}$. This implies $$c e^{- D'} e^{-\d_{\psi}(o, \ga_{i} o)} \le c e^{-\d_{\psi}(o, \ga_{i + 1} o)} < r$$ where the last inequality follows from the definition of $i = i_r$ in \eqref{ir}. Hence we deduce from \eqref{eqn.upperexdim} that $$\nu_\psi(B_{\psi}(\xi, r)) \le ( c_0 e^{D'}/c) \cdot r.$$

 Now let $c' = c'_R > 0$ be given by Theorem \ref{thm.shadowinball} and set \be\label{jr} j = j_r := \min\{k : c' e^{-\d_{\psi}(o, \ga_k o)} \le r\}.\ee  By Theorem \ref{thm.shadowinball}, we have $$O_R^{\theta}(o, \ga_{j} o) \cap \La_{\theta} \subset B_{\psi}(\xi, c' e^{- \d_{\psi}(o, \ga_{j} o)}) \subset B_{\psi}(\xi, r),$$ and hence applying Lemma \ref{lem.shadow} yields $$c_0^{-1} e^{-  \d_{\psi}(o, \ga_{j} o)} \le \nu_\psi(B_{\psi}(\xi, r)).$$
    By the minimality of $j = j_r$ as defined in \eqref{jr}  and the coarse triangle inequality of $\d_\psi$
    (Theorem \ref{thm.triangle}), we have
    $$r < c' e^{-\d_{\psi}(o, \ga_{j - 1} o)} \le c' e^{D} e^{-\d_{\psi}(o, \ga_{j} o) + \d_{\psi}(\ga_{j-1} o, \ga_{j} o)}.$$ Recalling that $\d_{\psi}(\ga_{j-1} o, \ga_j o) \le 2 Q_{\psi}$ and $D' = D + 2 Q_{\psi}$, we have $$r < c' e^{D'} e^{-\d_{\psi}(o, \ga_{j} o)}$$ and hence $$ (c_0 c' e^{D'})^{-1} \cdot r \le \nu_\psi(B_{\psi}(\xi, r)).$$ Therefore the theorem is proved with $c_1= \max (c_0 e^{D'} c^{-1}, c_0 c' e^{D'})$. 
\qed

\section{Hausdorff measures on limit sets} \label{sec.hausmeas}
Let $\Ga < G$ be a $\theta$-Anosov subgroup where $G$ is a connected semisimple real algebraic group.
For a linear form $\psi \in \fat$ which is positive on $\L -\{0\}$,
consider the associated conformal premetric $d_\psi$ on $\La_\theta$. For $s>0$, we denote by  $\cal H_{\psi}^s$
the associated Hausdorff measure of dimension $s$, that is, for any subset $B \subset \La_{\theta}$, let 
\be\label{hhhjun19} \cal H_{\psi}^{s}(B) :=  \lim_{\varepsilon \to 0} \inf \left\{ \sum_{i}(\diam_{\psi} U_i)^s : B \subset \bigcup_{i} U_i, \ \sup_i\diam_{\psi} U_i \le \varepsilon\right\}\ee where $\diam_{\psi} U =\sup_{\xi, \eta\in U} d_\psi(\xi, \eta)$.
This is an outer measure which induces a Borel measure on $\La_\theta$ (see \cite{Falconer_fractatl}, \cite[Appendix A]{DK_patterson}). 
For $s=1$, we simply write $\cal H_\psi$ for $\cal H_\psi^1$. 
 Recall that $\T_{\Ga}$ is the space of all linear forms tangent to the growth indicator $\psi_\Ga^\theta$.
In this section, we first deduce the following two theorems from Theorem \ref{thm.exactdim}. 
Together with Theorem \ref{thm.Aregular}, they imply Theorems \ref{main1} and \ref{main0}. We also prove Theorem \ref{main2}.

\begin{theorem} \label{thm.hausmeas}
For any symmetric $\psi\in \mathscr T_\Ga$, the associated Patterson-Sullivan measure $\nu_\psi$
coincides with the  one-dimensional Hausdorff measure $\cal H_{\psi}$, up to a constant multiple.
In other words, $\cal H_\psi$ is the unique  $(\Ga, \psi)$-conformal measure on $\La_\theta$ (up to a constant multiple).
\end{theorem}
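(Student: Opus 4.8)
The plan is to deduce the statement from the Ahlfors one-regularity of $\nu_\psi$ established in Theorem~\ref{thm.exactdim}, combined with the uniqueness of Patterson--Sullivan measures (Theorem~\ref{thm.uniquePS}). As in the proof of Theorem~\ref{thm.exactdim}, Lemma~\ref{lem.cansymm} and Remark~\ref{nons} reduce us to the case $\theta=\i(\theta)$, which I assume. I would proceed in three steps: (i) show $0<\cal H_\psi(\La_\theta)<\infty$; (ii) show that $\cal H_\psi$ is a $(\Ga,\psi)$-conformal measure on $\La_\theta$; (iii) conclude that the normalized measure $\cal H_\psi/\cal H_\psi(\La_\theta)$ is a $(\Ga,\psi)$-Patterson--Sullivan measure, hence equals $\nu_\psi$ by Theorem~\ref{thm.uniquePS}. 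The ``in other words'' reformulation is then immediate: any $(\Ga,\psi)$-conformal measure on $\La_\theta$ is by definition a $(\Ga,\psi)$-Patterson--Sullivan measure, hence a constant multiple of $\nu_\psi$, hence of $\cal H_\psi$, by the same uniqueness.

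For step (i) I would argue as in the rank one case. Given a countable cover $\{U_i\}$ of a Borel set $B\subset\La_\theta$ by nonempty sets, pick $\xi_i\in U_i$; then $U_i$ lies in a $d_\psi$-ball around $\xi_i$ of radius comparable to $\diam_\psi U_i$, so the Ahlfors one-regularity of $\nu_\psi$ (Theorem~\ref{thm.exactdim}), together with the fact that $\nu_\psi$ is non-atomic, gives $\nu_\psi(U_i)\le C'\diam_\psi U_i$ for a uniform $C'$; summing and taking the infimum over covers yields $\nu_\psi\le C'\cal H_\psi$ on Borel sets, whence $\cal H_\psi(\La_\theta)\ge (C')^{-1}>0$. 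For finiteness, use that $\La_\theta$ is compact: for $\varepsilon>0$, cover $\La_\theta$ by finitely many balls $B_\psi(\xi_j,r_j)$ with $r_j<\varepsilon$, apply the Vitali-type Lemma~\ref{lem.lovitali} to extract a disjoint subfamily $\{B_\psi(\xi_{j_k},r_{j_k})\}$ whose $N_0$-dilates cover $\La_\theta$, and observe that $\diam_\psi B_\psi(\xi_{j_k},N_0 r_{j_k})\le c'' r_{j_k}$ for a uniform $c''$ by Proposition~\ref{prop.lotriangle}, while $\sum_k r_{j_k}\le C\sum_k\nu_\psi(B_\psi(\xi_{j_k},r_{j_k}))\le C\nu_\psi(\La_\theta)=C$ by disjointness and Ahlfors regularity; letting $\varepsilon\to 0$ gives $\cal H_\psi(\La_\theta)\le c''C<\infty$.

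For step (ii), I would use Lemma~\ref{lem.gromovconformal}: the homeomorphism $\ga^{-1}\colon\La_\theta\to\La_\theta$ multiplies $d_\psi(\xi,\eta)$ by $e^{\frac12\psi(\beta_\xi^\theta(e,\ga)+\i(\beta_\eta^\theta(e,\ga)))}$, which, as $\eta\to\xi$, converges to $e^{\frac12(\psi(\beta_\xi^\theta(e,\ga))+\psi\circ\i(\beta_\xi^\theta(e,\ga)))}=e^{\psi(\beta_\xi^\theta(e,\ga))}$ by continuity of the Busemann cocycle and the hypothesis $\psi=\psi\circ\i$. Thus $\ga^{-1}$ is locally bi-Lipschitz at each $\xi$ with local scaling factor the continuous positive function $\xi\mapsto e^{\psi(\beta_\xi^\theta(e,\ga))}$, and a standard differentiation/covering argument for one-dimensional Hausdorff measures (cf.\ \cite[Appendix~A]{DK_patterson}), carried out in the quasi-premetric setting provided by Proposition~\ref{prop.lotriangle}, yields $\frac{d\ga_*\cal H_\psi}{d\cal H_\psi}(\xi)=e^{\psi(\beta_\xi^\theta(e,\ga))}$ for all $\ga\in\Ga$ and $\xi\in\La_\theta$. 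Hence $\cal H_\psi$ is $(\Ga,\psi)$-conformal, so by step (i) the probability measure $\cal H_\psi/\cal H_\psi(\La_\theta)$ is a $(\Ga,\psi)$-Patterson--Sullivan measure, and Theorem~\ref{thm.uniquePS} forces $\cal H_\psi=\cal H_\psi(\La_\theta)\cdot\nu_\psi$.

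The main obstacle I anticipate is step (ii): upgrading the coarse local scaling of Lemma~\ref{lem.gromovconformal} to the \emph{exact} pointwise Radon--Nikodym identity. Since $d_\psi$ is only a premetric, failing both symmetry and the genuine triangle inequality, one cannot simply invoke an off-the-shelf change-of-variables formula for Hausdorff measures; the differentiation argument must be run directly in this weakened setting, and it is precisely the almost-symmetry and bounded-multiplicative-error triangle inequality of Proposition~\ref{prop.lotriangle} that keep the Hausdorff-measure construction comparable under $\ga$ and make the pointwise density computation go through. A minor secondary point is that Lemma~\ref{lem.lovitali} is stated for finite collections, so in step (i) one must first pass to a finite subcover using compactness of $\La_\theta$.
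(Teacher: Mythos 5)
Your proposal follows essentially the same route as the paper: positivity and finiteness of $\cal H_\psi$ deduced from the Ahlfors one-regularity of $\nu_\psi$ (using compactness of $\La_\theta$ and the Vitali-type Lemma \ref{lem.lovitali} for finiteness), conformality of $\cal H_\psi$ from Lemma \ref{lem.gromovconformal} together with $\psi=\psi\circ\i$, and then the uniqueness Theorem \ref{thm.uniquePS} after reducing to $\theta=\i(\theta)$ via Lemma \ref{lem.cansymm} and Remark \ref{nons}. The step you flag as the main obstacle is precisely what the paper's Lemma \ref{lem.confhaus} carries out, and in the way you anticipate: a direct two-sided covering estimate comparing $\cal H_\psi(\ga^{-1}U)$ with $\cal H_\psi(U)$ via the supremum and infimum of the conformal factor over slightly enlarged $d_\psi$-balls (controlled by Proposition \ref{prop.lotriangle}), followed by shrinking $U\to\xi$ and continuity of the Busemann map, with no off-the-shelf differentiation theorem invoked.
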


We also show that the symmetric hypothesis is necessary: 
\begin{theorem}
     \label{thm.nonsymmhmeas}
    If $\psi\in \mathscr T_\Ga$ is not symmetric and $\Ga$ is Zariski dense, then $\nu_\psi$ is not comparable to $\cal H_{\psi}^s$ for any $s > 0$. 
\end{theorem}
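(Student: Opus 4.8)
First I would reduce to the symmetric case $\theta=\i(\theta)$. By Lemma \ref{lem.cansymm} the measure $\nu_{\psi,\theta}$ is the push-forward of $\nu_{\psi,\theta\cup\i(\theta)}$ under the $\Ga$-equivariant homeomorphism $p\colon\La_{\theta\cup\i(\theta)}\to\La_\theta$, and by Remark \ref{nons} this $p$ is an isometry for the premetrics $d_\psi$, hence carries $\cal H_\psi^s$ to $\cal H_\psi^s$; thus $\nu_\psi$ is comparable to $\cal H_\psi^s$ on $\La_\theta$ iff the corresponding statement holds on $\La_{\theta\cup\i(\theta)}$, and since symmetry of $\psi$ as a linear form on $\fa$ is unaffected, I may assume $\theta=\i(\theta)$ throughout. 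Then set $\bar\psi=\tfrac12(\psi+\psi\circ\i)\in\fa_\theta^*$; it is symmetric, and since $\L_\theta$ is $\i$-invariant and $\psi>0$ on $\Ltm$, it is still positive on $\Ltm$. Let $\delta:=\delta_{\bar\psi}>0$; by Lemma \ref{lem.tangentcritexp} the form $\delta\bar\psi$ lies in $\mathscr T_\Ga$ and is symmetric. A short computation shows $\psi\neq\delta\bar\psi$: if $\psi$ were not symmetric but $\psi\circ\i=c\psi$ for some $c>0$, applying $\i$ again gives $c^2=1$, so $c=1$ and $\psi\circ\i=\psi$; hence for non-symmetric $\psi$ the forms $\psi$ and $\psi\circ\i$ are not proportional, so $\bar\psi$ is not proportional to $\psi$, and in particular $\delta\bar\psi\neq\psi$.

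The core of the argument will be a comparison chain. Suppose, for contradiction, that $\nu_\psi$ is comparable to $\cal H_\psi^s$ for some $s>0$. Since $\nu_\psi$ is a probability measure, comparability forces $C^{-1}\le\cal H_\psi^s(\La_\theta)\le C$ for some $C\ge1$, so $\cal H_\psi^s$ is finite and positive on $\La_\theta$, whence $s=\dim_\psi\La_\theta$; by Theorem \ref{main2} this gives $s=\delta_{\bar\psi}=\delta$. Next, Proposition \ref{prop.bilip} shows the premetrics $d_\psi$ and $d_{\bar\psi}$ are Lipschitz equivalent, so $\cal H_\psi^\delta$ is comparable to $\cal H_{\bar\psi}^\delta$; the scaling identity $\cal H_{c\varphi}=\cal H_\varphi^c$ gives $\cal H_{\bar\psi}^\delta=\cal H_{\delta\bar\psi}$; and since $\delta\bar\psi\in\mathscr T_\Ga$ is symmetric, Theorem \ref{thm.hausmeas} identifies $\cal H_{\delta\bar\psi}$ with $\nu_{\delta\bar\psi}$ up to a constant multiple. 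Stringing these comparabilities together yields that $\nu_\psi$ is comparable to $\nu_{\delta\bar\psi}$.

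This is the desired contradiction: $\psi$ and $\delta\bar\psi$ are distinct elements of $\mathscr T_\Ga$, so by the final assertion of Theorem \ref{thm.uniquePS} (valid for Zariski dense $\Ga$) the probability measures $\nu_\psi$ and $\nu_{\delta\bar\psi}$ are mutually singular, and two nonzero mutually singular measures cannot be comparable (if $\nu_1\perp\nu_2$ is witnessed by a set $E$ with $\nu_1(E)=0$ and $\nu_2$ concentrated on $E$, then $C^{-1}\nu_2(\La_\theta)\le C^{-1}\nu_2(E)\le\nu_1(E)=0$, impossible). Hence no such $s$ exists. The step I expect to require the most care is the identification $s=\delta$: one must know that for the premetric $d_\psi$ — which is not a genuine metric — the condition $\cal H_\psi^s(\La_\theta)\in(0,\infty)$ still pins down the Hausdorff dimension, which follows in the standard way from the bounded-distortion quasi-triangle inequality of Proposition \ref{prop.lotriangle}(2), together with the fact (supplied by Theorem \ref{thm.hausmeas}, Theorem \ref{thm.exactdim} and Proposition \ref{prop.bilip}) that $\cal H_\psi^\delta$ is itself finite and positive. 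The rest is bookkeeping with the Ahlfors regularity already established for the symmetrized form.
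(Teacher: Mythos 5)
Your argument is correct and follows essentially the same route as the paper's proof: reduce to $\theta=\i(\theta)$ via Lemma \ref{lem.cansymm}, use the bi-Lipschitz equivalence of $d_\psi$ and $d_{\bar\psi}$ (Proposition \ref{prop.bilip}) together with Theorem \ref{thm.hausmeas} to identify $\cal H_{\psi}^{\delta_{\bar\psi}}$ with $\nu_{\delta_{\bar\psi}\bar\psi}$ up to constants and to rule out $s\neq\delta_{\bar\psi}$, and conclude by the mutual singularity of distinct tangent-form Patterson--Sullivan measures from Theorem \ref{thm.uniquePS}. The only cosmetic difference is that you run it as a contradiction (comparability forces $s=\dim_\psi\La_\theta=\delta_{\bar\psi}$) whereas the paper splits directly into the cases $s\neq\delta_{\bar\psi}$ and $s=\delta_{\bar\psi}$; the ingredients are identical.
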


\begin{Rmk}\label{hhh} If $\psi\in \fa_\theta^*$ is positive on $\L -\{0\}$, then $\delta_\psi \psi\in \mathscr T_\Ga$. Since $\cal H_{ \delta_\psi \psi}=\cal H_{ \psi}^{\delta_\psi}$, Theorem \ref{thm.hausmeas} says that if $\psi$ is symmetric in addition,
\be\label{hhh2} \cal H_\psi^{\delta_{\psi}} =\nu_{\delta_{\psi} \psi}\quad
\text{up to a constant multiple.}\ee   
    \end{Rmk}

\begin{Rmk} \label{rmk.DK} For a  special class of symmetric $\psi$ whose gradient lies in the interior of $\fa_\theta^+$,  
Dey-Kapovich \cite[Corollary 4.8]{DK_patterson} showed that $(\Ga o, \d_\psi)$ is a Gromov hyperbolic space and
they proved Theorem \ref{thm.hausmeas} relying upon the work of Coornaert \cite{Coornaert_PS} 
which gives the positivity and finiteness of $\cal H_\psi$ for the Gromov hyperbolic space. In our generality,
$(\Ga o, \d_\psi)$ is not even a metric space, and hence their approach cannot be extended.
\end{Rmk}

The main work is to establish the positivity and the finiteness of $\cal H_\psi$ and the key ingredient is the Ahlfors regular property of $\nu_\psi$ obtained in Theorem \ref{thm.exactdim}. For example, positivity of $\cal H_\psi$ is a standard consequence of the Ahlfors regularity of $(\La_\theta, d_\psi)$. However, we cannot conclude finiteness of ${\mathcal H}_\psi$ directly from Ahlfors regularity due to the lack of the triangle inequality.

\begin{prop} [Positivity] \label{prop.poshaus}
 For any symmetric $\psi\in \mathscr T_\Ga$,  we have $$\cal H_{\psi}(\La_{\theta}) > 0.$$
\end{prop}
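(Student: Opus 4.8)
The plan is to derive positivity of $\cal H_\psi(\La_\theta)$ from the upper Ahlfors regularity of $\nu_\psi$ established in Theorem \ref{thm.exactdim}, which is the standard ``mass distribution principle'' argument adapted to the premetric setting. Recall that $\nu_\psi$ is a Borel probability measure on $\La_\theta$, so $\nu_\psi(\La_\theta)=1$; it therefore suffices to bound $\nu_\psi$ from above by a constant multiple of $\cal H_\psi$ on $\La_\theta$.

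First I would fix a covering $\{U_i\}$ of $\La_\theta$ by subsets of small $d_\psi$-diameter, say $\diam_\psi U_i \le \e$, and estimate $\nu_\psi(U_i)$ in terms of $\diam_\psi U_i$. Here the lack of a genuine triangle inequality is handled by Proposition \ref{prop.lotriangle}(2): if $U_i$ is nonempty, pick $\xi_i \in U_i$; then for every $\eta \in U_i$ we have $d_\psi(\xi_i,\eta)\le N(d_\psi(\xi_i,\zeta)+d_\psi(\zeta,\eta)) $ for any $\zeta$, and more directly, since $d_\psi(\xi_i,\eta)\le \diam_\psi U_i$ by definition of diameter, we simply get $U_i \subset B_\psi(\xi_i, \diam_\psi U_i + \text{(harmless slack)})$; in fact $U_i\subseteq \overline{B_\psi(\xi_i,\diam_\psi U_i)}$, and enlarging the radius slightly (or using $N$) gives $U_i \subset B_\psi(\xi_i, 2N\,\diam_\psi U_i)$. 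Applying the upper bound in \eqref{nucom} from Theorem \ref{thm.exactdim}, $\nu_\psi(U_i) \le C \cdot 2N\,\diam_\psi U_i$ for some constant $C\ge 1$, valid once $\diam_\psi U_i$ is below the diameter of $(\La_\theta,d_\psi)$, which holds for $\e$ small.

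Summing over $i$ gives $1 = \nu_\psi(\La_\theta) \le \sum_i \nu_\psi(U_i) \le 2CN \sum_i \diam_\psi U_i$, hence $\sum_i \diam_\psi U_i \ge (2CN)^{-1}$ for every such covering. Taking the infimum over coverings with mesh $\le \e$ and then letting $\e \to 0$, we conclude $\cal H_\psi(\La_\theta) = \cal H_\psi^1(\La_\theta) \ge (2CN)^{-1} > 0$, as desired. The only mild subtlety is ensuring the sets $U_i$ in the definition \eqref{hhhjun19} of $\cal H_\psi^1$ may be taken nonempty and that the passage $U_i \subset B_\psi(\xi_i, c\,\diam_\psi U_i)$ is legitimate for the premetric $d_\psi$; this is where Proposition \ref{prop.lotriangle}(2) (the quasi-triangle inequality with multiplicative constant $N$) does its job, and it is the only structural input beyond the Ahlfors upper bound. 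I do not anticipate a serious obstacle here: unlike finiteness of $\cal H_\psi$, which genuinely requires the Vitali-type covering lemma (Lemma \ref{lem.lovitali}) because one must produce an \emph{efficient} covering, positivity only needs the trivial direction that \emph{every} covering has total diameter bounded below, and that follows immediately from the upper mass bound on balls.
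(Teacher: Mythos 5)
Your proposal is correct and follows essentially the same route as the paper: both prove positivity via the mass distribution principle, covering $\La_\theta$ by sets of small $d_\psi$-diameter, enclosing each $U_i$ in a ball $B_\psi(\xi_i,\cdot)$ about a chosen point $\xi_i\in U_i$, and applying the upper bound of the Ahlfors $1$-regularity from Theorem \ref{thm.exactdim} to get $1=\nu_\psi(\La_\theta)\ll\sum_i\diam_\psi U_i$. The only cosmetic difference is your invocation of Proposition \ref{prop.lotriangle}(2), which is not actually needed here, since $d_\psi(\xi_i,\eta)\le\diam_\psi U_i$ for $\eta\in U_i$ by the definition of diameter; the paper's proof uses this containment directly.
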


\begin{proof}
    Fix $\varepsilon > 0$ and a countable cover $\{U_i\}_{i \in \N}$ such that $\diam_{\psi} U_i \le \varepsilon$ for all $i \in \N$. For each $i \in \N$, we choose $\xi_i \in U_i$. By Theorem \ref{thm.exactdim}, we have $$\sum_{i \in \N} \diam_{\psi} U_i \ge C \sum_{i \in \N} \nu_{\psi}(B_{\psi}(\xi_i, \diam_{\psi} U_i))$$
    where the  constant $C>0$ depends only on $\psi$.
    Since $\La_{\theta} \subset \bigcup_{i \in \N} U_i \subset \bigcup_{i \in \N} B_{\psi}(\xi_i, \diam_{\psi} U_i)$, it follows that $\sum_{i \in \N} \diam_{\psi} U_i \ge C \cdot \nu_{\psi} (\La_{\theta}) = C.$ Since $\{U_i\}_{i \in \N}$ is an
    arbitrary countable cover, it follows that
    $\cal H_{\psi, \varepsilon}(\La_{\theta}) \ge C$.
Since $\e>0$ is arbitrary and $C$ is independent of $\e>0$, we have $\cal H_\psi (\La_\theta)>0$. 
\end{proof}

\begin{prop}[Finiteness] \label{prop.finhaus}
For any symmetric $\psi\in \mathscr T_\Ga$,
    we have $$\cal H_{\psi}(\La_{\theta}) < \infty.$$
\end{prop}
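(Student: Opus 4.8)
The plan is to compare the Hausdorff measure with the Ahlfors-regular measure $\nu_\psi$ by a Vitali-type covering argument, using Lemma \ref{lem.lovitali} to bypass the absence of a genuine triangle inequality for $d_\psi$.

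First I would fix $\varepsilon>0$. Since $d_\psi$ is a continuous premetric and $\La_\theta$ is compact, the open cover $\{B_\psi(\xi,\varepsilon):\xi\in\La_\theta\}$ admits a finite subcover $B_\psi(\xi_1,\varepsilon),\dots,B_\psi(\xi_n,\varepsilon)$. Applying Lemma \ref{lem.lovitali} yields a pairwise disjoint subcollection $B_\psi(\xi_{i_1},\varepsilon),\dots,B_\psi(\xi_{i_k},\varepsilon)$ with $\La_\theta\subset\bigcup_{j=1}^k B_\psi(\xi_{i_j},N_0\varepsilon)$, where $N_0=N_0(\psi)$ is the constant of that lemma.

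Next I would control the diameters of the enlarged balls: by the near-triangle inequality and near-symmetry of $d_\psi$ (Proposition \ref{prop.lotriangle}), there is a constant $N'=N'(\psi)\ge1$ with $\diam_\psi B_\psi(\xi,N_0\varepsilon)\le N'N_0\varepsilon$ for every $\xi$. Hence $\{B_\psi(\xi_{i_j},N_0\varepsilon)\}_j$ covers $\La_\theta$ by sets of $d_\psi$-diameter at most $\varepsilon':=N'N_0\varepsilon$, so
\[
\cal H_{\psi,\varepsilon'}(\La_\theta)\ \le\ \sum_{j=1}^k \diam_\psi B_\psi(\xi_{i_j},N_0\varepsilon)\ \le\ N'N_0\sum_{j=1}^k\varepsilon .
\]
By the Ahlfors one-regularity of $\nu_\psi$ (Theorem \ref{thm.exactdim}), once $\varepsilon<\diam(\La_\theta,d_\psi)$ there is $C\ge1$ with $\varepsilon\le C\,\nu_\psi(B_\psi(\xi_{i_j},\varepsilon))$ for each $j$; since these balls are pairwise disjoint,
\[
\sum_{j=1}^k\varepsilon\ \le\ C\sum_{j=1}^k \nu_\psi(B_\psi(\xi_{i_j},\varepsilon))\ =\ C\,\nu_\psi\Big(\bigcup_{j=1}^k B_\psi(\xi_{i_j},\varepsilon)\Big)\ \le\ C\,\nu_\psi(\La_\theta)=C .
\]
Combining the two displays gives $\cal H_{\psi,\varepsilon'}(\La_\theta)\le N'N_0C$, a bound independent of $\varepsilon$; letting $\varepsilon\to0$ (so $\varepsilon'\to0$) yields $\cal H_\psi(\La_\theta)\le N'N_0C<\infty$.

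The main obstacle is precisely the failure of the triangle inequality: without it one can neither bound the diameter of an enlarged ball nor pass from an arbitrary finite cover to a disjoint subcollection with controlled overlap. Both are supplied here — the diameter bound by Proposition \ref{prop.lotriangle} and the disjointification by Lemma \ref{lem.lovitali} — after which the argument is the routine comparison between a Hausdorff measure and an Ahlfors-regular measure. The only points requiring a little care are that the enlarged balls used in the Hausdorff sum have diameter tending to $0$ (handled by the uniform factor $N'N_0$) and that disjointness survives passing to the subcollection, which is built into Lemma \ref{lem.lovitali}.
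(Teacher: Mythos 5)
Your proof is correct and follows essentially the same route as the paper: compactness gives a finite cover by small $d_\psi$-balls, Lemma \ref{lem.lovitali} extracts a disjoint subcollection whose enlargements still cover, Proposition \ref{prop.lotriangle} controls the diameters, and the Ahlfors one-regularity of $\nu_\psi$ from Theorem \ref{thm.exactdim} applied to the disjoint balls bounds the Hausdorff pre-measure uniformly in $\varepsilon$. The only difference is cosmetic: the paper pre-scales the radii to $\varepsilon/(2NN_0)$ so the covering sets have diameter at most $\varepsilon$, whereas you keep radius $\varepsilon$ and work with $\cal H_{\psi,\varepsilon'}$ for $\varepsilon'=N'N_0\varepsilon$.
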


\begin{proof}
Let $N=N(\psi)$ and $N_0=N_0(\psi)$ be the constants  given in Proposition \ref{prop.lotriangle} and Lemma \ref{lem.lovitali} respectively.   Fix $\varepsilon > 0$. Since $\La_{\theta}$ is compact, we have a finite cover $\La_{\theta}$ by $ \bigcup_{i = 1}^n B_{\psi}(\xi_i, \tfrac{\varepsilon}{2NN_0})$  for some finite set $\xi_1, \cdots, \xi_n \in \La_{\theta}$.
Applying the Vitali covering type lemma (Lemma \ref{lem.lovitali}), there exists a disjoint subcollection $B_{\psi}(\xi_{i_1}, \tfrac{\varepsilon}{2NN_0}), \cdots, B_{\psi}(\xi_{i_k}, \tfrac{\varepsilon}{2NN_0})$ such that $$\La_{\theta} \subset \bigcup_{j = 1}^k B_{\psi}(\xi_{i_j}, \tfrac{\varepsilon}{2N}).$$ Since $\diam_{\psi} B_{\psi}(\xi_{i_j}, \tfrac{\varepsilon}{2N}) \le \varepsilon$ for each $1 \le j \le k$ by Proposition \ref{prop.lotriangle}(2), we have  $$\cal H_{\psi,\e}(\La_{\theta}) \le \sum_{j = 1}^k \diam_{\psi} B_{\psi}(\xi_{i_j}, \tfrac{\varepsilon}{2N}) \le  k \cdot \varepsilon.$$
    Applying Theorem \ref{thm.exactdim}, we obtain that for some constant $C>0$ depending only on $\psi$,
    $$
         k \cdot \varepsilon \le C \sum_{j = 1}^k \nu_{\psi}(B_{\psi}(\xi_{i_j}, \tfrac{\varepsilon}{2NN_0}))
         = C \cdot \nu_{\psi} \left( \bigcup_{j = 1}^k B_{\psi}(\xi_{i_j}, \tfrac{\varepsilon}{2NN_0}) \right) \le C \cdot \nu_{\psi}(\La_{\theta}) = C
    $$ where the equality follows from the disjointness. This implies $\cal H_{\psi, \varepsilon}(\La_{\theta}) \le C$. Since $\varepsilon$ is arbitrary, we have $\cal H_{\psi}(\La_{\theta}) \le C$.
\end{proof}

Hence $\cal H_\psi$ is a {\it non-trivial} measure on $\La_\theta$. It is also $(\Ga, \psi)$-conformal:

\begin{lemma}[Conformality] \label{lem.confhaus}
 For any symmetric $\psi\in \mathscr T_\Ga$,
 we have $$\frac{d \ga_* \cal H_{\psi}}{ d \cal H_{\psi}}(\xi) = e^{ \psi(\beta_{\xi}^{\theta}(e, \ga))}$$
  for all $\ga \in \Ga$ and $\xi \in \La_{\theta}$.
\end{lemma}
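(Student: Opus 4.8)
The plan is to transfer the $\Ga$-conformality of $\nu_\psi$ to $\cal H_\psi$ by showing that the two measures are mutually absolutely continuous with bounded Radon--Nikodym derivative, and then invoke the uniqueness of conformal measures. More precisely, I would first establish that $\cal H_\psi$ and $\nu_\psi$ are comparable: by Theorem~\ref{thm.exactdim}, $\nu_\psi$ is Ahlfors $1$-regular, so $\nu_\psi(B_\psi(\xi,r)) \asymp r$ for all $\xi \in \La_\theta$ and $0 \le r < \diam_\psi \La_\theta$, with implied constant depending only on $\psi$. Combining this with Propositions~\ref{prop.poshaus} and~\ref{prop.finhaus}, which give $0 < \cal H_\psi(\La_\theta) < \infty$, a standard covering argument (using the Vitali-type Lemma~\ref{lem.lovitali} to pass from arbitrary covers to disjoint subcollections, exactly as in the proof of Proposition~\ref{prop.finhaus}) shows that $\cal H_\psi(B_\psi(\xi,r)) \asymp r$ as well, uniformly in $\xi$ and $r$. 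Hence $\cal H_\psi$ is also Ahlfors $1$-regular, and by the Lebesgue differentiation theorem for the premetric $d_\psi$ — which holds because $d_\psi$ satisfies the approximate triangle inequality of Proposition~\ref{prop.lotriangle}(2) and the Vitali covering property of Lemma~\ref{lem.lovitali} — the density $\tfrac{d\cal H_\psi}{d\nu_\psi}$ exists $\nu_\psi$-a.e.\ and is bounded above and below by positive constants.

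Next I would use the $\Ga$-conformality of $\nu_\psi$ together with the conformal transformation property of $d_\psi$ under $\Ga$. Lemma~\ref{lem.gromovconformal} gives, for $\ga \in \Ga$ and $\xi,\eta \in \La_\theta$,
$$
d_\psi(\ga^{-1}\xi, \ga^{-1}\eta) = e^{\frac12 \psi(\beta_\xi^\theta(e,\ga) + \i(\beta_\eta^\theta(e,\ga)))}\, d_\psi(\xi,\eta),
$$
and since $\psi = \psi\circ\i$ and $\beta_\xi^\theta(e,\ga)$ varies by a bounded amount as $\xi$ ranges over a small $d_\psi$-ball (using continuity of the Busemann cocycle and the fact that balls of small radius shrink in the usual topology), the map $\ga$ distorts $d_\psi$-balls near $\xi$ by an essentially constant factor $e^{\psi(\beta_\xi^\theta(e,\ga))}$. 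Formally, one checks directly from the definition \eqref{hhhjun19} that $\ga_*\cal H_\psi$ is again a $1$-dimensional Hausdorff measure computed with respect to the pulled-back premetric $d_\psi \circ (\ga^{-1}\times\ga^{-1})$, and the conformal scaling above, combined with the fact that $\beta^\theta_\cdot(e,\ga)$ is continuous on $\La_\theta$ (hence locally almost constant at the scale relevant for Hausdorff measure), yields
$$
\frac{d\ga_*\cal H_\psi}{d\cal H_\psi}(\xi) = e^{\psi(\beta_\xi^\theta(e,\ga))}.
$$
I would carry this out by a Radon--Nikodym computation: for a Borel set $B$, cover $\ga^{-1}B$ by small sets $U_i$, note $\diam_\psi(\ga U_i) = e^{\psi(\beta^\theta_{\xi_i}(e,\ga)) + o(1)}\diam_\psi U_i$ for $\xi_i \in U_i$ as $\diam_\psi U_i \to 0$, sum, and pass to the infimum and the $\e\to 0$ limit.

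Alternatively — and this is probably the cleanest route — once comparability $\cal H_\psi \asymp \nu_\psi$ is established and $\cal H_\psi(\La_\theta) \in (0,\infty)$ is known, I would argue as follows: both $\cal H_\psi/\cal H_\psi(\La_\theta)$ and $\nu_\psi$ are Borel probability measures on $\La_\theta$, they are mutually absolutely continuous, and the Radon--Nikodym derivative $\varphi := \tfrac{d\cal H_\psi}{d\nu_\psi}$ is a bounded positive function. Using the $\Ga$-conformality of both the Hausdorff measure (via the direct scaling computation above) and of $\nu_\psi$, one finds that $\varphi$ is $\Ga$-invariant: $\varphi(\ga\xi) = \varphi(\xi)$ for $\nu_\psi$-a.e.\ $\xi$. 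Since $\Ga$ acts minimally on $\La_\theta$ (for Zariski dense $\Ga$; in general one uses that $\La_\theta$ is the unique $\Ga$-minimal set and that $\nu_\psi$ is supported on it, together with ergodicity of $\nu_\psi$ under $\Ga$ from \cite{KOW_ergodic} / \cite{KOW_indicators}), any $\Ga$-invariant function in $L^\infty(\nu_\psi)$ is $\nu_\psi$-a.e.\ constant, so $\cal H_\psi = c\,\nu_\psi$ for some $c > 0$. This simultaneously proves Theorem~\ref{thm.hausmeas} and gives the conformality statement of the present lemma as an immediate corollary, since scaling a $(\Ga,\psi)$-conformal measure by a constant preserves conformality.

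The main obstacle I anticipate is the ``locally almost constant'' step in transferring conformality directly to $\cal H_\psi$: one must control how much $\psi(\beta^\theta_\xi(e,\ga))$ varies over a $d_\psi$-ball, and confirm that this variation is $o(1)$ as the ball shrinks, uniformly enough that it does not affect the Hausdorff sums. This follows from uniform continuity of $\xi \mapsto \beta^\theta_\xi(e,\ga)$ on the compact set $\La_\theta$ together with the fact that small $d_\psi$-balls are small in the subspace topology of $\F_\theta$ (which in turn uses that $d_\psi$ induces the standard topology on $\La_\theta$, a consequence of Proposition~\ref{prop.lotriangle} and the Anosov property). The ergodicity/uniqueness route sidesteps the delicate geometric estimate by black-boxing it into the abstract uniqueness of conformal measures (Theorem~\ref{thm.uniquePS}) — indeed, the very cleanest proof just says: $\cal H_\psi$ is a nonzero finite $(\Ga,\psi)$-conformal measure supported on $\La_\theta$ by Propositions~\ref{prop.poshaus}, \ref{prop.finhaus} and the conformal scaling of Hausdorff measure, hence equals $\nu_\psi(\La_\theta)^{-1}\cal H_\psi(\La_\theta)\cdot\nu_\psi$ by uniqueness, and the stated Radon--Nikodym identity is then inherited verbatim from $\nu_\psi$.
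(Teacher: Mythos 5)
Your core argument for the lemma itself --- the direct Radon--Nikodym computation over small covers, using the conformal scaling of $d_\psi$ from Lemma \ref{lem.gromovconformal} (with $\psi=\psi\circ\i$), the almost triangle inequality of Proposition \ref{prop.lotriangle}, and the continuity of $\eta\mapsto\beta^\theta_\eta(e,\ga)$ together with the fact that small $d_\psi$-balls are small in the topology of $\La_\theta$ --- is essentially the paper's proof, which localizes to a shrinking neighborhood $U\ni\xi$ and sandwiches $\ga_*\cal H_\psi(U)/\cal H_\psi(U)$ between the inf and sup of $e^{\psi(\beta^\theta_\eta(e,\ga))}$ over small $d_\psi$-balls around $\xi$ (the lower bound being obtained by applying the same estimate to $\ga^{-1}$). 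One caveat: your ``cleanest route'' and the ergodicity variant are circular as proofs of this particular lemma, since asserting that $\cal H_\psi$ is a $(\Ga,\psi)$-conformal measure is exactly the statement being proved; the paper's logical order is the opposite of what you describe --- it establishes this lemma first by the direct computation and only afterwards deduces Theorem \ref{thm.hausmeas} from uniqueness (Theorem \ref{thm.uniquePS}), so the conformality is an input to, not a consequence of, the identification with $\nu_\psi$, and the preliminary comparability/differentiation machinery in your first paragraph is not needed.
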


\begin{proof} 
Since $d_{\psi}$ is invariant under the $\Ga$-equivariant homeomorphism $p : \La_{\theta\cup\i(\theta)} \to \La_\theta$ by the definition of $d_{\psi}$ (Remark \ref{nons}), the measure $(\cal H_\psi,\La_\theta)$ is the push-forward of the Hausdorff measure $(\cal H_\psi, \La_{\theta\cup \i (\theta)})$ via $p$.
Therefore it suffices to prove this lemma 
assuming that $\theta = \i(\theta)$. We simply write $\beta^{\theta} = \beta$ in this proof to ease the notations.
    
    Fix $\ga \in \Ga$ and $\xi \in \La_{\theta}$. Let $U \subset \La_{\theta}$ be a small open neighborhood of $\xi$. To estimate $\ga_* \cal H_{\psi}(U)$ in terms of $\cal H_{\psi}(U)$, we fix $\varepsilon > 0$ and take any cover $\{U_i \}_{i \in \N}$ of $U$ such that $\diam_{\psi} U_i \le \varepsilon$ and that  $U \cap U_i \neq \emptyset$ for all $i \in \N$.

 For simplicity, we write $s_{\xi, R} (\ga) := \sup_{\eta \in B_{\psi}(\xi, R)} e^{\psi(\beta_{\eta}(e, \ga))}$. 
    By Lemma \ref{lem.gromovconformal} and  Proposition \ref{prop.lotriangle} with $N=N(\psi) > 0$ therein,
    we have that for each $i \ge 1$, $$
        \diam_{\psi} \ga^{-1} U_i \le \sup_{\eta \in U_i} e^{ \psi(\beta_{\eta}(e, \ga))} \diam_{\psi} U_i \le  s_{\xi, R_\e}( \ga)  \diam_{\psi} U_i
    $$ where $R_{\varepsilon} =R_\e(U)= N\cdot (\diam_{\psi} U + \varepsilon)$.     We then have for $\tilde{\varepsilon} := s_{\xi, R_{\varepsilon}}(\ga) \varepsilon$, $$\begin{aligned}
        \cal H_{\psi,\tilde{\e}}(\ga^{-1} U) & \le \sum_{i \in \N} \diam_{\psi} \ga^{-1} U_i \le s_{\xi, R_\e}( \ga) \sum_{i \in \N} \diam_{\psi} U_i.
    \end{aligned}$$ Since $\{U_i\}_{i \in \N}$ is an arbitrary countable open cover of $U$, the above inequality implies $$\cal H_{\psi,\tilde{\e}}(\ga^{-1} U ) \le s_{\xi, R_\e} (\ga) \cal H_{\psi,\e }(U).$$ Taking $\varepsilon \to 0$, we have $\tilde{\varepsilon} = s_{\xi, R_\e}( \ga) \e \to 0$ and $R_{\varepsilon} \to R_U := N \cdot \diam_{\psi} U$. Therefore \be \label{eqn.conformalhauss}
    \cal H_{\psi}(\ga^{-1}U) \le s_{\xi, R_U}( \ga)  \cal H_{\psi}(U).
    \ee

    Applying \eqref{eqn.conformalhauss} after replacing $U$ with $\ga^{-1} U$, and $\ga$ by $\ga^{-1}$, we have \be \label{eqn.jan13}
        \cal H_{\psi}(U)  = \cal H_{\psi}(\ga(\ga^{-1}U)) \le 
        s_{\ga^{-1}\xi, R_{\ga^{-1}U}} (\ga^{-1})
        \cal H_{\psi}(\ga^{-1}U).
    \ee 
    If we set $c = \sup_{\zeta \in \La_{\theta}} e^{\psi(\beta_{\zeta}(e, \ga^{-1} ))}$, then
    for any $\eta \in B_{\psi}(\ga^{-1} \xi, R_{\ga^{-1}U})$, it follows from Lemma \ref{lem.gromovconformal} that $$d_{\psi}(\xi, \ga \eta) \le c d_{\psi} (\ga^{-1}\xi, \eta) \le c R_{\ga^{-1} U} .$$ 
    This implies $$
    \begin{aligned}
        s_{\ga^{-1} \xi, R_{\ga^{-1} U}}(\ga^{-1}) & = \sup_{\eta \in B_{\psi}(\ga^{-1} \xi, R_{\ga^{-1} U})} e^{ \psi(\beta_{\eta}(e, \ga^{-1}))} \\
        & \le \sup_{\eta \in B_{\psi}(\xi, c R_{\ga^{-1} U})} e^{\psi(\beta_{\ga^{-1} \eta}(e, \ga^{-1}))} = \sup_{\eta \in B_{\psi}(\xi, c R_{\ga^{-1} U})} e^{\psi(\beta_{\eta}(\ga, e))}
        \end{aligned}$$
    Hence we obtain from \eqref{eqn.jan13} that $$
        \cal H_{\psi}(U) \le \sup_{\eta \in B_{\psi}(\xi, c R_{\ga^{-1} U})} e^{- \psi(\beta_{\eta}(e, \ga))} \cal H_{\psi}(\ga^{-1} U).
    $$ Together with \eqref{eqn.conformalhauss}, we deduce $$\inf_{\eta \in B_{\psi}(\xi, c R_{\ga^{-1} U})} e^{\psi(\beta_{\eta}(e, \ga))} \le \frac{ \ga_* \cal H_{\psi}(U)}{\cal H_{\psi}(U)} \le \sup_{\eta \in B_{\psi}(\xi, R_{U})} e^{  \psi(\beta_{\eta}(e, \ga ))}.$$ Now shrinking $U \to \xi$, we have $R_U, R_{\ga^{-1} U} \to 0$ and hence the both sides in the above inequality converge to $e^{ \psi(\beta_{\xi}(e, \ga))}$, by the continuity of the Busemann map $\beta_{\eta}(e, \ga )$ on the $\eta$-variable. Therefore $$\frac{d \ga_* \cal H_{\psi}}{d \cal H_{\psi}}(\xi) = e^{\psi(\beta_{\xi}(e, \ga ))}$$ as desired.
\end{proof}

\subsection*{Proof of Theorem \ref{thm.hausmeas}} By Propositions \ref{prop.poshaus} and \ref{prop.finhaus}, we have $\cal H_{\psi}(\La_{\theta}) \in (0, \infty)$. Moreover, it follows from Lemma \ref{lem.confhaus} that $\frac{1}{\cal H_{\psi}(\La_{\theta})}\cal H_{\psi}$ is a $(\Ga, \psi)$-Patterson-Sullivan measure. Since there exists a unique $(\Ga, \psi)$-Patterson-Sullivan measure on $\La_{\theta}$ (Theorem \ref{thm.uniquePS}), this completes the proof.
\qed

\subsection*{Proof of Theorem \ref{thm.nonsymmhmeas}}
 By Lemma \ref{lem.cansymm}, we may assume $\theta = \i(\theta)$.
    Since $\psi\ne \psi \circ \i$, two linear forms
    $\psi$ and $\bar \psi$ are not proportional. Since $d_{\psi}$ and $d_{\bar \psi}$ are bi-Lipschitz by Proposition \ref{prop.bilip}, $\cal H_{\psi}^s$ is in the same measure class as $\cal H_{\bar \psi}^s$ for all $s > 0$. Hence it follows from Theorem \ref{thm.hausmeas} (see also Remark \ref{hhh}) that $\cal H_{\psi}^s(\La_{\theta}) = 0$ or $\infty$ if $s \neq \delta_{\bar \psi}$. Now it suffices to show that $\nu_\psi$ is not comparable to $\cal H_{\psi}^{\delta_{\bar \psi}}$.
Since $\psi$ and $\bar \psi$ are not proportional, $\psi$ and $\delta_{\bar \psi} \bar \psi$ are two different forms tangent to $\psi_{\Ga}^{\theta}$.
    By Theorem \ref{thm.uniquePS}, it follows that $\nu_\psi $ is mutually singular to $\nu_{\delta_{\bar \psi} \bar \psi}$.
    Since the latter is proportional to $\cal H_{\bar \psi}^{\delta_{\bar \psi}}$ by Theorem \ref{thm.hausmeas}, $\nu_\psi$
   is singular to $\cal H_{\bar \psi}^{\delta_{\bar \psi}}$ and hence singular to  
   $\cal H_{\psi}^{\delta_{\bar \psi}}$ as well. This finishes the proof.
\qed

\begin{remark} \label{rmk.jan13}
    In fact, without Zariski dense hypothesis, it was shown in \cite[Theorem A]{sambarino2022report} that for $\psi_1, \psi_2 \in \T_{\Ga}$, $\nu_{\psi_1}$ and $\nu_{\psi_2}$ are mutually singular unless $\psi_1 = \psi_2$ on $\L_{\theta}$. Hence
    Theorem \ref{thm.nonsymmhmeas} holds provided that $\psi$ and $\psi \circ \i$ are not identical on  $\L_{\theta}$.
\end{remark}

\subsection*{Critical exponents and Hausdorff dimensions} 
The Hausdorff dimension of $\La_{\theta}$ with respect to $d_{\psi}$ is defined as $$\dim_{\psi} \La_\theta := \inf \{ s > 0 : \cal H_{\psi}^s(\La_\theta) = 0 \} = \sup \{s > 0 : \cal H_{\psi}^s(\La_\theta) = \infty\}.$$
As a corollary of Theorem \ref{thm.hausmeas}, we obtain the following (Theorem \ref{main2}):
\begin{corollary} \label{cor.crithdim}
    For any $\psi \in \fa_{\theta}^*$ positive on $\L -\{0\}$, we have $$\delta_{\bar \psi} = \dim_{\psi} \La_{\theta}$$
    where $ \bar \psi = \frac{\psi + \psi \circ \i}{2}.$
\end{corollary}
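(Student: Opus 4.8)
The plan is to reduce the statement to the symmetric tangent form $\delta_{\bar\psi}\bar\psi$ and then invoke Theorem \ref{thm.hausmeas}. First I would use Lemma \ref{lem.cansymm} together with Remark \ref{nons} to pass to $\theta\cup\i(\theta)$ and thereby assume $\theta=\i(\theta)$: the critical exponents $\delta_\psi$ and $\delta_{\bar\psi}$ depend only on the linear forms and on $\mu(\Ga)$, while the isometry $p:(\La_{\theta\cup\i(\theta)},d_\psi)\to(\La_\theta,d_\psi)$ identifies the relevant Hausdorff measures, so nothing is lost. Under this assumption $\bar\psi\in\fa_\theta^*$, and since $\L_\theta$ is $\i$-invariant, $\bar\psi$ is positive on $\L_\theta-\{0\}$, so $\delta_{\bar\psi}$ is a well-defined positive number. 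Next, by Proposition \ref{prop.bilip} the premetrics $d_\psi$ and $d_{\bar\psi}$ are bi-Lipschitz equivalent, hence $\cal H_\psi^s$ and $\cal H_{\bar\psi}^s$ are comparable for every $s>0$ and $\dim_\psi\La_\theta=\dim_{\bar\psi}\La_\theta$; it therefore suffices to prove $\dim_{\bar\psi}\La_\theta=\delta_{\bar\psi}$.

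The key input is Theorem \ref{thm.hausmeas}. By Lemma \ref{lem.tangentcritexp}, $\delta_{\bar\psi}\bar\psi\in\T_\Ga$, and it is symmetric because $\bar\psi$ is symmetric and $\delta_{\bar\psi}>0$; so Theorem \ref{thm.hausmeas} tells us that the one-dimensional Hausdorff measure $\cal H_{\delta_{\bar\psi}\bar\psi}$ is a constant multiple of the probability measure $\nu_{\delta_{\bar\psi}\bar\psi}$. Since $d_{c\bar\psi}=(d_{\bar\psi})^c$ for $c>0$, one reads off from \eqref{hhhjun19} that $\cal H_{c\bar\psi}^s=\cal H_{\bar\psi}^{cs}$; taking $c=\delta_{\bar\psi}$ and $s=1$ gives $\cal H_{\bar\psi}^{\delta_{\bar\psi}}=\cal H_{\delta_{\bar\psi}\bar\psi}$, and hence $0<\cal H_{\bar\psi}^{\delta_{\bar\psi}}(\La_\theta)<\infty$ (this is Remark \ref{hhh}).

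Finally I would invoke the standard dichotomy for Hausdorff measures: if $\cal H_{\bar\psi}^{\delta_{\bar\psi}}(\La_\theta)$ is finite and positive, then $\cal H_{\bar\psi}^t(\La_\theta)=0$ for $t>\delta_{\bar\psi}$ and $\cal H_{\bar\psi}^t(\La_\theta)=\infty$ for $t<\delta_{\bar\psi}$, whence $\dim_{\bar\psi}\La_\theta=\delta_{\bar\psi}$. The only point requiring a moment of care — and the closest thing here to an obstacle — is that $d_{\bar\psi}$ is a premetric rather than a genuine metric, so one should check that this dichotomy (and the bi-Lipschitz comparison in the first paragraph) does not secretly use the triangle inequality. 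It does not: for $t>s$, any countable cover $\{U_i\}$ with $\sup_i\diam_{\bar\psi}U_i\le\e$ satisfies $\sum_i(\diam_{\bar\psi}U_i)^t\le\e^{t-s}\sum_i(\diam_{\bar\psi}U_i)^s$, so that $\cal H^t_{\bar\psi,\e}(\La_\theta)\le\e^{t-s}\cal H^s_{\bar\psi,\e}(\La_\theta)$, and letting $\e\to0$ yields both halves of the dichotomy directly from the definition \eqref{hhhjun19}. Combining this with the first paragraph gives $\delta_{\bar\psi}=\dim_\psi\La_\theta$, as claimed; all the substantive content lives in Theorem \ref{thm.hausmeas}, and the rest is bookkeeping.
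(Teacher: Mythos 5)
Your proposal is correct and follows essentially the same route as the paper: reduce to the symmetric form via Proposition \ref{prop.bilip} (so that $\dim_\psi\La_\theta=\dim_{\bar\psi}\La_\theta$), apply Theorem \ref{thm.hausmeas} to $\delta_{\bar\psi}\bar\psi$ as in Remark \ref{hhh} to get $\cal H_{\bar\psi}^{\delta_{\bar\psi}}(\La_\theta)\in(0,\infty)$, and conclude $\dim_{\bar\psi}\La_\theta=\delta_{\bar\psi}$. Your extra checks (the reduction to $\theta=\i(\theta)$ via Remark \ref{nons} and the verification that the Hausdorff-dimension dichotomy needs no triangle inequality) are sound but are exactly the bookkeeping the paper leaves implicit.
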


\begin{proof}
By Proposition \ref{prop.bilip}, we have $\dim_{\psi} \La_{\theta} = \dim_{\bar \psi} \La_{\theta}$. Applying  Theorem \ref{thm.hausmeas} to $\delta_{\bar\psi} \bar \psi$
(see Remark \ref{hhh}), we have $\cal H_{\bar \psi}^{\delta_{\bar \psi}}(\La_{\theta}) \in (0, \infty)$, which implies $\dim_{\bar \psi} \La_{\theta} = \delta_{\bar \psi}$. This shows the claim.
\end{proof}

For $\psi$ non-symmetric,  $\dim_{\psi} \La_{\theta}$ is not in general equal to $\delta_\psi$:
\begin{prop} \label{prop.mincrit}
  For $\psi \in \fa_{\theta}^*$ positive on $\L - \{0\}$, we have $$\delta_{\bar \psi} \le \delta_{\psi}.$$ 
 If $\Ga$ is Zariski dense, then the equality holds if and only if $\psi = \psi \circ \i$.
    \end{prop}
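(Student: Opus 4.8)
The plan is to obtain the inequality $\delta_{\bar\psi}\le\delta_\psi$ from the Cauchy--Schwarz inequality applied to the Poincar\'e series, and to extract the equality case from the strict concavity and analyticity of the $\theta$-growth indicator of an Anosov subgroup (Theorem \ref{strict}). For the inequality I would use that $\mu(\ga^{-1})=\i(\mu(\ga))$, so that $(\psi\circ\i)(\mu(\ga))=\psi(\mu(\ga^{-1}))$ and $\bar\psi(\mu(\ga))=\tfrac12\bigl(\psi(\mu(\ga))+\psi(\mu(\ga^{-1}))\bigr)$ for every $\ga\in\Ga$. Then for each $s>0$, Cauchy--Schwarz gives
\[
\sum_{\ga\in\Ga} e^{-s\bar\psi(\mu(\ga))} = \sum_{\ga\in\Ga} e^{-\frac{s}{2}\psi(\mu(\ga))}\,e^{-\frac{s}{2}\psi(\mu(\ga^{-1}))} \le \Bigl(\sum_{\ga\in\Ga} e^{-s\psi(\mu(\ga))}\Bigr)^{1/2}\Bigl(\sum_{\ga\in\Ga} e^{-s\psi(\mu(\ga^{-1}))}\Bigr)^{1/2},
\]
and the second factor equals the first since $\ga\mapsto\ga^{-1}$ permutes $\Ga$; hence $\sum_\ga e^{-s\bar\psi(\mu(\ga))}\le\sum_\ga e^{-s\psi(\mu(\ga))}$ for all $s>0$. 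Convergence of the right-hand series, which holds for $s>\delta_\psi$, then forces $\delta_{\bar\psi}\le s$, and letting $s\downarrow\delta_\psi$ finishes this part.

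For the equality statement (with $\Ga$ Zariski dense), the implication $\psi=\psi\circ\i\Rightarrow\bar\psi=\psi\Rightarrow\delta_{\bar\psi}=\delta_\psi$ is immediate, so I would concentrate on the converse. First I would reduce to $\theta=\i(\theta)$ by replacing $\theta$ with $\theta\cup\i(\theta)$ (legitimate, since $\Ga$ is $\theta$-Anosov iff it is $\theta\cup\i(\theta)$-Anosov and $\delta_\psi,\delta_{\bar\psi}$ depend only on $\psi\circ\mu$); then $\psi_\Ga^\theta$ and $\L_\theta$ are $\i$-invariant and $\bar\psi>0$ on $\L_\theta-\{0\}$. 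Assuming $\delta:=\delta_\psi=\delta_{\bar\psi}$, Lemma \ref{lem.tangentcritexp} shows that $\phi:=\delta\psi$ and $\delta\bar\psi$ both lie in $\mathscr{T}_\Ga$, and that $\delta\bar\psi=\tfrac12(\phi+\phi\circ\i)$ with $\phi\ge\psi_\Ga^\theta$ and, by $\i$-invariance of $\psi_\Ga^\theta$, also $\phi\circ\i\ge\psi_\Ga^\theta$. By Theorem \ref{strict} the tangent form $\delta\bar\psi$ meets $\psi_\Ga^\theta$ at a unique unit vector $u\in\inte\L_\theta$; since $\phi(u)\ge\psi_\Ga^\theta(u)$, $(\phi\circ\i)(u)\ge\psi_\Ga^\theta(u)$, and their average equals the finite value $\psi_\Ga^\theta(u)\in(0,\infty)$, both $\phi$ and $\phi\circ\i$ agree with $\psi_\Ga^\theta$ at $u$. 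As $\psi_\Ga^\theta$ is analytic, hence differentiable, at the interior point $u$, it admits a unique supporting linear functional there; both $\phi$ and $\phi\circ\i$ dominate $\psi_\Ga^\theta$ and agree with it at $u$, so each is that functional. Therefore $\phi=\phi\circ\i$, i.e.\ $\psi=\psi\circ\i$, as wanted.

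The only part that uses the structure theory of the paper is this equality case, and there the entire argument rests on Theorem \ref{strict}: the uniqueness of the direction at which a tangent form touches $\psi_\Ga^\theta$, together with differentiability of $\psi_\Ga^\theta$ on the interior of the limit cone. I do not expect any obstacle beyond correctly setting up the reduction to symmetric $\theta$ and verifying that all the relevant quantities are unchanged by it.
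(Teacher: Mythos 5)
Your proposal is correct, but it departs from the paper's argument in both halves, so a comparison is worth recording. For the inequality $\delta_{\bar\psi}\le\delta_\psi$, the paper stays inside the growth-indicator formalism: since $\delta_\psi\psi$ and $\delta_\psi(\psi\circ\i)$ both dominate $\psi_\Ga^\theta$, so does their average $\delta_\psi\bar\psi$, and comparing with the tangent form $\delta_{\bar\psi}\bar\psi$ at its tangency direction gives the bound. Your Cauchy--Schwarz manipulation of the Poincar\'e series, using $\mu(\ga^{-1})=\i(\mu(\ga))$ and the reindexing $\ga\mapsto\ga^{-1}$, is more elementary and does not use the Anosov hypothesis or tangency at all, so it proves the inequality for any discrete group for which the exponents make sense. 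For the equality case both arguments rest on Theorem \ref{strict}, but on different clauses: the paper uses part (2) (Zariski density gives a bijection $\psi\mapsto u_\psi$, so the two distinct tangent forms $\delta_\psi\psi$ and $\delta_\psi(\psi\circ\i)$ touch $\psi_\Ga^\theta$ at distinct directions $u\ne\i(u)$, forcing the strict inequality $\psi_\Ga^\theta<\delta_\psi\bar\psi$ and hence $\delta_{\bar\psi}<\delta_\psi$), whereas you use part (1): assuming $\delta_\psi=\delta_{\bar\psi}$, the averaging identity forces $\delta_\psi\psi$ and $\delta_\psi(\psi\circ\i)$ to touch $\psi_\Ga^\theta$ at the \emph{same} interior point $u$ (the tangency point of $\delta\bar\psi$), and differentiability of the analytic, concave $\psi_\Ga^\theta$ at $u\in\inte\L_\theta$ (open in $\fa_\theta$ by Zariski density) makes the supporting linear form at $u$ unique, so $\delta_\psi\psi=\delta_\psi(\psi\circ\i)$. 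Your first-derivative-test step is sound: $\phi-\psi_\Ga^\theta$ is nonnegative, differentiable near $u$, and vanishes at $u$, so its differential vanishes and $\phi$ equals $d(\psi_\Ga^\theta)_u$. The reduction to $\theta=\i(\theta)$ is the same as in the paper and causes no loss. In short: your route trades the paper's injectivity-of-tangency-directions argument for a smoothness argument, and gains a softer, more general proof of the inequality.
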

\begin{proof}

As before, we may assume $\theta = \i(\theta)$. Suppose that $\psi \neq \psi \circ \i$.
Note that $\delta_{\psi}=\delta_{\psi \circ \i}$ and hence 
both $\delta_{\psi}\psi$ and $ \delta_{\psi} (\psi\circ \i)$ are tangent to the $\theta$-growth indicator $\psi_{\Ga}^{\theta}$ (\cite[Theorem 2.5]{KMO_tent}, \cite[Lemma 4.5]{KOW_indicators}). We then have
$\psi_\Ga^\theta \le \delta_{\psi}\psi$ and $\psi_{\Ga}^{\theta} \le\delta_{\psi} (\psi\circ \i) $.
Hence $\psi_\Ga^\theta \le \delta_{\psi} \bar\psi$. Since  $\delta_{\bar\psi}\bar\psi$ is tangent to $\psi_\Ga^\theta$, it follows that $\delta_{\bar \psi}\le \delta_{\psi} $.

Now suppose that $\Ga$ is Zariski dense and that $\psi\ne \psi\circ \i$. By Theorem \ref{strict},
there exists a unique unit vector $u=u_{\delta_\psi \psi} \in \inte \L_\theta$ such that
$\psi_\Ga^{\theta}(u)= \delta_\psi \psi(u)$. Since $\psi_\Ga^\theta$ is $\i$-invariant, it implies that $\psi_\Ga^{\theta}(\i(u))= \delta_\psi (\psi \circ \i) (\i (u))$. On the other hand,
$u \neq \i(u)$ by Theorem \ref{strict}. Hence the inequality $\psi_{\Ga}^{\theta} \le \delta_{\psi} \psi$ and $\psi_{\Ga}^{\theta} \le \delta_{\psi} ( \psi \circ \i)$ cannot become
equalities  simultaneously at the same vector. This implies that $\psi_{\Ga}^{\theta} < \delta_{\psi} \bar{\psi}$ and hence $\delta_{\bar{\psi}} < \delta_{\psi}$.
\end{proof}

By Corollary  \ref{cor.crithdim}  and Proposition \ref{prop.mincrit}, we obtain:
\begin{cor} Let $\Ga$ be Zariski dense in $G$.
    For any $\psi\in \mathscr T_\Ga$,
   we have $\dim_\psi \La_\theta\le 1$ and the equality holds if and only if
     $\psi$ is symmetric.
\end{cor}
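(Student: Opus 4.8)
The plan is to deduce this corollary directly from Corollary \ref{cor.crithdim} (which is Theorem \ref{main2}) and Proposition \ref{prop.mincrit}, after recording the elementary observation that every tangent form has critical exponent $1$.

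Fix $\psi \in \mathscr T_\Ga$. Recall that every element of $\mathscr T_\Ga$ is positive on $\L_\theta - \{0\}$, so both Corollary \ref{cor.crithdim} and Proposition \ref{prop.mincrit} apply to $\psi$; recall also that by Lemma \ref{lem.tangentcritexp} the condition $\psi \in \mathscr T_\Ga$ is equivalent to $\delta_\psi = 1$. Writing $\bar\psi = \tfrac{1}{2}(\psi + \psi \circ \i)$, Corollary \ref{cor.crithdim} gives $\dim_\psi \La_\theta = \delta_{\bar\psi}$, while Proposition \ref{prop.mincrit} gives $\delta_{\bar\psi} \le \delta_\psi = 1$. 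Combining these yields $\dim_\psi \La_\theta \le 1$, which is the first assertion.

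For the equality case I would argue as follows. If $\psi$ is symmetric then $\bar\psi = \psi$, hence $\dim_\psi \La_\theta = \delta_\psi = 1$. Conversely, if $\dim_\psi \La_\theta = 1$ then $\delta_{\bar\psi} = 1 = \delta_\psi$, so $\delta_{\bar\psi} = \delta_\psi$; since $\Ga$ is Zariski dense, the equality clause of Proposition \ref{prop.mincrit} forces $\psi = \psi \circ \i$, i.e. $\psi$ is symmetric. This completes the argument.

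I do not anticipate a genuine obstacle: the statement is a formal corollary of two previously established results. The only points that need care are verifying that a tangent form satisfies the positivity hypothesis needed to invoke Corollary \ref{cor.crithdim} and Proposition \ref{prop.mincrit}, and recalling that $\delta_\psi = 1$ for $\psi \in \mathscr T_\Ga$; both are immediate from earlier sections.
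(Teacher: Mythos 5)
Your argument is correct and is exactly the paper's route: the corollary is stated there as an immediate combination of Corollary \ref{cor.crithdim} ($\dim_\psi\La_\theta=\delta_{\bar\psi}$) and Proposition \ref{prop.mincrit} ($\delta_{\bar\psi}\le\delta_\psi$ with equality iff $\psi=\psi\circ\i$ when $\Ga$ is Zariski dense), together with $\delta_\psi=1$ for $\psi\in\mathscr T_\Ga$ from Lemma \ref{lem.tangentcritexp}. Your added remarks on positivity of tangent forms on $\L_\theta-\{0\}$ are the right hypotheses to check and are handled as you say.
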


We now prove the Ahlfors regularity of $(\La_\theta, d_\psi)$ for general $\psi \in \T_{\Ga}$:
\begin{theorem} \label{thm.Aregular}
    Let $\Ga$ be a $\theta$-Anosov subgroup. For any
    $\psi\in \mathscr T_\Ga$, the premetric space $(\La_\theta, d_\psi)$ is Ahlfors $s$-regular for some $0 < s \le 1$.
   Moreover if $\Ga$ is Zariski dense, we have $s = 1$ if and only if $\psi$ is symmetric.
\end{theorem}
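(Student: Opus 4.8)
The plan is to reduce the general case to the symmetric case already proved in Theorem \ref{thm.exactdim} (equivalently Theorem \ref{thm.hausmeas}), using the symmetrization $\bar\psi=\tfrac12(\psi+\psi\circ\i)$ and the bi-Lipschitz comparison of Proposition \ref{prop.bilip}. First I would invoke Lemma \ref{lem.cansymm} together with Remark \ref{nons} to assume $\theta=\i(\theta)$, so that $\bar\psi\in\fa_\theta^*$ and $\bar\psi>0$ on $\L_\theta-\{0\}$ (since $\L_\theta$ is $\i$-invariant). By Lemma \ref{lem.tangentcritexp}, the tangent form associated with the direction of $\bar\psi$ is $\delta_{\bar\psi}\bar\psi\in\mathscr T_\Ga$; write $s:=\delta_{\bar\psi}$. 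Applying Theorem \ref{thm.exactdim} to the symmetric form $\delta_{\bar\psi}\bar\psi=s\bar\psi$, the Patterson-Sullivan measure $\nu_{s\bar\psi}$ is Ahlfors $1$-regular for the premetric $d_{s\bar\psi}$. Since $d_{s\bar\psi}=d_{\bar\psi}^{\,s}$, Ahlfors $1$-regularity for $d_{s\bar\psi}$ is exactly Ahlfors $s$-regularity for $d_{\bar\psi}$: that is, $\nu_{s\bar\psi}(B_{\bar\psi}(\xi,r))\asymp r^{s}$ uniformly in $\xi\in\La_\theta$ and $r\in[0,\diam_{\bar\psi}\La_\theta)$.

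Next I would transfer this to the premetric $d_\psi$. By Proposition \ref{prop.bilip} there is $R\ge 1$ with $R^{-1}d_\psi\le d_{\bar\psi}\le R\,d_\psi$ pointwise, so $B_{\bar\psi}(\xi,R^{-1}r)\subset B_\psi(\xi,r)\subset B_{\bar\psi}(\xi,Rr)$ for all $\xi,r$; combining with the estimate of the previous paragraph yields $\nu_{s\bar\psi}(B_\psi(\xi,r))\asymp r^{s}$, i.e. $(\La_\theta,d_\psi)$ is Ahlfors $s$-regular with the witnessing measure $\nu_{s\bar\psi}$. Here I should note that the admissible range of radii is the same up to a bounded factor since $\diam_\psi\La_\theta$ and $\diam_{\bar\psi}\La_\theta$ differ by at most the factor $R$, and the Ahlfors inequalities for small $r$ together with compactness handle the remaining radii. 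Finally, $s=\delta_{\bar\psi}$ and, by Lemma \ref{lem.tangentcritexp}, $\psi\in\mathscr T_\Ga$ means $\delta_\psi=1$; Proposition \ref{prop.mincrit} then gives $s=\delta_{\bar\psi}\le\delta_\psi=1$, so $0<s\le 1$, with equality $s=1$ if and only if $\psi=\psi\circ\i$ when $\Ga$ is Zariski dense (the "only if" is the Zariski-dense equality case of Proposition \ref{prop.mincrit}, and the "if" is immediate since $\psi$ symmetric forces $\bar\psi=\psi$ and $\delta_{\bar\psi}=\delta_\psi=1$).

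The genuinely substantive input is Theorem \ref{thm.exactdim} (Ahlfors one-regularity of $\nu_{s\bar\psi}$), which has already been established, so the main obstacle here is essentially bookkeeping: one must be careful that (i) the bi-Lipschitz constant from Proposition \ref{prop.bilip} interacts correctly with the power $s\le 1$ when passing between $d_\psi$-balls and $d_{\bar\psi}$-balls (since $x\mapsto x^s$ is only Hölder, not Lipschitz, but that is exactly the right direction for comparing balls of given radii), and (ii) the positivity $s>0$ holds, which follows because $\bar\psi>0$ on $\L_\theta-\{0\}$ forces $\delta_{\bar\psi}>0$ by \cite[Lemma 4.3]{KOW_indicators}. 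No new geometric estimate is required beyond what is already in the excerpt.
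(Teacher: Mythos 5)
Your proposal is correct and follows essentially the same route as the paper: symmetrize to $\bar\psi$, note $\delta_{\bar\psi}\bar\psi\in\mathscr T_\Ga$ (Lemma \ref{lem.tangentcritexp}), apply the Ahlfors one-regularity of $\nu_{\delta_{\bar\psi}\bar\psi}$ from Theorem \ref{thm.exactdim}, rescale balls via $B_{\delta_{\bar\psi}\bar\psi}(\xi,r^{\delta_{\bar\psi}})=B_{\bar\psi}(\xi,r)$, transfer through the bi-Lipschitz comparison of Proposition \ref{prop.bilip}, and settle $0<s\le 1$ and the equality case with Lemma \ref{lem.tangentcritexp} and Proposition \ref{prop.mincrit}. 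No gaps; your extra remarks on radius ranges and the power $s$ are harmless bookkeeping beyond what the paper records.
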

\begin{proof}
Let $\psi \in \T_{\Ga}$. By Proposition \ref{prop.bilip}, the identity map $(\La_{\theta}, d_{\psi}) \to (\La_{\theta}, d_{\bar \psi})$ is bi-Lipschitz. Noting that $\delta_{\bar \psi} \bar \psi \in \T_{\Ga}$ by Lemma \ref{lem.tangentcritexp}, we denote by $\nu := \nu_{\delta_{\bar \psi} \bar \psi}$ the $(\Ga, \delta_{\bar \psi}\bar \psi)$-Patterson-Sullivan measure on $\La_{\theta}$.
By Theorem \ref{thm.exactdim}, for any $\xi \in \La_{\theta}$ and $r \in [0, \diam_{\bar \psi} \La_{\theta})$, we have 
\be \label{eqn.jan222}
 c^{-1} r\le \cal \nu(B_{\delta_{\bar \psi} \bar \psi}(\xi, r)) \le c  r.
\ee
for some constant $c\ge 1$ depending only on $\psi$. 
Since  $B_{\delta_{\bar \psi} \bar \psi}(\xi, r^{\delta_{\bar \psi}}) = B_{\bar \psi}(\xi, r)$ and the identity map $(\La_{\theta}, d_{\psi}) \to (\La_{\theta}, d_{\bar \psi})$ is bi-Lipschitz  by Proposition \ref{prop.bilip}, \eqref{eqn.jan222} implies that for some $C \ge 1$ depending only on $\psi$, we have 
$$C^{-1} r^{\delta_{\bar \psi}} \le \nu(B_{\psi}(\xi, r)) \le C r^{\delta_{\bar \psi}}.$$
Recall  $\delta_{\psi} = 1$ for $\psi \in \T_{\Ga}$ (Lemma \ref{lem.tangentcritexp}). Hence
$\delta_{\bar\psi}<1$ for $\psi$ non-symmetric and $\Ga$ Zariski dense by Proposition \ref{prop.mincrit}. This finishes the proof.
\end{proof}

\subsection*{Analyticity of Hausdorff dimensions}
For a hyperbolic group $\Sigma$, a representation $\sigma : \Sigma \to G$ is called $\theta$-Anosov if $\sigma$ has a finite kernel and its image $\sigma(\Sigma)$ is a $\theta$-Anosov subgroup of $G$.
For a given $\psi \in \fa_{\theta}^*$ which is non-negative on $\fa_{\theta}^+$, the $\psi$-critical exponents $\delta_{\psi}(\sigma(\Sigma))$ vary analytically on analytic families of $\theta$-Anosov representations $\sigma$ in the variety $\op{Hom}(\Sigma, G)$ 
 by Bridgeman-Canary-Labourie-Sambarino \cite[Proposition 8.1]{BCLS_gafa} 
 (see also \cite[Section 4.4]{CS_local}).  Hence the following is an immediate consequence of Corollary \ref{cor.crithdim}:

\begin{corollary}\label{analytic}
    Let $\Sigma$ be a non-elementary\footnote{A hyperbolic group is non-elementary if its Gromov boundary has at least three points.} hyperbolic group and $\psi \in \fa_{\theta}^*$ be non-negative on $\fa_{\theta}^+$. Let $\cal D \subset \op{Hom}(\Sigma, G)$ be an analytic family of $\theta$-Anosov representations. Then $$\sigma \mapsto \dim_{\psi} \La_{\theta}(\sigma  (\Sigma))$$ is analytic on $ \cal D$.
\end{corollary}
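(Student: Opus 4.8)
The plan is to combine Corollary \ref{cor.crithdim} with the analyticity of critical exponents due to Bridgeman-Canary-Labourie-Sambarino. The statement to prove, Corollary \ref{analytic}, asserts that for a non-elementary hyperbolic group $\Sigma$, a linear form $\psi \in \fa_\theta^*$ non-negative on $\fa_\theta^+$, and an analytic family $\cal D \subset \op{Hom}(\Sigma, G)$ of $\theta$-Anosov representations, the map $\sigma \mapsto \dim_\psi \La_\theta(\sigma(\Sigma))$ is analytic on $\cal D$. First I would observe that since each $\sigma \in \cal D$ is a $\theta$-Anosov representation, it has finite kernel and $\sigma(\Sigma)$ is a $\theta$-Anosov subgroup of $G$; moreover the limit cone $\L_\theta(\sigma(\Sigma)) - \{0\}$ lies in $\inte \fa_\theta^+$ by Theorem \ref{thm.anosovbasic}(2), so $\psi$ — being non-negative on $\fa_\theta^+$ — is in fact strictly positive on $\L_\theta(\sigma(\Sigma)) - \{0\}$. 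Hence Corollary \ref{cor.crithdim} applies and gives
\[
\dim_\psi \La_\theta(\sigma(\Sigma)) = \delta_{\bar\psi}(\sigma(\Sigma))
\]
where $\bar\psi = \frac{\psi + \psi \circ \i}{2} \in \fa_{\theta \cup \i(\theta)}^*$. Note that $\bar\psi$ is again non-negative on $\fa_\theta^+$ (it is an average of $\psi$ and $\psi \circ \i$, and $\i$ preserves $\fa^+$, hence $\psi \circ \i \ge 0$ on $\fa_\theta^+$), so $\delta_{\bar\psi}(\sigma(\Sigma))$ is the well-defined positive critical exponent of the $\bar\psi$-Poincaré series.

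The second and main step is to invoke \cite[Proposition 8.1]{BCLS_gafa}: for a fixed linear form that is non-negative on the relevant Weyl chamber, the associated critical exponent varies analytically over analytic families of Anosov representations. I would apply this with the form $\bar\psi$ and the family $\cal D$ (after noting, if necessary, that $\theta$-Anosov is equivalent to $(\theta \cup \i(\theta))$-Anosov, as recorded just before Lemma \ref{lem.cansymm}, so that working with $\bar\psi \in \fa_{\theta \cup \i(\theta)}^*$ is legitimate). This yields that $\sigma \mapsto \delta_{\bar\psi}(\sigma(\Sigma))$ is analytic on $\cal D$. Combined with the identity from Corollary \ref{cor.crithdim}, this immediately gives that $\sigma \mapsto \dim_\psi \La_\theta(\sigma(\Sigma))$ is analytic.

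The step I expect to require the most care is verifying that the hypotheses of \cite[Proposition 8.1]{BCLS_gafa} are met uniformly in the family — in particular that the Anosov property is an open condition preserved along $\cal D$ (this is standard, but one should cite it, e.g. via \cite{GW_anosov} or \cite{Labourie2006anosov}), and that the linear functional $\bar\psi$ falls within the class for which their analyticity result is stated (forms non-negative on $\fa_\theta^+$, which corresponds to their setting of sums of logarithms of singular values/Jordan projections weighted non-negatively). One should also make explicit that the normalization built into $\dim_\psi$ versus $\delta_{\bar\psi}$ is consistent — but this is precisely the content of Corollary \ref{cor.crithdim}, so no extra work is needed there. Everything else is bookkeeping; the proof is genuinely short once Corollary \ref{cor.crithdim} is in hand.
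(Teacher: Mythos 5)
Your proposal is correct and follows essentially the same route as the paper: the identity $\dim_\psi \La_\theta(\sigma(\Sigma)) = \delta_{\bar\psi}(\sigma(\Sigma))$ from Corollary \ref{cor.crithdim} combined with the analyticity of $\psi$-critical exponents over analytic families of Anosov representations from \cite[Proposition 8.1]{BCLS_gafa} (applied to $\bar\psi$, using that $\theta$-Anosov is equivalent to $(\theta\cup\i(\theta))$-Anosov). Your extra checks (positivity of $\psi$ on $\L_\theta-\{0\}$, non-negativity of $\bar\psi$) are exactly the bookkeeping the paper leaves implicit, so there is nothing to add.
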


\subsection*{$(p,q)$-Hausdorff dimensions} Let $\Sigma$ be a non-elementary convex cocompact subgroup of $\so(n, 1)= \Isom^+(\H^n)$, $n\ge 2$. Let $\op{CC}(\Sigma) $ denote the space
$$ \{ \sigma : \Sigma \to \so(n, 1) : \text{convex cocompact, faithful representation}\}/\sim$$
where the equivalence relation is given by conjugations.
As in the introduction, for $\sigma \in \op{CC}(\Sigma)$, we denote by $\La_{\sigma} \subset \S^{n-1} \times \S^{n-1}$ the limit set of the self-joining subgroup $\Sigma_{\sigma} := (\id \times \sigma)(\Sigma) < \so(n, 1) \times \so(n, 1)$, which is well-defined up to translations.
The Hausdorff dimension of $\La_{\sigma}$  with respect to a Riemannian metric on $\S^{n-1}\times \S^{n-1}$ is 
equal to  $\max \left(\dim \La_\Sigma, \dim \La_{\sigma(\Sigma)} \right)$, 
where $\La_{\Sigma} \subset \S^{n-1}$ and $\La_{\sigma(\Sigma)}\subset \S^{n-1}$ are limit sets of $\Sigma$ and $\sigma(\Sigma)$  respectively and Hausdorff dimensions  are computed with respect to a Riemannian metric on $\S^{n-1}$ \cite[Theorem 1.1]{KMO_HD}.
 
For a pair $(p, q)$ of positive real numbers, let $d_{p, q}$ be the premetric on $\S^{n-1} \times \S^{n-1}$ defined as $$d_{p, q}(\xi, \eta) = d_{\S^{n-1}} (\xi_1, \eta_1)^p d_{\S^{n-1}}(\xi_2, \eta_2)^q$$ where $\xi = (\xi_1, \xi_2), \eta = (\eta_1, \eta_2) \in \S^{n-1} \times \S^{n-1}$ and $d_{\S^{n-1}}$ is a Riemannian metric on $\S^{n-1}$. We also denote by $\dim_{p, q}$ the Hausdorff dimension with respect to $d_{p, q}$. 
Let $\delta_{p,q}(\sigma)$ denote the critical exponent of the series
$$s\mapsto \sum_{\ga \in \Sigma} e ^{ - s(pd_{\H^n}(o, \ga o) + q d_{\H^n}(o, \sigma(\ga) o))}.$$
We deduce the following:
\begin{cor}\label{analytic23}
Let $\Sigma$ be a non-elementary convex cocompact subgroup of $\so(n, 1)$, $n\ge 2$. Let $p,q$ be positive real numbers. 

\begin{enumerate}
    \item 
For any $\sigma \in \op{CC}(\Sigma)$, we have
 $$\dim_{p, q}\La_{\sigma} =\delta_{p,q}(\sigma).$$

\item For any $\sigma \in \op{CC}(\Sigma)$, we have $$\dim_{p, q} \La_{\sigma} \le \left( \frac{p}{\dim \La_{\Sigma}}  + \frac{q}{\dim \La_{\sigma(\Sigma)}}  \right)^{-1}$$ and the equality holds if and only if $\rho = \id$.

\item  Moreover the map $$\sigma \mapsto \dim_{p, q}\La_{\sigma} $$ is an analytic function on any analytic subfamily of $\op{CC}(\Sigma)$.
In particular, for $n=2,3$, it is analytic on $\op{CC}(\Sigma)$.
\end{enumerate}
\end{cor}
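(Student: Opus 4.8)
The plan is to realize Corollary \ref{analytic23} as an instance of the general theory applied to the self-joining. Work in $G = \so(n,1)\times\so(n,1)$, so that $\F_\Pi = \S^{n-1}\times\S^{n-1}$ and, since each factor $\so(n,1)$ has trivial opposition involution, $\i$ is trivial on $G$ and every $\psi\in\fa^*\cong\R\oplus\R$ is symmetric. First I would check that $\Sigma_\sigma = (\id\times\sigma)(\Sigma)$ is a non-elementary $\Pi$-Anosov subgroup of $G$: its Cartan projection sends $(\gamma,\sigma(\gamma))$ to $\big(d_{\H^n}(o,\gamma o),\, d_{\H^n}(o,\sigma(\gamma)o)\big)$, and the two simple roots $\alpha_1,\alpha_2$ read off these two coordinates, each of which grows linearly in the word length of $\gamma$ because $\Sigma$ and $\sigma(\Sigma)$ are convex cocompact and $\sigma$ is faithful; in particular $\L_\Pi(\Sigma_\sigma)-\{0\}$ lies in the open positive quadrant of $\fa^+$. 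Writing $\psi_{p,q}\in\fa^*$ for $(t_1,t_2)\mapsto pt_1+qt_2$ with $p,q>0$, one has $\psi_{p,q}(\mu(\gamma,\sigma(\gamma))) = p\,d_{\H^n}(o,\gamma o)+q\,d_{\H^n}(o,\sigma(\gamma)o)$, so $\delta_{\psi_{p,q}}(\Sigma_\sigma)=\delta_{p,q}(\sigma)$, and $\psi_{p,q}$ is positive on $\L_\Pi(\Sigma_\sigma)-\{0\}$.

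The heart of part (1) is the bi-Lipschitz comparison of $d_{p,q}$ with the conformal premetric $d_{\psi_{p,q}}$ of Section \ref{sec.confmetric}. Since Busemann cocycles on a product of symmetric spaces split as the sum of those of the factors, the $\fa$-valued Gromov product on $\F_\Pi$ of $G$ splits as the pair of $\fa_i$-valued Gromov products of the two rank-one factors; and for $\so(n,1)$, identifying $\fa_i\cong\R$, the visual premetric $e^{-\cal G(\cdot,\cdot)}$ on $\S^{n-1}$ is bi-Lipschitz to a round Riemannian metric $d_{\S^{n-1}}$ — the defining feature of real hyperbolic space, cf.\ \cite[Theorem 5.4]{Corlette_Inv}. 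Multiplying the two factors yields $d_{\psi_{p,q}}\big((\xi_1,\xi_2),(\eta_1,\eta_2)\big)\asymp d_{\S^{n-1}}(\xi_1,\eta_1)^p\,d_{\S^{n-1}}(\xi_2,\eta_2)^q = d_{p,q}$, with constant depending only on $p,q,n$; hence the corresponding Hausdorff dimensions coincide. Then Corollary \ref{cor.crithdim}, applied to $\Sigma_\sigma$ and $\psi_{p,q}$ (with $\bar\psi_{p,q}=\psi_{p,q}$ as $\i$ is trivial), gives $\dim_{p,q}\La_\sigma = \dim_{\psi_{p,q}}\La_\Pi(\Sigma_\sigma) = \delta_{\psi_{p,q}}(\Sigma_\sigma) = \delta_{p,q}(\sigma)$.

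Part (2) I would deduce from part (1) together with convexity and rigidity of the Manhattan curve. The region $R=\{(s,t)\in\R^2:\sum_{\gamma\in\Sigma}e^{-s\,d_{\H^n}(o,\gamma o)-t\,d_{\H^n}(o,\sigma(\gamma)o)}<\infty\}$ is convex by H\"older's inequality and stable under increasing either coordinate, and its boundary passes through $(\delta_\Sigma,0)$, $(0,\delta_{\sigma(\Sigma)})$ and $(p\,\delta_{p,q}(\sigma),\,q\,\delta_{p,q}(\sigma))$. Since $\delta_\Sigma=\dim\La_\Sigma$ and $\delta_{\sigma(\Sigma)}=\dim\La_{\sigma(\Sigma)}$ — Sullivan's theorem, equivalently the rank-one case of Theorem \ref{main2} — convexity and monotonicity force $\partial R$ to lie on the origin side of the chord $\frac{x}{\dim\La_\Sigma}+\frac{y}{\dim\La_{\sigma(\Sigma)}}=1$ throughout the first quadrant; evaluating this at $(p\,\delta_{p,q}(\sigma),\,q\,\delta_{p,q}(\sigma))$ and using part (1) gives the asserted inequality. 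Equality forces $\partial R$ to contain the chord, i.e.\ the Manhattan curve of $(\id,\sigma)$ is a straight line, which by the rigidity theorem of Bishop-Steger \cite[Theorem 2]{BS_rigidity} and Burger \cite[Theorem 1(a)]{Burger_manhattan} holds if and only if $\sigma$ is conjugate to $\id$. For part (3), part (1) reduces the claim to analyticity of $\sigma\mapsto\dim_{\psi_{p,q}}\La_\Pi((\id\times\sigma)(\Sigma))$; the assignment $\sigma\mapsto(\id\times\sigma)$ is an analytic map of $\op{CC}(\Sigma)$ into the $\Pi$-Anosov locus of $\op{Hom}(\Sigma,G)$ and $\psi_{p,q}\ge 0$ on $\fa^+$, so Corollary \ref{analytic} gives analyticity along any analytic subfamily, and for $n\le 3$ the entire space $\op{CC}(\Sigma)$ is an analytic manifold by the deformation theory of convex cocompact subgroups of $\so(n,1)$, so analyticity holds on all of $\op{CC}(\Sigma)$.

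The steps requiring care are the premetric bi-Lipschitz comparison in part (1) — routine once one has the product splitting of the $\fa$-valued Gromov product and the classical identification of the round metric on $\partial\H^n$ as a visual metric — and, more substantively, the equality case in part (2). I expect the latter to be the genuine obstacle, since it is the only ingredient not reducible to results proved in this paper: it rests on the external Manhattan-curve rigidity of Bishop-Steger and Burger, where one additionally invokes that proportional marked length spectra of convex cocompact representations into $\so(n,1)$ have proportionality constant $1$ and hence are conjugate.
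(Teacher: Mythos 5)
Your proposal is correct, and for parts (1) and (3) it follows the paper's route essentially verbatim: realize $\Sigma_\sigma=(\id\times\sigma)(\Sigma)$ as a $\Pi$-Anosov subgroup of $\so(n,1)\times\so(n,1)$, note that the opposition involution is trivial so $\Psi(u_1,u_2)=pu_1+qu_2$ is symmetric, identify $d_\Psi$ with $d_{p,q}$ via the visual description of the Gromov product on $\partial\H^n$ (the paper states this as an equality for a suitable $K$-invariant metric; your bi-Lipschitz version is all that is needed since Hausdorff dimension is a Lipschitz invariant), and then apply Corollary \ref{cor.crithdim} for (1) and Corollary \ref{analytic} together with the analyticity of $\op{CC}(\Sigma)$ for $n=2,3$ for (3).

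For part (2) you take a genuinely different, though closely related, route. The paper deduces (2) from its Theorem \ref{bur}: it rescales the two coordinate forms to $\alpha_i'=\delta_i\alpha_i$ so that each has critical exponent one, writes $p\alpha_1+q\alpha_2$ as a positive multiple of a convex combination of $\alpha_1',\alpha_2'$, and quotes Burger's Theorem 1(a) (and Bishop--Steger for $n=2$) that such a convex combination has critical exponent at most one, with equality only for $\sigma=\id$. You instead argue directly with the Manhattan curve: H\"older convexity of the convergence region $R$, the boundary points $(\delta_\Sigma,0)$, $(0,\delta_{\sigma(\Sigma)})$ and $(p\delta_{p,q},q\delta_{p,q})$, and the chord comparison give the inequality (your argument is sound, including the point that a ray point strictly below the critical parameter has a neighborhood in the divergence region, so it cannot lie in $\overline R$), and equality forces the chord into $\partial R$, reducing the equality case to straightness of the Manhattan curve and hence to the Bishop--Steger/Burger rigidity, with the proportionality-constant-one point you correctly flag. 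The two approaches rest on the same external rigidity input; yours re-derives the inequality of Theorem \ref{bur} self-containedly and makes the convexity picture explicit, while the paper's normalization argument is shorter given Burger's statement in the convex-combination form and is recorded separately as Theorem \ref{bur}. You also implicitly use $\delta_\Sigma=\dim\La_\Sigma$ and $\delta_{\sigma(\Sigma)}=\dim\La_{\sigma(\Sigma)}$ (Sullivan's rank-one theorem), as does the paper; no gap there.
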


\begin{proof}  
Identifying the Cartan subspace $\fa$ of $\so(n, 1) \times \so(n, 1)$ with $\br^2$ and $\fa^+$ with $\R_{\ge 0}^2$,
consider the linear form $\Psi\in \fa^*$ defined by
$\Psi(u_1, u_2)= pu_1+ q u_2$.
Since  $d_{\S^{n-1}}(\xi, \eta)=
e^{-\cal G(\xi, \eta)}$ is a $\op{SO}(n)$-invariant metric and hence a Riemannian metric where
$\cal G$ is the Gromov product on $\S^{n-1}\simeq\partial \bH^n$,
we have
$d_{\Psi} = d_{p, q}$, where $d_{\Psi}$ is defined in \eqref{eqn.defconfmetric}.
Since the opposition involution $\i$ is trivial for $\so(n, 1) \times \so(n, 1)$, the linear form $\Psi$ is symmetric and hence
the claims (1) and (3) respectively follow  from  Corollary \ref{cor.crithdim} and Corollary \ref{analytic} applied to any analytic subfamily of
$ \{ (\id\times \sigma):\Sigma \to \so(n, 1) \times \so(n, 1) : \sigma \in \op{CC}(\Sigma)\}$.
For  $n=2,3$,
$\op{CC}(\Sigma)$ is known to be analytic (cf. \cite{Bers_spaces}, \cite[Theorem 10.8]{Marden_geometry}, \cite{Imayoshi_Teich}). Hence the last claim  of (3) follows.
Claim (2) follows from (1) and the following Theorem \ref{bur}.
\end{proof}

The following theorem is due to Bishop-Steger \cite[Theorem 2]{BS_rigidity} for $n=2$
and to Burger \cite[Theorem 1(a)]{Burger_manhattan} in general. We denote by $\delta_{\Sigma}$ the critical exponent of $\Sigma$, the abscissa of convergence of the Poincar\'e series $s \mapsto \sum_{\ga \in \Sigma} e^{-s d_{\H^n}(o, \ga o)}$.
\begin{theorem} \label{bur} \label{cor.feb5rigidity}
  For each $\sigma \in \op{CC}(\Sigma)$, we have $$\delta_{p, q}(\sigma) \le \left( \frac{p}{\delta_{\Sigma}}  + \frac{q}{\delta_{\sigma(\Sigma)}}  \right)^{-1}$$ and the equality holds if and only if $\sigma = \id$.
\end{theorem}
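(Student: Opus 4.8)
The plan is to realize the pair $(\id,\sigma)$ through its self-joining subgroup $\Sigma_\sigma=(\id\times\sigma)(\Sigma)$, which, as in the proof of Corollary \ref{analytic23}, is convex cocompact and hence $\Pi$-Anosov in $\so(n,1)\times\so(n,1)$; here $\fa=\br^2$, $\fa^+=\br_{\ge0}^2$, and the opposition involution is trivial. For $(a,b)\in\fa^*\cong\br^2$ write $\delta_{(a,b)}$ for the abscissa of convergence of $\sum_{\ga\in\Sigma}e^{-a\,d_{\H^n}(o,\ga o)-b\,d_{\H^n}(o,\sigma(\ga)o)}$, so that $\delta_{(p,q)}=\delta_{p,q}(\sigma)$, $\delta_{(1,0)}=\delta_\Sigma$ and $\delta_{(0,1)}=\delta_{\sigma(\Sigma)}$. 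By Lemma \ref{lem.tangentcritexp} the three points
$$P_0:=\delta_{p,q}(\sigma)\,(p,q),\qquad P_1:=(\delta_\Sigma,0),\qquad P_2:=(0,\delta_{\sigma(\Sigma)})$$
all lie on the Manhattan curve of $\Sigma_\sigma$, namely the locus $\mathscr T_{\Sigma_\sigma}$ of linear forms tangent to the $\Pi$-growth indicator $\psi_{\Sigma_\sigma}$; this curve is the boundary of the convex region $\Omega=\{(a,b)\in\fa^*:\delta_{(a,b)}\le1\}$, convexity of $\Omega$ being H\"older's inequality applied to the defining Poincar\'e series (equivalently, the concavity of $\psi_{\Sigma_\sigma}$).

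The inequality will follow from pure convexity of $\Omega$. Since $P_1,P_2\in\partial\Omega$, the segment $[P_1,P_2]$ lies in $\overline\Omega$; since the origin does not lie in $\Omega$, the arc of $\partial\Omega$ separating the origin from $[P_1,P_2]$ lies in the half-plane $\{(a,b):a/\delta_\Sigma+b/\delta_{\sigma(\Sigma)}\le1\}$. Because $p,q>0$, the point $P_0$ is the first point at which the ray $\br_{>0}(p,q)$ meets $\Omega$, hence lies on that arc, and therefore
$$\delta_{p,q}(\sigma)\left(\frac{p}{\delta_\Sigma}+\frac{q}{\delta_{\sigma(\Sigma)}}\right)\le 1,$$
which is the asserted bound.

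For the rigidity statement, observe that equality holds exactly when $P_0$ lies on the segment $[P_1,P_2]$ itself. Then an interior point $P_0$ of $[P_1,P_2]$ lies on $\partial\Omega$ while $P_1,P_2\in\overline\Omega$, and a supporting-hyperplane argument forces the whole segment $[P_1,P_2]$ into $\partial\Omega$; thus the Manhattan curve $\mathscr T_{\Sigma_\sigma}$ contains a line segment. By the analyticity of $\psi_{\Sigma_\sigma}$ (Theorem \ref{strict}(1)), the tangent-form locus $\mathscr T_{\Sigma_\sigma}$ is strictly convex whenever the limit cone $\L$ of $\Sigma_\sigma$ is two-dimensional, so this can occur only when $\L$ is one-dimensional, say $\L=\br_{\ge0}(1,t_0)$ with $t_0>0$. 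Applying the Cartan projection to the powers $\ga^n$ and using $d_{\H^n}(o,\ga^n o)=n\,\ell_\Sigma(\ga)+O(1)$ and $d_{\H^n}(o,\sigma(\ga)^n o)=n\,\ell_{\sigma(\ga)}+O(1)$ then gives $\ell_{\sigma(\ga)}=t_0\,\ell_\Sigma(\ga)$ for all $\ga\in\Sigma$, so the marked length spectra of $\Sigma$ and $\sigma(\Sigma)$ are proportional. Marked length spectrum rigidity for convex cocompact hyperbolic manifolds --- which additionally forces $t_0=1$ --- then yields that $\sigma$ is conjugate to the inclusion, i.e. $\sigma=\id$ in $\op{CC}(\Sigma)$. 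Conversely, if $\sigma=\id$ then $d_{\H^n}(o,\ga o)=d_{\H^n}(o,\sigma(\ga)o)$ for all $\ga$, so $\delta_{p,q}(\id)=\delta_\Sigma/(p+q)=\left(p/\delta_\Sigma+q/\delta_\Sigma\right)^{-1}$ and equality holds.

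The main obstacle is precisely the rigidity invoked in the equality case: that proportional marked length spectra of two convex cocompact subgroups of $\so(n,1)$ force them to be conjugate, with ratio one. For $n=2$ this rests on Otal's marked length spectrum theorem, its reformulation as straightness of the Manhattan curve being exactly Bishop--Steger's result, while the general case is Burger's theorem; this is why the statement is cited rather than reproved here. Everything else --- convexity of the convergence region, the location of the two axis intercepts, and the supporting-hyperplane dichotomy --- is elementary and is already available in the present setting through the concavity and strict concavity of the $\Pi$-growth indicator of the Anosov subgroup $\Sigma_\sigma$ (Definition \ref{def.growthindicator} and Theorem \ref{strict}).
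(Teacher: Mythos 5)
Your argument is essentially correct, but it takes a different route from the paper, and it is worth being clear about what it does and does not buy. The paper's proof is a two-line reduction: it sets $\alpha_i'=\delta_i\alpha_i$ so that $\delta_{\alpha_i'}=1$, observes that $p\alpha_1+q\alpha_2$ is a positive multiple of a convex combination of $\alpha_1'$ and $\alpha_2'$, and then quotes Burger's Theorem 1(a) verbatim for both the inequality and the rigidity. You instead prove the inequality internally: the region $\Omega=\{\delta_{(a,b)}\le 1\}$ is convex (H\"older, equivalently concavity of the growth indicator), the points $(\delta_\Sigma,0)$, $(0,\delta_{\sigma(\Sigma)})$ and $\delta_{p,q}(\sigma)(p,q)$ lie on its boundary by Lemma \ref{lem.tangentcritexp}, and the ray argument gives the bound; this part is complete and uses only what the paper provides. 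For the equality case you show that equality forces a nondegenerate segment in the tangent-form locus, hence (via Theorem \ref{strict}: the touching direction is interior and $\psi_{\Sigma_\sigma}$ is analytic there, so each interior direction admits a unique tangent form) a one-dimensional limit cone, hence proportional marked length spectra $\ell_{\sigma(\ga)}=t_0\ell_\Sigma(\ga)$; note it is the analyticity/differentiability of the growth indicator, not its strict concavity, that rules out the segment, which is the duality you implicitly invoke. The one thing your route does not deliver is independence from the cited literature: the statement you need at the end --- proportional marked length spectra of convex cocompact subgroups of $\so(n,1)$ force $t_0=1$ and conjugacy --- is essentially equivalent to the equality clause of Burger's theorem (indeed, proportionality immediately yields equality in the Manhattan inequality), and your attribution to Otal is not quite the relevant reference, since Otal's theorem concerns variable curvature and does not address the proportionality constant. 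So the net comparison is: your proof makes the inequality self-contained within the paper's growth-indicator framework and isolates exactly which rigidity input is needed, while the paper's proof is shorter and cites a single result that literally contains both halves; both ultimately rest on Bishop--Steger and Burger for the hard rigidity.
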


\begin{proof} We explain how to deduce this from \cite[Theorem 1(a)]{Burger_manhattan}.
 We again identify the Cartan subspace $\fa$ of $\so(n, 1) \times \so(n, 1)$ with $\br^2$. For each $i=1,2$, denote by $\delta_i$ the $\alpha_i$-critical exponent of $\Sigma_{\sigma} = (\id \times \sigma)(\Sigma)$
 where $\alpha_i : \fa \to \R$, $(u_1, u_2) \mapsto u_i$. 
 Then $\delta_1=\delta_\Sigma$ and  $\delta_{2} = \delta_{\sigma(\Sigma)}$. 
If we set $\alpha_i':= \delta_i \alpha_i$, then $\delta_{\alpha_i'}=1$ for each $i=1,2$ and hence
  Burger's theorem \cite[Theorem 1(a)]{Burger_manhattan} implies that
  the critical exponent of any convex combination of $\alpha_1'$ and $\alpha_2'$ is at most one and is equal to one only when $\sigma=\id$.

   Since $$
       p\alpha_1+q\alpha_2 = \left( \frac{p}{\delta_{1}}  + \frac{q}{\delta_{2}}  \right) \frac{\frac{p}{\delta_{1}} \alpha_1' + \frac{q}{\delta_{2}} \alpha_2'}{\frac{p}{\delta_{1}} + \frac{q}{\delta_{2}}}
    =\left( \frac{p}{\delta_{1}}  + \frac{q}{\delta_{2}}  \right) \Psi_0 $$
    where $\Psi_0 := \frac{\frac{p}{\delta_{1}}  \alpha_1' + \frac{q}{\delta_{2}}  \alpha_2'}{\frac{p}{\delta_{1}} + \frac{q}{\delta_{2}}}$  is a convex combination of $\alpha_1'$ and $ \alpha_2'$,
    we get
    \be \label{eqn.feb5convexcomb}
    \delta_{p,q}(\sigma) = \delta_{p\alpha_1+q\alpha_2}=\left( \frac{p}{\delta_{1}}  + \frac{q}{\delta_{2}}  \right)^{-1} \delta_{\Psi_0} \le \left( \frac{p}{\delta_{1}}  + \frac{q}{\delta_{2}}  \right)^{-1}
    \ee
    and the equality holds if and only if $\sigma = \id$.
\end{proof}

\begin{remark}
    We remark that in Corollary \ref{analytic23},
    the hypothesis $p, q>0$ was imposed to be able to consider {\em all} $\sigma \in \op{CC}(\Sigma)$. If we replace $\op{CC}(\Sigma)$ by a subset
    $\cal D \subset \op{CC}(\Sigma)$,  then Corollary \ref{analytic23} holds for
     any $p, q \in \R$ such that $p u_1 + q u_2 > 0$ for all non-zero $(u_1, u_2) \in \L(\Sigma_\sigma)$ and all $\sigma \in \cal D$.
\end{remark}

\section{Hausdorff dimensions with respect to Riemannian metrics} \label{sec.riem}
Let $G$ be a connected semisimple real algebraic group. As before, let
$\theta$ be a non-empty subset of the set $\Pi$ of simple roots of $(\frak g, \frak a)$.
   We denote  by $$\dim \La_{\theta}$$ the Hausdorff dimension of $\La_{\theta}$ with respect to
 a Riemannian distance $d_{\rm Riem}$ on $\F_{\theta}$. As any two Riemannian metrics are bi-Lipschitz, $\dim \La_\theta$ is well-defined independent of the choice of a Riemannian metric.
In this section, we present an estimate on $\dim \La_\theta$.

\subsection*{Tits representations and the sum of Tits weights}
Let $\mathbf G$ be the semisimple algebraic group defined over $\br$ such that $G=\mathbf G(\br)^\circ$.
There exists an exact sequence $\tilde{\mathbf G}\to_{\tilde p} \mathbf G\to_{\bar p} \bar{\mathbf G}$
where $\tilde {\mathbf G}$  and $\bar{\mathbf G}$ are respectively simply connected and adjoint semisimple $\br$-groups and $\tilde p$ and $\bar p$ are central $\br$-isogenies (\cite{BT}, \cite[Proposition 1.4.11]{Mar}).

 Recall that for $\alpha \in \Pi$, $\omega_\alpha$ denotes the (restricted) fundamental weight associated to $\alpha$ as defined in \eqref{fw}.
The first part of the following theorem immediately follows as a special case of a theorem of Tits \cite{Tits_representations}, and the second part is remarked in \cite{Benoist1997proprietes} and proved in \cite{Sm}.
\begin{theorem}[{\cite[Theorem 7.2]{Tits_representations}, \cite[Lemma 2.13]{Sm}}] \label{tits} 
  For each $\alpha\in \Pi$, there exists an irreducible $\br$-representation ${\tilde{\rho}}_{\alpha}: \tilde{\mathbf{G}}\to \op{GL} (\mathbf V_{\alpha}) $ whose  highest (restricted) weight 
    $\chi_{\alpha}$ is equal to $k_\alpha \omega_\alpha$
    for some positive integer $k_\alpha$ and whose highest weight space is one-dimensional.
    Moreover, all weights of $\tilde\rho_{\alpha}$ are $\chi_\alpha$, $\chi_\alpha-\alpha$ and
  weights of the form $\chi_{\alpha} - \alpha - \sum_{\beta \in \Pi} n_{\beta} \beta$ with $n_\beta$ non-negative integers. 
\end{theorem}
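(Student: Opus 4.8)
The plan is to obtain the existence statement as a direct specialization of Tits' classification of irreducible representations of reductive groups over arbitrary fields, and to read off the ``Moreover'' clause from elementary highest-weight combinatorics using the normalization $\chi_\alpha=k_\alpha\omega_\alpha$. Since the theorem is essentially a repackaging of \cite[Theorem 7.2]{Tits_representations} and \cite[Lemma 2.13]{Sm}, the write-up would cite these for the substance and spell out only the combinatorial half.

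For the first part: fix an algebraic closure $\bar\br$, a maximal $\br$-torus $\mathbf T\supset\mathbf A$ of $\tilde{\mathbf G}$, and the associated absolute root datum, on which $\op{Gal}(\bar\br/\br)$ acts through the $*$-action. Each restricted simple root $\alpha\in\Pi$ corresponds, via restriction, to a single $\op{Gal}$-orbit of absolute simple roots not in the anisotropic kernel; let $\varpi'_\alpha$ be the sum of the absolute fundamental weights over that orbit, a $\op{Gal}$-invariant dominant weight. Tits' theorem produces a positive integer $k_\alpha$ (governed by a Brauer/Schur index) such that the absolute irreducible module of highest weight $k_\alpha\varpi'_\alpha$ descends to an $\br$-representation $\tilde\rho_\alpha\colon\tilde{\mathbf G}\to\op{GL}(\mathbf V_\alpha)$, and one composes with $\tilde p$ to land on $\mathbf G$ when the central character allows. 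Restricting weights from $\mathbf T$ to $\mathbf A$, the restriction of $k_\alpha\varpi'_\alpha$ is, by the defining relation \eqref{fw} of $\omega_\alpha$, a positive integer multiple $\chi_\alpha:=k_\alpha\omega_\alpha$ of $\omega_\alpha$; it is the highest restricted weight, and since no absolute weight other than $k_\alpha\varpi'_\alpha$ restricts to it, the corresponding restricted weight space is the absolute highest-weight line and hence one-dimensional.

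For the weight description: every restricted weight $\mu$ of $\tilde\rho_\alpha$ lies in $\chi_\alpha-\sum_{\beta\in\Pi}\Z_{\ge0}\beta$, and, by the standard description of weights of a highest-weight module, if $\mu\ne\chi_\alpha$ then one can reach $\mu$ from $\chi_\alpha$ by successively subtracting simple roots, passing only through weights. The first step produces a weight of the form $\chi_\alpha-\beta$; but $\chi_\alpha-\beta$ is a weight only when $2(\chi_\alpha,\beta)/(\beta,\beta)\ge1$, which by \eqref{fw} fails for $\beta\ne\alpha$ and holds for $\beta=\alpha$. Hence the first step is forced to be $\chi_\alpha-\alpha$, so every weight $\ne\chi_\alpha$ has the form $\chi_\alpha-\alpha-\sum_{\beta\in\Pi}m_\beta\beta$ with $m_\beta\in\Z_{\ge0}$, and the unique such weight with all $m_\beta=0$ is $\chi_\alpha-\alpha$. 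The main obstacle lies entirely in the first part: controlling the field of definition of the absolute module — i.e. that multiplying the highest weight by the integer $k_\alpha$ kills the descent obstruction — and verifying that the restriction of the absolute highest weight to the split torus is an exact integer multiple of $\omega_\alpha$ rather than a coarser combination. This is precisely the content of the cited results of Tits and Smilga, which I would invoke rather than reprove; the weight combinatorics above is routine.
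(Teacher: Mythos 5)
Your proposal is correct in substance, and on the existence half it matches the paper exactly: the paper offers no argument beyond the citations to \cite[Theorem 7.2]{Tits_representations} (with \cite{Benoist1997proprietes} and \cite[Lemma 2.13]{Sm} for the weight statement), and you likewise defer the descent/Schur-index issues to Tits, merely sketching the Galois $*$-action mechanism. Where you genuinely diverge is the ``Moreover'' clause: the paper again cites Smilga, whereas you derive it directly from the normalization $\chi_\alpha=k_\alpha\omega_\alpha$ via the string/saturation argument (the first simple root subtracted from the highest weight must pair positively with it, and by \eqref{fw} only $\alpha$ does, the non-reduced case being harmless since the pairing is $k_\alpha c_\alpha\ge 1$ for $\beta=\alpha$ and $0$ otherwise). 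This buys a short self-contained proof of the weight description, which is essentially the content of the cited lemma anyway.

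Two points should be made explicit for your argument to be airtight. First, your combinatorics is phrased for \emph{restricted} weights, but the ``standard description of weights of a highest-weight module'' (every weight reachable from the highest one through weights by subtracting simple roots, and $\lambda-\beta$ a weight only if $\langle\lambda,\beta^\vee\rangle\ge 1$) is a statement about absolute weights of the complexification. One must transfer it: e.g.\ restrict an absolute chain, using that absolute simple roots restrict to elements of $\Pi$ or to $0$, that restrictions of positive absolute roots are nonnegative integral combinations of $\Pi$, and that the set of restricted weights is invariant under the restricted Weyl group and bounded above by $\chi_\alpha$; alternatively argue entirely at the absolute level, using that the absolute highest weight of $\tilde\rho_\alpha$ pairs to zero with every absolute simple coroot not lying over $\alpha$. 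Second, the one-dimensionality of the highest weight space does not follow from your remark that only the absolute highest weight restricts to $\chi_\alpha$: if $\tilde\rho_\alpha$ were irreducible over $\br$ but not absolutely irreducible, that restricted weight space could have dimension greater than one. This proximality is precisely what the minimal integer $k_\alpha$ in Tits' theorem guarantees, so it is covered by your citation, but it should be attributed to it rather than to the restriction count. (Also note that the paper later uses that $\chi_\alpha-\alpha$ actually occurs as a weight; this follows since some absolute simple root over $\alpha$ pairs positively with the highest weight, and is worth a sentence, as your argument only gives the containment.)
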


For each $\alpha\in \Pi$, we fix once and for all a representation $\tilde\rho_\alpha:\tilde{\mathbf{G}}\to \op{GL} (\mathbf V_\alpha)$ as in Theorem \ref{tits} with {\it minimal} $k_\alpha$.
Since $\tilde p$ and $\bar p$ are central isogenies and $\tilde p (\tilde{\mathbf G}(\br))=G$, the representation $\tilde\rho_\alpha$ induces a projective representation 
\be\label{choice} \rho_\alpha: G\to \op{PGL}(V_\alpha)\ee 
where $V_\alpha=\mathbf V_\alpha(\br)$. Since the restriction of $\tilde \rho_\alpha$ to
$\tilde{\mathbf G}(\br)$ and $\rho_\alpha$ induce the same representation of
the Lie algebra $\mathfrak g$ to $\mathfrak{gl}(V_\alpha)$,  their restricted weights are the same. 
We call \be\label{ttt2}
\rho_\alpha \quad \text{and} \quad \chi_\alpha
\ee the Tits representation and the Tits weight associated to $\alpha$ respectively.

Let $\rho$ denote the half-sum of all positive roots for $(\fg, \fa)$ counted with multiplicity: $2\rho =\sum_{\alpha\in \Phi^+} (\dim \fg^{\alpha} ) \alpha$.
In terms of the restricted fundamental weights $\omega_\alpha$, we then have
\be \label{rhow} \rho=\sum_{\alpha\in \Pi} c_\alpha \omega_\alpha\ee 
where $c_\alpha = \dim \frak g^{\alpha}$ if $2\alpha$ is not a root, and
$c_\alpha = \frac{1}{2}(\dim \fg^{\alpha} + 2 \dim \fg^{2 \alpha}) $ otherwise (cf. \cite{Bourbaki}). If $G$ is split over $\R$, 
we have $\chi_\alpha=\omega_\alpha$ for all $\alpha\in \Pi$ and hence
$ \sum_{\alpha \in \Pi} \chi_{\alpha}=\rho$. In general, we do not have this identity, which motivates the following definition:

\begin{Def}\label{CGG} 
Define ${\mathsf c}_\theta$ to be the minimum number $c\ge 0$
such that  $$\sum_{\alpha\in \theta} (\chi_\alpha + \chi_{\i(\alpha)}) \le  c\cdot \rho \quad \text{on } \fa^+.$$
We also set $\mathsf{c}_G := \mathsf{c}_{\Pi}$.

\end{Def}
It is easy to check that $0<\mathsf{c}_{\theta} \le \mathsf{c}_G$, and moreover if $\theta \cap \i(\theta) = \emptyset$, then $\mathsf{c}_{\theta} \le \frac{\mathsf{c}_G}{2}$.
By our choice of the Tits representation of $G$, note that ${\mathsf c}_G$ depends only on the Lie algebra $\mathfrak g$; hence we sometimes write ${\mathsf c}_G=\mathsf{c}_{\mathfrak g}$.
The proof of the following lemma was provided by I. Smilga:
\begin{lem}\label{smi}
    We have 
    $\sum_{\alpha\in \Pi} \chi_\alpha \le \rho,$
    and hence $$\mathsf c_{\mathfrak g}\le 2.$$
\end{lem}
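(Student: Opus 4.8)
The plan is to reduce the inequality to a statement about each weight $\chi_\alpha = k_\alpha \omega_\alpha$ relative to the fundamental-weight expansion $\rho = \sum_{\beta \in \Pi} c_\beta \omega_\beta$ from \eqref{rhow}, and then to compare $k_\alpha$ with $c_\alpha$ root by root. Since both $\sum_\alpha \chi_\alpha$ and $\rho$ are nonnegative combinations of the $\omega_\beta$ (for $\rho$ this is \eqref{rhow}; for $\sum_\alpha \chi_\alpha$ it is clear since $\chi_\alpha = k_\alpha\omega_\alpha$ with $k_\alpha \ge 1$), and since a nonnegative combination of fundamental weights is nonnegative on $\fa^+$, the inequality $\sum_\alpha \chi_\alpha \le \rho$ on $\fa^+$ is equivalent to $k_\alpha \le c_\alpha$ for every $\alpha \in \Pi$. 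So the first step is to record this reduction carefully, using that the $\omega_\beta$ are nonnegative on $\fa^+$ and linearly independent.

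Next I would bound $k_\alpha$ from above using the structure of the Tits representation $\tilde\rho_\alpha$ described in Theorem \ref{tits}. The highest weight is $\chi_\alpha = k_\alpha\omega_\alpha$, and $\chi_\alpha - \alpha$ is also a weight; writing $\alpha = \sum_\beta a_{\alpha\beta}\,\omega_\beta$ in terms of (co)fundamental data — equivalently using the restricted Cartan integers $2(\omega_\beta,\alpha)/(\alpha,\alpha)$ — the condition that $\chi_\alpha - \alpha$ is a weight (and the known $\mathfrak{sl}_2$-theory along the $\alpha$-root string, taking into account the factor $c_\alpha$ in \eqref{fw} when $2\alpha \in \Phi$) forces $k_\alpha$ to be at most the corresponding coefficient governing how $\alpha$ pairs with $\omega_\alpha$. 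The key arithmetic input is that $2(\omega_\alpha,\alpha)/(\alpha,\alpha) = c_\alpha$ by \eqref{fw}, so the $\alpha$-string through $\chi_\alpha$ has length forcing $k_\alpha \le c_\alpha$; here one uses that $c_\alpha = \dim\fg^\alpha$ (or $\tfrac12(\dim\fg^\alpha + 2\dim\fg^{2\alpha})$) is exactly the multiplicity data entering $\rho$, and that $\dim\fg^\alpha \ge 1$ guarantees $k_\alpha \ge 1$ on the other side. I would phrase this as: $k_\alpha$ divides/bounds the $\alpha$-component of $2\rho$, invoking \cite{Tits_representations} and \cite{Sm} for the precise weight combinatorics rather than reproving it.

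The main obstacle will be the non-split and non-reduced cases: when $2\alpha \in \Phi$ the normalization constant $c_\alpha = 2$ in \eqref{fw} changes the relation between $\omega_\alpha$ and $\alpha$, and the multiplicities $\dim\fg^\alpha$, $\dim\fg^{2\alpha}$ can be large, so one must check that the minimality of $k_\alpha$ (we chose $\tilde\rho_\alpha$ with \emph{minimal} $k_\alpha$) together with the weight constraints still yields $k_\alpha \le c_\alpha$ uniformly. I expect this to come down to a finite check against Tits's classification tables \cite{Tits_classification} of the possible highest weights $\chi_\alpha$ for each real form of each simple type, verifying in each case that the coefficient $k_\alpha$ never exceeds the multiplicity coefficient $c_\alpha$ appearing in \eqref{rhow}. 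Once $k_\alpha \le c_\alpha$ is established for all $\alpha$, the lemma follows: $\rho - \sum_\alpha\chi_\alpha = \sum_\alpha (c_\alpha - k_\alpha)\omega_\alpha$ is a nonnegative combination of fundamental weights, hence $\ge 0$ on $\fa^+$, giving $\sum_\alpha\chi_\alpha \le \rho$ and therefore $\mathsf c_{\mathfrak g} \le 1$.
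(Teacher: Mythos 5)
Your reduction step is fine in the one direction you actually need: since each restricted fundamental weight $\omega_\beta$ is nonnegative on $\fa^+$, it suffices to prove the coefficientwise bound $k_\alpha \le c_\alpha$ for every $\alpha \in \Pi$, where $\rho = \sum_{\beta\in\Pi} c_\beta\,\omega_\beta$ as in \eqref{rhow}. (The ``equivalence'' you assert is not needed, and its converse is false in general: a linear form nonnegative on $\fa^+$ lies in the cone spanned by the simple roots, not necessarily in the cone spanned by the fundamental weights. This is harmless.)

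The genuine gap is in the step that is supposed to bound $k_\alpha$. First, you conflate two different constants which the paper unfortunately both denotes $c_\alpha$: the normalization constant of \eqref{fw}, namely $2(\omega_\alpha,\alpha)/(\alpha,\alpha) \in \{1,2\}$, and the coefficient of $\omega_\alpha$ in \eqref{rhow}, which is the multiplicity datum $\dim\fg^{\alpha}$ (resp.\ $\tfrac12(\dim\fg^{\alpha}+2\dim\fg^{2\alpha})$). These are different numbers; for $\fg=\mathfrak{so}(n,1)$ they are $1$ and $n-1$, so your ``key arithmetic input'' is false as stated. Second, the root-string/$\mathfrak{sl}_2$ argument runs the wrong way: the fact that $\chi_\alpha-\alpha$ is a weight (Theorem \ref{tits}) only gives the \emph{lower} bound $\langle\chi_\alpha,\alpha^\vee\rangle\ge 1$; nothing in the weight combinatorics you describe produces an upper bound on $k_\alpha$, let alone a comparison of $k_\alpha$ with the restricted root multiplicities entering $\rho$. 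The inputs that make such a comparison possible --- that a highest weight is proximal if and only if it lies in $\fa^*$ \cite{AMS_semigroups}, and that the minimal proximal multiple satisfies $k_\alpha\in\{1,2\}$ \cite{Benoist_automorphism} --- do not appear in your plan, and your fallback of ``a finite check against Tits's classification tables'' is not carried out (nor is \cite{Tits_classification} a table of minimal proximal weights versus restricted multiplicities). For contrast, the paper's proof needs no case-by-case inspection of real forms: it passes to the complexification $\fg_{\bc}$ with a Cartan subalgebra $\fh\supset\fa$, writes $\rho=\pi(\rho_{\bc})$ with $\rho_{\bc}=\sum_j\bar\omega_j$ and $\pi:\fh^*\to\fa^*$ the restriction, and shows that if $k_\alpha=2$ then $2\omega_\alpha$, being an integral weight lying in $\fa^*$, must be a nonnegative integral combination of at most two complex fundamental weights, which forces the coefficient of $\omega_\alpha$ in $\rho$ to be at least $2$. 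You would need an argument of this kind (or an actually completed classification check) to close the gap.
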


\begin{proof} Let $\mathfrak g_\mathbb C$ be the complexification of $\frak g=\op{Lie}G$ and $\fh$ be a Cartan subalgebra of $\fg_\mathbb C$
containing $\fa$.   Since $\fa\subset \fh$, we have a natural restriction map $\pi : \mathfrak{h}^* \to \fa^*$.  Recall the restricted fundamental weights $\omega_1, \cdots, \omega_s$ defined in \eqref{fw} where $s=\dim \fa $; they form a basis of $\fa^*$.
We denote by $\bar \omega_1, \cdots, \bar \omega_r$ the fundamental weights of $(\fg_{\mathbb C}, \fh)$ where $r=\dim \fh$, which were chosen compatibly with $\omega_j$'s
 so that $\pi$ sends each $\bar \omega_j$ to some linear combination $\sum_{i} c_{j, i} \omega_i$ where $c_{j, i}$ are non-negative integers. They form a basis of $\fh^*$.

Set $\rho_\mathbb C=\sum_{i=1}^r \bar \omega_i$, which is equal to the half-sum of all positive roots of $(\fg_\mathbb C, \fh)$. Then 
$$\rho=\pi( \rho_{\mathbb C})= \sum_i d_i \omega_i$$
   where  $d_i=\sum_j c_{j, i}\in \mathbb N$. For the Tits weights $\chi_i=\kappa_i \omega_i$
for $i=1,\cdots ,s$, recall that $\kappa_i$
is the smallest positive integer $\kappa$ such that $\kappa \omega_i$ is proximal, that is, its highest weight space is one-dimensional. 
In view of the Killing form, we may consider $\fa^*$ as a subset of $\mathfrak{h}^*$. We have the following facts:
\begin{itemize}
    \item a representation with the highest weight $\chi\in \mathfrak{h}^*$ is proximal if and only if $\chi$ actually lies in $\fa^*$ \cite[Theorem 6.3]{AMS_semigroups};
    \item each  coefficient $\kappa_i$ is either $1$ or $2$ \cite[Section 2.3]{Benoist_automorphism}.
\end{itemize}

We now claim that $\kappa_i\le d_i$ for all $1\le i\le s$; this implies that $\sum_{i} \chi_i
=\sum_i \kappa_i \omega_i\le \rho$. This is clear if $\kappa_i=1$, since $d_i\ge 1$. 
So suppose that $\kappa_i=2$ and let us show that $d_i\ge 2$. Then $\chi_i=2  \omega_i$ lies in $\fa^*$ and is an integral weight; hence it is equal to some linear combination $\sum_j c_j \bar \omega_j$  with non-negative integer coefficients $c_j$. Moreover the sum $\sum_j c_j$ cannot exceed $2$, because $\pi$ has to map $\sum_j  c_j \bar \omega_j$ to $2  \omega_i$, and it maps each $\bar \omega_j$ to some non-zero sum of the $\omega_k$'s. So we are left with three cases:
\begin{enumerate}
    \item[(a)] $2  \omega_i = \bar \omega_j$ for some $j$;
    \item[(b)] $2  \omega_i = \bar \omega_j + \bar \omega_{k}$ for some distinct $j$ and $k$;
    \item[(c)] $2 \omega_i = 2 \bar \omega_j$ for some $j$.
\end{enumerate}

We can rule out case (c), because then $\omega_i = \bar \omega_j$ would be proximal which contradicts $\kappa_i = 2$. In case (a), we get that $c_{j, i}=2$ and hence $d_i$ is at least $2$. In case (b), applying $\pi$ on both sides, we necessarily have $\pi(\bar \omega_j) = \pi(\bar \omega_{k}) = \omega_i$, so $c_{j, i} = c_{k, i} = 1$ and hence $d_i$ is also at least 2.
\end{proof}

The bound on $\mathsf c_{G}$ can be improved in certain cases. For example, for $\mathfrak g=\mathfrak{so}(n,1)$, $n\ge 2$,
we have $\Pi=\{\alpha\}$, $\rho=\frac{n-1}{2} \alpha$ and $\chi_\alpha=\omega_\alpha=\frac{\alpha}{2}$; hence
$\mathsf c_{\mathfrak g}=\frac{2}{n-1}$.

\subsection*{Riemannian metric on $\mathcal F_\theta$} For each $\alpha \in \Pi$, we denote by $V_{\alpha}^+$ the highest weight space of $\rho_{\alpha}$ and by $V_{\alpha}^{<}$ its unique complementary $A$-invariant subspace in $V_{\alpha}$. Then the map $g \in G \mapsto (\rho_{\alpha}(g)V_{\alpha}^+)_{\alpha \in \theta}$ factors through a proper immersion \be\label{pi} \F_{\theta} \to \prod_{\alpha \in \theta} \P(V_{\alpha}).\ee 

Let $\langle \cdot, \cdot \rangle_{\alpha}$ be a $K$-invariant inner product on $V_{\alpha}$ with respect to which $A$ is symmetric, so that $V_{\alpha}^+$ is perpendicular to $V_{\alpha}^{<}$. We denote by $\| \cdot \|_{\alpha}$ the norm on $V_{\alpha}$ induced by $\langle \cdot, \cdot \rangle_{\alpha}$. We also use the notation $\| \cdot \|_{\alpha}$ for a bi-$\rho_{\alpha}(K)$-invariant norm on $\GL(V_{\alpha})$. 
The angle $\angle (E, F)$ between a line $E$ and a subspace $F$ is defined
as minimum of all angles between all non-zero $v\in E$ and non-zero $w\in F$.

We write $g V_{\alpha}^+ := \rho_{\alpha}(g) V_{\alpha}^+$ and $g V_{\alpha}^{<} := \rho_{\alpha}(g) V_{\alpha}^{<}$ for $g \in G$ and $\alpha \in \Pi$. Up to a Lipschitz equivalence, the Riemannian distance $d_{\rm Riem}$ on $\F_\theta=G/P_\theta$
satisfies that for all $g_1, g_2 \in G$, 
$$d_{\rm Riem}(g_1P_{\theta}, g_2 P_{\theta}) = \sqrt{\sum_{\alpha \in \theta} \sin^2 \angle (g_1 V_{\alpha}^+, g_2 V_{\alpha}^+)}.$$ 

The Gromov product $\cal G$ on $\F^{(2)}$ can be expressed in terms of angles between appropriate subspaces as follows:
\begin{lemma}[{\cite[Lemma 6.4]{Quint2002Mesures}, \cite[Lemma 3.11]{LO_invariant}}] \label{lem.gromovangle}
    For  $(\xi, \eta) \in \F^{(2)}$, we have that for any $\alpha\in \Pi$,
    $$2\chi_{\alpha} (\cal G(\xi, \eta)) = -\log \sin \angle (gV_{\alpha}^+, gV_{\alpha}^{<})$$
    where $g\in G$ is such that $\xi=gP$ and $\eta=gw_0P$.
\end{lemma}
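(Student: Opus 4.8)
The plan is to reduce both sides of the identity to norms of vectors for the Tits representation $\rho_\alpha$ and its contragredient, using the standard dictionary between the Busemann cocycle and highest-weight norms. Throughout one replaces $\rho_\alpha$ by a fixed linear lift (available since $\tilde p(\tilde{\mathbf G}(\br))=G$); this is harmless, because the identity to be proved involves only the subspaces $gV_\alpha^+$ and $gV_\alpha^<$ and is unaffected by rescaling $\rho_\alpha(g)$. The first step is the dictionary: if $v_\alpha^+$ is a unit highest weight vector spanning $V_\alpha^+$, then for every $g\in G$ and $\xi=gP$,
\[
\chi_\alpha(\beta_\xi(e,g))=\log\|\rho_\alpha(g)v_\alpha^+\|_\alpha .
\]
I would prove this from the Iwasawa decomposition $g=\kappa a\nu\in KAN$: then $\xi=\kappa P$, $\sigma(e,\xi)=0$, and $g^{-1}\kappa=\nu^{-1}a^{-1}\in AN$ gives $\sigma(g^{-1},\xi)=-\log a$, so $\beta_\xi(e,g)=\log a$; while $\rho_\alpha(\nu)v_\alpha^+=v_\alpha^+$, $\rho_\alpha(a)v_\alpha^+=e^{\chi_\alpha(\log a)}v_\alpha^+$, and $\rho_\alpha(K)$-invariance of $\|\cdot\|_\alpha$ give $\|\rho_\alpha(g)v_\alpha^+\|_\alpha=e^{\chi_\alpha(\log a)}$.

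Next I would identify the geometric data. The $G$-map $g'P\mapsto\rho_\alpha(g')V_\alpha^+$ realizes $\F$ inside $\P(V_\alpha)$, so $\xi=gP$ is the line $gV_\alpha^+$. Dually, $P$ fixes the hyperplane $V_\alpha^{>}$ spanned by all non-lowest weight spaces, and $\rho_\alpha(w_0)V_\alpha^{>}=V_\alpha^{<}$ because $w_0$ interchanges highest and lowest weights; hence the $G$-map $g'P\mapsto\rho_\alpha(g')V_\alpha^{>}$ sends $\eta=gw_0P$ to $\rho_\alpha(gw_0)V_\alpha^{>}=gV_\alpha^{<}$. Thus $\angle(gV_\alpha^+,gV_\alpha^<)$ is the angle between the line attached to $\xi$ and the hyperplane attached to $\eta$, which is positive since $(\xi,\eta)\in\F^{(2)}$. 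Writing $\phi_\alpha=\langle v_\alpha^+,\cdot\rangle_\alpha$ for the unit functional cutting out $V_\alpha^<$ (here $V_\alpha^+\perp V_\alpha^<$ is used) and $\rho_\alpha^*(g)\phi=\phi\circ\rho_\alpha(g)^{-1}$, elementary linear algebra gives
\[
\sin\angle(gV_\alpha^+,gV_\alpha^<)=\bigl(\|\rho_\alpha(g)v_\alpha^+\|_\alpha\cdot\|\rho_\alpha^*(g)\phi_\alpha\|_\alpha^*\bigr)^{-1},
\]
where $\|\cdot\|_\alpha^*$ is the dual norm, since $\rho_\alpha^*(g)\phi_\alpha$ vanishes on $gV_\alpha^<$ and takes the value $1$ on $\rho_\alpha(g)v_\alpha^+$.

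Finally I would recognize the second factor. The contragredient $\rho_\alpha^*$ has one-dimensional highest weight space of weight $-w_0\chi_\alpha=\chi_{\i(\alpha)}$, so the dictionary of the first step applies to $\rho_\alpha^*$; moreover $\phi_\alpha$ spans the lowest weight line of $\rho_\alpha^*$, hence $\phi_\alpha=\rho_\alpha^*(w_0)\psi_\alpha^+$ for a unit highest weight vector $\psi_\alpha^+$ of $\rho_\alpha^*$ (up to a sign, using $w_0^2=e$). Therefore $\|\rho_\alpha^*(g)\phi_\alpha\|_\alpha^*=\|\rho_\alpha^*(gw_0)\psi_\alpha^+\|_\alpha^*=e^{\chi_{\i(\alpha)}(\beta_\eta(e,gw_0))}$ since $gw_0P=\eta$, and one checks $\beta_\eta(e,gw_0)=\beta_\eta(e,g)$: by the cocycle and invariance properties, $\beta_\eta(g,gw_0)=\beta_{g^{-1}\eta}(e,w_0)=-\sigma(w_0,w_0P)=0$. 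Combining the three displays,
\[
-\log\sin\angle(gV_\alpha^+,gV_\alpha^<)=\chi_\alpha(\beta_\xi(e,g))+\chi_{\i(\alpha)}(\beta_\eta(e,g))=\chi_\alpha\bigl(\beta_\xi(e,g)+\i(\beta_\eta(e,g))\bigr)=2\chi_\alpha(\cal G(\xi,\eta)),
\]
using $\chi_\alpha\circ\i=\chi_{\i(\alpha)}$ and Definition \ref{def.defgromovprod}. The only delicate point is this last paragraph: tracking the $w_0$-twist that converts the ``incoming'' hyperplane datum at $\eta$ into a highest-weight statement for $\rho_\alpha^*$, checking the vanishing of $\beta_\eta(g,gw_0)$, and verifying that the normalization ambiguities (the projective nature of $\rho_\alpha$ and the freedom of $g$ up to right multiplication by $MA$) cancel in the product of the two norms. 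Everything else is the routine Iwasawa computation of the first step.
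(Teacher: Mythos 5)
The paper does not actually prove this lemma; it is quoted from \cite[Lemma 6.4]{Quint2002Mesures} and \cite[Lemma 3.11]{LO_invariant}, and your argument is correct and essentially the standard proof from those references: convert both Busemann terms into highest-weight norms for $\rho_\alpha$ and its contragredient via the Iwasawa decomposition, and identify $\sin\angle(gV_\alpha^+,gV_\alpha^<)$ with $\bigl(\|\rho_\alpha(g)v_\alpha^+\|_\alpha\,\|\rho_\alpha^*(g)\phi_\alpha\|_\alpha^*\bigr)^{-1}$. Your treatment of the delicate points is adequate: the cocycle computation giving $\beta_\eta(e,gw_0)=\beta_\eta(e,g)$ is right, and the scalar ambiguity of the projective representation indeed cancels in the product of the two norms (the central kernel acts by finite-order real scalars, hence by $\pm 1$). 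One small remark: for the final display you only need that the highest weight of $\rho_\alpha^*$ is $-\chi_\alpha\circ\op{Ad}_{w_0}=\chi_\alpha\circ\i$, so the identification of this weight with the Tits weight $\chi_{\i(\alpha)}$ (i.e.\ $k_\alpha=k_{\i(\alpha)}$) is not actually needed.
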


We then have the following estimates on the Riemannian distance  using Gromov products and Tits weights:

\begin{lem} \label{lem.riembdr} 
There exists a constant $C>0$ such that for all $g \in G$, $$
d_{\rm Riem}(gP_\theta , gw_0P_\theta) \ge C \left(\sum_{\alpha \in \theta} e^{-4\chi_\alpha (\cal G (g P, g w_0 P))}\right)^{1/2}.$$
\end{lem}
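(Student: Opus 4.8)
The idea is to compare the Riemannian distance $d_{\rm Riem}(gP_\theta, gw_0P_\theta)$, which we have written (up to Lipschitz equivalence) as $\bigl(\sum_{\alpha\in\theta}\sin^2\angle(gV_\alpha^+, gw_0V_\alpha^+)\bigr)^{1/2}$, with the quantity $\bigl(\sum_{\alpha\in\theta}\sin^2\angle(gV_\alpha^+, gV_\alpha^{<})\bigr)^{1/2}$, which by Lemma~\ref{lem.gromovangle} equals $\bigl(\sum_{\alpha\in\theta}e^{-4\chi_\alpha(\cal G(gP,gw_0P))}\bigr)^{1/2}$. Thus it suffices to show that for each $\alpha\in\theta$ and each $g\in G$,
\[
\sin\angle(gV_\alpha^+, gw_0V_\alpha^+)\ \ge\ C_\alpha\,\sin\angle(gV_\alpha^+, gV_\alpha^{<})
\]
for some constant $C_\alpha>0$, and then take $C=\min_{\alpha\in\theta}C_\alpha$ (using that the $\ell^2$-sums are comparable term by term).

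First I would reduce to understanding the relative position of the two subspaces $gw_0V_\alpha^+$ and $gV_\alpha^{<}$ inside $V_\alpha$. Since $\rho_\alpha(K)$ is orthogonal for $\langle\cdot,\cdot\rangle_\alpha$ and all three subspaces transform equivariantly under $G$, I would use a Cartan decomposition $g=k_1 a k_2$ with $a\in\exp\fa^+$, $k_1,k_2\in K$; because $k_1$ is an isometry the angles are unchanged by removing $k_1$, and because $V_\alpha^+$ and $V_\alpha^{<}$ are $A$-invariant and fixed (as $P_\theta$-invariant data, hence $P$-invariant) the relevant configuration is governed by $a k_2 V_\alpha^+$, $a k_2 w_0 V_\alpha^+$ versus the fixed flag $V_\alpha^+\perp V_\alpha^{<}$. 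The key structural input is that $w_0 V_\alpha^+$ is the \emph{lowest} weight line of $\rho_\alpha$, which lies inside $V_\alpha^{<}$ (as $\chi_\alpha\ne -w_0\chi_\alpha$ in the proximal situation, or more simply since the lowest weight differs from the highest weight $\chi_\alpha$); hence $gw_0V_\alpha^+\subset gV_\alpha^{<}$ always. Therefore the projection of the unit vector spanning $gV_\alpha^+$ onto the orthogonal complement of $gV_\alpha^{<}$ has norm at most its projection onto the orthogonal complement of $gw_0V_\alpha^+$; that is, $\sin\angle(gV_\alpha^+, gV_\alpha^{<})\le \sin\angle(gV_\alpha^+, gw_0V_\alpha^+)$, giving the inequality with $C_\alpha=1$.

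The step I expect to require the most care is the claim $gw_0V_\alpha^+\subset gV_\alpha^{<}$, equivalently $w_0V_\alpha^+\subset V_\alpha^{<}$: I must check that the lowest weight line $w_0V_\alpha^+$ is genuinely contained in the complement $V_\alpha^{<}$ of the highest weight line, i.e.\ that the highest and lowest weights of $\rho_\alpha$ are distinct. This holds because $\rho_\alpha$ is a nontrivial irreducible representation of a semisimple group, so its highest weight $\chi_\alpha$ is a nonzero dominant weight and the lowest weight $w_0\chi_\alpha\ne\chi_\alpha$ (they would coincide only if $\chi_\alpha=0$); since $V_\alpha^{<}$ is by definition the sum of all weight spaces other than the highest, $w_0V_\alpha^+$ sits in $V_\alpha^{<}$. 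With that containment in hand, the monotonicity of the angle function under enlarging the second subspace — a one-line linear-algebra fact about orthogonal projections — finishes the argument, and unwinding Lemma~\ref{lem.gromovangle} together with the Lipschitz equivalence of Riemannian metrics on $\F_\theta$ produces the stated constant $C>0$.
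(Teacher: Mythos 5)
Your proposal is correct and follows essentially the same route as the paper: establish $w_0V_\alpha^+\subset V_\alpha^{<}$ by noting that the weight $\chi_\alpha\circ\op{Ad}_{w_0}=-\chi_\alpha\circ\i$ cannot equal $\chi_\alpha$ (both comparisons on $\fa^+$ force $\chi_\alpha=0$ otherwise), then use monotonicity of the angle under enlarging the second subspace to get $\sin\angle(gV_\alpha^+,gV_\alpha^{<})\le\sin\angle(gV_\alpha^+,gw_0V_\alpha^+)$, and conclude via Lemma \ref{lem.gromovangle} and the Lipschitz description of $d_{\rm Riem}$. The Cartan-decomposition reduction in your middle paragraph is harmless but unnecessary, since the containment and the angle monotonicity already apply verbatim for every $g$.
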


\begin{proof}
    We first note that for each $\alpha\in \Pi$, $w_0 V_{\alpha}^+ \subset V_{\alpha}^{<}$; to see this, recall that $V_\alpha^<$ is the sum of all weight subspaces of $V_\alpha$ whose weight
is not equal to $\chi_{\alpha}$. On the other hand, $w_0 V_{\alpha}^+$ is a weight space with the weight given by $\chi_{\alpha} \circ \op{Ad}_{w_0} = - \chi_{\alpha} \circ \i$. Since $- \chi_{\alpha} \circ \i (\fa^+) \le 0$ while $\chi_{\alpha}(\fa^+) \ge 0$, we have $\chi \circ \op{Ad}_{w_0} \neq \chi_{\alpha}$, which shows $w_0 V_{\alpha}^+ \subset V_{\alpha}^{<}$.

Therefore for all $g\in G$, 
$$ \sin^2 \angle (g V_{\alpha}^+, gV_{\alpha}^<)  \le \sin^2 \angle (g V_{\alpha}^+, gw_0  V_{\alpha}^+) $$
Hence, up to a Lipschitz constant, we have that for  all $g \in G$,
$$
\begin{aligned}
d_{\rm Riem}(g P_{\theta}, g w_0 P_{\theta}) & = \sqrt{\sum_{\alpha \in \theta} \sin^2 \angle (g V_{\alpha}^+, gw_0  V_{\alpha}^+)} \\
& \ge \sqrt{\sum_{\alpha \in \theta} \sin^2 \angle (g V_{\alpha}^+, gV_{\alpha}^<)}  
 \\ &= \left(\sum_{\alpha \in \theta} e^{-4\chi_\alpha (\cal G (g P, g w_0 P))}\right)^{1/2}
\end{aligned}
$$ 
where the last equality follows from Lemma \ref{lem.gromovangle}.
\end{proof}

\subsection*{Lower bounds}
 In the rest of this section, we assume that
 $$\text{$\Ga$ is  a $\theta$-Anosov subgroup of $G$.}$$

Since the Tits weights $ \{ \chi_{\alpha} : \alpha \in \theta \}$ form a basis
of $\fa_{\theta}^*$,
 each linear form $\psi \in \fa_{\theta}^*$ can be uniquely written as
 $\psi = \sum_{\alpha \in \theta} \kappa_{\psi, \alpha} \chi_{\alpha}$ with $\kappa_{\psi, \alpha}\in \br$.
 We consider the following height of $\psi$:
 $$\kappa_{\psi} := \sum_{\alpha \in \theta} \kappa_{\psi, \alpha} \in \br $$ 
 
Denote by $\ess_{\theta}$ the collection of all linear forms which are non-negative linear combinations of $\{\chi_{\alpha} : \alpha \in \theta\}$. That is, 
\be\label{esst}\ess_{\theta} := \{ \psi \in \fa_{\theta}^* :  \kappa_{\psi, \alpha} \ge 0 \text{ for all } \alpha \in \theta\}.\ee 
Since $\chi_\alpha > 0$ on $\inte \fa_\theta^+$ for all $\alpha\in \theta$, each non-zero $\psi\in \ess_\theta$ is positive on $\inte \fa_{\theta}^+$.
Since $\L_\theta - \{0\} \subset \inte\fa_\theta^+$ by Theorem \ref{thm.anosovbasic}(2), each non-zero $\psi\in \ess_\theta$ is positive on $\L_\theta-\{0\}$ and hence  we have the corresponding conformal premetric $d_\psi$ on $\La_\theta$ discussed in section \ref{sec.confmetric}:

\begin{lem}\label{RG}
   For any non-zero $\psi \in  \ess_{\theta}$, 
   the identity map $(\La_{\theta}, d_{\rm Riem}) \to (\La_{\theta}, d_{\psi})$ is bi-H\"older. More precisely, we have for some $c_1, c_2 > 0$ so that
   $$c_1 \cdot d_{\op{Riem}}(\xi, \eta)^{r_{\Ga, \psi}} \le d_{\psi}(\xi, \eta) \le c_2 \cdot d_{\op{Riem}}(\xi, \eta)^{\kappa_{\psi}/2} \quad \text{for all } \xi, \eta  \in \La_{\theta}$$
   where $r_{\Ga, \psi} > 0$ is defined in \eqref{eqn.holdersep11}.
\end{lem}

\begin{proof}    By \cite[Theorem 6.1]{BCLS_gafa}, there exists $c, h_{\Ga} > 0$ such that 
    $d_{\op{Riem}}(\xi, \eta) \le c  e^{-h_{\Ga} ( \xi, \eta )_e}$ for all $\xi\ne  \eta\in \La_\theta\simeq \partial \Ga$.
Together with Lemma \ref{comG}, this implies the first inequality with \be \label{eqn.holdersep11}
    r_{\Ga, \psi} := h_{\Ga}^{-1}Q_{\bar \psi}.
    \ee
    For each $\xi \neq \eta \in \La_{\theta}$, there exists $g \in G$ such that $\xi = g P_{\theta}$ and $\eta = g w_0 P_{\theta}$ (Theorem \ref{thm.anosovbasic}(4)).
 By Lemma \ref{lem.riembdr}, we have that for each $\alpha \in \theta$, up to a Lipschitz constant, \be \label{eqn.jan11}
 d_{\rm Riem}(\xi, \eta) \ge \left(\sum_{\alpha \in \theta} e^{-4\chi_\alpha (\cal G (g P, g w_0 P))}\right)^{1/2} \ge e^{-2\chi_\alpha (\cal G (g P, g w_0 P))}.
 \ee
Recalling $$d_{2 \chi_{\alpha}}(\xi, \eta) = e^{-2\chi_\alpha (\cal G (g P, g w_0 P))}$$
and writing $\psi=\sum_{\alpha\in \theta} \kappa_{\psi, \alpha}\chi_\alpha\in \ess_\theta$,  since all $\kappa_{\psi, \alpha}$ are non-negative, \eqref{eqn.jan11} implies 
   $$
   d_{\psi}(\xi, \eta) = \prod_{\alpha \in \theta} d_{2 \chi_{\alpha}}(\xi, \eta)^{\frac{\kappa_{\psi,\alpha}}{2}}
    \le \prod_{\alpha \in \theta} d_{\rm Riem} (\xi, \eta)^{\frac{\kappa_{\psi, \alpha}}{2}}=
    d_{\rm Riem}(\xi, \eta) ^{\frac{\kappa_{\psi}} 2}$$  up to a Lipschitz constant. Hence the second inequality follows. 
\end{proof}

\begin{Rmk}
Since $d_{\psi}$ and $d_{\bar \psi}$ are bi-Lipschitz (Proposition \ref{prop.bilip}),
 Proposition \ref{prop.ballinshadowpart} and the above lemma imply that
 there exist $c, R > 0$ such that for any $\xi \in \La_{\theta}$ and  $g  \in [e,\xi]$ in $\Ga$, the shadow  $O_{R}^{\theta}(o, g o) \cap \La_{\theta}$ contains the Riemannian ball of center $\xi$ and of radius
$c e^{-\frac{2}{\kappa_{\psi}}\d_{\psi}(o, go)}$.  
\end{Rmk}

\begin{theorem} \label{thm.riemlower}
    For any non-zero $\psi \in  \ess_{\theta}$, we have $$r_{\Ga, \psi} \cdot \dim_{\psi} \La_{\theta} \ge \dim \La_{\theta} \ge 
     \frac{\kappa_{\psi}}{2} \cdot \dim_{\psi}\La_\theta.$$
     In particular,
     $$r_{\Ga, \psi} \cdot \delta_{\bar \psi} \ge \dim \La_{\theta} \ge \frac{\kappa_{\psi}}{2} \cdot \delta_{\bar \psi}.$$

\end{theorem}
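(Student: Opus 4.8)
The idea is to deduce Theorem \ref{thm.riemlower} by combining Lemma \ref{RG} with the elementary scaling behaviour of Hausdorff dimension under snowflaking of a premetric, and then invoking the identity $\dim_\psi \La_\theta = \delta_{\bar\psi}$ of Corollary \ref{cor.crithdim}. First I would record the following general fact: for any premetric $d$ on a set $Z$ and any exponent $t>0$, the snowflaked premetric $d^t$ satisfies $\diam_{d^t} U = (\diam_d U)^t$ for every $U\subset Z$, since $x\mapsto x^t$ is increasing. Plugging this into the definition \eqref{hhhjun19} of the Hausdorff measure gives $\cal H^s_{d^t} = \cal H^{ts}_d$ for all $s>0$, and therefore $\dim_{d^t} Z = \tfrac1t \dim_d Z$. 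Since $\psi\in \ess_\theta$ is non-zero, all $\kappa_{\psi,\alpha}\ge 0$ and not all vanish, so $\kappa_\psi=\sum_{\alpha\in\theta}\kappa_{\psi,\alpha}>0$ and $t:=2/\kappa_\psi$ is a legitimate positive exponent. Applying the above with $d=d_\psi$ yields
\[
\dim_{d_\psi^{2/\kappa_\psi}}\La_\theta = \frac{\kappa_\psi}{2}\,\dim_\psi\La_\theta.
\]

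Next, Lemma \ref{RG} asserts precisely that the identity map $(\La_\theta, d_{\rm Riem}) \to (\La_\theta, d_\psi^{2/\kappa_\psi})$ is Lipschitz. Since a Lipschitz map cannot increase Hausdorff dimension, this gives
\[
\frac{\kappa_\psi}{2}\,\dim_\psi\La_\theta = \dim_{d_\psi^{2/\kappa_\psi}}\La_\theta \le \dim_{d_{\rm Riem}}\La_\theta = \dim\La_\theta,
\]
where the last equality is the definition of $\dim\La_\theta$ (well-defined, as all Riemannian metrics on $\F_\theta$ are bi-Lipschitz). Finally, substituting $\dim_\psi\La_\theta = \delta_{\bar\psi}$ from Corollary \ref{cor.crithdim} produces both the claimed inequality $\dim\La_\theta \ge \frac{\kappa_\psi}{2}\delta_{\bar\psi}$ and the displayed equality $\frac{\kappa_\psi}{2}\dim_\psi\La_\theta = \frac{\kappa_\psi}{2}\delta_{\bar\psi}$.

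There is essentially no serious obstacle at this stage: the only mildly delicate point is making the snowflake-scaling identity precise for premetrics rather than genuine metrics, but since the Hausdorff measure \eqref{hhhjun19} is defined purely through diameters of covering sets, the computation goes through verbatim, and Lipschitz maps respect the measure-class inequalities needed to compare dimensions. The actual content of the theorem has already been absorbed into Lemma \ref{RG} (which rests on the Tits representations, Lemma \ref{lem.gromovangle} and Lemma \ref{lem.riembdr}) and into Corollary \ref{cor.crithdim} (which rests on the Ahlfors regularity statement Theorem \ref{thm.exactdim}).
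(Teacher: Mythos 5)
Your proposal is correct and follows essentially the same route as the paper: the paper's proof likewise combines Lemma \ref{RG} with ``a standard property of Hausdorff dimension'' and then invokes Corollary \ref{cor.crithdim}. You have merely made that standard property explicit (the snowflake identity $\cal H^s_{d^t}=\cal H^{ts}_d$, hence $\dim_{d^t}=\tfrac1t\dim_d$, together with the fact that Lipschitz maps do not increase Hausdorff dimension, both of which hold verbatim for premetrics since \eqref{hhhjun19} only involves diameters), which is a welcome but not substantively different elaboration.
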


\begin{proof} It follows from Lemma \ref{RG} and a standard property of Hausdorff dimension
that  we get $$r_{\Ga, \psi} \cdot \dim_{\psi} \La_{\theta} \ge \dim \La_{\theta} \ge   \frac{\kappa_{\psi}}{2} \cdot \dim_{\psi}\La_\theta.$$
   Since $\dim_{\psi}\La_\theta=  \delta_{\bar \psi}$ by Corollary \ref{cor.crithdim}, the claim follows.
\end{proof}

Applying Theorem \ref{thm.riemlower} to each $\chi_{\alpha}$, $\alpha\in \theta$,
we obtain the following uniform lower bound on the Hausdorff dimension of all non-elementary $\theta$-Anosov subgroups:

\begin{corollary} \label{cor.riemlowermax}
    We have $$\dim \La_{\theta} \ge \max_{\alpha \in \theta}  \delta_{\chi_{\alpha} + \chi_{\i(\alpha)}}.$$
\end{corollary}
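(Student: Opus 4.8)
The plan is to deduce the corollary by applying Theorem~\ref{thm.riemlower} to each individual Tits weight $\chi_\alpha$, $\alpha\in\theta$. First I would note that since $\{\chi_\alpha:\alpha\in\theta\}$ forms a basis of $\fa_\theta^*$, for each fixed $\alpha\in\theta$ the linear form $\psi=\chi_\alpha$ is a non-zero element of $\ess_\theta$ whose coefficients are $\kappa_{\chi_\alpha,\alpha}=1$ and $\kappa_{\chi_\alpha,\beta}=0$ for $\beta\neq\alpha$; in particular its height is $\kappa_{\chi_\alpha}=1$. Theorem~\ref{thm.riemlower} then gives
\[
\dim\La_\theta \;\ge\; \tfrac12\,\delta_{\overline{\chi_\alpha}},\qquad \overline{\chi_\alpha}=\tfrac12\bigl(\chi_\alpha+\chi_\alpha\circ\i\bigr).
\]

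The key identification is $\chi_\alpha\circ\i=\chi_{\i(\alpha)}$. Since the opposition involution $\i$ permutes $\Pi$ and preserves the $\cal W$-invariant inner product (equivalently, $\i(\beta)=\beta\circ\i$ for $\beta\in\Phi$ and the integers $c_\beta$ of \eqref{fw} are $\i$-invariant), the restricted fundamental weights satisfy $\omega_\alpha\circ\i=\omega_{\i(\alpha)}$; and the minimal integer $k_\alpha$ with $k_\alpha\omega_\alpha$ proximal equals $k_{\i(\alpha)}$, because the contragredient of the Tits representation $\rho_\alpha$ is again proximal with highest weight $\chi_\alpha\circ\i=k_\alpha\,\omega_{\i(\alpha)}$, so by minimality and the symmetric argument applied to $\i(\alpha)$ one gets $k_{\i(\alpha)}=k_\alpha$. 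Hence $\chi_\alpha\circ\i=\chi_{\i(\alpha)}$ and $\overline{\chi_\alpha}=\tfrac12(\chi_\alpha+\chi_{\i(\alpha)})$.

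Finally, using the scaling relation $\delta_{c\psi}=c^{-1}\delta_\psi$ for $c>0$, we obtain $\delta_{\overline{\chi_\alpha}}=2\,\delta_{\chi_\alpha+\chi_{\i(\alpha)}}$, so the displayed inequality becomes $\dim\La_\theta\ge\delta_{\chi_\alpha+\chi_{\i(\alpha)}}$. Taking the maximum over $\alpha\in\theta$ proves the corollary. I do not expect a genuine obstacle here: the only delicate point is the identity $\chi_\alpha\circ\i=\chi_{\i(\alpha)}$, a standard fact relating Tits weights to the opposition involution, and one should also observe at the outset that $\delta_{\chi_\alpha+\chi_{\i(\alpha)}}$ is well defined because this form is symmetric and positive on the limit cone of $\Ga$ (which, being $\theta$-Anosov, is also $\theta\cup\i(\theta)$-Anosov, so that the constructions of Section~\ref{sec.confmetric} and Corollary~\ref{cor.crithdim} apply); the rest is bookkeeping with the normalizations $\kappa_\psi$ and $\delta_\psi$.
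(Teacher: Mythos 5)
Your argument is correct and is essentially the paper's proof: the corollary is obtained by applying Theorem \ref{thm.riemlower} to $\psi=\chi_\alpha$ for each $\alpha\in\theta$, where $\kappa_{\chi_\alpha}=1$, and then rewriting $\tfrac12\delta_{\overline{\chi_\alpha}}$ as $\delta_{\chi_\alpha+\chi_{\i(\alpha)}}$. The only difference is that you spell out the identity $\chi_\alpha\circ\i=\chi_{\i(\alpha)}$ and the scaling $\delta_{c\psi}=c^{-1}\delta_\psi$, which the paper leaves implicit.
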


\begin{example}
 For $G = \PSL_n(\br)$, we have  $\Pi = \{ \alpha_1, \cdots, \alpha_{n-1} \}$ where $$\alpha_i : \op{diag}(a_1, \cdots, a_n) \mapsto a_i - a_{i+1} .$$
 Let $1\le p\le n-1$.
Since  $\chi_{\alpha_p}$ is equal to the fundamental weight $\omega_p$
which is given by $\omega_p(\op{diag}(a_1, \cdots, a_n))=  a_1 + \cdots + a_p$, we deduce from  Corollary \ref{cor.riemlowermax} that
for all non-elementary $\alpha_p$-Anosov subgroups of $\PSL_n(\br)$,
we  have
$$\dim \La_{\alpha_p} \ge \delta_{\omega_p+ \omega_{n-p}}.
$$
When $p = 1$, this lower bound is obtained in \cite[Theorem 10.1]{DK_patterson}.
\end{example}

 The following upper bound in Proposition \ref{prop.psllower} was obtained in (\cite[Theorem B]{PSW_Lipschitz}, \cite[Theorem 1.2]{CZZ_entropy}) for $G = \PSL_n(\br)$ and $\theta$ is a singleton. 
 \begin{prop} \label{prop.psllower}
We  have
$$ \dim \La_{\theta} \le  \max_{\alpha\in \theta} 
\delta_{\alpha} .$$
\end{prop}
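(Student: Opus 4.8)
## Proof plan for Proposition~\ref{prop.psllower}

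\textbf{Overall strategy.} The plan is to bound $\dim \La_\theta$ by producing, for each $s > \max_{\alpha \in \theta} \delta_\alpha$, an efficient cover of $\La_\theta$ by shadows whose $d_{\mathrm{Riem}}$-diameters are controlled, and to show the associated $s$-dimensional sum is finite. The natural cover comes from the shadows $O_R^\theta(o, \ga o)$ with $\ga$ ranging over a suitable ``sphere'' in $\Ga$; the key is to relate the Riemannian diameter of such a shadow to $e^{-\alpha(\mu(\ga))}$ for the worst $\alpha \in \theta$, and then invoke that $\sum_{\ga} e^{-s\,\alpha(\mu(\ga))} < \infty$ once $s > \delta_\alpha$.

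\textbf{Step 1: diameter bound for shadows.} First I would show that there is a constant $C > 0$ (depending on $R$) such that for all $\ga \in \Ga$,
\[
\diam_{\mathrm{Riem}}\bigl(O_R^\theta(o,\ga o) \cap \La_\theta\bigr) \le C \, e^{-\min_{\alpha\in\theta}\alpha(\mu(\ga))}.
\]
This is where the Tits representations enter: for $\xi = g P_\theta$, $\eta = g' P_\theta$ two points in the shadow, one writes the Riemannian distance via $\sin \angle(gV_\alpha^+, g'V_\alpha^+)$ and uses that both $go$ and $g'o$ lie within bounded distance of $\ga A^+ o$; standard contraction estimates for the proximal representation $\rho_\alpha$ (the highest weight space attracts at rate governed by the gap $\chi_\alpha - (\chi_\alpha - \alpha) = \alpha$) give the bound $e^{-\alpha(\mu(\ga)) + O(R)}$ for each $\alpha \in \theta$, hence the claimed bound with the minimum. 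The $\theta$-Anosov hypothesis guarantees $\alpha(\mu(\ga)) \to \infty$ uniformly in $|\ga|$, so these shadows genuinely shrink.

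\textbf{Step 2: a Vitali-type / covering argument.} Next, for each large $n$, let $\Ga_n = \{\ga \in \Ga : |\ga| = n\}$ (or a level set of $\mu$), so that $\La_\theta = \bigcup_{\ga \in \Ga_n} O_R^\theta(o, \ga o)\cap\La_\theta$ after possibly passing to geodesic rays; one uses that every $\xi \in \La_\theta$ lies in such a shadow for some $\ga$ on a geodesic ray $[e,\xi]$ with $|\ga| = n$, by the Morse lemma (Theorem~\ref{thm:ML}(2)) exactly as in Proposition~\ref{prop.ballinshadowpart}. Then for $s > \max_{\alpha\in\theta}\delta_\alpha$, choosing $\alpha_0 \in \theta$ achieving the max,
\[
\sum_{\ga \in \Ga_n} \bigl(\diam_{\mathrm{Riem}} O_R^\theta(o,\ga o)\bigr)^s
\le C^s \sum_{\ga \in \Ga_n} e^{-s\,\alpha_0(\mu(\ga))},
\]
and summing over $n$ the right-hand side is bounded by $C^s \sum_{\ga \in \Ga} e^{-s\,\alpha_0(\mu(\ga))} < \infty$ since $s > \delta_{\alpha_0}$. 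As $n\to\infty$ the mesh of these covers goes to $0$ by Step~1 together with the Anosov growth estimate, so $\cal H^s_{\mathrm{Riem}}(\La_\theta) < \infty$, giving $\dim\La_\theta \le s$. Letting $s \downarrow \max_{\alpha\in\theta}\delta_\alpha$ finishes the proof.

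\textbf{Expected main obstacle.} The delicate point is Step~1: making the contraction estimate for $\rho_\alpha$ uniform and converting ``lies in the shadow $O_R^\theta(o,\ga o)$'' into a genuine bound on the angle $\angle(gV_\alpha^+, g'V_\alpha^+)$ for \emph{both} $\xi,\eta$ in the shadow simultaneously. One has to be careful that the relevant rate is the first spectral gap $\alpha(\mu(\ga))$ of $\rho_\alpha$ (not $\chi_\alpha(\mu(\ga))$), using the description of the weights in Theorem~\ref{tits} — all weights other than $\chi_\alpha$ are $\le \chi_\alpha - \alpha$ on $\fa^+$. A secondary technical nuisance is handling the multiplicative quasi-triangle constant of $d_{\mathrm{Riem}}$ restricted to shadows versus its genuine metric structure, but since $d_{\mathrm{Riem}}$ is an honest Riemannian metric this is routine. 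Once Step~1 is in place, Step~2 is a direct imitation of the shadow-lemma covering arguments already used in Sections~\ref{sec.ballinshadow} and~\ref{sec.localsize}.
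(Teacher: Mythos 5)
Your plan is in substance the paper's proof: the paper also pushes $\La_\theta$ into $\prod_{\alpha\in\theta}\P(V_\alpha)$ via the Tits representations, covers it for each $k$ by sets indexed by $\{\ga:|\ga|=k\}$ of diameter $\lesssim e^{-\min_{\alpha\in\theta}\alpha(\mu(\ga))}$ (using that $\beta_{1,\alpha}(\mu(\rho_\alpha(\ga)))=\alpha(\mu(\ga))$ because the weight gap of $\rho_\alpha$ is $\alpha$), and then compares with the Poincar\'e series. The only real difference is packaging: where you propose to prove a shadow-diameter estimate from scratch via contraction of the proximal representations, the paper imports exactly this localization from Pozzetti--Sambarino--Wienhard (balls $\B_\alpha(\ga)$ of radius $C_\alpha e^{-\alpha(\mu(\ga))}$ in $\P(V_\alpha)$ containing $f_\alpha(x)$ for all $x$ with $\ga\in[e,x]$), which plays the role of your Step 1; your shadow formulation is equivalent, since the Morse lemma puts every $\xi$ with $\ga\in[e,\xi]$ into a uniform shadow $O_R^\theta(o,\ga o)$.

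There is, however, one incorrect step in your Step 2. From $\diam\le C e^{-\min_{\alpha\in\theta}\alpha(\mu(\ga))}$ you cannot conclude
\[
\sum_{\ga\in\Ga_n}\bigl(\diam_{\mathrm{Riem}} O_R^\theta(o,\ga o)\bigr)^s\le C^s\sum_{\ga\in\Ga_n}e^{-s\,\alpha_0(\mu(\ga))}
\]
for a single $\alpha_0$ realizing $\max_{\alpha\in\theta}\delta_\alpha$: pointwise $\min_{\alpha\in\theta}\alpha(\mu(\ga))\le\alpha_0(\mu(\ga))$, so $e^{-s\min_\alpha\alpha(\mu(\ga))}\ge e^{-s\alpha_0(\mu(\ga))}$ and your inequality goes the wrong way. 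The fix is to use $e^{-s\min_{\alpha\in\theta}\alpha(\mu(\ga))}=\max_{\alpha\in\theta}e^{-s\alpha(\mu(\ga))}\le\sum_{\alpha\in\theta}e^{-s\alpha(\mu(\ga))}$, so that the covering sums are finite (and the sphere sums tend to $0$) as soon as $s>\max_{\alpha\in\theta}\delta_\alpha$; equivalently, one shows $\delta_{\min_{\alpha\in\theta}\alpha}=\max_{\alpha\in\theta}\delta_\alpha$ by sandwiching $\sum_\ga e^{-s\min_\alpha\alpha(\mu(\ga))}$ between $\frac{1}{\#\theta}\sum_{\alpha\in\theta}\sum_\ga e^{-s\alpha(\mu(\ga))}$ and $\sum_{\alpha\in\theta}\sum_\ga e^{-s\alpha(\mu(\ga))}$, which is exactly how the paper concludes. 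With that one-line repair, and with your Step 1 carried out carefully (it is a standard but nontrivial estimate, which is why the paper cites it rather than reproves it), the argument is complete.
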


\begin{proof} 
Via the proper immersion of $\F_{\theta}$ into $\prod_{\alpha \in \theta} \P(V_{\alpha})$ as discussed in \eqref{pi},
 we may consider the following metric on $\F_{\theta}$: for $g_1, g_2 \in G$, $$d_{\F_\theta} (g_1 P_{\theta}, g_2 P_{\theta}) = \max_{\alpha \in \theta} d_{\P(V_{\alpha})}(g_1 V_{\alpha}^+, g_2 V_{\alpha}^+)$$ where $d_{\P(V_{\alpha})}$ is the metric on $\P(V_{\alpha})$ given by $d_{\P(V_{\alpha})}(v_1, v_2) = \sin \angle (v_1, v_2)$. Then $d_{\F_{\theta}}$ is Lipschitz equivalent to the Riemannian distance on $\F_\theta$ and hence we can use $d_{\F_{\theta}}$ to compute $\dim \La_{\theta}$.

Fix $\alpha\in \theta$ and consider the Tits representation $(\rho_\alpha, V_\alpha)$. We write  $V_\alpha=\br^{n_\alpha}$ and $\PGL(V_{\alpha})=\PGL_{n_\alpha}(\br)$ by fixing a basis.
  We denote by $\beta_{1,\alpha}$ the  simple root of $\PGL_{n_\alpha}(\br)$ given by $\beta_{1,\alpha}(\op{diag}(u_1, \cdots, u_{n_\alpha}))=u_1-u_2.$
     Since  the highest weight of $\rho_\alpha$ is $\chi_{\alpha}$ and the second highest weight is  $\chi_{\alpha}-\alpha $ by Theorem \ref{tits}, we have that for all $\ga\in \Ga$,
\be \label{eqn.jan17}
\beta_{1,\alpha}(\mu(\rho_{\alpha}(\ga))) = \alpha(\mu(\ga)).
\ee
 Since $\Ga$ is an $\{\alpha\}$-Anosov subgroup of $G$, 
there exists $C>1$ such that  for all $\ga \in \G$, $\alpha(\mu(\ga)) \ge C^{-1}|\ga|-C $, and hence 
$\beta_{1,\alpha}(\mu(\rho_{\alpha}(\ga))) \ge C^{-1}|\ga|-C$. Therefore
$\rho_\alpha (\Ga)$ is a
$\{\beta_{1, \rho}\}$-Anosov subgroup of $\PGL_{n_\alpha}(\br)$.

We denote by $f_{\alpha} : \partial \Ga \to \P(V_{\alpha})$ the $\rho_{\alpha}(\Ga)$-equivariant embedding obtained as the extension of the orbit map of $\rho_{\alpha}(\Ga)$ (Theorem \ref{thm.anosovbasic}(4)).
It is shown in \cite[Proposition 3.5, Proposition 3.8]{PSW_Lipschitz} that there exists a constant $C_{\alpha} > 0$ such that for each $\ga \in \Ga$, there exists a ball $\B_{\alpha}(\ga)$ of radius $C_{\alpha} e^{-\beta_{1, \rho}(\mu(\rho_{\alpha}(\ga)))}$ in $\P(V_{\alpha})$ so that for any $x \in \partial \Ga$ such that $\ga \in [e, x]$ in $\Ga$, we have $f_{\alpha}(x) \in \B_{\alpha}(\ga)$. In particular, for every $k \ge 1$, the collection $$\{ \B_{\alpha}(\ga) : \ga \in \Ga, \ |\ga| = k \}$$ covers the limit set of $\rho_{\alpha}(\Ga)$ in $\P(V_{\alpha})$. Hence $\La_{\theta}$ is covered by the collection $$\left \{ \prod_{\alpha \in \theta} \B_{\alpha}(\ga) : \ga \in \Ga, |\ga| = k \right\}$$ via the immersion $\F_{\theta} \to \prod_{\alpha \in \theta} \P(V_{\alpha})$. Since  $\prod_{\alpha \in \theta} \B_{\alpha}(\ga)$ has $d_{\F_\theta}$-diameter at most 
$$\max_{\alpha \in \theta} C_{\alpha} e^{-\beta_{1, \rho}(\mu(\rho_{\alpha}(\ga)))} \le C e^{-\min_{\alpha \in \theta} \alpha(\mu(\ga))}$$ where $C = \max_{\alpha \in \theta} C_{\alpha}$ by \eqref{eqn.jan17}, we have that for each $s > 0$, the $s$-dimensional Hausdorff measure $\cal H^s(\La_\theta)$ with respect to 
$d_{\cal F_\theta}$ satisfies $$ \cal H^s(\La_\theta) \le \limsup_{k \to \infty} C^s \sum_{\ga \in \Ga, |\ga| = k} e^{-s \min_{\alpha \in \theta} \alpha(\mu(\ga))}.$$ 
Therefore, denoting by $\delta_{\min_{\alpha \in \theta} \alpha}$ the abscissa of convergence of the series $s \mapsto \sum_{\ga \in \Ga} e^{-s \min_{\alpha \in \theta} \alpha(\mu(\ga))}$,
if $s > \delta_{\min_{\alpha \in \theta} \alpha}$,
we have $ \cal H^s(\La_\theta)=0$ and hence 
 $$\dim \La_{\theta} \le \delta_{\min_{\alpha \in \theta} \alpha}.$$
On the other hand, we have $$ \frac{1}{\# \theta} \sum_{\alpha \in \theta} \sum_{\ga \in \Ga} e^{- s \alpha(\mu(\ga))} \le \sum_{\ga \in \Ga} e^{-s \min_{\alpha \in \theta} \alpha(\mu(\ga))} \le \sum_{\alpha \in \theta}  \sum_{\ga \in \Ga} e^{-s \alpha(\mu(\ga))}.$$ The first inequality implies $\max_{\alpha \in \theta} \delta_{\alpha} \le \delta_{\min_{\alpha \in \theta} \alpha}$ and the second gives $\delta_{\min_{\alpha \in \theta} \alpha} \le \max_{\alpha \in \theta} \delta_{\alpha}$. Hence $\delta_{\min_{\alpha \in \theta} \alpha} = \max_{\alpha \in \theta} \delta_{\alpha}$, which completes the proof.
\end{proof}

Theorem \ref{m4} is a combination of Corollary \ref{cor.riemlowermax} and Proposition \ref{prop.psllower}.

\section{Growth indicator bounds and applications to the $L^2$-spectrum}\label{fff}
As before, let $\Ga<G$ be a  $\theta$-Anosov subgroup where $G$ is a connected semisimple real algebraic group. In this final section, we deduce bounds on the growth indicator $\psi_\Ga^\theta:\fa_\theta\to [0, \infty)\cup\{-\infty\}$ of $\Ga$ (see Definition \ref{def.growthindicator}) from Corollary \ref{cor.riemlowermax}.
Recall Tits weights $\chi_\alpha$, $\alpha\in \Pi$, of $G$ from \eqref{ttt2}. We have the following (Corollary \ref{maincor5}):

\begin{corollary} \label{cor.growthbysumchi}\label{simple} 
    We have
    \be\label{upper1}
     \psi_{\Ga}^{\theta \cup \i(\theta)} \le \dim \La_{\theta} \cdot \min_{\alpha \in \theta} (\chi_{\alpha} + \chi_{\i(\alpha)}).
     \ee 
Moreover,
\be\label{p10} 
     \psi_{\Ga} \le \dim \La_{\theta} \cdot \min_{\alpha \in \theta} (\chi_{\alpha} + \chi_{\i(\alpha)}).
\ee 
\end{corollary}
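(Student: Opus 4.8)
\textbf{Proof proposal for Corollary \ref{cor.growthbysumchi}.}

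The plan is to deduce both displayed inequalities from Corollary \ref{cor.riemlowermax} together with the basic relationship between growth indicators and critical exponents (Lemma \ref{lem.tangentcritexp} and Definition \ref{def.growthindicator}). First, I would establish \eqref{upper1} by working with the symmetric subset $\theta \cup \i(\theta)$; this is the correct setting because $\psi_{\Ga}^{\theta \cup \i(\theta)}$ is $\i$-invariant and $\min_{\alpha \in \theta}(\chi_\alpha + \chi_{\i(\alpha)})$ already lives in $\fa_{\theta \cup \i(\theta)}^*$. Fix $\alpha_0 \in \theta$ realizing the maximum $\delta_{\chi_{\alpha_0} + \chi_{\i(\alpha_0)}} = \max_{\alpha \in \theta} \delta_{\chi_\alpha + \chi_{\i(\alpha)}}$, and set $\psi_0 := \chi_{\alpha_0} + \chi_{\i(\alpha_0)}$. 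Since $\psi_0$ is a non-zero element of $\ess_{\theta \cup \i(\theta)}$, it is positive on $\L_{\theta \cup \i(\theta)} - \{0\}$, so $\delta_{\psi_0}$ is well-defined and positive; moreover $\psi_0$ is symmetric ($\i$-invariant), so $\bar\psi_0 = \psi_0$. By Lemma \ref{lem.tangentcritexp}, $\delta_{\psi_0} \psi_0 \in \T_\Ga$, i.e.\ $\delta_{\psi_0}\psi_0$ is tangent to $\psi_\Ga^{\theta \cup \i(\theta)}$, which in particular gives $\psi_\Ga^{\theta \cup \i(\theta)} \le \delta_{\psi_0}\psi_0$ on $\fa_{\theta \cup \i(\theta)}$. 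By Corollary \ref{cor.riemlowermax}, $\delta_{\psi_0} = \max_{\alpha \in \theta}\delta_{\chi_\alpha + \chi_{\i(\alpha)}} \le \dim \La_\theta$. Combining, $\psi_\Ga^{\theta \cup \i(\theta)} \le \dim \La_\theta \cdot \psi_0 = \dim \La_\theta \cdot (\chi_{\alpha_0} + \chi_{\i(\alpha_0)})$; since $\alpha_0$ was one particular index in $\theta$, this is not quite the minimum yet, so I need to be careful.

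The subtle point — and the main thing to get right — is that the inequality $\psi_\Ga^{\theta \cup \i(\theta)} \le \delta_{\psi}\psi$ for $\psi = \chi_\alpha + \chi_{\i(\alpha)}$ holds for \emph{every} $\alpha \in \theta$, not just the maximizing one, because for each such $\alpha$ the form $\psi$ is symmetric and positive on the limit cone, so Lemma \ref{lem.tangentcritexp} applies and $\delta_\psi \psi \in \T_\Ga$ gives $\psi_\Ga^{\theta \cup \i(\theta)} \le \delta_\psi \psi$. Using $\delta_{\chi_\alpha + \chi_{\i(\alpha)}} \le \max_{\beta \in \theta}\delta_{\chi_\beta + \chi_{\i(\beta)}} \le \dim \La_\theta$ (Corollary \ref{cor.riemlowermax}) for each $\alpha$, we obtain $\psi_\Ga^{\theta \cup \i(\theta)} \le \dim \La_\theta \cdot (\chi_\alpha + \chi_{\i(\alpha)})$ for every $\alpha \in \theta$, and taking the pointwise minimum over $\alpha \in \theta$ yields \eqref{upper1}. (Here I use that if a function is below each of finitely many linear forms, it is below their pointwise minimum.)

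For \eqref{p10}, I would reduce $\psi_\Ga$ on $\fa$ to $\psi_\Ga^{\theta \cup \i(\theta)}$ on $\fa_{\theta \cup \i(\theta)}$ via the projection $p_{\theta \cup \i(\theta)} : \fa \to \fa_{\theta \cup \i(\theta)}$. The standard relation between growth indicators for nested subsets (cf.\ \cite{KOW_indicators}, and Definition \ref{def.growthindicator}) gives $\psi_\Ga(u) \le \psi_\Ga^{\theta \cup \i(\theta)}(p_{\theta \cup \i(\theta)}(u))$ for all $u \in \fa$, since the Cartan projections $\mu(\Ga)$ project onto $\mu_{\theta \cup \i(\theta)}(\Ga)$ and directional growth cannot increase under projection. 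On the other hand, $\chi_\alpha + \chi_{\i(\alpha)} \in \fa_{\theta \cup \i(\theta)}^*$ means $(\chi_\alpha + \chi_{\i(\alpha)}) \circ p_{\theta \cup \i(\theta)} = \chi_\alpha + \chi_{\i(\alpha)}$ as linear forms on $\fa$ (by \eqref{att}). Therefore, for any $u \in \fa$,
\[
\psi_\Ga(u) \le \psi_\Ga^{\theta \cup \i(\theta)}(p_{\theta \cup \i(\theta)}(u)) \le \dim \La_\theta \cdot \min_{\alpha \in \theta}(\chi_\alpha + \chi_{\i(\alpha)})(p_{\theta \cup \i(\theta)}(u)) = \dim \La_\theta \cdot \min_{\alpha \in \theta}(\chi_\alpha + \chi_{\i(\alpha)})(u),
\]
which is \eqref{p10}. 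I expect the only real obstacle to be making the projection-monotonicity statement $\psi_\Ga \le \psi_\Ga^{\theta'} \circ p_{\theta'}$ precise and correctly cited; everything else is a direct assembly of Corollary \ref{cor.riemlowermax}, Lemma \ref{lem.tangentcritexp}, and the identification \eqref{att}.
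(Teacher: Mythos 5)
Your argument is correct and is essentially the paper's own proof: for each $\alpha\in\theta$ you apply Lemma \ref{lem.tangentcritexp} to $\chi_\alpha+\chi_{\i(\alpha)}$ and bound $\delta_{\chi_\alpha+\chi_{\i(\alpha)}}\le\dim\La_\theta$ via Corollary \ref{cor.riemlowermax}, then take the pointwise minimum, and deduce \eqref{p10} from \eqref{upper1} using $p_{\theta\cup\i(\theta)}$-invariance of the forms. The projection-monotonicity step $\psi_\Ga\le\psi_\Ga^{\theta\cup\i(\theta)}\circ p_{\theta\cup\i(\theta)}$ that you flag as needing a citation is exactly what the paper invokes, namely \cite[Lemma 3.12]{KOW_indicators}.
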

\begin{proof}
  For any linear form $\psi\in \fa_{\theta \cup \i(\theta)}^*$ positive on $\L_{\theta \cup \i(\theta)} - \{0\}$, the scaled linear from $\delta_{\psi} \psi$ is tangent to the growth indicator (Lemma \ref{lem.tangentcritexp}).
  Hence it follows from Corollary \ref{cor.riemlowermax} that for each $\alpha \in \theta$, we have
  $$
  \psi_{\Ga}^{\theta \cup \i(\theta)} \le \delta_{\chi_{\alpha} + \chi_{\i(\alpha)}} \cdot (\chi_{\alpha} + \chi_{\i(\alpha)}) \le \dim \La_{\theta} \cdot (\chi_{\alpha} + \chi_{\i(\alpha)}) \quad \text{on } \fa_{\theta \cup \i(\theta)}.
  $$
  Therefore taking minimum among $\alpha \in \theta$ finishes the proof of \eqref{upper1}.

By \cite[Lemma 3.12]{KOW_indicators}, we have 
$$\psi_{\Ga} \le \psi_{\Ga}^{\theta \cup \i(\theta)} \circ p_{\theta \cup \i(\theta)}\quad\text{on } \fa.$$
Hence by \eqref{upper1}, we have $$\psi_{\Ga} \le \dim \La_{\theta} \cdot \min_{\alpha \in \theta} (\chi_{\alpha} + \chi_{\i(\alpha)}) \circ p_{\theta \cup \i(\theta)}.$$
Since the linear form $\chi_{\alpha} + \chi_{\i(\alpha)} \in \fa_{\theta \cup \i(\theta)}^*$ is $p_{\theta \cup \i(\theta)}$-invariant for each $\alpha \in \theta$, \eqref{p10} follows.
\end{proof}

Observing 
$$\min_{\alpha \in \theta} (\chi_{\alpha} + \chi_{\i(\alpha)} ) \le \frac{1}{\# \theta} \sum_{\alpha \in \theta} \chi_{\alpha} + \chi_{\i(\alpha)},$$
Corollary \ref{cor.growthbysumchi} implies the following:

\begin{cor} 
For any 
$\theta$-Anosov subgroup of $G$, we have
\be \label{ppp0}
\psi_{\Ga} \le \frac{ {\mathsf c}_\theta \dim \La_{\theta}}{\# \theta }\cdot \rho .
\ee
\end{cor}

\begin{remark}
    We remark that our proof shows that $\frac{{\mathsf c}_\theta }{ \# \theta }$ in the above corollary
can be replaced by the minimum $c\ge 0$ such that
$\min_{\alpha \in \theta} ( \chi_{\alpha} + \chi_{\i(\alpha)} ) \le  c \cdot \rho  $ on the limit cone $\L$. \end{remark}

 Define the real number $\lambda_0(\Ga\ba X) \in [0,\infty)$ as follows: \be\label{ll} \lambda_0(\Ga\ba X):= \inf\left\lbrace \frac{\int_{\Gamma\ba X}\|\text{grad} \, f \|^2\,d\vol}{\int_{\Gamma\ba X}|f|^2\,d\vol}\,:\,f\in C^\infty_c(\Gamma\ba X),\; f\neq 0 \right\rbrace .\ee 
This number is equal to  the bottom of the $L^2$-spectrum of $\Ga \ba X$ of the Laplace-Beltrami operator \cite[Theorem 2.2]{Sullivan_Riemannian}. 
The following was proved in \cite[Theorem 1.6]{EO_temperedness} for $\Pi$-Anosov subgroups and in \cite[Corollary 3]{LWW} in general:
 \begin{theorem} \label{eo}
 If $\Ga<G$ is a torsion-free discrete subgroup of $G$ with $\psi_\Ga \le \rho$, then  $L^2(\Ga \ba G)$ is tempered and $\la_0(\Ga\ba X) = \| \rho\|^2$.
 \end{theorem}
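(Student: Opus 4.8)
This statement combines \cite[Theorem 1.6]{EO_temperedness} (for $\Pi$-Anosov subgroups) with \cite[Corollary 3]{LWW} (in general), and the plan is to indicate the two mechanisms behind it. By \cite[Theorem 2.2]{Sullivan_Riemannian}, $\lambda_0(\Ga\ba X)$ is the bottom of the $L^2$-spectrum of the Laplacian on $\Ga\ba X$, so there are really two assertions: the spectral identity $\lambda_0(\Ga\ba X)=\|\rho\|^2$ and the temperedness of $L^2(\Ga\ba G)$, which rest on rather different arguments and which I would treat separately.

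For the spectral identity, the inequality $\lambda_0(\Ga\ba X)\le\|\rho\|^2$ holds for any discrete subgroup: the projection $X\to\Ga\ba X$ is a local isometry taking $B_X(o,R)$ onto $B_{\Ga\ba X}(\bar o,R)$, hence $\vol B_{\Ga\ba X}(\bar o,R)\le\vol B_X(o,R)$, whose exponential growth rate is $2\|\rho\|$, and the classical bound $\lambda_0\le\tfrac14(\text{volume growth rate})^2$ (Brooks) gives $\lambda_0(\Ga\ba X)\le\|\rho\|^2$. For the reverse inequality I would use $\psi_\Ga\le\rho$ to produce, for each $\lambda<\|\rho\|^2$, a positive $\Ga$-invariant smooth function $h_\lambda$ on $X$ with $-\Delta h_\lambda\ge\lambda h_\lambda$: letting $G_\lambda$ be the Green's function of $-\Delta-\lambda$ on $X$ and fixing a nonnegative bump $\rho_0\in C_c^\infty(X)$, set $h_\lambda(x)=\sum_{\ga\in\Ga}\int_X G_\lambda(y,\ga x)\,\rho_0(y)\,dy$. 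Since $G_\lambda(y,(\exp H)o)$ decays like $e^{-\phi_\lambda(H)}$ up to a polynomial factor with $\phi_\lambda>\rho\ge\psi_\Ga$ on $\fa^+\setminus\{0\}$, Quint's convergence criterion for Poincar\'e series gives local uniform convergence; $h_\lambda$ is then smooth, positive and $\Ga$-invariant, with $(-\Delta-\lambda)h_\lambda(x)=\sum_{\ga\in\Ga}\rho_0(\ga x)\ge0$. The ground-state substitution $f=h_\lambda u$ with $u\in C_c^\infty(\Ga\ba X)$ yields $\int_{\Ga\ba X}|\nabla f|^2=\int_{\Ga\ba X}(-\Delta h_\lambda)\,h_\lambda u^2+\int_{\Ga\ba X}h_\lambda^2|\nabla u|^2\ge\lambda\int_{\Ga\ba X}|f|^2$, so $\lambda_0(\Ga\ba X)\ge\lambda$ for all $\lambda<\|\rho\|^2$, whence $\lambda_0(\Ga\ba X)=\|\rho\|^2$; letting $\lambda\uparrow\|\rho\|^2$ and taking a normalized Harnack limit also furnishes a positive $\Ga$-invariant $\|\rho\|^2$-eigenfunction $\varphi$, which will be used below.

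The temperedness of $L^2(\Ga\ba G)$ is the deeper half and is the main content of \cite{EO_temperedness,LWW}. By the criterion of Cowling--Haagerup--Howe (or the equivalent $L^{2+\e}$-integrability criterion), temperedness reduces to a bound $|\langle\pi(g)v,w\rangle|\ll\Xi(g)$ for $K$-finite $v,w$, where $\Xi$ is the Harish-Chandra function and $\Xi(g)\asymp e^{-\rho(\mu(g))}(1+\|\mu(g)\|)^{O(1)}$. Unfolding, a diagonal coefficient attached to the push-down of a bump $F\in C_c^\infty(G)$ becomes $\sum_{\ga\in\Ga}\langle\pi_G(g)L_{\ga^{-1}}F,F\rangle_{L^2(G)}$, a sum of translated matrix coefficients of the regular representation of $G$, the $\ga$-th term being supported near the double coset of $\mu(\ga^{-1})$. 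The main obstacle is that this sum cannot be estimated termwise: the shell $\{\ga:\mu(\ga)\approx H\}$ carries about $e^{\psi_\Ga(H)}$ terms of size $O(1)$, so the triangle inequality only yields the growing bound $e^{\psi_\Ga(\mu(g))}$, whereas one needs the sum to collapse to size $\asymp e^{-\rho(\mu(g))}$. Making this collapse precise requires exploiting the oscillation and decay of the matrix coefficients of $L^2(G)$, ultimately the mixing of the $A$-action, and it is exactly here that $\psi_\Ga\le\rho$ enters, as the borderline condition (stronger than the a priori $\psi_\Ga\le2\rho$) making the estimate summable; in \cite{EO_temperedness} the relevant bookkeeping is organized around the eigenfunction $\varphi$ constructed above, which encodes this hypothesis. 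I expect the passage from the growth bound $\psi_\Ga\le\rho$ to an honest $\Xi$-bound on automorphic matrix coefficients to be the crux of the whole argument.
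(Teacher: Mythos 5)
First, note what the paper itself does with this statement: it is not proved in the paper at all, but imported verbatim, with the attribution ``\cite[Theorem 1.6]{EO_temperedness} for $\Pi$-Anosov subgroups and \cite[Corollary 3]{LWW} in general.'' Your proposal correctly identifies exactly these sources, so at the level at which the paper operates your treatment matches the paper's: a citation. Your added sketch of the spectral identity $\lambda_0(\Gamma\backslash X)=\|\rho\|^2$ is a reasonable and essentially standard route: the upper bound follows either from your volume-growth (Brooks-type) test-function argument or, even more simply, from the fact that a positive $\|\rho\|^2$-eigenfunction on $X$ (e.g.\ $y\mapsto e^{-\rho(\beta_\xi(y,o))}$ averaged over $K$, i.e.\ the spherical function $\phi_0$) certifies $\lambda_0(X)=\|\rho\|^2$ and positive eigenfunctions pull back under the covering $X\to\Gamma\backslash X$; the lower bound via the $\Gamma$-average of the Green's function of $-\Delta-\lambda$, whose Anker--Ji decay exponent exceeds $\rho$ (hence $\psi_\Gamma$) by a definite linear margin for $\lambda<\|\rho\|^2$, convergence by Quint's criterion, and then Barta/ground-state substitution, is correct in outline and is in the spirit of what is done in the cited works.

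The genuine gap is the temperedness half, and you flag it yourself: you reduce to showing that automorphic matrix coefficients are majorized by the Harish-Chandra function $\Xi$, observe that termwise estimation of the unfolded sum only gives $e^{\psi_\Gamma(\mu(g))}$, and then state that you ``expect'' the passage from $\psi_\Gamma\le\rho$ to the $\Xi$-bound to be the crux --- but you do not carry it out. That passage is precisely the content of \cite{EO_temperedness} and \cite{LWW}: in \cite{EO_temperedness} it is organized around Herz majorization together with a collapse of the unfolded sum controlled by a positive joint eigenfunction built from the Patterson--Sullivan data (not merely the single Laplace eigenfunction your Harnack limit produces, since temperedness is a statement about the full joint spectrum of the ring of invariant differential operators, not just $\Delta$); in \cite{LWW} it proceeds via polyhedral bounds locating the joint spectrum of $L^2(\Gamma\backslash G)$ inside the tempered region, for which $\psi_\Gamma\le\rho$ is exactly the hypothesis needed. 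So as a self-contained argument your proposal establishes (modulo standard inputs) only $\lambda_0(\Gamma\backslash X)=\|\rho\|^2$, while the temperedness assertion remains unproved; as a reading of how the paper handles Theorem \ref{eo}, your citation of \cite{EO_temperedness} and \cite{LWW} is exactly what the paper does.
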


Applying Theorem \ref{eo},
we obtain the following (Corollary \ref{cortempered}) from \eqref{ppp0}.

\begin{corollary} \label{cor.feb2}\label{final} Let  $\Ga$ be a  torsion-free $\theta$-Anosov subgroup. 
  If $ \dim \La_{\theta}  \le \frac{\# \theta}{ {\mathsf c}_\theta}$,
then $L^2(\Ga \ba G)$ is tempered and $\la_0(\Ga\ba X) = \| \rho\|^2$.
\end{corollary}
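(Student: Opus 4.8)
The plan is to combine the growth indicator bound of Corollary \ref{cor.growthbysumchi} with the temperedness criterion of Theorem \ref{eo}. The key point is to translate the hypothesis on $\dim \La_\theta$ into the inequality $\psi_\Ga \le \rho$.

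First I would invoke the chain of elementary inequalities recorded just before the statement: using the definition of $\mathsf c_G$ (Definition \ref{CGG}), for any $\theta$-Anosov subgroup we have on $\fa^+$
\[
\min_{\alpha \in \theta}(\chi_\alpha + \chi_{\i(\alpha)}) \le \frac{1}{\#\theta}\sum_{\alpha \in \theta}(\chi_\alpha + \chi_{\i(\alpha)}) \le \frac{2}{\#\theta}\sum_{\alpha \in \Pi}\chi_\alpha \le \frac{2\mathsf c_G}{\#\theta}\cdot \rho,
\]
and when $\theta \cap \i(\theta) = \emptyset$ the middle sum $\sum_{\alpha \in \theta}(\chi_\alpha + \chi_{\i(\alpha)})$ ranges over $2\#\theta$ distinct weights all lying in $\{\chi_\alpha : \alpha \in \Pi\}$, so it is bounded by $\sum_{\alpha \in \Pi}\chi_\alpha$ and we gain the factor of $2$, obtaining $\min_{\alpha \in \theta}(\chi_\alpha + \chi_{\i(\alpha)}) \le \frac{\mathsf c_G}{\#\theta}\cdot\rho$ on $\fa^+$. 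Then Corollary \ref{cor.growthbysumchi}, specifically \eqref{p10}, gives $\psi_\Ga \le \dim \La_\theta \cdot \min_{\alpha \in \theta}(\chi_\alpha + \chi_{\i(\alpha)})$ on $\fa$; restricting to $\fa^+$ (which suffices since $\psi_\Ga = -\infty$ outside $\L \subset \fa^+$) and substituting the above, we get $\psi_\Ga \le \frac{2\mathsf c_G \dim \La_\theta}{\#\theta}\cdot\rho$ in general, and $\psi_\Ga \le \frac{\mathsf c_G \dim \La_\theta}{\#\theta}\cdot\rho$ when $\theta\cap\i(\theta) = \emptyset$.

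Next, under the stated hypotheses — $\dim \La_\theta \le \frac{\#\theta}{\mathsf c_G}$ in the disjoint case, or $\dim \La_\theta \le \frac{\#\theta}{2\mathsf c_G}$ otherwise — the coefficient $\frac{2\mathsf c_G \dim\La_\theta}{\#\theta}$ (resp. $\frac{\mathsf c_G\dim\La_\theta}{\#\theta}$) is at most $1$, and since $\rho \ge 0$ on $\fa^+$ we conclude $\psi_\Ga \le \rho$ on $\fa$. Finally, $\Ga$ is torsion-free by hypothesis, so Theorem \ref{eo} applies directly and yields that $L^2(\Ga\ba G)$ is tempered and $\lambda_0(\Ga\ba X) = \|\rho\|^2$, completing the proof.

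There is essentially no obstacle here beyond bookkeeping: all the analytic content lives in Corollary \ref{cor.growthbysumchi} (hence in Corollary \ref{cor.riemlowermax} and ultimately Theorem \ref{thm.riemlower} / the Ahlfors regularity results) and in Theorem \ref{eo}. The one point that needs a word of care is the validity of the weight inequality $\sum_{\alpha \in \theta}(\chi_\alpha + \chi_{\i(\alpha)}) \le \sum_{\alpha \in \Pi}\chi_\alpha$ in the disjoint case, which uses that $\i$ is an involution of $\Pi$ so that $\theta$ and $\i(\theta)$ together contribute $2\#\theta$ distinct simple roots, each weight $\chi_\beta$ appearing at most once on the left; this is immediate but worth stating since it is exactly what produces the sharper constant.
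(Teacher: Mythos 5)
Your argument is correct and follows essentially the same route as the paper: you bound $\min_{\alpha\in\theta}(\chi_\alpha+\chi_{\i(\alpha)})$ by $\frac{2\mathsf c_G}{\#\theta}\rho$ (respectively $\frac{\mathsf c_G}{\#\theta}\rho$ when $\theta\cap\i(\theta)=\emptyset$) on $\fa^+$, feed this into the growth indicator bound \eqref{p10} to get $\psi_\Ga\le\rho$ under the stated hypotheses, and then apply Theorem \ref{eo}. The paper proves the corollary in exactly this way, via the intermediate bound \eqref{ppp0}, so nothing further is needed.
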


Moreover $\la_0$ is not an $L^2$-eigenvalue (\cite{EO_temperedness}, \cite{EFLO}, see also \cite[Corollary 5.2]{Weich_Wolf} for the absence of any principal joint $L^2$-eigenvalues as well).

\begin{remark}\label{lpint}
   Indeed, it is shown in \cite[Theorem 11]{LWW} that if $\psi_\Ga \le (2-\frac{2}{p}) \rho$ for some $p \ge 1$,
   then $L^2(\Ga\ba G)$ is strongly $L^{p+\e}$-integrable for all $\e>0$, that is,
   for a dense subset of vectors, the associated matrix coefficients belong to $L^{p+\e}(G)$. Hence if $ \dim \La_{\theta}  \le (2- \frac{2}p) \frac{\# \theta}{ {\mathsf c}_\theta}$, we obtain that
   $L^2(\Gamma\ba G)$ is strongly $L^{p+\e}$-integrable for all $\e>0$.
\end{remark}

\begin{remark}\label{ffinal}
Using that $\mathsf c_G = \frac{2}{n-1}$ for $G = \so(n, 1)$, Corollary \ref{cor.feb2} says that for a Zariski dense convex cocompact $\Ga < \so(n, 1)$, if $\dim \La \le \frac{n-1}{2}$, then $L^2(\Ga \ba \so(n, 1))$ is tempered and $\la_0(\Ga \ba \H^n) = \frac{(n-1)^2}{4}$, as shown by Sullivan \cite[Theorem 2.21]{Sullivan_Riemannian}.
\end{remark}

%\bibliographystyle{plain}
%\bibliography{HM}

\begin{thebibliography}{GGKW17}

\bibitem[AMS95]{AMS_semigroups}
H.~Abels, G.~A. Margulis, and G.~A. Soifer.
\newblock Semigroups containing proximal linear maps.
\newblock {\em Israel J. Math.}, 91(1-3):1--30, 1995.

\bibitem[Alb99]{Albuquerque1999patterson}
P.~Albuquerque.
\newblock Patterson-{S}ullivan theory in higher rank symmetric spaces.
\newblock {\em Geom. Funct. Anal.}, 9(1):1--28, 1999.

\bibitem[Ben97]{Benoist1997proprietes}
Y.~Benoist.
\newblock Propri\'{e}t\'{e}s asymptotiques des groupes lin\'{e}aires.
\newblock {\em Geom. Funct. Anal.}, 7(1):1--47, 1997.

\bibitem[Ben00]{Benoist_automorphism}
Y.~Benoist.
\newblock Automorphismes des c\^{o}nes convexes.
\newblock {\em Invent. Math.}, 141(1):149--193, 2000.

\bibitem[Ber70]{Bers_spaces}
L.~Bers.
\newblock Spaces of {K}leinian groups.
\newblock In {\em Several {C}omplex {V}ariables, {I} ({P}roc. {C}onf., {U}niv. of {M}aryland, {C}ollege {P}ark, {M}d., 1970)}, volume Vol. 155 of {\em Lecture Notes in Math.}, pages 9--34. Springer, Berlin-New York, 1970.

\bibitem[BS93]{BS_rigidity}
C.~Bishop and T.~Steger.
\newblock Representation-theoretic rigidity in {${\rm PSL}(2,{\bf R})$}.
\newblock {\em Acta Math.}, 170(1):121--149, 1993.

\bibitem[BPS19]{BPS_anosov}
J.~Bochi, R.~Potrie, and A.~Sambarino.
\newblock Anosov representations and dominated splittings.
\newblock {\em J. Eur. Math. Soc. (JEMS)}, 21(11):3343--3414, 2019.

\bibitem[BT65]{BT}
A.~Borel and J.~Tits.
\newblock Groupes r\'{e}ductifs.
\newblock {\em Inst. Hautes \'{E}tudes Sci. Publ. Math.}, (27):55--150, 1965.

\bibitem[Bou02]{Bourbaki}
N.~Bourbaki.
\newblock Lie groups and {L}ie algebras. {C}hapters 4--6.
\newblock pages xii+300, 2002.
\newblock Translated from the 1968 French original by Andrew Pressley.

\bibitem[BCLS15]{BCLS_gafa}
M.~Bridgeman, R.~Canary, F.~Labourie, and A.~Sambarino.
\newblock The pressure metric for {A}nosov representations.
\newblock {\em Geom. Funct. Anal.}, 25(4):1089--1179, 2015.

\bibitem[BH99]{Bridson1999metric}
M.~Bridson and A.~Haefliger.
\newblock {\em Metric spaces of non-positive curvature}, volume 319 of {\em Grundlehren der mathematischen Wissenschaften [Fundamental Principles of Mathematical Sciences]}.
\newblock Springer-Verlag, Berlin, 1999.

\bibitem[BT22]{burelle2022schottky}
J.-P. Burelle and N.~Treib.
\newblock Schottky presentations of positive representations.
\newblock {\em Math. Ann.}, 382(3-4):1705--1744, 2022.

\bibitem[Bur93]{Burger_manhattan}
M.~Burger.
\newblock Intersection, the {M}anhattan curve, and {P}atterson-{S}ullivan theory in rank {$2$}.
\newblock {\em Internat. Math. Res. Notices}, (7):217--225, 1993.

\bibitem[BILW05]{BI}
M.~Burger, A.~Iozzi, F.~Labourie, and A.~Wienhard.
\newblock Maximal representations of surface groups: symplectic {A}nosov structures.
\newblock {\em Pure Appl. Math. Q.}, 1(3):543--590, 2005.

\bibitem[CZZ22]{CZZ_entropy}
R.~Canary, T.~Zhang, and A.~Zimmer.
\newblock Entropy rigidity for cusped {H}itchin representations.
\newblock {\em Preprint, arXiv:2201.04859}, 2022.
\newblock To appear in J. Topol.

\bibitem[CS23]{CS_local}
M.~Chow and P.~Sarkar.
\newblock Local {M}ixing of {O}ne-{P}arameter {D}iagonal {F}lows on {A}nosov {H}omogeneous {S}paces.
\newblock {\em Int. Math. Res. Not. IMRN}, (18):15834--15895, 2023.

\bibitem[Coo93]{Coornaert_PS}
M.~Coornaert.
\newblock Mesures de {P}atterson-{S}ullivan sur le bord d'un espace hyperbolique au sens de {G}romov.
\newblock {\em Pacific J. Math.}, 159(2):241--270, 1993.

\bibitem[Cor90]{Corlette_Inv}
K.~Corlette.
\newblock Hausdorff dimensions of limit sets. {I}.
\newblock {\em Invent. Math.}, 102(3):521--541, 1990.

\bibitem[DK22]{DK_patterson}
S.~Dey and M.~Kapovich.
\newblock Patterson-{S}ullivan theory for {A}nosov subgroups.
\newblock {\em Trans. Amer. Math. Soc.}, 375(12):8687--8737, 2022.

\bibitem[Duf17]{Dufloux_hausdorff}
L.~Dufloux.
\newblock Hausdorff dimension of limit sets.
\newblock {\em Geom. Dedicata}, 191:1--35, 2017.

\bibitem[EFLO24]{EFLO}
M.~Edwards, S.~Fraczyk, M.~Lee, and H.~Oh.
\newblock Infinite volume and atoms at the bottom of the spectrum.
\newblock {\em C. R. Math. Acad. Sci. Paris}, 362:1873--1880, 2024.

\bibitem[EO23]{EO_temperedness}
S.~Edwards and H.~Oh.
\newblock Temperedness of {$L^2(\Gamma \setminus G)$} and positive eigenfunctions in higher rank.
\newblock {\em Comm. Amer. Math. Soc.}, 3:744--778, 2023.

\bibitem[Fal14]{Falconer_fractatl}
K.~Falconer.
\newblock {\em Fractal geometry}.
\newblock John Wiley \& Sons, Ltd., Chichester, third edition, 2014.
\newblock Mathematical foundations and applications.

\bibitem[GMT23]{GMT_Hausdorff}
O.~Glorieux, D.~Monclair, and N.~Tholozan.
\newblock Hausdorff dimension of limit sets for projective {A}nosov representations.
\newblock {\em J. \'{E}c. polytech. Math.}, 10:1157--1193, 2023.

\bibitem[GGKW17]{GGKW_gt}
F.~Gu\'{e}ritaud, O.~Guichard, F.~Kassel, and A.~Wienhard.
\newblock Anosov representations and proper actions.
\newblock {\em Geom. Topol.}, 21(1):485--584, 2017.

\bibitem[GW12]{GW_anosov}
O.~Guichard and A.~Wienhard.
\newblock Anosov representations: domains of discontinuity and applications.
\newblock {\em Invent. Math.}, 190(2):357--438, 2012.

\bibitem[Har66]{Harish}
Harish-Chandra.
\newblock Discrete series for semisimple {L}ie groups. {II}.
\newblock {\em Acta Math.}, 116:1--111, 1966.

\bibitem[IT92]{Imayoshi_Teich}
Y.~Imayoshi and M.~Taniguchi.
\newblock {\em An introduction to {T}eichm\"{u}ller spaces}.
\newblock Springer-Verlag, Tokyo, 1992.
\newblock Translated and revised from the Japanese by the authors.

\bibitem[KLP17]{KLP_Anosov}
M.~Kapovich, B.~Leeb, and J.~Porti.
\newblock Anosov subgroups: dynamical and geometric characterizations.
\newblock {\em Eur. J. Math.}, 3(4):808--898, 2017.

\bibitem[KLP18]{KLP_2018}
M.~Kapovich, B.~Leeb, and J.~Porti.
\newblock A {M}orse lemma for quasigeodesics in symmetric spaces and {E}uclidean buildings.
\newblock {\em Geom. Topol.}, 22(7):3827--3923, 2018.

\bibitem[KMO23]{KMO_HD}
D.~M. Kim, Y.~Minsky, and H.~Oh.
\newblock Hausdorff dimension of directional limit sets for self-joinings of hyperbolic manifolds.
\newblock {\em J. Mod. Dyn.}, 19:433--453, 2023.

\bibitem[KMO24]{KMO_tent}
D.~M. Kim, Y.~Minsky, and H.~Oh.
\newblock Tent property of the growth indicator functions and applications.
\newblock {\em Geom. Dedicata}, 218(1):Paper No.14, 18, 2024.

\bibitem[KOW25a]{KOW_ergodic}
D.~M. Kim, H.~Oh, and Y.~Wang.
\newblock Ergodic dichotomy for subspace flows in higher rank.
\newblock {\em Commun. Am. Math. Soc.}, 5:1--47, 2025.

\bibitem[KOW25b]{KOW_indicators}
D.~M. Kim, H.~Oh, and Y.~Wang.
\newblock Properly discontinuous actions, growth indicators, and conformal measures for transverse subgroups.
\newblock {\em Math. Ann.}, 393(2):2391--2450, 2025.

\bibitem[Lab06]{Labourie2006anosov}
F.~Labourie.
\newblock Anosov flows, surface groups and curves in projective space.
\newblock {\em Invent. Math.}, 165(1):51--114, 2006.

\bibitem[LL25]{Ledrappier-Lessa}
F.~Ledrappier and P.~Lessa.
\newblock Dimension gap and variational principle for {A}nosov representations.
\newblock {\em J. \'Ec. polytech. Math.}, 12:71--100, 2025.

\bibitem[LO23]{LO_invariant}
M.~Lee and H.~Oh.
\newblock Invariant {M}easures for {H}orospherical {A}ctions and {A}nosov {G}roups.
\newblock {\em Int. Math. Res. Not. IMRN}, (19):16226--16295, 2023.

\bibitem[LO24]{LO_dichotomy}
M.~Lee and H.~Oh.
\newblock Dichotomy and measures on limit sets of {A}nosov groups.
\newblock {\em Int. Math. Res. Not. IMRN}, (7):5658--5688, 2024.

\bibitem[LPX23]{LPX_dimension}
J.~Li, W.~Pan, and D.~Xu.
\newblock On the dimension of limit sets on $\mathbb{P}(\mathbb{R}^3)$ via stationary measures: the theory and applications.
\newblock {\em Preprint arXiv:2311.10265}, 2023.

\bibitem[LWW24]{LWW}
C.~Lutsko, T.~Weich, and L.Wolf.
\newblock Polyhedral bounds on the joint spectrum and temperedness of locally symmetric spaces.
\newblock {\em Preprint arXiv:2402.02530}, 2024.

\bibitem[Mar74]{Marden_geometry}
A.~Marden.
\newblock The geometry of finitely generated kleinian groups.
\newblock {\em Ann. of Math. (2)}, 99:383--462, 1974.

\bibitem[Mar91]{Mar}
G.~Margulis.
\newblock {\em Discrete subgroups of semisimple {L}ie groups}, volume~17 of {\em Ergebnisse der Mathematik und ihrer Grenzgebiete (3)}.
\newblock Springer-Verlag, Berlin, 1991.

\bibitem[Oh02]{oh}
H.~Oh.
\newblock Uniform pointwise bounds for matrix coefficients of unitary representations and applications to {K}azhdan constants.
\newblock {\em Duke Math. J. 113 (2002), no. 1, 133--192.}

\bibitem[Pat76]{Patterson1976limit}
S.~Patterson.
\newblock The limit set of a {F}uchsian group.
\newblock {\em Acta Math.}, 136(3-4):241--273, 1976.

\bibitem[PS17]{PS_eigenvalues}
R.~Potrie and A.~Sambarino.
\newblock Eigenvalues and entropy of a {H}itchin representation.
\newblock {\em Invent. Math.}, 209(3):885--925, 2017.

\bibitem[PSW21]{PSW_conformality}
B.~Pozzetti, A.~Sambarino, and A.~Wienhard.
\newblock Conformality for a robust class of non-conformal attractors.
\newblock {\em J. Reine Angew. Math.}, 774:1--51, 2021.

\bibitem[PSW23]{PSW_Lipschitz}
B.~Pozzetti, A.~Sambarino, and A.~Wienhard.
\newblock Anosov representations with {L}ipschitz limit set.
\newblock {\em Geom. Topol.}, 27(8):3303--3360, 2023.

\bibitem[Qui02a]{Quint2002divergence}
J.-F. Quint.
\newblock Divergence exponentielle des sous-groupes discrets en rang sup\'{e}rieur.
\newblock {\em Comment. Math. Helv.}, 77(3):563--608, 2002.

\bibitem[Qui02b]{Quint2002Mesures}
J.-F. Quint.
\newblock Mesures de {P}atterson-{S}ullivan en rang sup\'{e}rieur.
\newblock {\em Geom. Funct. Anal.}, 12(4):776--809, 2002.

\bibitem[Qui03a]{Quint_indicator}
J.-F. Quint.
\newblock L'indicateur de croissance des groupes de {S}chottky.
\newblock {\em Ergodic Theory Dynam. Systems}, 23(1):249--272, 2003.

\bibitem[Qui03b]{quint_ka}
J.-F. Quint.
\newblock Propri\'{e}t\'{e} de {K}azhdan et sous-groupes discrets de covolume infini.
\newblock In {\em Travaux math\'{e}matiques. {F}asc. {XIV}}, volume~14 of {\em Trav. Math.}, pages 143--151. Univ. Luxemb., Luxembourg, 2003.

\bibitem[Sam14]{samb_hyper}
A.~Sambarino.
\newblock Hyperconvex representations and exponential growth.
\newblock {\em Ergodic Theory Dynam. Systems}, 34(3):986--1010, 2014.

\bibitem[Sam24]{sambarino2022report}
A.~Sambarino.
\newblock A report on an ergodic dichotomy.
\newblock {\em Ergodic Theory Dynam. Systems}, 44(1):236--289, 2024.

\bibitem[Smi18]{Sm}
I.~Smilga.
\newblock Proper affine actions in non-swinging representations.
\newblock {\em Groups Geom. Dyn.}, 12(2):449--528, 2018.

\bibitem[Sul79]{Sullivan1979density}
D.~Sullivan.
\newblock The density at infinity of a discrete group of hyperbolic motions.
\newblock {\em Inst. Hautes \'{E}tudes Sci. Publ. Math.}, (50):171--202, 1979.

\bibitem[Sum87]{Sullivan_Riemannian}
D.~Sullivan.
\newblock Related aspects of positivity in {R}iemannian geometry.
\newblock {\em J. Differential Geom.}, 25(3):327--351, 1987.

\bibitem[Tit66]{Tits_classification}
J.~Tits.
\newblock Classification of algebraic semisimple groups.
\newblock {\em Proc. Sympos. Pure. Math.}, 9:33--62, 1966.

\bibitem[Tit71]{Tits_representations}
J.~Tits.
\newblock Repr\'{e}sentations lin\'{e}aires irr\'{e}ductibles d'un groupe r\'{e}ductif sur un corps quelconque.
\newblock {\em J. Reine Angew. Math.}, 247:196--220, 1971.

\bibitem[WW23]{Weich_Wolf}
T.~Weich and L.~Wolf.
\newblock Absence of principal eigenvalues for higher rank locally symmetric spaces.
\newblock {\em Comm. Math. Phys.}, 403(3):1275--1295, 2023.

\end{thebibliography}

\end{document}